\newif\ifpdf
\numberwithin{equation}{section} \swapnumbers
\newtheorem{satz}{Satz}[section]
\newtheorem{theorem}[satz]{Theorem}
\newtheorem{proposition}[satz]{Proposition}
\newtheorem{lemma}[satz]{Lemma}
\newtheorem{assumption}[satz]{Assumption}
\newtheorem{definition}[satz]{Definition}
\newtheorem{remark}[satz]{Remark}
\newcommand{\bbr}{\mathbb{R}}
\newcommand{\bbe}{\mathbb{E}}
\newcommand{\bbn}{\mathbb{N}}
\newcommand{\bbp}{\mathbb{P}}
\newcommand{\cala}{\mathcal{A}}
\newcommand{\cald}{\mathcal{D}}
\newcommand{\cale}{\mathcal{E}}
\newcommand{\calf}{\mathcal{F}}
\newcommand{\calh}{\mathcal{H}}
\newcommand{\bc}{{\rm bc}(\{ e_l \}_{l \in \bbn})}
\newcommand{\ubc}{{\rm ubc}(\{ e_l \}_{l \in \bbn})}
\newcommand{\Lip}{{\rm Lip}}
\newcommand{\LG}{{\rm LG}}
\newcommand{\loc}{{\rm loc}}
\newcommand{\B}{{\rm B}}
\newcommand{\F}{{\rm F}}
\newcommand{\G}{{\rm G}}
\newcommand{\supp}{{\rm supp}}
\newcommand{\Id}{{\rm Id}}
\newcommand{\bbI}{\mathbbm{1}}
\begin{document}

\hyphenation{rea-li-za-tion pa-ra-me-tri-za-ti-ons pa-ra-me-tri-za-ti-on Schau-der}

\title[Invariance of closed convex cones]{Invariance of closed convex cones for stochastic partial differential equations}
\author{Stefan Tappe}
\address{Leibniz Universit\"{a}t Hannover, Institut f\"{u}r Mathematische Stochastik, Welfengarten 1, 30167 Hannover, Germany}
\email{tappe@stochastik.uni-hannover.de}
\thanks{I am grateful to an anonymous referee for the careful study of my paper and the valuable comments and suggestions.}
\begin{abstract}
The goal of this paper is to clarify when a closed convex cone is invariant for a stochastic partial differential equation (SPDE) driven by a Wiener process and a Poisson random measure, and to provide conditions on the parameters of the SPDE, which are necessary and sufficient.
\end{abstract}
\keywords{Stochastic partial differential equation, closed convex cone, stochastic invariance, parallel function}
\subjclass[2010]{60H15, 60G17}

\maketitle\thispagestyle{empty}

\section{Introduction}\label{sec-intro}

Consider a semilinear stochastic partial differential equation (SPDE) of the form
\begin{align}\label{SPDE}
\left\{
\begin{array}{rcl}
dr_t & = & ( A r_t + \alpha(r_t) ) dt + \sigma(r_t) dW_t + \int_E \gamma(r_{t-},x) (\mu(dt,dx) - F(dx)dt) \medskip
\\ r_0 & = & h_0
\end{array}
\right.
\end{align}
driven by a trace class Wiener process $W$ and a Poisson random measure $\mu$ on some mark space $E$ with compensator $dt \otimes F(dx)$. The state space of the SPDE (\ref{SPDE}) is a separable Hilbert space $H$, and the operator $A$ is the generator of a strongly continuous semigroup $(S_t)_{t \geq 0}$ on $H$. We refer to Section \ref{sec-ass} for more details concerning the mathematical framework.

In applications, one is often interested in the question when a certain subset of the state space is invariant for the SPDE (\ref{SPDE}), and frequently it turns out that this subset is a closed convex cone. For example, when modeling the evolution of interest rate curves, a desirable feature is that the model produces nonnegative interest curves; or when modeling multiple yield curves, it is desirable to have spreads which are ordered with respect to different tenors.

In order to translate these ideas into mathematical terms, let $K \subset H$ be a closed convex cone of the state space $H$. We say that the cone $K$ is invariant for the SPDE (\ref{SPDE}) if for each starting point $h_0 \in K$ the solution process $r$ to (\ref{SPDE}) stays in $K$. The goal of this paper is to clarify when the cone $K$ is invariant for the SPDE (\ref{SPDE}), and to provide conditions on the parameters $(A,\alpha,\sigma,\gamma)$ -- or, equivalently, on $((S_t)_{t \geq 0},\alpha,\sigma,\gamma)$ -- of the SPDE (\ref{SPDE}), which are necessary and sufficient.

Stochastic invariance of a given subset $K \subset H$ for jump-diffusion SPDEs (\ref{SPDE}) has already been studied in the literature, mostly for diffusion SPDEs
\begin{align}\label{SPDE-Wiener}
\left\{
\begin{array}{rcl}
dr_t & = & ( A r_t + \alpha(r_t) ) dt + \sigma(r_t) dW_t \medskip
\\ r_0 & = & h_0
\end{array}
\right.
\end{align}
without jumps. The classes of subsets $K \subset H$, for which stochastic invariance has been investigated, can roughly be divided as follows:
\begin{itemize}
\item For a finite dimensional submanifold $K \subset H$ the stochastic invariance has been studied in \cite{Filipovic-inv} and \cite{Nakayama} for diffusion SPDEs (\ref{SPDE-Wiener}), and in \cite{FTT-manifolds} for jump-diffusion SPDEs (\ref{SPDE}). Here a related problem is the existence of a finite dimensional realization (FDR), which means that for each starting point $h_0 \in H$ a finite dimensional invariant manifold $K \subset H$ with $h_0 \in K$ exists. This problem has mostly been studied for the so-called Heath-Jarrow-Morton-Musiela (HJMM) equation from mathematical finance, and we refer, for example, to \cite{Bj_Sv, Bj_La, Filipovic, Filipovic-Teichmann-royal, Tappe-Wiener, Tappe-affine} for the existence of FDRs for diffusion SPDEs (\ref{SPDE-Wiener}), and, for example, to \cite{Tappe-Levy, Platen-Tappe, Tappe-affin-real} for the existence of FDRs for SPDEs driven by L\'{e}vy processes, which are particular cases of jump-diffusion SPDEs (\ref{SPDE}).

\item For an arbitrary closed subset $K \subset H$ the stochastic invariance has been studied for PDEs in \cite{Jachimiak-note}, and for diffusion SPDEs (\ref{SPDE-Wiener}) in \cite{Jachimiak} and -- based on the support theorem presented in \cite{Nakayama-Support} -- in \cite{Nakayama}. Both authors obtain the so-called stochastic semigroup Nagumo's condition (SSNC) as a criterion for stochastic invariance, which is necessary and sufficient. An indispensable assumption for the formulation of the SSNC is that the volatility $\sigma$ is sufficiently smooth; it must be two times continuously differentiable.

\item For a closed convex cone $K \subset H$ -- as in our paper -- the stochastic invariance has been studied in two particular situations on function spaces. In \cite{Milian} the state space $H$ is an $L^2$-space, $K$ is the closed convex cone of nonnegative functions, and its stochastic invariance is investigated for diffusion SPDEs (\ref{SPDE-Wiener}). In \cite{Positivity} the state space $H$ is a Hilbert space consisting of continuous functions, $K$ is also the closed convex cone of nonnegative functions, and its stochastic invariance is investigated for jump-diffusion SPDEs (\ref{SPDE}); a particular application in \cite{Positivity} is the positivity preserving property of interest rate curves from the aforementioned HJMM equation, which appears in mathematical finance.
\end{itemize}
In this paper, we provide a general investigation of the stochastic invariance problem for an arbitrary closed convex cone $K \subset H$, contained in an arbitrary separable Hilbert space $H$, for jump-diffusion SPDEs (\ref{SPDE}). Taking advantage of the structural properties of closed convex cones, we do not need smoothness of the volatility $\sigma$, as it is required in \cite{Jachimiak} and \cite{Nakayama}, and also in \cite{Positivity}.

In order to present our main result of this paper, let $K \subset H$ be a closed convex cone, and let $K^* \subset H$ be its dual cone
\begin{align}\label{dual-cone}
K^* = \bigcap_{h \in K} \{ h^* \in H : \langle h^*,h \rangle \geq 0 \}.
\end{align}
Then the cone $K$ has the representation
\begin{align}\label{cone-repr}
K = \bigcap_{h^* \in K^*} \{ h \in H : \langle h^*,h \rangle \geq 0 \}.
\end{align}
We fix a generating system $G^*$ of the cone $K$; that is, a subset $G^* \subset K^*$ such that the cone admits the representation
\begin{align}\label{cone-G}
K = \bigcap_{h^* \in G^*} \{ h \in H : \langle h^*,h \rangle \geq 0 \}.
\end{align}
In particular, we could simply take $G^* = K^*$. However, for applications we will choose a generating system $G^*$ which is as convenient as possible. Throughout this paper, we make the following assumptions:
\begin{itemize}
\item The semigroup $(S_t)_{t \geq 0}$ is pseudo-contractive; see Assumption \ref{ass-pseudo-contractive}.

\item The coefficients $(\alpha,\sigma,\gamma)$ are locally Lipschitz and satisfy the linear growth condition, which ensures existence and uniqueness of mild solutions to the SPDE (\ref{SPDE}); see Assumption \ref{ass-loc-Lip-LG}.

\item The cone $K$ is invariant for the semigroup $(S_t)_{t \geq 0}$; see Assumption \ref{ass-cone-semigroup}.

\item The cone $K$ is generated by an unconditional Schauder basis; see Assumption \ref{ass-Schauder-basis}.
\end{itemize}
We refer to Section \ref{sec-ass} for the precise mathematical framework. We define the set $D \subset G^* \times K$ as
\begin{align}\label{def-D}
D := \bigg\{ (h^*,h) \in G^* \times K : \liminf_{t \downarrow 0} \frac{\langle h^*,S_t h \rangle}{t} < \infty \bigg\}.
\end{align}
Since the cone $K$ is invariant for the semigroup $(S_t)_{t \geq 0}$, for all $(h^*,h) \in G^* \times K$ the limes inferior in (\ref{def-D}) exists with value in $\overline{\bbr}_+ = [0,\infty]$. Now, our main result reads as follows.

\begin{theorem}\label{thm-main}
Suppose that Assumptions \ref{ass-pseudo-contractive}, \ref{ass-loc-Lip-LG}, \ref{ass-cone-semigroup} and \ref{ass-Schauder-basis} are fulfilled. Then the following statements are equivalent:
\begin{enumerate}
\item[(i)] The closed convex cone $K$ is invariant for the SPDE (\ref{SPDE}).

\item[(ii)] We have
\begin{align}\label{main-1}
h + \gamma(h,x) \in K \quad \text{for $F$-almost all $x \in E$,} \quad \text{for all $h \in K$,}
\end{align}
and for all $(h^*,h) \in D$ we have
\begin{align}\label{main-3}
&\liminf_{t \downarrow 0} \frac{\langle h^*,S_t h \rangle}{t} + \langle h^*,\alpha(h) \rangle - \int_E \langle h^*,\gamma(h,x) \rangle F(dx) \geq 0,
\\ \label{main-4} &\langle h^*,\sigma^j(h) \rangle = 0, \quad j \in \bbn.
\end{align}
\end{enumerate}
\end{theorem}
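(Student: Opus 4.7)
The plan is to establish the two implications separately, in both cases exploiting the representation (\ref{cone-G}) of $K$ as an intersection of closed half-spaces indexed by $G^*$.

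\emph{Sufficiency $(ii) \Rightarrow (i)$.} Fix $h_0 \in K$ and let $r$ be the mild solution to (\ref{SPDE}). By (\ref{cone-G}) it suffices to verify that $\langle h^*, r_t \rangle \geq 0$ almost surely for each $h^* \in G^*$ and each $t \geq 0$; here the unconditional Schauder basis assumption is what allows one to replace $G^*$ by a countable testing family, so that the null sets do not proliferate when writing $\{r_t \in K\} = \bigcap_{h^* \in G^*} \{\langle h^*, r_t \rangle \geq 0\}$. The plan is to analyze the real semimartingale $Y_t := \langle h^*, r_t \rangle$. Since $r$ is only mild, It\^o's formula cannot be applied directly to $\langle h^*, r_t \rangle$; instead I would test against the Yosida approximations of $h^*$, apply It\^o, and pass to the limit, using the finiteness of $\liminf_{t \downarrow 0} \langle h^*, S_t h \rangle/t$ on $D$ (when $(h^*,h) \notin D$ the semigroup contribution alone dominates everything). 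Conditions (\ref{main-3}) and (\ref{main-4}) then encode that the infinitesimal drift of $Y$ is non-negative and its diffusion coefficient vanishes whenever $Y = 0$, while (\ref{main-1}) guarantees that the compensated jump part cannot push $Y$ below zero. A penalty / stopping-time argument applied to $(-Y)^+$, combined with a Gronwall-type estimate, then yields $Y \geq 0$ almost surely.

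\emph{Necessity $(i) \Rightarrow (ii)$.} The jump condition (\ref{main-1}) is obtained by starting the SPDE at an arbitrary $h \in K$, localizing $F$ to sets of finite mass, and conditioning on the first jump time $T_1$: since $r_{T_1-} \to h$ as $T_1 \downarrow 0$, the identity $r_{T_1} = r_{T_1-} + \gamma(r_{T_1-}, X_1) \in K$ forces $h + \gamma(h, x) \in K$ for $F$-almost all $x$. For (\ref{main-3}) and (\ref{main-4}), fix $(h^*, h) \in D$, start the SPDE at $h$, and It\^o-decompose the regularized quantity obtained by replacing $h^*$ with its Yosida approximation in $D(A^*)$. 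Passing the Yosida parameter to infinity, dividing by $t$, and taking $\liminf_{t \downarrow 0}$ of the expectation yields (\ref{main-3}); finiteness of the $\liminf$ in (\ref{def-D}) is exactly what makes the semigroup contribution converge to a finite quantity, and (\ref{main-1}) (already proven) is used to handle the compensator integral over $E$. The diffusion identity (\ref{main-4}) follows from a second-moment argument: were any $\langle h^*, \sigma^j(h) \rangle$ nonzero, the quadratic variation of $Y_t$ at $0$ would dominate the drift on a $\sqrt{t}$-scale and drive $Y$ below zero with positive probability, contradicting invariance.

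The main obstacle I expect is rigorously controlling the Yosida limit simultaneously with the small-time limit $t \downarrow 0$, the compensated jump integral over $E$, and the unconditional sum $\sum_j \langle h^*, \sigma^j(r_s) \rangle^2$ in the quadratic variation of $Y$. The $\liminf$ formulation in (\ref{def-D}) and (\ref{main-3}) is designed precisely to absorb the non-smooth semigroup contribution at $t = 0$, but coordinating all three limits -- while exploiting pseudo-contractivity of $(S_t)$, invariance of $K$ under $(S_t)$, and the locally Lipschitz structure of $(\alpha, \sigma, \gamma)$ -- is the technical core of the argument.
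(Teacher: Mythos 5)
Your necessity argument is broadly in the spirit of the paper's Theorem \ref{thm-nec}: start the equation at a boundary point $h$ with $(h^*,h)\in D$, localize the jumps to sets of finite $F$-measure, and perform a short-time analysis of $\bbe[\langle h^*,r_{T\wedge t}\rangle]/t$ along a sequence realizing the $\liminf$. Two remarks, though. First, the paper never needs Yosida approximations of $h^*$: it reads the decomposition of $\langle h^*,r_{T\wedge t}\rangle$ directly off the mild formulation (\ref{mild-solution}), with the semigroup entering only through $\langle h^*,S_{R-s}(\cdot)\rangle$, so the three limits you worry about coordinating never arise. Second, for (\ref{main-4}) the paper does not use a quadratic-variation/$\sqrt{t}$-scale argument; it multiplies $\langle h^*,r\rangle$ by the stochastic exponential $\cale(\Phi\beta^j)$ with $\Phi=-(\eta+1)/\langle h^*,\sigma^j(h)\rangle$ and derives the contradiction $0\le -1$. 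Your oscillation heuristic is plausible but would need a genuine small-ball or iterated-logarithm estimate to become a proof; as written it is only a sketch.

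The sufficiency half is where the real gap lies. Conditions (\ref{main-3}) and (\ref{main-4}) hold only on the set $D$, that is, only at points with $\langle h^*,h\rangle=0$ and finite $\liminf$; they say nothing about $\langle h^*,\sigma^j(g)\rangle$ for $g\in K$ with $\langle h^*,g\rangle>0$ small, let alone for $g\notin K$. Consequently the It\^{o}--Tanaka/penalty estimate for $(-Y)^+$ produces terms such as $\bbe\int\bbI_{\{Y_s\le 0\}}\sum_j\langle h^*,\sigma^j(r_s)\rangle^2\,ds$ that cannot be dominated by $\bbe\int((Y_s)^-)^2\,ds$: the Lipschitz property of $\sigma$ only controls $\sigma(r_s)$ in terms of the full distance $d_K(r_s)$, not in terms of $(\langle h^*,r_s\rangle)^-$, and even at the metric projection $\pi_K(r_s)$ the pairing $\langle h^*,\sigma^j(\pi_K(r_s))\rangle$ need not vanish, since $(h^*,\pi_K(r_s))$ need not belong to $D$. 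So the Gronwall step does not close. This is precisely why the paper takes the long route: it verifies the stochastic semigroup Nagumo condition (Proposition \ref{prop-SSNC}) and invokes Nakayama's viability theorem, which requires $\sigma\in C_b^2$; the bulk of the work (Sections \ref{sec-proof-1}--\ref{sec-proof-4} and Appendices \ref{app-drift}, \ref{app-volatility}) is the multi-stage approximation of a merely Lipschitz $\sigma$ by $C_b^2$ volatilities with a common Lipschitz constant that remain parallel at the boundary, followed by the stability result (Proposition \ref{prop-K-stability}), the switching-on of the jumps, and the localization from locally to globally Lipschitz coefficients. Finally, the unconditional Schauder basis assumption is not there to make $G^*$ countable -- Lindel\"{o}f's lemma already gives a countable generating system -- but is what makes Proposition \ref{prop-SSNC} and the parallel-preserving approximations work, via the projections $\Pi_n$ and the $(\Id-\Pi_n)$-invariance of $K$.
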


Conditions (\ref{main-1})--(\ref{main-4}) are geometric conditions on the coefficients of the SPDE (\ref{SPDE}); condition (\ref{main-1}) concerns the behaviour of the solution process in the cone, and conditions (\ref{main-3}) and (\ref{main-4}) concern the behaviour of the solution process at boundary points of the cone:
\begin{itemize}
\item Condition (\ref{main-1}) is a condition on the jumps; it means that the cone $K$ is invariant for the functions $h \mapsto h + \gamma(h,x)$ for $F$-almost all $x \in E$. 

\item Condition (\ref{main-3}) means that the drift is inward pointing at boundary points of the cone.

\item Condition (\ref{main-4}) means that the volatilities are parallel at boundary points of the cone.
\end{itemize}
Figure \ref{fig-geometric} illustrates conditions (\ref{main-1})--(\ref{main-4}). Let us provide further explanations regarding the drift condition (\ref{main-3}). For this purpose, we fix an arbitrary pair $(h^*,h) \in D$. By the definition (\ref{def-D}) of the set $D$, we have $\langle h^*,h \rangle = 0$, indicating that we are at the boundary of the cone. 
\begin{itemize}
\item The drift condition (\ref{main-3}) implies
\begin{align}\label{main-2}
\int_E \langle h^*,\gamma(h,x) \rangle F(dx) < \infty.
\end{align}
This means that the jumps of the solution process at boundary points of the cone are of finite variation, unless they are parallel to the boundary.

\item If $h \in \cald(A)$, then the drift condition (\ref{main-3}) is fulfilled if and only if
\begin{align}\label{FV-cond-3}
\langle h^*, Ah + \alpha(h) \rangle - \int_E \langle h^*,\gamma(h,x) \rangle F(dx) \geq 0.
\end{align}
In view of condition (\ref{FV-cond-3}), we point out that $K \cap \cald(A)$ is dense in $K$.

\item If $h^* \in \cald(A^*)$, then the drift condition (\ref{main-3}) is fulfilled if and only if
\begin{align}\label{FV-cond-4}
\langle A^* h^*,h \rangle + \langle h^*, \alpha(h) \rangle - \int_E \langle h^*,\gamma(h,x) \rangle F(dx) \geq 0.
\end{align}
In particular, if $A^*$ is a local operator, then the drift condition (\ref{main-3}) is equivalent to
\begin{align}\label{FV-cond-5}
\langle h^*, \alpha(h) \rangle - \int_E \langle h^*,\gamma(h,x) \rangle F(dx) \geq 0.
\end{align} 
In any case, condition (\ref{FV-cond-5}) implies the drift condition (\ref{main-3}).
\end{itemize}
\begin{figure}[!ht]
 \centering
 \includegraphics[width=0.5\textwidth]{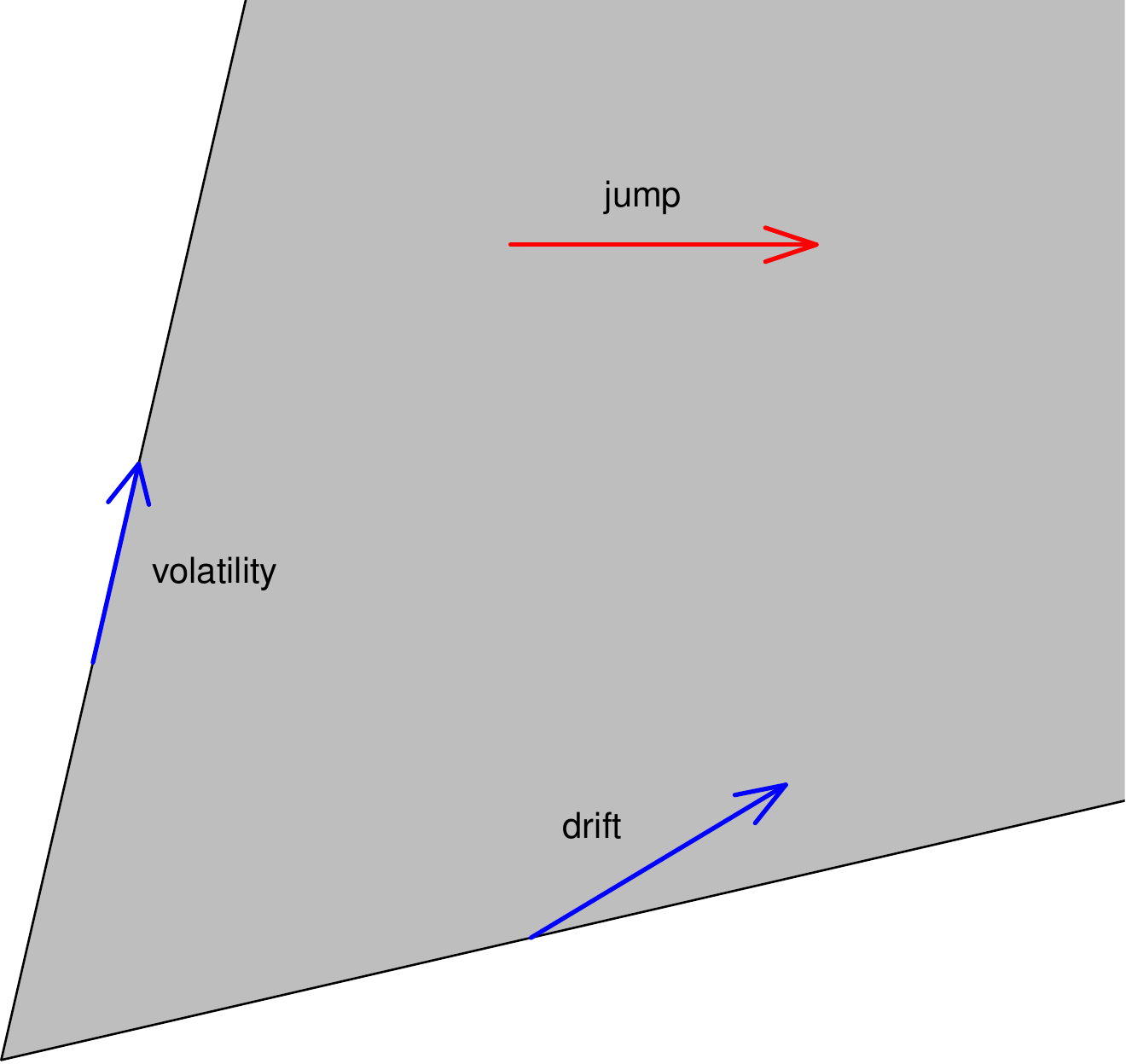}
 \caption{Illustration of the invariance conditions.}\label{fig-geometric}
\end{figure}
We refer to Section \ref{sec-ass} for the proofs of these and of further statements. We emphasize that for $(h^*,h) \in G^* \times K$ with $\langle h^*,h \rangle = 0$ it may happen that $(h^*,h) \notin D$. In this case, conditions (\ref{main-3}) -- and hence (\ref{main-2}) -- and (\ref{main-4}), the two boundary conditions illustrated in Figure \ref{fig-geometric}, do not need to be fulfilled. Intuitively, at such a boundary point $h$ of the cone, there is an infinite drift pulling the process in the interior of the half space $\{ h \in H : \langle h^*,h \rangle \geq 0 \}$, whence we can skip conditions (\ref{main-3}) and (\ref{main-4}) in this situation. This phenomenon is typical for SPDEs, as for norm continuous semigroups $(S_t)_{t \geq 0}$ (in particular, if $A = 0$) the limes inferior appearing in (\ref{def-D}) is always finite.

Now, let us outline the essential ideas for the proof of Theorem \ref{thm-main}:
\begin{itemize}
\item In Theorem \ref{thm-nec} we will prove that conditions (\ref{main-1})--(\ref{main-4}) are necessary for invariance of the cone $K$, where the main idea is to perform a short-time analysis of the sample paths of the solution processes. We emphasize that for this implication we do not need the assumption that $K$ is generated by an unconditional Schauder basis; that is, we can skip Assumption \ref{ass-Schauder-basis} here.

\item In order to show that conditions (\ref{main-1})--(\ref{main-4}) are sufficient for invariance of the cone $K$, we perform several steps:
\begin{enumerate}
\item First, we show that the cone $K$ is invariant for diffusion SPDEs (\ref{SPDE-Wiener}) with smooths volatilities $\sigma^j \in C_b^2(H)$, $j \in \bbn$; see Theorem \ref{thm-diffusion-C2}. The essential idea is to verify the aforementioned SSNC.

\item Then, we show that the cone $K$ is invariant for diffusion SPDEs (\ref{SPDE-Wiener}) with Lipschitz coefficients without imposing smoothness on the volatilities; see Theorem \ref{thm-diffusion}. The main idea is to approximate the volatility $\sigma$ by a sequence $(\sigma_n)_{n \in \bbn}$ of smooth volatilities, and to apply a stability result (see Proposition \ref{prop-K-stability}) for SPDEs.

\item Then, we show that the cone $K$ is invariant for general jump-diffusion SPDEs (\ref{SPDE}) with Lipschitz coefficients; see Theorem \ref{thm-suff}. This is done by using the so-called method to switch on the jumps -- also used in \cite{Positivity} -- and the aforementioned stability result for SPDEs.

\item Finally, we show that the cone $K$ is invariant for the SPDE (\ref{SPDE}) in the general situation, where the coefficients are locally Lipschitz and satisfy the linear growth condition; see Theorem \ref{thm-suff-general}. This is done by approximating the parameters $(\alpha,\sigma,\gamma)$ of the SPDE (\ref{SPDE}) by a sequence $(\alpha_n,\sigma_n,\gamma_n)_{n \in \bbn}$ of globally Lipschitz coefficients, and to argue by stability. In order to ensure that the modified coefficients $(\alpha_n,\sigma_n,\gamma_n)$ also satisfy the required invariance conditions (\ref{main-1})--(\ref{main-4}), the structural properties of closed convex cones are essential.
\end{enumerate}
\end{itemize}
The most challenging is the second step, where we approximate the volatility $\sigma$ by a sequence $(\sigma_n)_{n \in \bbn}$ of smooth volatilities. In particular, for an application of our stability result (Proposition \ref{prop-K-stability}) we must ensure that all $\sigma_n$ are Lipschitz continuous with a joint Lipschitz constant. We can roughly divide the approximation procedure into the following steps:
\begin{enumerate}
\item[(a)] First, we approximate $\sigma$ by a sequence $(\sigma_n)_{n \in \bbn}$ of bounded volatilities with finite dimensional range; see Propositions \ref{prop-sigma-FDR} and \ref{prop-sigma-bounded}. We construct similar approximations $(\alpha_n)_{n \in \bbn}$ for the drift $\alpha$; see Propositions \ref{prop-alpha-FDR} and \ref{prop-alpha-bounded}.

\item[(b)] Then, we approximate a bounded volatility $\sigma$ with finite dimensional range by a sequence $(\sigma_n)_{n \in \bbn}$ from $C_b^{1,1}$. This is done by the so-called sup-inf convolution technique from \cite{Lasry-Lions}; see Proposition \ref{prop-C-b-1-1}. Although we do not use it in this paper, we mention the related article \cite{Johanis}, which shows how a Lipschitz function can be approximated by uniformly G\^{a}teaux differentiable functions.

\item[(c)] Finally, we approximate a volatility $\sigma$ from $C_b^{1,1}$ by a sequence $(\sigma_n)_{n \in \bbn}$ from $C_b^2$; see Proposition \ref{prop-C-b-2}. This is done by a generalization of the mollifying technique in infinite dimension. For this procedure, we follow the construction provided in \cite{Fry}, which constitutes a generalization of a result from Moulis (see \cite{Moulis}), whence we also refer to this method as Moulis' method. Concerning smooth approximations in infinite dimensional spaces, we also mention the related papers \cite{Approx, Fine-Approx, Hajek-Johanis-G, Hajek-Johanis}.
\end{enumerate}
We emphasize that we cannot directly apply Moulis' method in step (b), because for a Lipschitz continuous function $\sigma$ this would only provide a sequence $(\sigma_n)_{n \in \bbn}$ from $C^2$ -- in fact, even $C^{\infty}$ -- but the second order derivatives might be unbounded. Applying the sup-inf convolution technique before ensures that we obtain a sequence from $C_b^2$. We mention that a combination of the sup-inf convolution technique and Moulis' method has also been used in \cite{Approx} in order to prove that every Lipschitz continuous function defined on a (possibly infinite dimensional) separable Riemannian manifold can be uniformly approximated by smooth Lipschitz functions.

Besides the aforementioned required joint Lipschitz constant, we have to take care that the respective approximations $(\sigma_n)_{n \in \bbn}$ of the volatility $\sigma$ remain parallel at boundary points of the cone; that is, condition (\ref{main-4}) must be preserved, which is expressed by Definition \ref{def-parallel}. The situation is similar for the approximations $(\alpha_n)_{n \in \bbn}$ of the drift $\alpha$. They must remain inward pointing at boundary points of the cone; that is, condition (\ref{main-3}) must be preserved, which is expressed by Definition \ref{def-inward-pointing}.

It arises the problem that we can generally not ensure in steps (b) and (c) that the approximating volatilities remain parallel. In order to illustrate the situation in step (c), where we apply Moulis' method, let us assume for the sake of simplicity that the state space is $H = \bbr^d$. Then the construction of the approximating sequence $(\sigma_n)_{n \in \bbn}$ becomes simpler than in the infinite dimensional situation in \cite{Fry}, and it is given by the well-known construction
\begin{align*}
\sigma_n : \bbr^d \to \bbr^d, \quad \sigma_n(h) := \int_{\bbr^d} \sigma(h-g) \varphi_n(g) dg,
\end{align*}
where $(\varphi_n)_{n \in \bbn} \subset C^{\infty}(\bbr^d,\bbr_+)$ is an appropriate sequence of mollifiers. Then, for $(h^*,h) \in D$, which implies $\langle h^*,h \rangle = 0$, we generally have
\begin{align*}
\langle h^*,\sigma_n(h) \rangle = \int_{\bbr^d} \langle h^*, \sigma(h-g) \rangle \varphi_n(g) dg \neq 0,
\end{align*}
because we only have $\langle h^*, \sigma(h) \rangle = 0$, but generally not $\langle h^*, \sigma(h-g) \rangle = 0$ for all $g \in \bbr^d$ from a neighborhood of $0$. This problem leads to the notion of locally parallel functions (see Definition \ref{def-locally-par}), which have the desired property that  $\langle h^*, \sigma(h-g) \rangle = 0$ for all $g \in \bbr^d$ from an appropriate neighborhood of $0$. In order to implement this concept, we have to show that a parallel function can be approximated by a sequence of locally parallel functions. The idea is to approximate a function $\sigma : \bbr^d \to \bbr^d$ for $\epsilon > 0$ by taking $\sigma \circ \Phi_{\epsilon}$, where
\begin{align*}
\Phi_{\epsilon} : \bbr^d \to \bbr^d, \quad \Phi_{\epsilon}(h) := (\phi_{\epsilon}(h_1),\ldots,\phi_{\epsilon}(h_d)),
\end{align*}
and where the function $\phi_{\epsilon} : \bbr \to \bbr$ is defined as
\begin{align}\label{def-psi-intro}
\phi_{\epsilon}(x) := (x + \epsilon) \mathbbm{1}_{(-\infty,-\epsilon]}(x) + (x - \epsilon) \mathbbm{1}_{[\epsilon,\infty)}(x),
\end{align}
see Figure \ref{fig-approx}. We can also establish this procedure in infinite dimension; see Proposition \ref{prop-locally-par}.

\begin{figure}[!ht]
 \centering
 \includegraphics[width=0.5\textwidth]{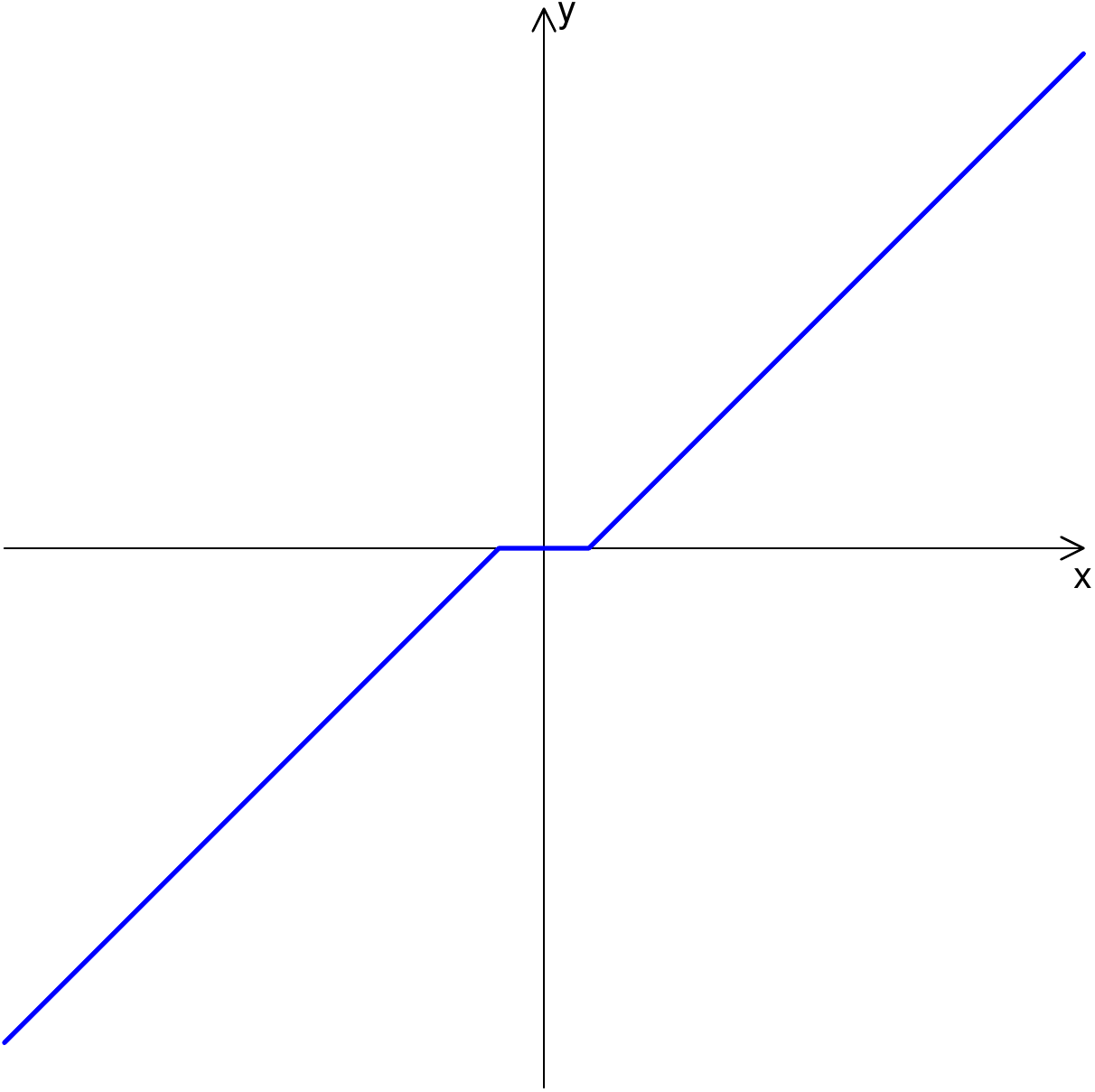}
 \caption{Approximation with locally parallel functions.}\label{fig-approx}
\end{figure}

The remainder of this paper is organized as follows. In Section \ref{sec-ass} we present the mathematical framework and preliminary results. In Section \ref{sec-nec} we prove that our invariance conditions are necessary for invariance of the cone. In Section \ref{sec-Schauder} we provide the required background about closed convex cones generated by unconditional Schauder basis. Afterwards, we start with the proof that our invariance conditions are sufficient for invariance of in the cone. In Section \ref{sec-proof-1} we prove this for diffusion SPDEs with smooth volatilities, in Section \ref{sec-proof-2} for diffusion SPDEs with Lipschitz coefficients without imposing smoothness on the volatility, in Section \ref{sec-proof-3} for general jump-diffusion SPDEs with Lipschitz coefficients, and in Section \ref{sec-proof-4} for the general situation of jump-diffusion SPDEs with coefficients being locally Lipschitz and satisfying the linear growth condition. In Section \ref{sec-example} we provide an example illustrating our main result. In Appendix \ref{app-function-spaces} we collect the function spaces which we use throughout this paper, and in Appendix \ref{app-stability} we present the required stability result for SPDEs. In Appendix \ref{app-drift} we provide the required results about inward pointing functions, and in Appendix \ref{app-volatility} about parallel functions.

\section{Mathematical framework and preliminary results}\label{sec-ass}

In this section, we present the mathematical framework and preliminary results. Let $(\Omega,\calf,(\calf_t)_{t \in \bbr_+},\bbp)$ be a filtered probability space satisfying the usual conditions. Let $H$ be a separable Hilbert space and let $A : \cald(A) \subset H \to H$ be the infinitesimal generator of a $C_0$-semigroup $(S_t)_{t \geq 0}$ on $H$. 

\begin{assumption}\label{ass-pseudo-contractive}
We assume that the semigroup $(S_t)_{t \geq 0}$ is pseudo-contractive; that is, there exists a constant $\beta \geq 0$ such that
\begin{align}\label{growth-semigroup}
\| S_t \| \leq e^{\beta t} \quad \text{for all $t \geq 0$.}
\end{align}
\end{assumption}

In view of condition (\ref{growth-semigroup}), we emphasize that for $h \in H$ we denote by $\| h \|$ the Hilbert space norm, and that for a bounded linear operator $T \in L(H)$ we denote by $\| T \|$ the operator norm
\begin{align*}
\| T \| = \inf \{ M \geq 0 : \| Th \| \leq M \| h \| \text{ for all } h \in H \}.
\end{align*}
Let $U$ be a separable Hilbert space, and let $W$ be an $U$-valued $Q$-Wiener process for some nuclear, self-adjoint, positive definite linear operator $Q \in L(U)$; see \cite[pages 86, 87]{Da_Prato}. There exist an orthonormal basis $\{ e_j \}_{j \in \bbn}$ of $U$ and a sequence $(\lambda_j)_{j \in \bbn} \subset (0,\infty)$ with $\sum_{j \in \bbn} \lambda_j < \infty$ such that
\begin{align*}
Q e_j = \lambda_j e_j \quad \text{for all $j \in \bbn$.}
\end{align*}
Let $(E,\cale)$ be a Blackwell space, and let $\mu$ be a homogeneous Poisson random measure with compensator $dt \otimes F(dx)$ for some $\sigma$-finite measure $F$ on $(E,\cale)$; see \cite[Def. II.1.20]{Jacod-Shiryaev}.
The space $U_0 := Q^{1/2}(U)$, equipped with the inner product
\begin{align}\label{inner-prod-U0}
\langle u,v \rangle_{U_0} := \langle Q^{-1/2}u, Q^{-1/2}v \rangle_U,
\end{align}
is another separable Hilbert space. We denote by $L_2^0(H) := L_2(U_0,H)$ the space of all Hilbert-Schmidt operators from $U_0$ into $H$. We fix the orthonormal basis $\{ g_j \}_{j \in \bbn}$ of $U_0$ given by $g_j := \sqrt{\lambda_j} e_j$ for each $j \in \bbn$, and for each $\sigma \in L_2^0(H)$ we set $\sigma^j := \sigma g_j$ for $j \in \bbn$. Furthermore, we denote by $L^2(F) := L^2(E,\cale,F;H)$ the space of all square-integrable functions from $E$ into $H$.
Let $\alpha : H \to H$, $\sigma : H \to L_2^0(H)$ and $\gamma : H \to L^2(F)$ be measurable functions. Concerning the upcoming notation, we remind the reader that in Appendix \ref{app-function-spaces} we have collected the function spaces used in this paper.

\begin{assumption}\label{ass-loc-Lip-LG}
We suppose that
\begin{align*}
\alpha &\in \Lip^{\loc}(H) \cap \LG(H),
\\ \sigma &\in \Lip^{\loc}(H,L_2^0(H)) \cap \LG(H,L_2^0(H)),
\\ \gamma &\in \Lip^{\loc}(H,L^2(F)) \cap \LG(H,L^2(F)).
\end{align*}
\end{assumption}

Assumption \ref{ass-loc-Lip-LG} ensures that for each $h_0 \in H$ the SPDE (\ref{SPDE}) has a unique mild solution; that is, an $H$-valued c\`{a}dl\`{a}g adapted process $r$, unique up to indistinguishability, such that
\begin{equation}\label{mild-solution}
\begin{aligned}
r_t &= S_t h_0 + \int_0^t S_{t-s} \alpha(r_s) ds + \int_0^t S_{t-s} \sigma(r_s) dW_s
\\ &\quad + \int_0^t S_{t-s} \gamma(r_{s-},x) (\mu(ds,dx) - F(dx)ds), \quad t \in \bbr_+.
\end{aligned}
\end{equation}
The sequence $(\beta^j)_{j \in \bbn}$ defined as
\begin{align}\label{beta-j}
\beta^j := \frac{1}{\sqrt{\lambda_j}} \langle W,e_j \rangle, \quad j \in \bbn
\end{align}
is a sequence of real-valued standard Wiener processes, and we can write (\ref{mild-solution}) equivalently as
\begin{equation}\label{mild-solution-beta}
\begin{aligned}
r_t &= S_t h_0 + \int_0^t S_{t-s} \alpha(r_s) ds + \sum_{j \in \bbn} \int_0^t S_{t-s} \sigma^j(r_s) d\beta_s^j
\\ &\quad + \int_0^t S_{t-s} \gamma(r_{s-},x) (\mu(ds,dx) - F(dx)ds), \quad t \in \bbr_+.
\end{aligned}
\end{equation}
Note that Assumption \ref{ass-loc-Lip-LG} is implied by the slightly stronger conditions
\begin{align*}
\alpha \in \Lip(H), \quad \sigma &\in \Lip(H,L_2^0(H)) \quad \text{and} \quad \gamma \in \Lip(H,L^2(F)).
\end{align*}
Under such global Lipschitz conditions, we refer the reader to \cite{Da_Prato, Prevot-Roeckner, Atma-book, Liu-Roeckner} for diffusion SPDEs, to \cite{P-Z-book} for L\'{e}vy driven SPDEs, and to \cite{MPR, SPDE} for general jump-diffusion SPDEs. Under the local Lipschitz and linear growth conditions from Assumption \ref{ass-loc-Lip-LG}, we refer to \cite{Tappe-refine}.

\begin{definition}
A subset $K \subset H$ is called \emph{invariant} for the SPDE (\ref{SPDE}) if for each $h_0 \in K$ we have $r \in K$ up to an evanescent set\footnote[1]{A random set $A \subset \Omega \times \mathbb{R}_+$ is called \emph{evanescent} if the set $\{ \omega \in \Omega : (\omega,t) \in A \text{ for some } t \in \mathbb{R}_+ \}$ is a $\mathbb{P}$-nullset, cf. \cite[1.1.10]{Jacod-Shiryaev}.}, where $r$ denotes the mild solution to (\ref{SPDE}) with $r_0 = h_0$.
\end{definition}

\begin{definition}\label{def-cone}
A subset $K \subset H$ is called a \emph{cone} if we have $\lambda h \in K$ for all $\lambda \geq 0$ and all $h \in K$.
\end{definition}

\begin{definition}\label{def-conv-cone}
A cone $K \subset H$ is called a \emph{convex cone} if we have $h+g \in K$ for all $h,g \in H$.
\end{definition}

Note that a convex cone $K \subset H$ is indeed a convex subset of $H$.

\begin{definition}
A convex cone $K \subset H$ is called a \emph{closed convex cone} if it is closed as a subset of $H$.
\end{definition}

For what follows, we fix a closed convex cone $K \subset H$. Denoting by $K^* \subset H$ its dual cone (\ref{dual-cone}), the cone $K$ has the representation (\ref{cone-repr}).

\begin{definition}
A subset $G^* \subset K^*$ is called a \emph{generating system} of the cone $K$ if we have the representation (\ref{cone-G}).
\end{definition}

Of course $G^* = K^*$ is a generating system of the cone $K$. However, for applications we will choose the generating system $G^*$ as convenient as possible. In this respect, we mention that, by Lindel\"{o}f's lemma, the cone $K$ admits a generating system $G^*$ which is at most countable. For what follows, we fix a generating system $G^* \subset K^*$.

\begin{definition}
For a function $f : H \to H$ we say that $K$ is \emph{$f$-invariant} if $f(K) \subset K$.
\end{definition}

\begin{definition}
The closed convex cone $K$ is called \emph{invariant} for the semigroup $(S_t)_{t \geq 0}$ if $K$ is $S_t$-invariant for all $t \geq 0$.
\end{definition}

According to \cite[Cor. 1.10.6]{Pazy} the adjoint semigroup $(S_t^*)_{t \geq 0}$ is a $C_0$-semigroup on $H$ with infinitesimal generator $A^*$.

\begin{lemma}\label{lemma-cone-adj}
The following statements are equivalent:
\begin{enumerate}
\item[(i)] $K$ is invariant for the semigroup $(S_t)_{t \geq 0}$.

\item[(ii)] $K^*$ is invariant for the adjoint semigroup $(S_t^*)_{t \geq 0}$.
\end{enumerate}
\end{lemma}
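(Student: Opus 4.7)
The plan is to exploit the duality between $K$ and $K^*$ captured by the representations (\ref{dual-cone}) and (\ref{cone-repr}), together with the defining adjoint relation $\langle S_t^* h^*, h\rangle = \langle h^*, S_t h\rangle$ for all $h, h^* \in H$ and $t \geq 0$. Because both $K$ and $K^*$ are characterized as intersections of half-spaces of the form $\{\langle h^*, \cdot\rangle \geq 0\}$, the equivalence will follow essentially by symmetry.

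For the implication (i) $\Rightarrow$ (ii), I would fix $h^* \in K^*$ and $t \geq 0$ and show $S_t^* h^* \in K^*$ by verifying $\langle S_t^* h^*, h\rangle \geq 0$ for every $h \in K$. Using the adjoint identity and invariance of $K$ under $S_t$, I get $\langle S_t^* h^*, h\rangle = \langle h^*, S_t h\rangle \geq 0$, since $S_t h \in K$ and $h^* \in K^*$. By the definition (\ref{dual-cone}) of $K^*$ this gives $S_t^* h^* \in K^*$.

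The converse (ii) $\Rightarrow$ (i) works in exactly the same way, now invoking the representation (\ref{cone-repr}) of $K$ in place of (\ref{dual-cone}): given $h \in K$ and $t \geq 0$, for any $h^* \in K^*$ we have $\langle h^*, S_t h\rangle = \langle S_t^* h^*, h\rangle \geq 0$ since $S_t^* h^* \in K^*$ by assumption and $h \in K$. By (\ref{cone-repr}) this forces $S_t h \in K$.

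There is no real obstacle here; the only subtle point is that the argument for (ii) $\Rightarrow$ (i) requires (\ref{cone-repr}), i.e.\ essentially the bipolar identity $K^{**} = K$ for closed convex cones, which is already stated (and used) in the excerpt just before the lemma.
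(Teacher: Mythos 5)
Your proposal is correct and follows exactly the paper's argument: the adjoint identity $\langle h^*, S_t h\rangle = \langle S_t^* h^*, h\rangle$ combined with the representations (\ref{dual-cone}) of $K^*$ and (\ref{cone-repr}) of $K$. The paper states this more tersely, but the content is identical; your version merely spells out the two directions.
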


\begin{proof}
For all $(h^*,h) \in K^* \times K$ and all $t \geq 0$ we have
\begin{align*}
\langle h^*,S_t h \rangle = \langle S_t^* h^*,h \rangle,
\end{align*}
and hence, the representations (\ref{cone-repr}) and (\ref{dual-cone}) of $K$ and $K^*$ prove the claimed equivalence.
\end{proof}

For $\lambda > \beta$, where the constant $\beta \geq 0$ stems from the growth estimate (\ref{growth-semigroup}), we define the resolvent $R_{\lambda} := (\lambda - A)^{-1}$. We consider the abstract Cauchy problem
\begin{align}\label{PDE-Cauchy}
\left\{
\begin{array}{rcl}
dr_t & = & A r_t dt \medskip
\\ r_0 & = & h_0.
\end{array}
\right.
\end{align}

\begin{lemma}\label{lemma-cone-semi-inv}
The following statements are equivalent:
\begin{enumerate}
\item[(i)] $K$ is invariant for the semigroup $(S_t)_{t \geq 0}$.

\item[(ii)] $K$ is invariant for the abstract Cauchy problem (\ref{PDE-Cauchy}).

\item[(iii)] $K$ is $R_{\lambda}$-invariant for all $\lambda > \beta$.
\end{enumerate}
\end{lemma}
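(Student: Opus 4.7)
The plan is to prove the cyclic chain (i) $\Rightarrow$ (ii) $\Rightarrow$ (iii) $\Rightarrow$ (i), exploiting throughout that $K$ is a closed convex cone, so it is closed under nonnegative scalar multiples, finite sums, norm limits, and Bochner integrals of $K$-valued functions.

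The equivalence (i) $\Leftrightarrow$ (ii) is essentially a tautology, since for any $h_0 \in H$ the unique classical/mild solution of (\ref{PDE-Cauchy}) is $r_t = S_t h_0$. So I would dispatch this by one sentence identifying the solution map with the semigroup.

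For (i) $\Rightarrow$ (iii), I would use the Laplace transform representation of the resolvent. Since $\lambda > \beta$, the estimate (\ref{growth-semigroup}) ensures $\| e^{-\lambda t} S_t \| \leq e^{(\beta - \lambda)t}$, so for every $h \in H$ one has
\begin{align*}
R_\lambda h = \int_0^\infty e^{-\lambda t} S_t h\, dt,
\end{align*}
the integral converging as a Bochner integral. If $h \in K$, the integrand lies in $K$ for every $t \geq 0$, and since $K$ is a closed convex cone it is stable under Bochner integration; hence $R_\lambda h \in K$.

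The nontrivial direction is (iii) $\Rightarrow$ (i), which I would obtain via the Yosida approximation. For $\lambda > \beta$, set $A_\lambda := \lambda A R_\lambda = \lambda^2 R_\lambda - \lambda\, \Id$. Then
\begin{align*}
e^{t A_\lambda} h \;=\; e^{-\lambda t} \sum_{n=0}^\infty \frac{(\lambda^2 t)^n}{n!}\, R_\lambda^n h.
\end{align*}
If $R_\lambda$ maps $K$ into $K$, then so does each $R_\lambda^n$, and since each coefficient $e^{-\lambda t}(\lambda^2 t)^n/n!$ is nonnegative, every partial sum lies in $K$ (closedness under finite nonnegative combinations). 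Taking $n \to \infty$ and using that $K$ is closed yields $e^{t A_\lambda} h \in K$ for all $h \in K$, $t \geq 0$. Finally, by the Hille-Yosida construction $S_t h = \lim_{\lambda \to \infty} e^{t A_\lambda} h$ for every $h \in H$, and closedness of $K$ gives $S_t h \in K$. This argument is clean precisely because $K$ is a closed convex cone; the only potential pitfall is bookkeeping the pseudo-contractivity constant $\beta$ to ensure the Yosida approximants are defined for all sufficiently large $\lambda$, which is immediate from Assumption \ref{ass-pseudo-contractive}.
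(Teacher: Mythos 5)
Your proposal is correct. The directions (i) $\Leftrightarrow$ (ii) and (i) $\Rightarrow$ (iii) coincide with the paper's argument: the solution of (\ref{PDE-Cauchy}) is identified with the semigroup, and the Laplace transform representation $R_{\lambda}h = \int_0^{\infty} e^{-\lambda t} S_t h\,dt$ combined with the stability of the closed convex cone under Bochner integration (most transparently via (\ref{cone-repr}), since $\langle h^*, R_\lambda h\rangle = \int_0^\infty e^{-\lambda t}\langle h^*, S_t h\rangle\,dt \geq 0$ for every $h^* \in K^*$) gives $R_\lambda K \subset K$. For the only nontrivial direction (iii) $\Rightarrow$ (i) you diverge from the paper: you run the Yosida approximation, writing $e^{tA_\lambda}h = e^{-\lambda t}\sum_{n \geq 0} \frac{(\lambda^2 t)^n}{n!} R_\lambda^n h$ as a norm-convergent series of nonnegative combinations of elements of $K$ and then letting $\lambda \to \infty$, whereas the paper invokes the exponential formula $S_t h = \lim_{n \to \infty} \big(\frac{n}{t} R_{n/t}\big)^n h$ from \cite[Thm.~1.8.3]{Pazy}, which needs only powers of resolvents, nonnegative scalings and one passage to the closure. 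Both arguments use exactly the same structural features of $K$ (nonnegative combinations plus closedness); the paper's route is a single limit with no series to control, while yours requires the additional (standard, but worth citing) fact that $S_t h = \lim_{\lambda \to \infty} e^{tA_\lambda}h$ for pseudo-contractive semigroups, which under Assumption \ref{ass-pseudo-contractive} follows from the contraction case applied to $e^{-\beta t}S_t$. Either way the proof closes; your bookkeeping of $\beta$ via (\ref{growth-semigroup}) is exactly what is needed to have $R_\lambda$ and $A_\lambda$ defined for all large $\lambda$.
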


\begin{proof}
(i) $\Leftrightarrow$ (ii): This equivalence follows, because for each $h_0 \in K$ the mild solution to the abstract Cauchy problem (\ref{PDE-Cauchy}) is given by $r_t = S_t h_0$ for $t \geq 0$.
\\ \noindent(i) $\Rightarrow$ (iii): For each $\lambda > \beta$ and each $h \in K$ we have
\begin{align*}
R_{\lambda} h = \int_0^{\infty} e^{-\lambda t} S_t h \, dt \in K.
\end{align*}
(iii) $\Rightarrow$ (i): Let $t > 0$ and $h \in K$ be arbitrary. By the exponential formula (see \cite[Thm. 1.8.3]{Pazy}) we have
\begin{align*}
S_t h = \lim_{n \to \infty} \bigg( \frac{n}{t} R_{n/t} \bigg)^n h \in K,
\end{align*}
completing the proof.
\end{proof}

From now on, we make the following assumption.

\begin{assumption}\label{ass-cone-semigroup}
We assume that the cone $K$ is invariant for the semigroup $(S_t)_{t \geq 0}$; that is, any of the equivalent conditions from Lemma \ref{lemma-cone-semi-inv} is fulfilled.
\end{assumption}

\begin{lemma}
For all $(h^*,h) \in G^* \times K$ we have
\begin{align*}
\liminf_{t \downarrow 0} \frac{\langle h^*, S_t h \rangle}{t} \in \overline{\bbr}_+.
\end{align*}
\end{lemma}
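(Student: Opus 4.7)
The plan is essentially a one-line observation exploiting the two structural ingredients we already have in place. By Assumption \ref{ass-cone-semigroup}, the cone $K$ is invariant for the semigroup, so $S_t h \in K$ for every $h \in K$ and every $t \geq 0$. Since $G^* \subset K^*$, the defining property of the dual cone (see (\ref{dual-cone})) gives $\langle h^*, S_t h \rangle \geq 0$ for all $t \geq 0$ and all $(h^*,h) \in G^* \times K$.

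Dividing by $t > 0$ preserves the inequality, so $\frac{\langle h^*, S_t h \rangle}{t} \in [0, \infty)$ for every $t > 0$. Taking the limes inferior as $t \downarrow 0$, the value lies in $[0, \infty] = \overline{\bbr}_+$, which is exactly the claim. (Note that the liminf may legitimately equal $+\infty$; this corresponds to the case $(h^*, h) \notin D$ flagged in the discussion after Theorem \ref{thm-main}.)

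There is no real obstacle here: the statement is a direct consequence of combining Assumption \ref{ass-cone-semigroup} with the definition of $K^*$. The lemma is essentially a bookkeeping remark, needed so that the set $D$ in (\ref{def-D}) is well-defined and so that the drift condition (\ref{main-3}) in Theorem \ref{thm-main} makes sense.
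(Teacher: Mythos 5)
Your proposal is correct and follows exactly the paper's argument: semigroup invariance of $K$ (Assumption \ref{ass-cone-semigroup}) plus $G^* \subset K^*$ give $\langle h^*, S_t h \rangle \geq 0$ for all $t \geq 0$, whence the limes inferior of the nonnegative quotients lies in $\overline{\bbr}_+$. Your version merely spells out the dual-cone step that the paper leaves implicit.
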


\begin{proof}
Since $K$ is invariant for the semigroup $(S_t)_{t \geq 0}$, we have $\langle h^*,S_t h \rangle \geq 0$ for all $t \geq 0$, which establishes the proof.
\end{proof}

\begin{definition}
For $g,h \in H$ we write $g \leq_K h$ if $h-g \in K$.
\end{definition}

Recall the set $D \subset G^* \times K$ defined in (\ref{def-D}). We define the function
\begin{align*}
a : D \to \bbr_+, \quad a(h^*,h) := \liminf_{t \downarrow 0} \frac{\langle h^*,S_t h \rangle}{t}.
\end{align*}

\begin{lemma}\label{lemma-fct-a}
For each $(h^*,h) \in D$ the following statements are true:
\begin{enumerate}
\item We have $\langle h^*,h \rangle = 0$.

\item For all $\lambda \geq 0$ we have $(h^*,\lambda h) \in D$ and
\begin{align}\label{id-a}
a(h^*,\lambda h) = \lambda a(h^*,h).
\end{align}
\item For all $g \in K$ with $g \leq_K h$ we have $(h^*,g) \in D$ and
\begin{align}\label{id-a-g}
a(h^*,g) \leq a(h^*,h).
\end{align}
\end{enumerate}
\end{lemma}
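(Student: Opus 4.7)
For part (1), the plan is to exploit strong continuity of $(S_t)_{t \geq 0}$ together with the invariance of $K$. Since $h^* \in G^* \subset K^*$ and $h \in K$, Assumption \ref{ass-cone-semigroup} and the definition of the dual cone give $\langle h^*, S_t h \rangle \geq 0$ for all $t \geq 0$, and strong continuity yields $\langle h^*, S_t h \rangle \to \langle h^*, h \rangle \geq 0$ as $t \downarrow 0$. If $\langle h^*, h \rangle$ were strictly positive, the numerator would stay bounded away from zero for small $t$, forcing $\langle h^*, S_t h \rangle / t \to \infty$ and contradicting $(h^*, h) \in D$. Hence $\langle h^*, h \rangle = 0$.

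For part (2), the linearity of $S_t$ immediately gives $\langle h^*, S_t(\lambda h) \rangle = \lambda \langle h^*, S_t h \rangle$. Pulling the nonnegative scalar $\lambda$ out of the liminf yields
\begin{align*}
\liminf_{t \downarrow 0} \frac{\langle h^*, S_t (\lambda h) \rangle}{t} = \lambda \liminf_{t \downarrow 0} \frac{\langle h^*, S_t h \rangle}{t} = \lambda \, a(h^*, h) < \infty,
\end{align*}
so $(h^*, \lambda h) \in D$ and (\ref{id-a}) holds. The case $\lambda = 0$ is trivial since $S_t 0 = 0$.

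For part (3), I would invoke semigroup-invariance of $K$ applied to the element $h - g \in K$. This yields $S_t(h - g) \in K$ for all $t \geq 0$, hence $\langle h^*, S_t(h-g) \rangle \geq 0$, and by linearity
\begin{align*}
\langle h^*, S_t g \rangle \leq \langle h^*, S_t h \rangle \quad \text{for all } t \geq 0.
\end{align*}
Dividing by $t > 0$ and passing to the liminf gives $a(h^*, g) \leq a(h^*, h) < \infty$, which both shows $(h^*, g) \in D$ and establishes (\ref{id-a-g}).

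None of the three parts involves a serious obstacle; the content is essentially a direct unfolding of the definition of $D$ combined with strong continuity (for (1)), linearity of $S_t$ (for (2)), and semigroup-invariance of $K$ applied to the difference $h - g$ (for (3)). The only subtlety worth flagging is that in (1) one must rule out the value $+\infty$ before concluding $\langle h^*, h \rangle = 0$, which is exactly why the finiteness of the liminf encoded in the definition of $D$ is used.
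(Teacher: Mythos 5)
Your proof is correct and follows essentially the same route as the paper: parts (1) and (2) are verbatim the paper's argument, and part (3) rests on the same key inequality $\langle h^*, S_t g\rangle \leq \langle h^*, S_t h\rangle$ for all $t \geq 0$. The only cosmetic differences are in part (3), where you derive that inequality directly from $S_t(h-g) \in K$ rather than via the adjoint semigroup (the paper's Lemma \ref{lemma-cone-adj}), and you pass to the limes inferior using its monotonicity rather than the paper's Bolzano--Weierstrass subsequence extraction; both shortcuts are valid.
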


\begin{proof}
For each $(h^*,h) \in G^* \times K$ with $\langle h^*,h \rangle > 0$ we have
\begin{align*}
\lim_{t \downarrow 0} \langle h^*,S_t h \rangle = \langle h^*,h \rangle > 0,
\end{align*}
and hence
\begin{align*}
\liminf_{t \downarrow 0} \frac{\langle h^*,S_t h \rangle}{t} = \infty,
\end{align*}
showing that $(h^*,h) \notin D$. This proves the first statement, and we proceed with the second statement.
Since $K$ is a cone, we have $\lambda h \in K$. Furthermore, we have
\begin{align*}
\liminf_{t \downarrow 0} \frac{\langle h^*,S_t(\lambda h) \rangle}{t} = \lambda \liminf_{t \downarrow 0} \frac{\langle h^*,S_t h \rangle}{t} < \infty,
\end{align*}
showing $(h^*,\lambda h) \in D$ and the identity (\ref{id-a}). For the proof of the third statement, let $t \geq 0$ be arbitrary. By Lemma \ref{lemma-cone-adj} we have $S_t^* h^* \in K^*$. Since $g \leq_K h$, we obtain $\langle S_t^* h^*,h-g \rangle \geq 0$, and hence
\begin{align*}
\langle h^*,S_t g \rangle = \langle S_t^* h^*,g \rangle \leq \langle S_t^* h^*,h \rangle = \langle h^*,S_t h \rangle.
\end{align*}
Consequently, we have
\begin{align}\label{compare-semi}
\langle h^*,S_t g \rangle \leq \langle h^*,S_t h \rangle \quad \text{for all $t \geq 0$.}
\end{align}
There exists a sequence $(t_n)_{n \in \bbn} \subset (0,\infty)$ with $t_n \downarrow 0$ such that the sequence $(b_n)_{n \in \bbn} \subset \bbr_+$ defined as
\begin{align*}
b_n := \frac{\langle h^*,S_{t_n} h \rangle}{t_n}, \quad n \in \bbn 
\end{align*}
converges to $a(h^*,h) \in \bbr_+$. Defining the sequence $(a_n)_{n \in \bbn} \subset \bbr_+$ as
\begin{align*}
a_n := \frac{\langle h^*,S_{t_n} g \rangle}{t_n}, \quad n \in \bbn,
\end{align*}
by (\ref{compare-semi}) we have $0 \leq a_n \leq b_n$ for each $n \in \bbn$. Hence, the sequence $(a_n)_{n \in \bbn}$ is bounded, and by the Bolzano-Weierstrass theorem there exists a subsequence $(n_k)_{k \in \bbn}$ such that $(a_{n_k})_{k \in \bbn}$ converges to some $a \in \bbr_+$ with $a \leq a(h^*,h)$, which proves $(h^*,g) \in D$ and (\ref{id-a-g}).
\end{proof}

\begin{lemma}\label{lemma-liminf-domain}
Let $(h^*,h) \in G^* \times K$ with $\langle h^*,h \rangle = 0$ be arbitrary. Then the following statements are true:
\begin{enumerate}
\item If $h \in \cald(A)$, then we have $(h^*,h) \in D$ and
\begin{align}\label{liminf-domain}
\liminf_{t \downarrow 0} \frac{\langle h^*, S_t h \rangle}{t} = \langle h^*,A h \rangle.
\end{align}
\item If $h^* \in \cald(A^*)$, then we have $(h^*,h) \in D$ and
\begin{align}\label{liminf-domain-2}
\liminf_{t \downarrow 0} \frac{\langle h^*, S_t h \rangle}{t} = \langle A^* h^*,h \rangle.
\end{align}
\item If the semigroup $(S_t)_{t \geq 0}$ is norm continuous, then we have $(h^*,h) \in D$ as well as (\ref{liminf-domain}) and (\ref{liminf-domain-2}).
\end{enumerate}
\end{lemma}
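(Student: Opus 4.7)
The plan is to exploit the hypothesis $\langle h^*,h\rangle = 0$ to rewrite the difference quotient $\frac{\langle h^*,S_t h\rangle}{t}$ as a quotient whose genuine limit (not merely liminf) can be read off from the defining property of the generator $A$, respectively of its adjoint $A^*$. Once an actual finite limit is established, both the membership $(h^*,h) \in D$ and the stated identity follow simultaneously, since the liminf of a convergent sequence equals its limit.

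For statement (1), using $\langle h^*,h\rangle = 0$, I would write
\begin{align*}
\frac{\langle h^*,S_t h\rangle}{t} = \left\langle h^*, \frac{S_t h - h}{t}\right\rangle.
\end{align*}
Because $h \in \cald(A)$, the vector $\frac{S_t h - h}{t}$ converges in $H$ to $Ah$ as $t \downarrow 0$; continuity of $\langle h^*,\cdot\rangle$ then gives a genuine limit equal to $\langle h^*,Ah\rangle \in \bbr$. Since this value is finite, $(h^*,h) \in D$ and identity (\ref{liminf-domain}) holds. For statement (2), I would shift the action of the semigroup to the adjoint side and again invoke $\langle h^*,h\rangle = 0$ to obtain
\begin{align*}
\frac{\langle h^*,S_t h\rangle}{t} = \frac{\langle S_t^* h^* - h^*,h\rangle}{t} = \left\langle \frac{S_t^* h^* - h^*}{t}, h\right\rangle.
\end{align*}
Since $(S_t^*)_{t \geq 0}$ is a $C_0$-semigroup with infinitesimal generator $A^*$ (as already recalled in the excerpt), the assumption $h^* \in \cald(A^*)$ gives convergence of the difference quotient in $H$ to $A^* h^*$, and continuity of $\langle\cdot,h\rangle$ yields the limit $\langle A^* h^*,h\rangle$, again a finite real number.

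For statement (3), the key observation is the classical characterization that a $C_0$-semigroup on a Banach space is norm continuous if and only if its generator is a bounded linear operator defined on the whole space. Under norm continuity of $(S_t)_{t \geq 0}$, therefore, $A$ is bounded with $\cald(A) = H$, and its Banach-space adjoint $A^*$ is also bounded with $\cald(A^*) = H$. Hence the hypotheses of both (1) and (2) are met for every $h \in H$ and every $h^* \in H$, so $(h^*,h) \in D$ and both (\ref{liminf-domain}) and (\ref{liminf-domain-2}) are in force (which is moreover consistent, since in this case $\langle h^*,Ah\rangle = \langle A^* h^*,h\rangle$ by the very definition of the adjoint).

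There is no real obstacle here: the entire argument rests on turning the quotient $\frac{\langle h^*,S_t h\rangle}{t}$ into the inner product of $h^*$ (or $h$) with the strong derivative of a semigroup orbit at $t=0$, which is precisely what membership in $\cald(A)$ or $\cald(A^*)$ supplies. The only subtlety worth flagging is that the liminf in the definition (\ref{def-D}) of $D$ is an honest limit here, so that membership in $D$ and the identities are established in one stroke.
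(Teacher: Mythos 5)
Your proof is correct and follows essentially the same route as the paper: both arguments use $\langle h^*,h\rangle=0$ to rewrite the quotient as $\langle h^*,\tfrac{S_th-h}{t}\rangle$ (resp.\ $\langle\tfrac{S_t^*h^*-h^*}{t},h\rangle$), obtain a genuine limit from membership in $\cald(A)$ (resp.\ $\cald(A^*)$), and deduce (3) from (1) and (2) via the boundedness of the generator of a norm continuous semigroup. No discrepancies to report.
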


\begin{proof}
If $h \in \cald(A)$, then we have
\begin{align*}
\frac{\langle h^*, S_t h \rangle}{t} = \frac{\langle h^*, S_t h \rangle - \langle h^*,h \rangle}{t} = \frac{\langle h^*, S_t h - h \rangle}{t} = \bigg\langle h^*, \frac{S_t h - h}{t} \bigg\rangle \to \langle h^*,A h \rangle
\end{align*}
as $t \downarrow 0$, showing the first statement. Furthermore, if $h^* \in \cald(A^*)$, then we obtain
\begin{align*}
\frac{\langle h^*,S_t h \rangle}{t} &= \frac{\langle S_t^* h^*,h \rangle}{t} = \frac{\langle S_t^* h^*,h \rangle - \langle h^*,h \rangle}{t} 
\\ &= \frac{\langle S_t^* h^* - h^*,h \rangle}{t} = \bigg\langle \frac{S_t^* h^* - h^*}{t}, h \bigg\rangle \to \langle A^* h^*,h \rangle
\end{align*}
as $t \downarrow 0$, showing the second statement. The third statement is an immediate consequence of the first and the second statement.
\end{proof}

The following definition is inspired by \cite[Lemma 5]{Milian}.

\begin{definition}\label{def-local-op}
We call $A^*$ a \emph{local operator} if $G^* \subset \cald(A^*)$, and for all $(h^*,h) \in D$ we have $\langle A^* h^*,h \rangle = 0$.
\end{definition}

\begin{proposition}\label{prop-conditions}
Suppose that condition (\ref{main-1}) is fulfilled. Then for all $(h^*,h) \in D$ the following statements are true:
\begin{enumerate}
\item We have
\begin{align*}
\langle h^*,\gamma(h,x) \rangle \geq 0 \quad \text{for $F$-almost all $x \in E$.}
\end{align*}

\item We have
\begin{align*}
\int_E \langle h^*,\gamma(h,x) \rangle F(dx) \in \overline{\bbr}_+.
\end{align*}

\item If condition (\ref{main-3}) is satisfied, then we have (\ref{main-2}).

\item If $h \in \cald(A)$, then conditions (\ref{main-3}) and (\ref{FV-cond-3}) are equivalent.

\item If $h^* \in \cald(A^*)$, then conditions (\ref{main-3}) and (\ref{FV-cond-4}) are equivalent.

\item If $A^*$ is a local operator, then conditions (\ref{main-3}) and (\ref{FV-cond-5}) are equivalent.

\item Condition (\ref{FV-cond-5}) implies (\ref{main-3}).
\end{enumerate}
\end{proposition}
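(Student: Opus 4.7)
The plan is to exploit two basic facts throughout: first, by Lemma \ref{lemma-fct-a}(1) any pair $(h^*,h)\in D$ satisfies $\langle h^*,h\rangle = 0$; second, the liminf appearing in the definition of $D$ is always nonnegative since $K$ is invariant for $(S_t)_{t\geq 0}$. The seven claims then split cleanly into: two sign statements (1)--(2) coming from condition (\ref{main-1}); one finiteness statement (3) coming from rearranging (\ref{main-3}); three equivalences (4)--(6) obtained by substituting the explicit values of the liminf from Lemma \ref{lemma-liminf-domain}; and one implication (7) following from nonnegativity of the liminf.

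For (1), I would fix $(h^*,h)\in D$. By condition (\ref{main-1}), $h+\gamma(h,x)\in K$ for $F$-a.a. $x$, hence $\langle h^*,h+\gamma(h,x)\rangle\geq 0$ since $h^*\in G^*\subset K^*$. Using $\langle h^*,h\rangle = 0$ from Lemma \ref{lemma-fct-a}(1) yields $\langle h^*,\gamma(h,\cdot)\rangle\geq 0$ $F$-a.e. Statement (2) is then immediate: the integral of a nonnegative measurable function takes values in $\overline{\bbr}_+$. For (3), assuming (\ref{main-3}), I would simply rearrange
\[
\int_E \langle h^*,\gamma(h,x)\rangle F(dx) \leq \liminf_{t \downarrow 0} \frac{\langle h^*,S_t h\rangle}{t} + \langle h^*,\alpha(h)\rangle,
\]
where the right-hand side is finite because $(h^*,h)\in D$ forces the liminf to be finite and $\langle h^*,\alpha(h)\rangle$ is a real number.

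Statements (4) and (5) follow directly from Lemma \ref{lemma-liminf-domain}: in case (4), $h \in \cald(A)$ gives the liminf value $\langle h^*,Ah\rangle$, so (\ref{main-3}) becomes exactly (\ref{FV-cond-3}); in case (5), $h^*\in\cald(A^*)$ gives the liminf value $\langle A^*h^*,h\rangle$, so (\ref{main-3}) becomes (\ref{FV-cond-4}). For (6), locality of $A^*$ (Definition \ref{def-local-op}) provides $G^*\subset\cald(A^*)$, so statement (5) applies, and the additional locality property $\langle A^*h^*,h\rangle = 0$ collapses (\ref{FV-cond-4}) into (\ref{FV-cond-5}). Finally, (7) follows by observing that the liminf in (\ref{def-D}) is nonnegative on $D$, so adding it to the left-hand side of (\ref{FV-cond-5}) preserves the inequality and yields (\ref{main-3}).

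None of these steps presents a real obstacle; the proposition is essentially a bookkeeping consequence of Lemmas \ref{lemma-fct-a} and \ref{lemma-liminf-domain} together with the definitions. The only point worth flagging is that throughout one must use $\langle h^*,h\rangle = 0$ on $D$ in order to pass between $\langle h^*,S_th\rangle/t$ and the incremental quotients appearing in the semigroup generator formulas; this is precisely what makes the liminf computable in (4)--(6).
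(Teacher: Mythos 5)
Your proposal is correct and follows essentially the same route as the paper: statement (1) via $\langle h^*,h\rangle=0$ on $D$ together with $h+\gamma(h,x)\in K$ and $h^*\in K^*$, statements (2)--(3) as immediate consequences, (4)--(6) by substituting the liminf values from Lemma \ref{lemma-liminf-domain} and Definition \ref{def-local-op}, and (7) from nonnegativity (and finiteness on $D$) of the liminf. The paper's own proof is just a terser version of the same bookkeeping.
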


\begin{proof}
By (\ref{main-1}), for $F$-almost all $x \in E$ we have
\begin{align*}
\langle h^*,\gamma(h,x) \rangle = \langle h^*,h \rangle + \langle h^*,\gamma(h,x) \rangle = \langle h^*,h+\gamma(h,x) \rangle \geq 0,
\end{align*}
which establishes the first statement. The second statement is an immediate consequence, and the third statement is obvious. The fourth and the fifth statement follow from Lemma \ref{lemma-liminf-domain}. Taking into account Definition \ref{def-local-op}, the sixth statement is an immediate consequence of the fifth statement. Finally, the last statement follows from the first statement. 
\end{proof}

In view of condition (\ref{FV-cond-3}), we emphasize that $K \cap \cald(A)$ is dense is $K$, which follows from the next result.

\begin{lemma}\label{lemma-cone-dense}
We have $K = \overline{K \cap \cald(A)}$.
\end{lemma}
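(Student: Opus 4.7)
The plan is to show both inclusions. One inclusion is immediate: since $K$ is closed, we have $\overline{K \cap \cald(A)} \subset \overline{K} = K$.

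For the reverse inclusion, I would use the resolvent approximation. Fix $h \in K$. By standard semigroup theory, for every $\lambda > \beta$ we have $R_\lambda h \in \cald(A)$, so $\lambda R_\lambda h \in \cald(A)$. Moreover, by Lemma \ref{lemma-cone-semi-inv}(iii), the closed convex cone $K$ is $R_\lambda$-invariant, so $R_\lambda h \in K$, and because $K$ is a cone we also have $\lambda R_\lambda h \in K$. Therefore $\lambda R_\lambda h \in K \cap \cald(A)$ for each $\lambda > \beta$.

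The final step is to recall the well-known convergence $\lambda R_\lambda h \to h$ as $\lambda \to \infty$ for every $h \in H$, which holds for any $C_0$-semigroup (one may cite \cite[Lemma 1.3.2]{Pazy}, where this is obtained via the pseudo-contractivity bound (\ref{growth-semigroup}) and density of $\cald(A)$ in $H$). This immediately gives $h \in \overline{K \cap \cald(A)}$.

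There is no serious obstacle: the argument is essentially a direct application of the $R_\lambda$-invariance supplied by Assumption \ref{ass-cone-semigroup} together with the standard resolvent approximation of the identity. The only subtlety worth flagging is the use of the cone property to pass from $R_\lambda h \in K$ to $\lambda R_\lambda h \in K$, which is exactly why closed \emph{convex} cones (rather than merely closed sets) are well-adapted to this density statement.
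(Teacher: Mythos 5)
Your proof is correct, and it takes a genuinely different route from the paper's. The paper approximates $h \in K$ by the time averages $h_t := \frac{1}{t}\int_0^t S_s h\,ds$, which lie in $\cald(A)$ and converge to $h$ as $t \downarrow 0$; membership $h_t \in K$ is then checked by pairing with each $h^* \in G^*$, interchanging the inner product with the Bochner integral, and invoking the semigroup invariance together with the representation (\ref{cone-G}). You instead use the Yosida-type approximation $\lambda R_\lambda h \to h$ as $\lambda \to \infty$, obtaining membership in $K \cap \cald(A)$ directly from the $R_\lambda$-invariance already established in Lemma \ref{lemma-cone-semi-inv}(iii) together with the scaling property of the cone. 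Your argument is arguably leaner, since it reuses a proven equivalence and avoids the integral--pairing interchange; the paper's argument has the mild advantage of not needing the cone's scaling property at all (only closedness under the dual pairing), so it would survive verbatim for more general closed convex sets represented as in (\ref{cone-G}) that are invariant under the semigroup. One cosmetic point: the convergence $\lambda R_\lambda h \to h$ is indeed standard for any $C_0$-semigroup satisfying (\ref{growth-semigroup}), though you may wish to double-check the exact numbering of the lemma in \cite{Pazy} you cite for it.
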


\begin{proof}
Since $K$ is closed, we have $\overline{K \cap \cald(A)} \subset K$. In order to prove the converse inclusion, let $h \in K$ be arbitrary. For $t > 0$ we set $h_t := \frac{1}{t} \int_0^t S_s h ds$. Then we have $h_t \in \cald(A)$ for each $t > 0$, and we have $h_t \to h$ for $t \downarrow 0$. It remains to show that $h_t \in K$ for each $t > 0$. For this purpose, let $t > 0$ and $h^* \in G^*$ be arbitrary. Since $K$ is invariant for the semigroup $(S_t)_{t \geq 0}$, we obtain
\begin{align*}
\langle h^*,h_t \rangle = \bigg\langle h^*,\frac{1}{t} \int_0^t S_s h ds \bigg\rangle = \frac{1}{t} \int_0^t \langle h^*,S_s h \rangle ds \geq 0,
\end{align*}
showing that $h_t \in K$.
\end{proof}

\section{Necessity of the invariance conditions}\label{sec-nec}

In this section, we prove the necessity of our invariance conditions.

\begin{theorem}\label{thm-nec}
Suppose that Assumptions \ref{ass-pseudo-contractive}, \ref{ass-loc-Lip-LG} and \ref{ass-cone-semigroup} are fulfilled. If the closed convex cone $K$ is invariant for the SPDE (\ref{SPDE}), then we have (\ref{main-1}), and for all $(h^*,h) \in D$ we have (\ref{main-3}) and (\ref{main-4}).
\end{theorem}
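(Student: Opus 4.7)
The plan is to fix $h_0 = h \in K$, let $r$ denote the mild solution to (\ref{SPDE}) with $r_0 = h$, and exploit that $r_t \in K$ almost surely for every $t\geq 0$ in order to extract the three conditions by a short-time pathwise analysis of $r$.

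For condition (\ref{main-1}) I would use the Poisson random measure. For any $B \in \cale$ with $0 < F(B) < \infty$, the first jump time $\tau_B$ of $\mu|_{[0,\infty)\times B}$ is exponentially distributed with rate $F(B)$, and its mark $\xi$ is independent of $\tau_B$ and distributed as $F(\cdot \cap B)/F(B)$. By c\`adl\`ag continuity of $r$ and $L^2$-continuity of the mild solution at $t=0$, $r_{\tau_B-}$ is close to $h$ with positive probability on $\{\tau_B \leq \epsilon\}$ for small $\epsilon>0$; the inclusion $r_{\tau_B} = r_{\tau_B-}+\gamma(r_{\tau_B-},\xi) \in K$, together with closedness of $K$ and local Lipschitz continuity of $\gamma$, then forces $h + \gamma(h,x) \in K$ for $F|_B$-almost every $x$. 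A $\sigma$-finite exhaustion of $E$ delivers (\ref{main-1}).

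For (\ref{main-3}) and (\ref{main-4}), fix $(h^*,h) \in D$ (so that $\langle h^*,h\rangle = 0$ by Lemma \ref{lemma-fct-a}) and consider the nonnegative real process $Y_t := \langle h^*,r_t\rangle$ with $Y_0 = 0$. The central idea is to switch off the jumps by an independence argument: for $B \in \cale$ with $F(B) < \infty$, let $r^B$ denote the mild solution of the SPDE obtained from (\ref{SPDE}) by zeroing out $\mu|_{[0,\infty)\times B}$ and absorbing $-\int_B \gamma(\cdot,x)F(dx)\,dt$ into the drift. By uniqueness of mild solutions, $r$ and $r^B$ agree on $[0,t]$ on the event $\{\tau_B > t\}$; since $r^B$ is driven only by $W$ and $\mu|_{[0,\infty)\times B^c}$, it is independent of $\tau_B$, and $\bbp(\tau_B > t) > 0$ together with $r\in K$ a.s.\ forces $r^B \in K$ almost surely. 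For $Y^B_t := \langle h^*, r^B_t\rangle \geq 0$, taking expectations in the mild solution formula (the Wiener and compensated $B^c$-Poisson integrals being $L^2$-martingales) and dividing by $t$ yields
\begin{align*}
\frac{\bbe[Y^B_t]}{t} \xrightarrow[t\downarrow 0]{} a(h^*,h) + \langle h^*,\alpha(h)\rangle - \int_B \langle h^*,\gamma(h,x)\rangle F(dx) \geq 0.
\end{align*}
Monotone convergence as $B \uparrow E$ along a $\sigma$-finite exhaustion --- legitimate because $\langle h^*,\gamma(h,x)\rangle \geq 0$ for $F$-a.a.~$x$ by (\ref{main-1}) --- then produces (\ref{main-2}) and (\ref{main-3}). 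For (\ref{main-4}), I would compute $\bbe[(Y^B_t)^2]$ by the It\^o isometry: the Wiener part contributes a term of order $t \sum_j|\langle h^*,\sigma^j(h)\rangle|^2$ to leading order while $\bbe[Y^B_t]$ is of order $t$; a Paley--Zygmund or Gaussian-comparison argument (equivalently, a Tanaka-type local time argument applied to $y\mapsto y^-$ after approximating $h^*$ by elements of $\cald(A^*)$) then rules out any non-vanishing $\langle h^*,\sigma^j(h)\rangle$ on pain of contradicting $Y^B \geq 0$ for small $t$.

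The hard part will be the $B \uparrow E$ passage in the infinite-activity case $F(E)=\infty$, where $\int_E \langle h^*,\gamma(h,x)\rangle F(dx)$ is only established to be finite a posteriori; the short-time inequality for $r^B$ has to simultaneously deliver (\ref{main-2}) and (\ref{main-3}).
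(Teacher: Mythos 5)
Your treatment of (\ref{main-1}) and (\ref{main-3}) follows essentially the same route as the paper: condition (\ref{main-1}) is extracted from the behaviour at the first jump time on a set $B$ of finite $F$-measure (the paper cites \cite[Lemma 2.11]{FTT-appendix} for exactly this), and for (\ref{main-3}) the paper likewise switches off the jumps on an exhausting sequence $B_n$, absorbs the compensator $-\int_{B_n}\gamma(\cdot,x)F(dx)$ into the drift, uses that the modified solution stays in $K$ before the first jump on $B_n$ (this is \cite[Prop. 2.21]{FTT-appendix}, which is your independence argument), takes expectations in the mild formula, and passes $B_n \uparrow E$ by monotone convergence using $\langle h^*,\gamma(h,x)\rangle \geq 0$. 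Two small corrections there: the expectation must be localized by a stopping time bounding $\| r^n \|$ before dividing by $t$, and the displayed convergence of $\bbe[Y^B_t]/t$ cannot be a full limit as $t \downarrow 0$, since for $(h^*,h)\in D$ only the limes inferior of $\langle h^*,S_t h\rangle/t$ is finite; one must work along a sequence $t_k \downarrow 0$ realizing it, which still yields the inequality.

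The genuine gap is in your argument for (\ref{main-4}). A second-moment/Paley--Zygmund argument cannot work here, because the compensated jump integral over $B^c$ also contributes a term of order $t\int_{B^c}|\langle h^*,\gamma(h,x)\rangle|^2F(dx)$ to $\bbe[(Y^B_t)^2]$, and more fundamentally because the moment data you propose to use are not contradictory for jump processes: a nonnegative process that jumps to height $\sqrt{t/p_t}$ with probability $p_t \leq Ct$ satisfies $Y\geq 0$, $\bbe[Y_t]=O(t)$ and $\bbe[Y_t^2]\sim t$ simultaneously. Turning these moments into a contradiction requires an upper bound on $\bbe[(Y^B_t)^4]$ of order $o(t)$, which is unavailable since $\gamma$ is only $L^2(F)$-valued; and the Tanaka/local-time variant requires $Y$ to be a real semimartingale, i.e. $h^*\in\cald(A^*)$, and the proposed approximation of $h^*$ destroys both the membership $(h^*,h)\in D$ and the identity $\langle h^*,h\rangle=0$ on which the argument rests. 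The point is that the contradiction must come from the \emph{continuous, symmetric} nature of the Gaussian fluctuation, not from its size. The paper achieves this by tilting with the stochastic exponential $Z=\cale(\Phi\beta^j)$, where $\Phi$ is chosen so that the covariation $\langle M^c,Z\rangle$ produces the term $\Phi\langle h^*,\sigma^j(h)\rangle$ in the first moment of $\langle h^*,r_{T\wedge t}\rangle Z_{T\wedge t}$; since $Z$ is continuous, the jump martingale has vanishing covariation with $Z$ and does not pollute the computation, and one obtains the limit $\eta+\Phi\langle h^*,\sigma^j(h)\rangle=-1<0$, contradicting nonnegativity. You would need to replace your moment argument by such a tilting (or an equivalent Girsanov change of measure) to close the proof.
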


\begin{proof}
Condition (\ref{main-1}) follows from \cite[Lemma 2.11]{FTT-appendix}. Let $(h^*,h) \in D$ be arbitrary, and denote by $r$ the mild solution to (\ref{SPDE}) with $r_0 = h$. Since the measure space $(E,\cale,F)$ is $\sigma$-finite, there exists an increasing sequence $(B_n)_{n \in \bbn} \subset \cale$ with $F(B_n) < \infty$ for each $n \in \bbn$ such that $E = \bigcup_{n \in \bbn} B_n$. Let $n \in \bbn$ be arbitrary. According to \cite[Lemma 2.20]{FTT-appendix} the mapping $T_n : \Omega \to \overline{\bbr}_+$ given by
\begin{align*}
T_n := \inf \{ t \in \bbr_+ : \mu([0,t] \times B_n) = 1 \}
\end{align*}
is a strictly positive stopping time. We denote by $r^n$ the mild solution to the SPDE
\begin{align*}
\left\{
\begin{array}{rcl}
dr_t^n & = & (A r_t^n + \alpha(r_t^n) - \int_{B_n} \gamma(r_t^n,x) F(dx)) dt + \sigma(r_t^n) dW_t
\\ && + \int_{B_n^c} \gamma(r_{t-}^n,x) (\mu(dt,dx) - F(dx) dt) \medskip
\\ r_0^n & = & h.
\end{array}
\right.
\end{align*}
Since $K$ is a closed subset of $H$, by \cite[Prop. 2.21]{FTT-appendix} we obtain $(r^n)^{T_n} \in K$ up to an evanescent set. We define the strictly positive, bounded stopping time
\begin{align*}
T &:= \inf \{ t \in \bbr_+ : \| r_t^n \| > 1 + \| h \| \} \wedge T_n \wedge 1.
\end{align*}
Furthermore, for every stopping time $R \leq T$ we define the processes $A^n(R)$ and $M^n(R)$ as
\begin{align*}
A^n(R)_t &:= \int_0^t \bigg\langle h^*,S_{R-s} \bigg( \alpha(r_s^n) - \int_{B_n} \gamma(r_s^n,x) F(dx) \bigg) \bigg\rangle \bbI_{\{ R \geq s \}} ds, \quad t \in \bbr_+,
\\ M^n(R)_t &:= \int_0^t \langle h^*,S_{R-s} \sigma(r_s^n) \rangle \bbI_{\{ R \geq s \}} dW_s
\\ &\quad + \int_0^t \int_{B_n} \langle h^*,S_{R-s} \gamma(r_{s-}^n,x) \rangle \bbI_{\{ R \geq s \}} (\mu(ds,dx) - F(dx)ds), \quad t \in \bbr_+.
\end{align*}
Then, by the Cauchy-Schwarz inequality and Assumptions \ref{ass-pseudo-contractive}, \ref{ass-loc-Lip-LG} we have $A^n(R) \in \cala$ and $M^n(R) \in \calh^2$ for each stopping time $R \leq T$, where $\cala$ denotes the space of all finite variation processes with integrable variation (see \cite[I.3.7]{Jacod-Shiryaev}) and $\calh^2$ denotes the space of all square-integrable martingales (see \cite[Def. I.1.41]{Jacod-Shiryaev}). Moreover, we have $\bbp$-almost surely
\begin{align*}
0 \leq \langle h^*,r_{T \wedge t}^n \rangle = \langle h^*,S_{T \wedge t} h \rangle + A^n(T \wedge t)_{T \wedge t} + M^n(T \wedge t)_{T \wedge t} \quad \text{for all $t \in \bbr_+$.}
\end{align*}
Let $(t_k)_{k \in \bbn} \subset (0,\infty)$ be a sequence with $t_k \downarrow 0$ such that
\begin{align}\label{liminf-nec}
\liminf_{t \downarrow 0} \frac{\langle h^*,S_t h \rangle}{t} =
\lim_{k \to \infty} \frac{\langle h^*,S_{t_k} h \rangle}{t_k}.
\end{align}
By Lebesgue's dominated convergence theorem we obtain
\begin{align*}
0 &\leq \lim_{k \to \infty} \frac{1}{t_k} \bbe[ \langle h^*,r_{T \wedge t_k}^n \rangle ] = \lim_{k \to \infty} \frac{1}{t_k} \bbe [\langle h^*,S_{T \wedge t_k} h \rangle] + \lim_{k \to \infty} \frac{1}{t_k} \bbe[A^n(T \wedge t_k)_{T \wedge t_k}]
\\ &= \lim_{k \to \infty} \frac{\langle h^*,S_{t_k} h \rangle}{t_k} + \langle h^*,\alpha(h) \rangle - \int_{B_n} \langle h^*,\gamma(h,x) \rangle F(dx),
\end{align*}
showing that
\begin{align}\label{drift-nec-1}
\liminf_{t \downarrow 0} \frac{1}{t} \langle h^*,S_t h \rangle + \langle h^*,\alpha(h) \rangle - \int_{B_n} \langle h^*,\gamma(h,x) \rangle F(dx) \geq 0.
\end{align}
Furthermore, by the monotone convergence theorem and Proposition \ref{prop-conditions} we have
\begin{align}\label{drift-nec-2}
\int_E \langle h^*,\gamma(h,x) \rangle F(dx) = \lim_{n \to \infty} \int_{B_n} \langle h^*,\gamma(h,x) \rangle F(dx).
\end{align}
Combining (\ref{drift-nec-1}) and (\ref{drift-nec-2}), we arrive at (\ref{main-3}).

Now, suppose that condition (\ref{main-4}) is not fulfilled. Then there exist $j \in \bbn$ and $(h^*,h) \in D$ such that $\langle h^*,\sigma^j(h) \rangle \neq 0$. We define $\eta,\Phi \in \bbr$ by
\begin{align}\label{def-eta}
\eta := \liminf_{t \downarrow 0} \frac{\langle h^*,S_t h \rangle}{t} + \langle h^*,\alpha(h) \rangle \quad \text{and} \quad \Phi := -\frac{\eta + 1}{\langle h^*,\sigma^j(h) \rangle}.
\end{align}
Note that, by (\ref{main-3}) and Proposition \ref{prop-conditions} we have $\eta \in \bbr_+$. The stochastic exponential
\begin{align*}
Z := \cale(\Phi \beta^j),
\end{align*}
where the Wiener process $\beta^j$ is given by (\ref{beta-j}), is a strictly positive, continuous local martingale. We define the strictly positive, bounded stopping time
\begin{align*}
T &:= \inf \{ t \in \bbr_+ : \| r_t \| > 1 + \| h \| \} \wedge \inf \{ t \in \bbr_+ : | Z_t | > 2 \}
\\ &\quad \wedge \inf \{ t \in \bbr_+ : \langle Z,Z \rangle_t > 1 \} \wedge 1.
\end{align*} 
For every stopping time $R \leq T$ we define the processes $A(R)$, $M(R)$ and $N(R)$ as
\begin{align*}
A(R)_t &:= \int_0^t \langle h^*,S_{R-s} \alpha(r_s) \rangle \bbI_{\{ R \geq s \}} ds, \quad t \in \bbr_+,
\\ M(R)_t &:= \int_0^t \langle h^*,S_{R-s} \sigma(r_s) \rangle \bbI_{\{ R \geq s \}} dW_s
\\ &\quad + \int_0^t \int_E \langle h^*,S_{R-s} \gamma(r_{s-},x) \rangle \bbI_{\{ R \geq s \}} (\mu(ds,dx) - F(dx)ds), \quad t \in \bbr_+,
\\ N(R)_t &:= \int_0^t ( A(R)_{s-} + M(R)_{s-} ) \bbI_{\{ R \geq s \}} dZ_s + \int_0^t Z_s \bbI_{\{ R \geq s \}} dM(R)_s, \quad t \in \bbr_+.
\end{align*}
Then, by Assumptions \ref{ass-pseudo-contractive}, \ref{ass-loc-Lip-LG} we have $A(R) \in \cala$ and $M(R),N(R) \in \calh^2$ for each stopping time $R \leq T$. Moreover, we have $\bbp$-almost surely
\begin{align*}
0 \leq \langle h^*,r_{T \wedge t} \rangle = \langle h^*,S_{T \wedge t} h \rangle + A(T \wedge t)_{T \wedge t} + M(T \wedge t)_{T \wedge t} \quad \text{for all $t \in \bbr_+$.}
\end{align*}
Let $R \leq T$ be an arbitrary stopping time. By \cite[Prop. I.4.49]{Jacod-Shiryaev} we have $[A(R),Z^R] = 0$, and by \cite[Thm. I.4.52]{Jacod-Shiryaev} we have $[M(R),Z^R] = \langle M(R)^c,Z^R \rangle$. Therefore, and since
\begin{align*}
Z_t^R = 1 + \Phi \int_0^t Z_s \bbI_{\{ R \geq s \}} d\beta_s^j, \quad t \in \bbr_+,
\end{align*}
by \cite[Def. I.4.45]{Jacod-Shiryaev} we obtain
\begin{equation}\label{int-by-parts}
\begin{aligned}
&( A(R)_t + M(R)_t ) Z_t^R = N(R)_t + \int_0^t Z_s \bbI_{\{ R \geq s \}} dA(R)_s + \langle M(R)^c,Z^R \rangle
\\ &= N(R)_t + \int_0^t \langle h^*, S_{R-s} ( \alpha(r_s) + \Phi \sigma^j(r_s) ) \rangle Z_s \bbI_{\{ R \geq s \}} ds, \quad t \in \bbr_+.
\end{aligned}
\end{equation}
Let $(t_k)_{k \in \bbn} \subset (0,\infty)$ be a sequence with $t_k \downarrow 0$ such that we have (\ref{liminf-nec}). By (\ref{int-by-parts}), Lebesgue's dominated convergence theorem and (\ref{def-eta}) we obtain
\begin{align*}
0 &\leq \lim_{k \to \infty} \frac{1}{t_k} \bbe[ \langle h^*,r_{T \wedge t_k}^n \rangle Z_{T \wedge t_k} ] = \lim_{k \to \infty} \frac{1}{t_k} \bbe [\langle h^*,S_{T \wedge t_k} h \rangle Z_{T \wedge t_k}]
\\ &\quad + \lim_{k \to \infty} \frac{1}{t_k} \bbe[(A(T \wedge t_k)_{T \wedge t_k} + M(T \wedge t_k)_{T \wedge t_k} )Z_{T \wedge t_k}^{T \wedge t_k}]
\\ &= \liminf_{t \downarrow 0} \frac{\langle h^*,S_t h \rangle}{t} + \langle h^*,\alpha(h) + \Phi \sigma^j(h) \rangle
\\ &= \eta + \Phi \langle h^*,\sigma^j(h) \rangle = \eta - (\eta + 1) = -1,
\end{align*}
a contradiction.
\end{proof}

\section{Cones generated by unconditional Schauder bases}\label{sec-Schauder}

In this section, we provide the required background about closed convex cones generated by unconditional Schauder bases. Let $\{ e_k \}_{k \in \bbn}$ be an unconditional Schauder basis of the Hilbert space $H$; that is, for each $h \in H$ there is a unique sequence $(h_k)_{k \in \bbn} \subset \bbr$ such that
\begin{align}\label{series-h}
h = \sum_{k \in \bbn} h_k e_k,
\end{align}
and the series (\ref{series-h}) converges unconditionally.  Without loss of generality, we assume that $\| e_k \| = 1$ for all $k \in \bbn$.

\begin{remark}
Every orthonormal basis of the Hilbert space $H$ is an unconditional Schauder basis. Of course, the converse statement is not true, but for every unconditional Schauder basis of the Hilbert space $H$ there is an equivalent inner product on $H$ under which the unconditional Schauder basis is an orthonormal basis; see \cite{Bari}.
\end{remark}

There are unique elements $\{ e_k^* \}_{k \in \bbn} \subset H$ such that
\begin{align*}
\langle e_k^*,h \rangle = h_k \quad \text{for each $h \in H$,}
\end{align*}
where we refer to the series representation (\ref{series-h}); see \cite[page 164]{Fabian}. Given these coordinate functionals $\{ e_k^* \}_{k \in \bbn}$, we also call $\{ e_k^*,e_k \}_{k \in \bbn}$ an unconditional Schauder basis of $H$. Recall that, throughout this paper, we consider a closed convex cone $K \subset H$ with representation (\ref{cone-G}) for some generating system $G^* \subset K^*$. Now, we make an additional assumption on the generating system $G^*$ of the cone.

\begin{assumption}\label{ass-Schauder-basis}
We assume there is an unconditional Schauder basis $\{ e_k^*,e_k \}_{k \in \bbn}$ of $H$ such that
\begin{align*}
G^* \subset \{ \theta e_k^* : \theta \in \{ -1,1 \} \text{ and } k \in \bbn \}.
\end{align*}
\end{assumption}

\begin{remark}
Equivalently, we could demand $G^* \subset \bigcup_{k \in \bbn} \langle e_k^* \rangle$. Assumption \ref{ass-Schauder-basis} ensures that the generating system $G^*$ becomes minimal.
\end{remark}

We define the sequence $(E_n)_{n \in \bbn_0}$ of finite dimensional subspaces $E_n \subset H$ as $E_n := \langle e_1,\ldots,e_n \rangle$. Furthermore, we define the sequence $(\Pi_n)_{n \in \bbn_0}$ of projections $\Pi_n \in L(H,E_n)$ as
\begin{align}\label{Pi-def}
\Pi_n h = \sum_{k=1}^n \langle e_k^*,h \rangle e_k = \sum_{k=1}^n h_k e_k, \quad h \in H,
\end{align}
where we refer to the series representation (\ref{series-h}) of $h$. We denote by $\bc := \sup_{n \in \bbn} \| \Pi_n \|$ the basis constant of the Schauder basis $\{ e_k \}_{k \in \bbn}$. Since the Schauder basis is unconditional, by \cite[Prop. 6.31]{Fabian} there is a constant $C \in \bbr_+$ such for all $m \in \bbn$, all $\lambda_1,\ldots,\lambda_m \in \bbr$ and all $\epsilon_1,\ldots,\epsilon_m \in \{ -1,1 \}$ we have
\begin{align}\label{ubc-est}
\bigg\| \sum_{k=1}^m \epsilon_k \lambda_k e_k \bigg\| \leq C \bigg\| \sum_{k=1}^m \lambda_k e_k \bigg\|.
\end{align}
The smallest possible constant $C \in \bbr_+$ such that the inequality (\ref{ubc-est}) is fulfilled, is called the unconditional basis constant, and is denoted by $\ubc$.

\begin{lemma}\label{lemma-norm-g-star}
The following statements are true:
\begin{enumerate}
\item We have $1 \leq \bc \leq \ubc$.

\item For each $k \in \bbn$ we have $\| \langle e_k^*,\cdot \rangle \| \leq 2 \bc$.

\item For all $h \in H$ with representation (\ref{series-h}) and every bounded sequence $(\lambda_k)_{k \in \bbn}$ we have
\begin{align*}
g := \sum_{k \in \bbn} \lambda_k h_k e_k \in H
\end{align*}
with norm estimate
\begin{align*}
\| g \| \leq \ubc \Big( \sup_{k \in \bbn} |\lambda_k| \Big) \| h \|.
\end{align*}
\end{enumerate}
\end{lemma}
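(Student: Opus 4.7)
My plan is to prove the three statements in order, since the later parts build on constants from the earlier ones.

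For statement~(i), I would begin with the easy bound $\bc \geq 1$, which follows from $\Pi_n e_1 = e_1$ and $\|e_1\|=1$ for every $n\geq 1$. The nontrivial inequality $\bc \leq \ubc$ rests on expressing the truncation $\Pi_n$ as a combination of two full sums with sign choices. Concretely, I would fix $h = \sum_k h_k e_k \in H$ and $n \in \bbn$, and for each $m \geq n$ set $\epsilon_k := 1$ for $k \leq n$ and $\epsilon_k := -1$ for $n < k \leq m$. Then
\begin{align*}
\sum_{k=1}^m \epsilon_k h_k e_k = 2\Pi_n\Bigl(\sum_{k=1}^m h_k e_k\Bigr) - \sum_{k=1}^m h_k e_k,
\end{align*}
so the triangle inequality and (\ref{ubc-est}) yield $\|\Pi_n(\sum_{k=1}^m h_k e_k)\| \leq \tfrac{1+\ubc}{2}\|\sum_{k=1}^m h_k e_k\|$. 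Letting $m\to\infty$ and using $\ubc \geq 1$ (obtained by plugging $m=1$, $\lambda_1 = \epsilon_1 = 1$ into (\ref{ubc-est})), I conclude $\|\Pi_n h\| \leq \ubc \|h\|$.

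Statement~(ii) is then immediate from the telescoping identity $\langle e_k^*,h\rangle e_k = \Pi_k h - \Pi_{k-1}h$, together with $\|e_k\|=1$ and the definition of $\bc$.

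Statement~(iii) is the most substantial and will require first extending (\ref{ubc-est}) from sign sequences $\epsilon_k \in \{-1,1\}$ to arbitrary real multipliers $\lambda_k$ with $\sup_k|\lambda_k|\leq M$. The plan is a convexity argument: for fixed $\mu_1,\dots,\mu_m \in \bbr$, the map $\lambda \mapsto \bigl\|\sum_{k=1}^m \lambda_k \mu_k e_k\bigr\|$ is a seminorm on $\bbr^m$, hence convex, and therefore attains its maximum over the compact convex set $[-M,M]^m$ at an extreme point, i.e.\ at some vector in $\{-M,M\}^m$. Applying (\ref{ubc-est}) at that extreme point delivers the finite estimate
\begin{align*}
\Bigl\|\sum_{k=1}^m \lambda_k \mu_k e_k\Bigr\| \leq M\cdot \ubc \cdot \Bigl\|\sum_{k=1}^m \mu_k e_k\Bigr\|.
\end{align*}

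Finally, setting $M := \sup_k|\lambda_k|$ and $H_n := \sum_{k=1}^n h_k e_k$, I would apply the finite estimate to $\mu_k = h_k$ for indices $m+1,\dots,n$ to obtain $\|s_n - s_m\| \leq M\cdot\ubc\cdot \|H_n - H_m\|$ for the partial sums $s_n := \sum_{k=1}^n \lambda_k h_k e_k$. The unconditional convergence of $h = \sum_k h_k e_k$ ensures $(H_n)$ is Cauchy, hence so is $(s_n)$; thus $g := \lim_n s_n \in H$ is well defined. Passing to the limit in the finite estimate with $\mu_k = h_k$ over $k=1,\dots,n$ yields $\|g\| \leq M\cdot\ubc\cdot\|h\|$. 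The main (and only) subtlety is justifying this extension of (\ref{ubc-est}) to bounded multipliers; the convexity/extreme-point argument above handles it cleanly.
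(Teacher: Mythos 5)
Your proof is correct, but it takes a different route from the paper: the paper disposes of all three parts by citation to Fabian et al.\ (Proposition 6.31 for the inequality $\bc \leq \ubc$, the identity $\|e_k^*\|\,\|e_k\| \leq 2\,\bc$ from page 164 for part (ii), and Lemma 6.33 for the multiplier bound in part (iii)), whereas you supply self-contained proofs of exactly these facts. Your arguments are the standard ones behind those citations and they all check out: the identity $\sum_{k=1}^m \epsilon_k h_k e_k = 2\Pi_n\bigl(\sum_{k=1}^m h_k e_k\bigr) - \sum_{k=1}^m h_k e_k$ with $\epsilon_k = 1$ for $k \leq n$ and $\epsilon_k = -1$ otherwise gives $\|\Pi_n\| \leq \tfrac{1+\ubc}{2} \leq \ubc$; the telescoping $\langle e_k^*,h\rangle e_k = \Pi_k h - \Pi_{k-1}h$ gives part (ii) directly (the paper instead goes through Cauchy--Schwarz and the norm product bound, which is proved in the reference by your very telescoping argument, so the two are equivalent); and the extreme-point argument for the seminorm $\lambda \mapsto \|\sum_{k=1}^m \lambda_k \mu_k e_k\|$ over $[-M,M]^m$ legitimately upgrades (\ref{ubc-est}) from signs to bounded real multipliers, after which the Cauchy estimate $\|s_n - s_m\| \leq M\,\ubc\,\|H_n - H_m\|$ on blocks settles convergence and the norm bound. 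What your version buys is independence from the cited lemmas and an explicit verification that the constant in part (iii) is $\ubc$ rather than $2\,\ubc$ (which is what one would get over complex scalars; your extreme-point argument correctly uses that the Hilbert space here is real). The only cosmetic points worth tightening are to note that $\Pi_n\bigl(\sum_{k=1}^m h_k e_k\bigr) = \Pi_n h$ exactly for $m \geq n$ (no continuity of $\Pi_n$ needed), and that the block estimate for indices $m+1,\ldots,n$ follows from (\ref{ubc-est}) by setting the first $m$ coefficients to zero.
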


\begin{proof}
The first statement follows the proof of \cite[Prop. 6.31]{Fabian}. Noting that $\| e_k \| = 1$, by the Cauchy-Schwarz inequality, Assumption \ref{ass-Schauder-basis} and the identity
\begin{align*}
\| e_k^* \| \, \| e_k \| \leq 2 \bc
\end{align*}
from \cite[page 164]{Fabian}, for each $h \in H$ we obtain
\begin{align*}
|\langle e_k^*,h \rangle| \leq \| e_k^* \| \, \| h \| \leq  2 \bc \| h \|.
\end{align*}
The third statement follows from \cite[Lemma~6.33]{Fabian}.
\end{proof}

\begin{lemma}\label{lemma-proj-par}
The following statements are true:
\begin{enumerate}
\item We have $\Pi_n \to {\rm Id}_H$ as $n \to \infty$.

\item For all $k,n \in \bbn$, all $h^* \in \langle e_k^* \rangle$ and all $h \in H$ we have
\begin{align*}
\langle h^*,\Pi_n h \rangle = \langle h^*,h \rangle \mathbbm{1}_{\{ k \leq n \}}.
\end{align*}
\end{enumerate}
\end{lemma}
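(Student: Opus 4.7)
The plan is to handle the two statements separately, each of which reduces quickly to basic properties of the unconditional Schauder basis $\{e_k^*, e_k\}_{k\in\bbn}$ set up just before the lemma.

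For the first statement, the convergence $\Pi_n \to \Id_H$ must be understood in the strong operator topology (pointwise on $H$), since a Schauder-basis projection sequence is not norm convergent unless $H$ is finite dimensional. Fix an arbitrary $h \in H$ with series representation $h = \sum_{k \in \bbn} h_k e_k$ as in \eqref{series-h}. By the very definition \eqref{Pi-def} of $\Pi_n$, the vector $\Pi_n h = \sum_{k=1}^n h_k e_k$ is precisely the $n$-th partial sum of the unconditionally convergent series representing $h$. Hence $\Pi_n h \to h$ in $H$ as $n \to \infty$, which is the claim.

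For the second statement, I would exploit the biorthogonality relation $\langle e_k^*, e_j \rangle = \delta_{kj}$. This is immediate from the defining property of the coordinate functionals: applying $\langle e_k^*, \cdot \rangle$ to $e_j = \sum_{l \in \bbn} \delta_{jl} e_l$ picks out the $k$-th coefficient, which is $\delta_{jk}$. Now fix $k,n \in \bbn$, $h^* \in \langle e_k^* \rangle$ and $h \in H$. Write $h^* = \theta e_k^*$ for some $\theta \in \bbr$, and use \eqref{Pi-def} together with biorthogonality:
\begin{align*}
\langle h^*, \Pi_n h \rangle
= \theta \sum_{j=1}^n h_j \langle e_k^*, e_j \rangle
= \theta \sum_{j=1}^n h_j \delta_{kj}
= \theta h_k \bbI_{\{k \leq n\}}
= \langle h^*, h \rangle \bbI_{\{k \leq n\}},
\end{align*}
where in the last equality I used that $\langle e_k^*, h \rangle = h_k$, so $\langle h^*, h \rangle = \theta h_k$.

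Neither step presents a real obstacle; the whole lemma is a bookkeeping exercise once biorthogonality and the partial-sum description of $\Pi_n$ are in place. The only subtle point worth flagging in the write-up is the mode of convergence in (i), namely that $\Pi_n \to \Id_H$ is meant strongly rather than in operator norm, which is automatically justified by the uniform bound $\sup_n \|\Pi_n\| = \bc < \infty$ stated just before the lemma together with the pointwise convergence on each vector.
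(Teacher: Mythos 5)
Your proposal is correct and matches the paper's argument, which simply cites \cite[Lemma 6.2.iii]{Fabian} for the strong convergence $\Pi_n \to \Id_H$ (i.e.\ convergence of the partial sums of the basis expansion) and invokes the definition (\ref{Pi-def}) together with biorthogonality for the second statement. You have merely written out the standard details behind those two references, and your remark that the convergence in (1) is pointwise rather than in operator norm is the right reading.
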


\begin{proof}
The first statement follows from \cite[Lemma 6.2.iii]{Fabian}, and the second statement follows from the definition (\ref{Pi-def}) of the projection $\Pi_n$.
\end{proof}

\section{Sufficiency of the invariance conditions for diffusion SPDEs with smooth volatilities}\label{sec-proof-1}

In this section, we prove the sufficiency of our invariance conditions for diffusion SPDEs (\ref{SPDE-Wiener}) with smooth volatilities. Recall that the distance function $d_K : H \to \bbr_+$ of the cone $K$ is given by
\begin{align*}
d_K(h) := \inf_{g \in K} \| h - g \|.
\end{align*}

\begin{lemma}\label{lemma-distance}
The following statements are true:
\begin{enumerate}
\item For all $\lambda \geq 0$ and $h \in H$ we have
\begin{align}\label{distance-1}
d_K(\lambda h) = \lambda d_K( h ).
\end{align}
\item For all $h \in H$ and $g \in K$ we have
\begin{align}
d_K(h+g) \leq d_K(h).
\end{align}
\end{enumerate}
\end{lemma}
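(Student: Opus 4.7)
The plan is to derive both statements directly from the two defining structural features of $K$: being a cone (closed under nonnegative scaling) and being a convex cone (closed under addition), together with the definition $d_K(h) = \inf_{g \in K} \| h - g \|$.

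For statement (1), I would first dispose of the case $\lambda = 0$ by noting that $0 \in K$ (since $K$ is a cone containing $0 \cdot h$ for any $h \in K$), so $d_K(0) = 0 = 0 \cdot d_K(h)$. For $\lambda > 0$, I would use the substitution $g = \lambda g'$ in the infimum defining $d_K(\lambda h)$. Since $K$ is a cone, the map $g' \mapsto \lambda g'$ is a bijection $K \to K$, so
\begin{align*}
d_K(\lambda h) = \inf_{g \in K} \| \lambda h - g \| = \inf_{g' \in K} \| \lambda h - \lambda g' \| = \lambda \inf_{g' \in K} \| h - g' \| = \lambda d_K(h),
\end{align*}
which gives (\ref{distance-1}).

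For statement (2), the key observation is that $K$ is closed under addition (Definition \ref{def-conv-cone}), so for every $k \in K$ and the fixed $g \in K$ we have $k + g \in K$. Hence, for each $k \in K$,
\begin{align*}
d_K(h+g) \leq \| (h+g) - (k+g) \| = \| h - k \|,
\end{align*}
and taking the infimum over $k \in K$ on the right-hand side yields $d_K(h+g) \leq d_K(h)$.

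Both arguments are essentially one-line calculations once the right substitution is identified, so I do not expect any serious obstacle; the only subtle point is making sure to invoke the convex-cone property (rather than mere convexity) in statement (2), since it is closure of $K$ under addition, not under convex combinations, that makes the shift $k \mapsto k+g$ preserve membership in $K$.
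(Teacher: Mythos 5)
Your proof is correct and follows essentially the same route as the paper: for (1) the same reindexing $g = \lambda g'$ using that scaling by $\lambda > 0$ is a bijection of $K$ onto itself, and for (2) the same use of closure of $K$ under addition to compare the two infima (the paper phrases it as $K \subset K - \{g\}$, you phrase it as a pointwise bound followed by taking the infimum, which is the same argument). No gaps.
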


\begin{proof}
Let $h \in H$ be arbitrary. For $\lambda = 0$ both sides in (\ref{distance-1}) are zero, and for $\lambda > 0$, by Definition \ref{def-cone} we obtain
\begin{align*}
d_K(\lambda h) = \inf_{g \in K} \| \lambda h - g \| = \inf_{f \in K} \| \lambda h - \lambda f \| = \lambda \inf_{f \in K} \| h - f \| = \lambda d_K( h ),
\end{align*}
proving the first statement. For the proof of the second statement, let $h \in H$ and $g \in K$ be arbitrary. Note that $K \subset K - \{ g \}$. Indeed, for each $f \in K$ by Definition \ref{def-conv-cone} we have $f + g \in K$, and hence $f = (f+g)-g \in K - \{ g \}$. This gives us
\begin{align*}
d_K(h+g) &= \inf_{f \in K} \| (h+g) - f \| = \inf_{f \in K} \| h - (f-g) \|
\\ &= \inf_{e \in K - \{ g \}} \| h - e \| \leq \inf_{e \in K} \| h - e \| = d_K(h),
\end{align*}
establishing the second statement.
\end{proof}

The following result ensures that the stochastic semigroup Nagumo's condition (SSNC) is fulfilled in our situation.

\begin{proposition}\label{prop-SSNC}
Let $\Sigma \in \F(H)$ be such that for all $(h^*,h) \in D$ we have
\begin{align}\label{drift-Nagumo}
\liminf_{t \downarrow 0} \frac{\langle h^*,S_t h \rangle}{t} + \langle h^*,\Sigma(h) \rangle \geq 0.
\end{align}
Then, for each $h \in K$ we have
\begin{align}\label{SSNC}
\liminf_{t \downarrow 0} \frac{1}{t} d_K(S_t h + t \Sigma(h)) = 0.
\end{align}
\end{proposition}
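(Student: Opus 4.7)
The plan is to construct, for each $t > 0$, an approximating element $q(t) \in K$ from $p(t) := S_t h + t \Sigma(h)$ by zeroing out exactly those Schauder coordinates whose sign violates the cone constraint imposed by $G^*$. I will then bound $\|p(t) - q(t)\|$ by splitting the offending coordinates into a finite initial block and a tail, handle each piece via the hypothesis (\ref{drift-Nagumo}) and the unconditional-basis structure, and finally send the truncation level to infinity.

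Concretely, using Assumption~\ref{ass-Schauder-basis}, put $J_+ := \{k \in \bbn : e_k^* \in G^*\}$ and $J_- := \{k \in \bbn : -e_k^* \in G^*\}$, and expand $\Sigma(h) = \sum_k b_k e_k$ with $b_k := \langle e_k^*, \Sigma(h) \rangle$. Set $s_k(t) := \langle e_k^*, S_t h \rangle$ and $p_k(t) := \langle e_k^*, p(t) \rangle = s_k(t) + t b_k$. Since $h \in K$ implies $S_t h \in K$ by Assumption~\ref{ass-cone-semigroup}, we have $s_k(t) \geq 0$ for $k \in J_+$, $s_k(t) \leq 0$ for $k \in J_-$, and thus $s_k(t) \equiv 0$ for $k \in J_+ \cap J_-$. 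Call an index $k$ \emph{bad at time $t$} if $p_k(t)$ violates its prescribed sign, and let $B(t)$ denote the set of such indices. Define
\[
q(t) \;:=\; p(t) \;-\; \sum_{k \in B(t)} p_k(t)\, e_k,
\]
which converges in $H$ by Lemma~\ref{lemma-norm-g-star}(3) with $\lambda_k = \mathbbm{1}_{k \in B(t)}$, and lies in $K$ by the representation~(\ref{cone-G}) together with the biorthogonality of $\{e_k^*, e_k\}_{k \in \bbn}$.

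The decisive pointwise estimate is $|p_k(t)| \leq t |b_k|$ for every $k \in B(t)$: for $k \in J_+$ bad we have $p_k(t) < 0$ yet $s_k(t) \geq 0$, so $t b_k = p_k(t) - s_k(t) \leq -s_k(t) \leq 0$ and hence $|p_k(t)| = -s_k(t) - t b_k \leq t |b_k|$; the case $k \in J_-$ is symmetric, while for $k \in J_+ \cap J_-$ one has $p_k(t) = t b_k$ directly. Writing $p_k(t) = \mu_k(t)\, t b_k$ with $|\mu_k(t)| \leq 1$ for $k \in B(t)$ (note $b_k \neq 0$ on $B(t)$), Lemma~\ref{lemma-norm-g-star}(3) applied to $\Sigma(h) - \Pi_n \Sigma(h) = \sum_{k > n} b_k e_k$ with the bounded sequence $(\mu_k(t)\, \mathbbm{1}_{k > n,\, k \in B(t)})_{k \in \bbn}$ yields
\[
\Bigl\| \sum_{k > n,\, k \in B(t)} p_k(t)\, e_k \Bigr\| \;\leq\; t \, \ubc \, \bigl\| \Sigma(h) - \Pi_n \Sigma(h) \bigr\|.
\]

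For the finite part, I will show that $c_k(t) := |p_k(t)|\, \mathbbm{1}_{k \in B(t)}$ satisfies $c_k(t)/t \to 0$ as $t \downarrow 0$ for every $k$, by a short case analysis. For $k \in J_+$: if $\langle e_k^*, h\rangle > 0$ then $p_k(t) > 0$ for small $t$ by strong continuity of $(S_t)_{t \geq 0}$; if $\langle e_k^*, h\rangle = 0$ and $(e_k^*, h) \notin D$ then $s_k(t)/t \to \infty$ forces $p_k(t) > 0$ for small $t$; if $(e_k^*, h) \in D$ then the hypothesis~(\ref{drift-Nagumo}) gives $\liminf_{t \downarrow 0} p_k(t)/t \geq 0$, hence $c_k(t)/t \to 0$. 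The case $k \in J_-$ is symmetric, and for $k \in J_+ \cap J_-$ both $(\pm e_k^*, h) \in D$ because $s_k \equiv 0$; applying (\ref{drift-Nagumo}) to $\pm e_k^*$ forces $b_k \geq 0$ and $-b_k \geq 0$, so $b_k = 0$ and $p_k \equiv 0$. Combining the triangle inequality on the finite part with the tail estimate,
\[
\frac{d_K(p(t))}{t} \;\leq\; \frac{\|p(t) - q(t)\|}{t} \;\leq\; \sum_{k=1}^{n} \frac{c_k(t)}{t} \;+\; \ubc \, \bigl\| \Sigma(h) - \Pi_n \Sigma(h) \bigr\|.
\]
Passing first to $\limsup_{t \downarrow 0}$ kills the finite sum, and then letting $n \to \infty$ kills the remaining term by Lemma~\ref{lemma-proj-par}(1), proving the even stronger assertion $\lim_{t \downarrow 0} d_K(p(t))/t = 0$ and in particular (\ref{SSNC}). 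The main obstacle is recognizing that at a bad index the ``correct sign'' of $s_k(t)$, inherited from $S_t h \in K$, opposes the offending drift contribution $t b_k$; this observation is what converts a uniform tail estimate on $B(t)$ into one driven by the vanishing Schauder tail of $\Sigma(h)$, and without it no uniform control over the countable family of bad indices would be available.
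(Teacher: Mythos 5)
Your proof is correct, and it takes a genuinely different route from the paper's. The paper also works coordinate-wise via Assumption \ref{ass-Schauder-basis}, but it proceeds by extracting a subsequence $(t_{m_p})$ along which each relevant coordinate $c_{m_p}(k)$ either converges to a finite nonnegative limit or is eventually nonnegative; it then peels off all terms already lying in $K$ using $d_K(g+f)\le d_K(g)$ for $f\in K$ (Lemma \ref{lemma-distance}) together with the invariance of $K$ under the semigroup and under ${\rm Id}-\Pi_n$, and concludes by continuity of $d_K$ at a limiting element of $K$ --- yielding exactly the $\liminf$ statement (\ref{SSNC}). You instead build, for every $t$, an explicit element $q(t)\in K$ by zeroing out the sign-violating coordinates and control $\|p(t)-q(t)\|$ quantitatively: the observation that $|p_k(t)|\le t\,|b_k|$ at a bad index (because the semigroup contribution has the correct sign and can only help) lets you absorb the countable tail via the unconditional basis constant (Lemma \ref{lemma-norm-g-star}), while the finitely many remaining coordinates are $o(t)$ by your exhaustive case analysis on membership in $D$ (including the degenerate case $e_k^*,-e_k^*\in G^*$, which you correctly dispose of by showing $b_k=0$ there). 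This avoids any subsequence extraction and gives the stronger conclusion $\lim_{t\downarrow 0} d_K(S_t h + t\Sigma(h))/t=0$; moreover it uses $\Sigma\in\F(H)$ only through the vanishing of the Schauder tail of $\Sigma(h)$, so it would extend beyond finite dimensional range. The price is having to verify that the truncated series converges and that $q(t)\in K$, both of which you correctly reduce to Lemma \ref{lemma-norm-g-star} and Assumption \ref{ass-Schauder-basis}. Either argument suffices for the application in Theorem \ref{thm-diffusion-C2}.
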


\begin{proof}
Since $\Sigma \in \F(H)$, there is an index $n \in \bbn$ such that $\Sigma(H) \subset E_n$. Let $h \in K$ be arbitrary.
We set $\bbn_n := \{ 1,\ldots,n \}$ and
\begin{align*}
\bbn_n^1 &:= \{ k \in \bbn_n : (e_k^*,h) \in D \text{ or } (-e_k^*,h) \in D \},
\\ \bbn_n^2 &:= \{ k \in \bbn_n : e_k^* \in G^* \text{ or } -e_k^* \in G^* \}
\\ &\qquad \cap \{ k \in \bbn_n : (e_k^*,h) \notin D \text{ and } (-e_k^*,h) \notin D \},
\\ \bbn_n^3 &:= \{ k \in \bbn_n : e_k^* \notin G^* \text{ and } -e_k^* \notin G^* \}.
\end{align*}
Then we have the decomposition $\bbn_n = \bbn_n^1 \cup \bbn_n^2 \cup \bbn_n^3$, for each $k \in \bbn_n^1$ there exists $\theta_k \in \{ -1,1 \}$ such that $(\theta_k e_k^*,h) \in D$, and for each $k \in \bbn_n^2$ there exists $\theta_k \in \{ -1,1 \}$ such that $\theta_k e_k^* \in G^*$ and $(\theta_k e_k^*,h) \notin D$. Furthermore, we set $\theta_k := 1$ for each $k \in \bbn_n^3$. There is a sequence $(t_m)_{m \in \bbn} \subset (0,\infty)$ with $t_m \downarrow 0$ such that
\begin{align}\label{SSNC-proof-1}
c_m(k) \geq 0 \quad \text{for all $m \in \bbn$ and all $k \in \bbn_n^2.$}
\end{align}
where we agree on the notation
\begin{align*}
c_m(k) := \frac{\langle \theta_k e_k^*,S_{t_m} h + t_m \Sigma(h) \rangle}{t_m} \quad \text{for all $m \in \bbn$ and all $k \in \bbn_n$.}
\end{align*}
Inductively, we define the subsequences $(m(k)_p)_{p \in \bbn}$ for $k \in \{ 0 \} \cup \bbn_n^1$ as follows:
\begin{enumerate}
\item For $k = 0$ we set $m(0)_p := p$ for each $p \in \bbn$.

\item Let $k \in \bbn_n^1$ be arbitrary, and suppose that we have defined $(m(l)_p)_{p \in \bbn}$, where $l$ denotes the largest integer from $\{ 0 \} \cup \bbn_n^1$ with $l < k$. We distinguish two cases:
\begin{itemize}
\item If $\liminf_{p \to \infty} c_{m(l)_p}(k) = \infty$, then we choose a subsequence $(m(k)_p)_{p \in \bbn}$ of $(m(l)_p)_{p \in \bbn}$ such that $c_{m(k)_p}(k) \geq 0$ for all $p \in \bbn$.

\item Otherwise, we choose a subsequence $(m(k)_p)_{p \in \bbn}$ of $(m(l)_p)_{p \in \bbn}$ such that $c_{m(k)_p}(k)$ converges to a finite limit for $p \to \infty$.
\end{itemize}
\end{enumerate}
Now, we define the subsequence $(m_p)_{p \in \bbn}$ as $m_p := m(k)_p$ for each $p \in \bbn$, where $k$ denotes the largest integer from $\{ 0 \} \cup \bbn_n^1$. Furthermore, we define the sets
\begin{align*}
\bbn_n^{1a} &:= \Big\{ k \in \bbn_n^1 : \liminf_{p \to \infty} c_{m_p}(k) < \infty \Big\},
\\ \bbn_n^{1b} &:= \Big\{ k \in \bbn_n^1 : \liminf_{p \to \infty} c_{m_p}(k) = \infty \Big\}.
\end{align*}
Then we have the decomposition $\bbn_n^1 = \bbn_n^{1a} \cup \bbn_n^{1b}$, and by (\ref{drift-Nagumo}) we have
\begin{align}\label{SSNC-proof-2}
\lim_{p \to \infty} c_{m_p}(k) &\in \bbr_+ \quad \text{for all $k \in \bbn_n^{1a},$}
\\ \label{SSNC-proof-3} c_{m_p}(k) &\geq 0 \quad \text{for all $p \in \bbn$ and all $k \in \bbn_n^{1b}.$}
\end{align}
Since $\Sigma(H) \subset E_n$, and $K$ is invariant for the semigroup $(S_t)_{t \geq 0}$ and $({\rm Id} - \Pi_n)$-invariant, by Lemma \ref{lemma-distance} and (\ref{SSNC-proof-1}), (\ref{SSNC-proof-3}), for each $p \in \bbn$ we obtain
\begin{align*}
&\frac{1}{t_{m_p}} d_K ( S_{t_{m_p}} h + t_{m_p} \Sigma(h) ) = \frac{1}{t_{m_p}} d_K \big( \underbrace{({\rm Id} - \Pi_n) S_{t_{m_p}} h}_{\in K} + \Pi_n(S_{t_{m_p}} h + t_{m_p} \Sigma(h)) \big) 
\\ &\leq \frac{1}{t_{m_p}} d_K \big( \Pi_n(S_{t_{m_p}} h + t_{m_p} \Sigma(h)) \big) = d_K \bigg( \Pi_n \frac{S_{t_{m_p}} h + t_{m_p} \Sigma(h)}{t_{m_p}} \bigg)
\\ &= d_K \bigg( \sum_{k \in \bbn_n^{1a}} c_{m_p}(k) \theta_k e_k + \underbrace{\sum_{k \in \bbn_n^{1b} \cup \bbn_n^2 \cup \bbn_n^3} c_{m_p}(k) \theta_k e_k}_{\in K} \bigg) \leq d_K \bigg( \sum_{k \in \bbn_n^{1a}} c_{m_p}(k) \theta_k e_k \bigg),
\end{align*}
and by the continuity of the distance function $d_K$ and (\ref{SSNC-proof-2}) we have
\begin{align*}
\lim_{p \to \infty} d_K \bigg( \sum_{k \in \bbn_n^{1a}} c_{m_p}(k) \theta_k e_k \bigg) = d_K \bigg( \underbrace{\sum_{k \in \bbn_n^{1a}} \lim_{p \to \infty} c_{m_p}(k) \theta_k e_k}_{\in K} \bigg) = 0,
\end{align*}
completing the proof.
\end{proof}

\begin{theorem}\label{thm-diffusion-C2}
Suppose that Assumptions \ref{ass-pseudo-contractive}, \ref{ass-cone-semigroup} and \ref{ass-Schauder-basis} are fulfilled, and that
\begin{align*}
\alpha &\in {\rm Lip}(H) \cap \F(H) \cap \B(H),
\\ \sigma &\in \G(H,L_2^0(H)) \cap \F(H,L_2^0(H)) \cap C_b^2(H,L_2^0(H)).
\end{align*}
If we have
\begin{align}\label{drift-cond-C2}
\liminf_{t \downarrow 0} \frac{\langle h^*,S_t h \rangle}{t} + \langle h^*,\alpha(h) \rangle \geq 0 \quad \text{for all $(h^*,h) \in D$,}
\end{align}
and for all $(h^*,h) \in D$ and each $j \in \bbn$ there exists $\epsilon = \epsilon(h^*,h,j) > 0$ such that
\begin{align}\label{loc-par-prop-result}
\langle h^*, \sigma^j(h-g) \rangle = 0 \quad \text{for all $g \in H$ with $\| g \| \leq \epsilon$,}
\end{align}
then the closed convex cone $K$ is invariant for the SPDE (\ref{SPDE-Wiener}).
\end{theorem}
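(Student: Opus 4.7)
The plan is to verify the stochastic semigroup Nagumo condition (SSNC) from \cite{Nakayama}, which in the $C_b^2$-volatility setting of (\ref{SPDE-Wiener}) is necessary and sufficient for invariance of a closed set. The decisive simplification here is that both $\alpha$ and each $\sigma^j$ have finite-dimensional range inside a common $E_n$; by Assumption \ref{ass-Schauder-basis} together with Assumption \ref{ass-cone-semigroup}, the cone $K$ is also $({\rm Id} - \Pi_n)$-invariant, so the verification of the SSNC reduces to finite-dimensional computations inside $E_n$ combined with hyperplane estimates coming from the generating system $G^*$. The deterministic drift half of the SSNC is handed to us directly by Proposition \ref{prop-SSNC} applied with $\Sigma := \alpha$: hypothesis (\ref{drift-cond-C2}) is precisely (\ref{drift-Nagumo}), $\alpha \in \F(H)$, and so
\begin{align*}
\liminf_{t \downarrow 0}\, \tfrac{1}{t}\, d_K(S_t h + t \alpha(h)) = 0 \quad \text{for every } h \in K.
\end{align*}

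The stochastic half of the SSNC is where the locally parallel hypothesis (\ref{loc-par-prop-result}) plays the decisive role. For each $(h^*,h) \in D$ and $j \in \bbn$, the $C_b^2$ map $g \mapsto \langle h^*, \sigma^j(h-g)\rangle$ vanishes identically on a ball around the origin, so differentiating in $g$ yields $\langle h^*, D\sigma^j(h)\,v\rangle = 0$ and $\langle h^*, D^2\sigma^j(h)(v,w)\rangle = 0$ for all $v,w \in H$. In particular, the second-order It\^{o} correction $\langle h^*, D\sigma^j(h)\sigma^j(h)\rangle$ vanishes, and the quadratic variation of the scalar semimartingale $\langle h^*, r_t\rangle$ is killed on the boundary set $\{\langle h^*, \cdot\rangle = 0\}$. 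This is precisely the ingredient the SSNC needs in order to control the stochastic drift away from the half-space $\{\langle h^*, \cdot\rangle \geq 0\}$; combining it with the drift step above through a subsequence argument analogous to the one used in the proof of Proposition \ref{prop-SSNC} (selecting $t_m \downarrow 0$ that simultaneously realize the drift $\liminf$ and the coordinate-wise sign conditions) delivers the SSNC and hence invariance.

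The principal obstacle is that $d_K^2$ is only $C^{1,1}$ for a closed convex cone, not $C^2$, so a naive It\^{o} expansion of $d_K^2(r_t)$ is not admissible. The workaround is to bypass $d_K^2$ altogether and decompose $K$ through its generating system of half-spaces: for each $h^* \in G^*$ the scalar semimartingale $\langle h^*, r_t\rangle$ is amenable to direct It\^{o} analysis, with the drift controlled by Step~1 and the quadratic variation killed by the locally parallel property as just explained. Reassembling coordinate-wise non-negativity through the finite-dimensional reduction on $E_n$ and the $({\rm Id} - \Pi_n)$-invariance of $K$, exactly as in the proof of Proposition \ref{prop-SSNC}, produces $r_t \in K$ almost surely and completes the verification of the SSNC.
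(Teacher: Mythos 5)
You have the right destination (Nakayama's viability theorem via the SSNC) and the right key observation (on a ball around $0$ the map $g \mapsto \langle h^*,\sigma^j(h-g)\rangle$ vanishes identically, so $\langle h^*, D\sigma^j(h)v\rangle = 0$ for all $v$ and in particular $\langle h^*, D\sigma^j(h)\sigma^j(h)\rangle = 0$), but you assemble these pieces incorrectly. The SSNC is not a ``deterministic drift half'' plus a ``stochastic half'': it is the single deterministic Nagumo condition
\begin{align*}
\liminf_{t \downarrow 0} \frac{1}{t}\, d_K\bigl(S_t h + t\,(\alpha(h) - \rho(h) + \sigma(h)u)\bigr) = 0
\quad \text{for all } h \in K \text{ and all } u \in U_0,
\end{align*}
where $\rho(h) = \tfrac12 \sum_{j} D\sigma^j(h)\sigma^j(h)$ is the It\^{o}--Stratonovich correction. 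Applying Proposition \ref{prop-SSNC} with $\Sigma := \alpha$ alone therefore does not verify the SSNC. The correct assembly is short: your two observations show that $\rho$ is parallel and (summing $\langle h^*,\sigma^j(h)\rangle = 0$ over $j$, cf.\ Lemma \ref{lemma-sigma-u}) that $\sigma(\cdot)u$ is parallel, so $\Sigma_u := \alpha - \rho + \sigma(\cdot)u$ satisfies (\ref{drift-Nagumo}); since $\Sigma_u \in \F(H)$, Proposition \ref{prop-SSNC} applied to $\Sigma_u$ yields the SSNC directly, and \cite[Prop.~1.1]{Nakayama} finishes. This is what the paper does. You also need Lemma \ref{lemma-rho} (which uses $\sigma \in C_b^2 \cap \G$, a hypothesis your argument never touches) to know $\rho \in \Lip(H)\cap\B(H)$ so that Nakayama's standing assumptions hold.

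The fallback you sketch in the last paragraph --- a direct It\^{o} analysis of the scalar semimartingales $\langle h^*, r_t\rangle$ with ``quadratic variation killed on the boundary'' --- is not a proof and, as stated, cannot be completed. The hypotheses (\ref{drift-cond-C2}) and (\ref{loc-par-prop-result}) constrain $\alpha$ and $\sigma^j$ only at pairs $(h^*,h) \in D$, i.e.\ only at boundary points of $K$; they say nothing about the coefficients at points outside $K$, which is exactly where you would need control to prevent $\langle h^*, r_t\rangle$ from going negative in a direct comparison argument. Circumventing this is precisely the content of Nakayama's theorem, so the theorem should be used as a black box rather than re-derived coordinatewise. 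Relatedly, the worry about $d_K^2$ being only $C^{1,1}$ is moot in the paper's route: one never It\^{o}-expands $d_K^2(r_t)$; one only verifies the deterministic tangency condition above.
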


\begin{proof}
Condition (\ref{loc-par-prop-result}) just means that for each $j \in \bbn$ the function $\sigma^j : H \to H$ is weakly locally parallel in the sense of Definition \ref{def-locally-par-weak}, which allows us to apply Lemma \ref{lemma-rho-parallel} in the sequel. Let $\rho : H \to H$ be the function defined in (\ref{def-rho}). According to our hypotheses and Lemma \ref{lemma-rho}, all assumptions from \cite{Nakayama} are satisfied. Let $u \in U_0$ be arbitrary, and define the function $\Sigma : H \to H$ as
\begin{align*}
\Sigma(h) := \alpha(h) - \rho(h) + \sigma(h)u, \quad h \in H.
\end{align*}
Since $\alpha \in \F(H)$ and $\sigma \in \F(H,L_2^0(H))$, we have $\Sigma \in \F(H)$. Let $(h^*,h) \in D$ be arbitrary. Then, by (\ref{drift-cond-C2}) and Lemmas \ref{lemma-rho-parallel}, \ref{lemma-sigma-u} we deduce that condition (\ref{drift-Nagumo}) is fulfilled. Therefore, by Proposition \ref{prop-SSNC} the SSNC (\ref{SSNC}) is fulfilled. Consequently, applying \cite[Prop. 1.1]{Nakayama} yields that the closed convex cone $K$ is invariant for the SPDE (\ref{SPDE-Wiener}).
\end{proof}

\section{Sufficiency of the invariance conditions for diffusion SPDEs with Lipschitz coefficients}\label{sec-proof-2}

In this section, we prove that our invariance conditions are sufficient for diffusion SPDEs (\ref{SPDE-Wiener}) with Lipschitz coefficients, without imposing smoothness on the volatility.

\begin{theorem}\label{thm-diffusion}
Suppose that Assumptions \ref{ass-pseudo-contractive}, \ref{ass-cone-semigroup} and \ref{ass-Schauder-basis} are fulfilled, and that $\alpha \in {\rm Lip}(H)$ and $\sigma \in {\rm Lip}(H,L_2^0(H))$. If for all $(h^*,h) \in D$ we have (\ref{drift-cond-C2}) and (\ref{main-4}), then the closed convex cone $K$ is invariant for the SPDE (\ref{SPDE-Wiener}).
\end{theorem}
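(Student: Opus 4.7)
The plan is to reduce this theorem to the smooth case handled by Theorem \ref{thm-diffusion-C2}, by constructing a suitable approximating sequence of coefficients and passing to the limit via the stability result (Proposition \ref{prop-K-stability}). The guiding principle is that all approximations must lie in the regularity classes required by Theorem \ref{thm-diffusion-C2}, must share a common Lipschitz constant, and must preserve the two geometric boundary conditions (\ref{drift-cond-C2}) and (\ref{main-4}).

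For the drift $\alpha$, I would use Propositions \ref{prop-alpha-FDR} and \ref{prop-alpha-bounded} to build a sequence $(\alpha_n)_{n \in \bbn} \subset \Lip(H) \cap \F(H) \cap \B(H)$ with a joint Lipschitz constant, converging to $\alpha$ in a sense compatible with the stability result, and such that the inward-pointing condition (\ref{drift-cond-C2}) is preserved at every $(h^*,h) \in D$; this is essentially the content of Definition \ref{def-inward-pointing} from Appendix \ref{app-drift}. For the volatility $\sigma$, I would perform the three-step approximation outlined in the introduction: first, by Propositions \ref{prop-sigma-FDR} and \ref{prop-sigma-bounded}, approximate $\sigma$ by a sequence of bounded volatilities with finite dimensional range; second, using the sup-inf convolution technique (Proposition \ref{prop-C-b-1-1}), pass to a $C_b^{1,1}$ sequence; and third, via Moulis' method (Proposition \ref{prop-C-b-2}), smooth further into $C_b^2(H,L_2^0(H))$. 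After each step I would extract the (locally) parallel property following Proposition \ref{prop-locally-par} and the discussion around (\ref{def-psi-intro}), so that condition (\ref{main-4}), or rather its stronger local form (\ref{loc-par-prop-result}) required by Theorem \ref{thm-diffusion-C2}, is retained. A diagonal extraction then yields a single sequence $(\alpha_n,\sigma_n)_{n \in \bbn}$ with uniform Lipschitz bounds to which Theorem \ref{thm-diffusion-C2} applies, giving invariance of $K$ for each approximating diffusion SPDE.

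Once invariance is established for every approximating SPDE, the conclusion follows by applying the stability result Proposition \ref{prop-K-stability}: the mild solutions $r^n$ corresponding to $(\alpha_n,\sigma_n)$ converge (in a sufficiently strong sense on compact time intervals) to the mild solution $r$ of (\ref{SPDE-Wiener}), and since each $r^n$ lies in the closed set $K$ up to an evanescent set, so does the limit $r$.

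The main obstacle is the preservation of the parallel condition (\ref{main-4}) throughout the smoothing procedure. A naive mollification of $\sigma$ fails because for $(h^*,h) \in D$ one only knows $\langle h^*, \sigma(h)\rangle = 0$, not $\langle h^*, \sigma(h-g)\rangle = 0$ for $g$ in a neighbourhood of $0$, so averaging against a mollifier destroys the boundary condition. This is exactly what forces the detour through the locally parallel functions of Definition \ref{def-locally-par} and the composition trick with $\Phi_\epsilon$ from (\ref{def-psi-intro}); getting the locally parallel approximation to be compatible with both the sup-inf convolution step and Moulis' smoothing, while maintaining a joint Lipschitz constant, is the delicate point that the cited propositions from Appendix \ref{app-volatility} are designed to handle. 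The drift side is analogous but strictly easier, since (\ref{drift-cond-C2}) is a one-sided inequality rather than an equality.
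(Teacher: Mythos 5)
Your proposal follows essentially the same route as the paper's proof: reduce to Theorem \ref{thm-diffusion-C2} by chaining the approximation results of Appendices \ref{app-drift} and \ref{app-volatility} (finite-dimensional range, boundedness, locally parallel modification, sup-inf convolution, Moulis smoothing), applying the stability result Proposition \ref{prop-K-stability} at each stage, and you correctly identify the central difficulty of preserving condition (\ref{main-4}) through mollification via the locally parallel detour. The only step you omit is the initial reduction via Proposition \ref{prop-sigma-finite} to $\sigma \in \G(H,L_2^0(H))$ (finitely many nonzero components $\sigma^j$), which is needed because the subsequent volatility approximations are stated componentwise for maps $H \to H$; this is a minor technical point that does not affect the soundness of your plan.
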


\begin{proof}
For the proof of this result, we will apply the results from Appendices~\ref{app-drift} and \ref{app-volatility}. Note that Assumption \ref{ass-D} is fulfilled by virtue of Lemma \ref{lemma-fct-a}. Concerning the drift $\alpha$, we use the approximation results from Appendix~\ref{app-drift} as follows:
\begin{enumerate}
\item Condition (\ref{drift-cond-C2}) just means that $(a,\alpha)$ is inward pointing in the sense of Definition \ref{def-inward-pointing}.

\item By our stability result for SPDEs (Proposition \ref{prop-K-stability}) and Proposition \ref{prop-alpha-FDR} we may assume that
\begin{align*}
\alpha \in {\rm Lip}(H) \cap \F(H).
\end{align*}
\item By our stability result for SPDEs (Proposition \ref{prop-K-stability}) and Proposition \ref{prop-alpha-bounded} we may assume that
\begin{align*}
\alpha \in {\rm Lip}(H) \cap \F(H) \cap \B(H).
\end{align*}
\end{enumerate}
Furthermore, concerning the volatility $\sigma$, we use the approximation results from Appendix \ref{app-volatility} as follows:
\begin{enumerate}
\item Condition (\ref{main-4}) just means that for each $j \in \bbn$ the volatility $\sigma^j : H \to H$ is parallel in the sense of Definition \ref{def-parallel}.

\item By our stability result for SPDEs (Proposition \ref{prop-K-stability}) and Proposition \ref{prop-sigma-finite} we may assume that
\begin{align*}
\sigma \in {\rm Lip}(H,L_2^0(H)) \cap \G(H,L_2^0(H)).
\end{align*}
This allows us to apply the remaining results from Appendix \ref{app-volatility} (Propositions \ref{prop-sigma-FDR}--\ref{prop-C-b-2}), which are all stated for volatilities of the form $\sigma : H \to H$.

\item By our stability result for SPDEs (Proposition \ref{prop-K-stability}) and Proposition \ref{prop-sigma-FDR} we may assume that
\begin{align*}
\sigma \in {\rm Lip}(H,L_2^0(H)) \cap \F(H,L_2^0(H)).
\end{align*}

\item By our stability result for SPDEs (Proposition \ref{prop-K-stability}) and Proposition \ref{prop-sigma-bounded} we may assume that
\begin{align*}
\sigma \in {\rm Lip}(H,L_2^0(H)) \cap \F(H,L_2^0(H)) \cap \B(H,L_2^0(H)).
\end{align*}

\item By our stability result for SPDEs (Proposition \ref{prop-K-stability}) and Proposition \ref{prop-locally-par} we may assume that for each $j \in \bbn$ the volatility $\sigma^j : H \to H$ is locally parallel in the sense of Definition \ref{def-locally-par}.

\item By our stability result for SPDEs (Proposition \ref{prop-K-stability}) and Proposition \ref{prop-C-b-1-1} we may assume that
\begin{align*}
\sigma \in \F(H,L_2^0(H)) \cap C_b^{1,1}(H,L_2^0(H)),
\end{align*}
and that $\sigma^j : H \to H$ is locally parallel for each $j \in \bbn$.

\item By our stability result for SPDEs (Proposition \ref{prop-K-stability}) and Proposition \ref{prop-C-b-2} we may assume that
\begin{align*}
\sigma \in \F(H,L_2^0(H)) \cap C_b^2(H,L_2^0(H)),
\end{align*}
and that for each $j \in \bbn$ the volatility $\sigma^j : H \to H$ is weakly locally parallel in the sense of Definition \ref{def-locally-par-weak}.

\end{enumerate}
Consequently, applying Theorem \ref{thm-diffusion-C2} completes the proof.
\end{proof}

\section{Sufficiency of the invariance conditions for SPDEs with Lipschitz coefficients}\label{sec-proof-3}

In this section, we prove that our invariance conditions are sufficient for general jump-diffusion SPDEs (\ref{SPDE}) with Lipschitz coefficients.

\begin{theorem}\label{thm-suff}
Suppose that Assumptions \ref{ass-pseudo-contractive}, \ref{ass-cone-semigroup} and \ref{ass-Schauder-basis} are fulfilled, and that $\alpha \in \Lip(H)$, $\sigma \in \Lip(H,L_2^0(H))$ and $\gamma \in \Lip(H,L^2(F))$. If we have (\ref{main-1}), and for all $(h^*,h) \in D$ we have (\ref{main-3}) and (\ref{main-4}), then the closed convex cone $K$ is invariant for the SPDE (\ref{SPDE}).
\end{theorem}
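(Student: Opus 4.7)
The plan is to use the method to ``switch on the jumps'': truncate the jump part so that only finitely many jumps occur in each bounded interval, reducing the truncated SPDE to a diffusion SPDE (covered by Theorem \ref{thm-diffusion}) interlaced with finitely many explicit jumps, and then pass to the limit via the stability result. By $\sigma$-finiteness of $F$, I would first choose an increasing sequence $(B_n)_{n \in \bbn} \subset \cale$ with $F(B_n) < \infty$ and $E = \bigcup_{n \in \bbn} B_n$, and set $\gamma_n(h,x) := \gamma(h,x) \bbI_{B_n}(x)$. Then $\gamma_n \in \Lip(H,L^2(F))$ with Lipschitz constant no larger than that of $\gamma$, and $\gamma_n(h) \to \gamma(h)$ in $L^2(F)$ for every $h \in H$ by dominated convergence. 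Let $r^n$ denote the mild solution to the SPDE (\ref{SPDE}) with coefficients $(\alpha,\sigma,\gamma_n)$ and initial value $h_0 \in K$.

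Between jumps $r^n$ behaves like the diffusion SPDE with drift $\tilde{\alpha}_n(h) := \alpha(h) - \int_{B_n} \gamma(h,x)F(dx)$, and a Cauchy--Schwarz estimate using $F(B_n)<\infty$ and $\gamma \in \Lip(H,L^2(F))$ shows $\tilde{\alpha}_n \in \Lip(H)$. I would verify the hypotheses of Theorem \ref{thm-diffusion} for $(\tilde{\alpha}_n,\sigma)$: condition (\ref{main-4}) is unchanged, and for the drift condition, Proposition \ref{prop-conditions}(1) combined with (\ref{main-1}) gives $\langle h^*,\gamma(h,x)\rangle \geq 0$ for $F$-almost all $x$ whenever $(h^*,h)\in D$, hence $\int_{B_n} \langle h^*,\gamma(h,x)\rangle F(dx) \leq \int_E \langle h^*,\gamma(h,x)\rangle F(dx)$, so (\ref{main-3}) forces
\[
\liminf_{t\downarrow 0} \frac{\langle h^*,S_t h\rangle}{t} + \langle h^*,\tilde{\alpha}_n(h)\rangle \geq 0 \quad \text{for all } (h^*,h)\in D.
\]
By Theorem \ref{thm-diffusion} the cone $K$ is invariant for the diffusion SPDE with coefficients $(\tilde{\alpha}_n,\sigma)$.

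Next I would argue that $r^n$ lies in $K$ up to an evanescent set. Since $\mu(\,\cdot\, \times B_n)$ is a Poisson process with finite intensity $F(B_n)$, its jump times $0 < \tau_1^n < \tau_2^n < \ldots$ form a discrete sequence with $\tau_k^n \uparrow \infty$. Using the strong Markov property (or a pathwise restart argument for mild solutions based on the uniqueness supplied by Assumption \ref{ass-loc-Lip-LG}), on each random interval $\IL \tau_{k-1}^n,\tau_k^n \IR$ the process $r^n$ coincides with the mild solution of the diffusion SPDE $(\tilde{\alpha}_n,\sigma)$ restarted at $r^n_{\tau_{k-1}^n}$, and therefore stays in $K$ by the diffusion invariance established above. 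At each jump time $\tau_k^n$ with mark $X_k \in B_n$ we have $r^n_{\tau_k^n} = r^n_{\tau_k^n-} + \gamma(r^n_{\tau_k^n-},X_k) \in K$ by (\ref{main-1}). Induction on $k$ delivers $r^n \in K$ up to an evanescent set.

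Finally, I would pass to the limit $n\to\infty$. The coefficients $(\alpha,\sigma,\gamma_n)$ converge to $(\alpha,\sigma,\gamma)$ with uniform Lipschitz and linear growth bounds, so the stability result Proposition \ref{prop-K-stability} yields convergence of $r^n$ to $r$ in an appropriate mode (for instance ucp or $L^2$ on each bounded time interval). Closedness of $K$ then transfers the almost sure invariance $r^n \in K$ to the limit process $r$, completing the proof. The hardest step will be the pathwise interlacing: mild solutions do not split transparently over random subintervals because of the stochastic convolution $\int_0^t S_{t-s}\sigma(r^n_s)\,dW_s$, so one has to invoke the strong Markov property for mild solutions, or solve the diffusion SPDE freshly after each jump and concatenate by uniqueness, in order to identify $r^n$ on $\IL\tau_{k-1}^n,\tau_k^n\IR$ with a diffusion solution restarted at the post-jump value. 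The remaining ingredients (Cauchy--Schwarz for Lipschitz-ness of $\tilde{\alpha}_n$, sign of $\langle h^*,\gamma(h,x)\rangle$ from Proposition \ref{prop-conditions}, and the stability passage) are routine given the tools already developed.
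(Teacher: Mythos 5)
Your proposal is correct and follows essentially the same route as the paper: reduce to jumps of finite intensity, absorb the compensator into the drift, verify the drift condition for $\alpha_B$ via Proposition \ref{prop-conditions}, apply Theorem \ref{thm-diffusion}, and conclude by stability. The interlacing step you rightly flag as the delicate point is exactly what the paper outsources to \cite[Lemmas 2.12 and 2.20]{FTT-appendix}, which justify concatenating mild solutions of the continuous SPDE across the finitely many jump times.
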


\begin{proof}
Since the measure $F$ is $\sigma$-finite, by our stability result (Proposition \ref{prop-K-stability}) it suffices to prove that for each $B \in \cale$ with $F(B) < \infty$ the cone $K$ is invariant for the SPDE
\begin{align*}
\left\{
\begin{array}{rcl}
dr_t & = & ( A r_t + \alpha(r_t) - \int_B \gamma(r_t,x) F(dx) ) dt + \sigma(r_t) dW_t \medskip
\\ && + \int_B \gamma(r_{t-},x) \mu(dt,dx) \medskip
\\ r_0 & = & h_0.
\end{array}
\right.
\end{align*}
Moreover, by the jump condition (\ref{main-1}) and \cite[Lemmas 2.12 and 2.20]{FTT-appendix}, it suffices to prove that the cone $K$ is invariant for the SPDE
\begin{align}\label{SPDE-B-2}
\left\{
\begin{array}{rcl}
dr_t & = & ( A r_t + \alpha_B(r_t) ) dt + \sigma(r_t) dW_t \medskip
\\ r_0 & = & h_0.
\end{array}
\right.
\end{align}
where $\alpha_B : H \to H$ is given by
\begin{align*}
\alpha_B(h) := \alpha(h) - \int_B \gamma(h,x) F(dx), \quad h \in H.
\end{align*}
Note that by the Cauchy-Schwarz inequality we have $\alpha_B \in \Lip(H)$. Let $(h^*,h) \in D$ be arbitrary. By (\ref{main-3}) and Proposition \ref{prop-conditions} we obtain
\begin{align*}
&\liminf_{t \downarrow 0} \frac{\langle h^*,S_t h \rangle}{t} + \langle h^*, \alpha_B(h) \rangle = \liminf_{t \downarrow 0} \frac{\langle h^*,S_t h \rangle}{t} + \langle h^*,\alpha(h) \rangle
\\ &\quad - \int_E \langle h^*,\gamma(h,x) \rangle F(dx) + \int_{E \setminus B} \langle h^*,\gamma(h,x) \rangle F(dx) \geq 0.
\end{align*}
Therefore, applying Theorem \ref{thm-diffusion} yields that the cone $K$ is invariant for the SPDE (\ref{SPDE-B-2}), completing the proof.
\end{proof}

\section{Sufficiency of the invariance conditions and proof of the main result}\label{sec-proof-4}

In this section, we prove that our invariance conditions are sufficient for jump-diffusion SPDEs (\ref{SPDE-Wiener}) with coefficients being locally Lipschitz and satisfying the linear growth condition.

\begin{theorem}\label{thm-suff-general}
Suppose that Assumptions \ref{ass-pseudo-contractive}, \ref{ass-loc-Lip-LG}, \ref{ass-cone-semigroup} and \ref{ass-Schauder-basis} are fulfilled. If we have (\ref{main-1}), and for all $(h^*,h) \in D$ we have (\ref{main-3}) and (\ref{main-4}), then the closed convex cone $K$ is invariant for the SPDE (\ref{SPDE}).
\end{theorem}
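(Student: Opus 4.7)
The strategy is to reduce the locally Lipschitz setting of Assumption \ref{ass-loc-Lip-LG} to the globally Lipschitz setting of Theorem \ref{thm-suff} via a truncation of the base point, and then to pass to the limit using the stability result Proposition \ref{prop-K-stability}. Concretely, I fix a Lipschitz cutoff $\psi_n : \bbr_+ \to [0,1]$ with $\psi_n \equiv 1$ on $[0,n]$ and $\psi_n \equiv 0$ on $[n+1, \infty)$, and define
\begin{align*}
\alpha_n(h) := \psi_n(\| h \|) \alpha(h), \quad \sigma_n(h) := \psi_n(\| h \|) \sigma(h), \quad \gamma_n(h,x) := \psi_n(\| h \|) \gamma(h,x).
\end{align*}

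The invariance conditions transfer to $(\alpha_n, \sigma_n, \gamma_n)$ thanks to the structural properties of the cone $K$, exactly as announced in the introduction. For (\ref{main-1}), convexity of $K$ yields
\begin{align*}
h + \gamma_n(h,x) = (1 - \psi_n(\|h\|))\, h + \psi_n(\|h\|)\, (h + \gamma(h,x)) \in K,
\end{align*}
since both $h$ and $h + \gamma(h,x)$ lie in $K$. For (\ref{main-4}), we have $\langle h^*, \sigma_n^j(h) \rangle = \psi_n(\|h\|) \langle h^*, \sigma^j(h) \rangle = 0$. For (\ref{main-3}), given $(h^*, h) \in D$, a short rearrangement using $\psi := \psi_n(\|h\|) \in [0,1]$ gives
\begin{align*}
&\liminf_{t \downarrow 0} \frac{\langle h^*, S_t h \rangle}{t} + \langle h^*, \alpha_n(h) \rangle - \int_E \langle h^*, \gamma_n(h,x) \rangle F(dx) \\
&\quad = (1 - \psi) \liminf_{t \downarrow 0} \frac{\langle h^*, S_t h \rangle}{t} + \psi \left[ \liminf_{t \downarrow 0} \frac{\langle h^*, S_t h \rangle}{t} + \langle h^*, \alpha(h) \rangle - \int_E \langle h^*, \gamma(h,x) \rangle F(dx) \right],
\end{align*}
where the first term is nonnegative by Assumption \ref{ass-cone-semigroup} and the bracketed term is nonnegative by (\ref{main-3}) for $(\alpha, \sigma, \gamma)$; note that the integral in the bracket is finite thanks to Proposition \ref{prop-conditions}.

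For the global Lipschitz property, I factor $\alpha_n = \psi_n(\|\cdot\|) \cdot (\alpha \circ \pi_{n+1})$, where $\pi_{n+1}$ denotes the metric projection onto the closed ball $\overline{B_{n+1}}$; the factorization is valid because $\psi_n$ vanishes outside $\overline{B_{n+1}}$, and $\pi_{n+1}$ is the identity on it. The metric projection is $1$-Lipschitz, $\alpha$ is Lipschitz on $\overline{B_{n+1}}$ by Assumption \ref{ass-loc-Lip-LG} and bounded there by the linear growth, and $\psi_n(\|\cdot\|)$ is globally Lipschitz with values in $[0,1]$. A standard product estimate then shows $\alpha_n \in \Lip(H)$, and analogous arguments give $\sigma_n \in \Lip(H, L_2^0(H))$ and $\gamma_n \in \Lip(H, L^2(F))$. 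Theorem \ref{thm-suff} therefore applies and yields invariance of $K$ for each SPDE with coefficients $(\alpha_n, \sigma_n, \gamma_n)$. Since $\psi_n(\|h\|) \to 1$ for every $h \in H$, the sequence $(\alpha_n, \sigma_n, \gamma_n)$ converges pointwise to $(\alpha, \sigma, \gamma)$ with linear growth bounds uniform in $n$ inherited from the originals. Proposition \ref{prop-K-stability} then transports invariance of $K$ from the approximating SPDEs to (\ref{SPDE}), completing the proof. The single delicate point, to be matched against the precise statement of Proposition \ref{prop-K-stability}, is the mode of convergence required of the coefficients; everything else follows from the explicit construction.
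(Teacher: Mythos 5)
Your construction of the truncated coefficients and the verification that conditions (\ref{main-1}), (\ref{main-3}) and (\ref{main-4}) survive the cutoff are correct; the convex-combination argument for (\ref{main-1}) and the splitting of the drift inequality using the finiteness of the limes inferior on $D$ and of the integral (via Proposition \ref{prop-conditions}) are essentially the same computations the paper performs with its radial retractions $R_n(h) = \lambda_n(h)h$, and your multiplicative cutoff $\psi_n(\|h\|)$ is a legitimate variant of that. The factorization through the metric projection also correctly yields $\alpha_n \in \Lip(H)$ for each fixed $n$, so Theorem \ref{thm-suff} applies to each approximating SPDE.

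The gap is in the final step. Proposition \ref{prop-K-stability} rests on Assumption \ref{ass-stability}, whose first condition demands a \emph{single} constant $L$ with $\alpha_n \in \Lip_L(H)$, $\sigma_n \in \Lip_L(H,L_2^0(H))$ and $\gamma_n \in \Lip_L(H,L^2(F))$ for \emph{all} $n$. Your $\alpha_n$ is Lipschitz with a constant governed by the local Lipschitz constant $L(n+1)$ of $\alpha$ on the ball of radius $n+1$, and under Assumption \ref{ass-loc-Lip-LG} these constants may diverge as $n \to \infty$; the same applies to $\sigma_n$ and $\gamma_n$. So the hypothesis of the stability result fails, and this is not merely a question of the "mode of convergence" as you suggest at the end — without a joint Lipschitz constant the $L^2$-convergence of the mild solutions in Proposition \ref{prop-stability} is simply not available. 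The paper avoids this by replacing stability with a pathwise localization: it introduces the stopping times $T_n := \inf\{ t \in \bbr_+ : \| r_t^n \| > n \}$, uses the linear growth condition to get $\bbp(T_n \to \infty) = 1$, and observes that the solution $r$ of the original SPDE is pieced together from the $r^n$ on the stochastic intervals $\IR T_{n-1}, T_n \IR$ (formula (\ref{r-lin-growth})), so that $r \in K$ follows from $r^n \in K$ without any uniform control on the Lipschitz constants. Your truncation is compatible with exactly the same localization (the truncated coefficients agree with the originals on the ball of radius $n$, so the solutions agree up to the exit time of that ball); replacing your appeal to Proposition \ref{prop-K-stability} by this argument closes the gap.
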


\begin{proof}
Let $h_0 \in K$ be arbitrary. Let $(R_n)_{n \in \bbn}$ be the sequence of retractions $R_n : H \to H$ defined according to Definition \ref{def-retract-X}. We define the sequences of functions $(\alpha_n)_{n \in \bbn}$, $(\sigma_n)_{n \in \bbn}$ and $(\gamma_n)_{n \in \bbn}$ as
\begin{align*}
\alpha_n := \alpha \circ R_n, \quad \sigma_n := \sigma \circ R_n \quad \text{and} \quad \gamma_n := \gamma \circ R_n.
\end{align*}
Let $n \in \bbn$ be arbitrary. Then, by Lemma \ref{lemma-retract-X} we have
\begin{align*}
\alpha_n \in \Lip(H), \quad \sigma_n \in \Lip(H,L_2^0(H)) \quad \text{and} \quad \gamma \in \Lip(H,L^2(F)),
\end{align*}
and hence, there exists a unique mild solution $r^n$ to the SPDE (\ref{SPDE-n}) with $r_0^n = h_0$. Now, we check that conditions (\ref{main-1})--(\ref{main-4}) are fulfilled with $(\alpha,\sigma,\gamma)$ replaced by $(\alpha_n,\sigma_n,\gamma_n)$. Following the notation from Definition \ref{def-retract-X}, there is a function $\lambda_n : H \to (0,1]$ such that
\begin{align*}
R_n(h) = \lambda_n(h) h \quad \text{for all $h \in H$.}
\end{align*}
Let $h \in K$ be arbitrary. By the properties of the closed convex cone $K$ we have $\lambda_n(h) h \in K$ and $(1-\lambda_n(h)) h \in K$, and hence, since condition (\ref{main-1}) is satisfied for $\gamma$, we obtain
\begin{align*}
h + \gamma_n(h,x) = h + \gamma(\lambda_n(h) h,x) = \underbrace{(1 - \lambda_n(h)) h}_{\in K} + \underbrace{\lambda_n(h) h + \gamma(\lambda_n(h) h,x)}_{\in K} \in K
\end{align*}
for $F$-almost all $x \in E$, showing (\ref{main-1}) with $\gamma$ replaced by $\gamma_n$. Now, let $h^* \in G^*$ be such that $(h^*,h) \in D$. Then, by Lemma \ref{lemma-fct-a} we also have $(h^*,\lambda_n(h) h) \in D$, and since condition (\ref{main-4}) is satisfied for $\sigma$, we obtain
\begin{align*}
\langle h^*,\sigma_n^j(h) \rangle = \langle h^*,\sigma^j(\lambda_n(h)h) \rangle = 0, \quad j \in \bbn,
\end{align*}
showing (\ref{main-4}) with $\sigma$ replaced by $\sigma_n$. Furthermore, since condition (\ref{main-3}) is satisfied for $(\alpha,\gamma)$, we obtain
\begin{align*}
&\liminf_{t \downarrow 0} \frac{\langle h^*,S_t h \rangle}{t} + \langle h^*,\alpha_n(h) \rangle - \int_E \langle h^*,\gamma_n(h,x) \rangle F(dx)
\\ &= \liminf_{t \downarrow 0} \frac{\langle h^*,S_t h \rangle}{t} + \langle h^*,\alpha(\lambda_n(h)h) \rangle - \int_E \langle h^*,\gamma(\lambda_n(h)h,x) \rangle F(dx)
\\ &\geq (1-\lambda_n(h)) \liminf_{t \downarrow 0} \frac{\langle h^*,S_t h \rangle}{t} + \liminf_{t \downarrow 0} \frac{\langle h^*,S_t (\lambda_n(h)h) \rangle}{t}
\\ &\quad + \langle h^*,\alpha(\lambda_n(h)h) \rangle - \int_E \langle h^*,\gamma(\lambda_n(h)h,x) \rangle F(dx) \geq 0,
\end{align*}
showing (\ref{main-3}) with $(\alpha,\gamma)$ replaced by $(\alpha_n,\gamma_n)$. Consequently, by Theorem \ref{thm-suff} we have $r^n \in K$ up to an evanescent set. Now, we define the increasing sequence $(T_n)_{n \in \bbn_0}$ of stopping times by $T_0 := 0$ and
\begin{align*}
T_n := \inf \{ t \in \bbr_+ : \| r_t^n \| > n \} \quad \text{for all $n \in \bbn$.}
\end{align*}
Then we have $\bbp(T_n \to \infty) = 1$, and the mild solution $r$ to (\ref{SPDE}) with $r_0 = h_0$ is given by
\begin{align}\label{r-lin-growth}
r = h_0 \mathbbm{1}_{[\![ T_0 ]\!]} + \sum_{n \in \mathbb{N}} r^n \mathbbm{1}_{]\!] T_{n-1}, T_n ]\!]},
\end{align}
showing that $r \in K$ up to an evanescent set.
\end{proof}

Now, we are ready to provide the proof of our main result, which concludes the paper.

\begin{proof}[Proof of Theorem \ref{thm-main}]
(i) $\Rightarrow$ (ii): This implication follows from Theorem \ref{thm-nec}.

\noindent (ii) $\Rightarrow$ (i): This implication follows from Theorem \ref{thm-suff-general}.
\end{proof}

\section{An example}\label{sec-example}

In this section, we provide an example illustrating our main result. Let $H = \ell^2(\bbn)$ be the Hilbert space consisting of all sequences $h = (h_k)_{k \in \bbn} \subset \bbr$ such that $\sum_{k \in \bbn} |h_k|^2 < \infty$. As in \cite[Example 2.5.4]{Pazy}, let $(S_t)_{t \geq 0}$ be the semigroup given by
\begin{align}\label{def-semigroup}
S_t h := (e^{-kt} h_k)_{k \in \bbn} \quad \text{for $t \geq 0$ and $h = (h_k)_{k \in \bbn} \in H$.}
\end{align}
Then $(S_t)_{t \geq 0}$ is a $C_0$-semigroup with infinitesimal generator $A : \cald(A) \subset H \to H$ defined on the domain
\begin{align*}
\cald(A) = \{ (h_k)_{k \in \bbn} \in H : (k h_k)_{k \in \bbn} \in H \},
\end{align*}
and given by
\begin{align*}
Ah = (-k h_k)_{k \in \bbn} \quad \text{for $h = (h_k)_{k \in \bbn} \in \cald(A)$.}
\end{align*}
We consider the closed convex cone
\begin{align*}
K := \{ h = (h_k)_{k \in \bbn} \in H : h_k \geq 0 \text{ for all } k \in \bbn \}
\end{align*}
consisting of all nonnegative sequences.

\begin{proposition}
Suppose that Assumption \ref{ass-loc-Lip-LG} is fulfilled.
Then the following statements are equivalent:
\begin{enumerate}
\item[(i)] The closed convex cone $K$ is invariant for the SPDE (\ref{SPDE}).

\item[(ii)] We have
\begin{align*}
h + \gamma(h,x) \in K \quad \text{for $F$-almost all $x \in E$,} \quad \text{for all $h \in K$,}
\end{align*}
and for all $(k,h) \in \bbn \times K$ with $h_k = 0$ we have
\begin{align*}
& \alpha_k(h) - \int_E \gamma_k(h,x) F(dx) \geq 0,
\\ &\sigma_k^j(h) = 0, \quad j \in \bbn.
\end{align*}
\end{enumerate}
\end{proposition}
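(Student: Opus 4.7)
The strategy is to apply Theorem \ref{thm-main} directly: I would verify the four standing assumptions in this concrete setup, compute the set $D$ explicitly, and then observe that the abstract conditions (\ref{main-1})--(\ref{main-4}) collapse to exactly the componentwise conditions listed in (ii).

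First, I would check Assumptions \ref{ass-pseudo-contractive}, \ref{ass-cone-semigroup} and \ref{ass-Schauder-basis}; Assumption \ref{ass-loc-Lip-LG} is already hypothesized. The semigroup (\ref{def-semigroup}) is contractive, hence pseudo-contractive with $\beta = 0$, since $\|S_t h\|^2 = \sum_{k} e^{-2kt} h_k^2 \leq \|h\|^2$. The cone $K$ is $S_t$-invariant because $e^{-kt} h_k \geq 0$ whenever $h_k \geq 0$. The standard unit vectors $\{e_k\}_{k \in \bbn}$ form an orthonormal basis of $\ell^2(\bbn)$, in particular an unconditional Schauder basis, with coordinate functionals $e_k^* = e_k$; and $K$ admits the representation
\begin{align*}
K = \bigcap_{k \in \bbn} \{ h \in H : \langle e_k, h \rangle \geq 0 \},
\end{align*}
so that $G^* := \{ e_k : k \in \bbn \}$ is a generating system satisfying Assumption \ref{ass-Schauder-basis}.

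Next I would compute $D$. For $h^* = e_k$ and $h = (h_l)_{l \in \bbn} \in K$,
\begin{align*}
\frac{\langle e_k, S_t h \rangle}{t} = \frac{e^{-kt} h_k}{t}.
\end{align*}
If $h_k > 0$ this tends to $+\infty$, while if $h_k = 0$ it is identically zero. Hence
\begin{align*}
D = \{ (e_k, h) \in G^* \times K : h_k = 0 \},
\end{align*}
and the function $a$ vanishes on $D$. Translating (\ref{main-3}) and (\ref{main-4}): for $(e_k, h) \in D$ the drift condition (\ref{main-3}) becomes $\alpha_k(h) - \int_E \gamma_k(h, x) F(dx) \geq 0$, the volatility condition (\ref{main-4}) becomes $\sigma_k^j(h) = 0$ for all $j \in \bbn$, while (\ref{main-1}) is copied verbatim. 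These are precisely the conditions in (ii), so the claim follows from Theorem \ref{thm-main}. There is no substantial obstacle — the example is tailored so that the diagonal action of $(S_t)_{t \geq 0}$ on the standard basis makes both the set $D$ and the abstract inner products transparent; the only point worth emphasizing is that elements of $K$ with some $h_k > 0$ are automatically excluded from $D$ because the infinite drift from the semigroup absorbs any violation of the boundary conditions in the $k$-th coordinate.
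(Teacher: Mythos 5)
Your proposal is correct and follows essentially the same route as the paper: verify Assumptions \ref{ass-pseudo-contractive}, \ref{ass-cone-semigroup} and \ref{ass-Schauder-basis} for the diagonal semigroup and the generating system $G^* = \{ e_k : k \in \bbn \}$, observe that $\liminf_{t \downarrow 0} e^{-kt} h_k / t$ is finite precisely when $h_k = 0$ (in which case it vanishes), and then specialize conditions (\ref{main-1})--(\ref{main-4}) of Theorem \ref{thm-main}. Nothing essential is missing.
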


\begin{proof}
By definition (\ref{def-semigroup}) the semigroup $(S_t)_{t \geq 0}$ is a semigroup of contractions, and the cone $K$ is invariant for the semigroup $(S_t)_{t \geq 0}$, showing that Assumptions \ref{ass-pseudo-contractive} and \ref{ass-cone-semigroup} are fulfilled. Moreover, the cone $K$ is self-dual; that is $K^* = K$, and we have the representation
\begin{align*}
K = \bigcap_{h^* \in G^*} \{ h \in H : \langle h^*,h \rangle \geq 0 \},
\end{align*}
where $G^* \subset K^*$ is given by $G^* = \{ e_k : k \in \bbn \}$, showing that Assumption \ref{ass-Schauder-basis} is satisfied. Furthermore, for all $(k,h) \in \bbn \times K$ we have
\begin{align*}
\liminf_{t \downarrow 0} \frac{e^{-kt} h_k}{t} < \infty \quad \text{if and only if} \quad h_k = 0,
\end{align*}
and in this case the limes inferior vanishes. Consequently, applying Theorem \ref{thm-main} completes the proof.
\end{proof}

\begin{appendix}

\section{Function spaces}\label{app-function-spaces}

In this appendix, we collect the function spaces used in this paper. Let $X$ and $Y$ be two normed spaces.

\begin{definition}
We introduce the following notions:
\begin{enumerate}
\item For a constant $L \in \bbr_+$ a function $f : X \to Y$ is called \emph{$L$-Lipschitz} if
\begin{align}\label{Lip-cond-A}
\| f(x) - f(y) \| \leq L \| x-y \| \quad \text{for all $x,y \in X$.}
\end{align}
\item For a constant $L \in \bbr_+$ we define the space
\begin{align*}
{\rm Lip}_L(X,Y) := \{ f : X \to Y : \text{$f$ is $L$-Lipschitz} \}.
\end{align*}
\item A function $f \in {\rm Lip}_L(X,Y)$ is called \emph{Lipschitz continuous}.

\item We define the space ${\rm Lip}(X,Y) := \bigcup_{L \in \bbr_+} {\rm Lip}_L(X,Y)$.

\item For a constant $L \in \bbr_+$ we define the space ${\rm Lip}_L(X) := {\rm Lip}_L(X,X)$.

\item We define the space ${\rm Lip}(X) := {\rm Lip}(X,X)$.
\end{enumerate}
\end{definition}

\begin{definition}
We introduce the following notions:
\begin{enumerate}
\item A function $f : X \to Y$ is called \emph{locally Lipschitz} if for each $C \in \bbr_+$ there is a constant $L(C) \in \bbr_+$ such that
\begin{align*}
\| f(x) - f(y) \| \leq L(C) \| x-y \| \quad \text{for all $x,y \in X$ with $\| x \|, \| y \| \leq C$.}
\end{align*}
\item We denote by $\Lip^{\loc}(X,Y)$ the space of all locally Lipschitz functions $f : X \to Y$.

\item We define the space $\Lip^{\rm loc}(X) := \Lip^{\loc}(X,X)$.
\end{enumerate}
\end{definition}

\begin{definition}
We introduce the following notions:
\begin{enumerate}
\item We say that a function $f : X \to Y$ satisfies the \emph{linear growth condition} if there is a finite constant $C \in \bbr_+$ such that
\begin{align}\label{LG-A}
\| f(x) \| \leq C (1 + \| x \|) \quad \text{for all $x \in X$.}
\end{align}
\item We denote by $\LG(X,Y)$ the space of all functions $f : X \to Y$ satisfying the linear growth condition.

\item We define the space $\LG(X) := \LG(X,Y)$.
\end{enumerate}
\end{definition}

Note that $\Lip(X,Y) \subset \Lip^{\rm loc}(X,Y) \cap \LG(X,Y)$. Indeed, if (\ref{Lip-cond-A}) is fulfilled, setting $C := \max \{ L, \| f(0) \| \}$, for all $x \in X$ we obtain
\begin{align*}
\| f(x) \| \leq \| f(x) - f(0) \| + \| f(0) \| \leq L \| x \| + \| f(0) \| \leq C \| x \| + C = C (1 + \| x \|),
\end{align*}
showing (\ref{LG-A}).

\begin{definition}
We introduce the following notions:
\begin{enumerate}
\item A function $f : X \to Y$ is called \emph{bounded} if there is a constant $M \in \bbr_+$ such that
\begin{align*}
\| f(x) \| \leq M \quad \text{for all $x \in X$.}
\end{align*}
\item We denote by $\B(X,Y)$ the space of all bounded functions $f : X \to Y$.

\item We define the space $\B(X) := \B(X,X)$.
\end{enumerate}
\end{definition}

\begin{definition}
We introduce the following notions:
\begin{enumerate}
\item A function $f : X \to Y$ is called \emph{locally bounded} if for each $C \in \bbr_+$ there is a constant $M(C) \in \bbr_+$ such that
\begin{align}\label{loc-bounded-A}
\| f(x) \| \leq M(C) \quad \text{for all $x \in X$ with $\| x \| \leq C$.}
\end{align}
\item We denote by $\B^{\rm loc}(X,Y)$ the space of all locally bounded functions $f : X \to Y$.

\item We define the space $\B^{\rm loc}(X) := \B^{\rm loc}(X,X)$.
\end{enumerate}
\end{definition}

Note that $\LG(X,Y) \subset \B^{\loc}(X,Y)$. Indeed, if (\ref{LG-A}) is satisfied, for each $C \in \bbr_+$ we set $M(C) := C(1+C)$, and then for all $x \in X$ with $\| x \| \leq C$ we obtain
\begin{align*}
\| f(x) \| \leq C(1 + \| x \|) \leq C(1 + C) = M(C),
\end{align*}
showing (\ref{loc-bounded-A}).

\begin{definition}
We introduce the following notions:
\begin{enumerate}
\item We denote by $C(X,Y)$ the space of all continuous functions $f : X \to Y$.

\item We define the space $C_b(X,Y) := C(X,Y) \cap \B(X,Y)$.

\item We define the spaces $C(X) := C(X,X)$ and $C_b(X) := C_b(X,X)$.
\end{enumerate}
\end{definition}

Note that $\Lip^{\loc}(X,Y) \subset C(X,Y)$. For the next definition, we agree about the convention $\overline{\bbn} := \bbn \cup \{ \infty \}$, where $\bbn = \{ 1,2,3,\ldots \}$ denotes the natural numbers.

\begin{definition}
Let $p \in \overline{\bbn}$ be arbitrary.
\begin{enumerate}
\item We denote by $C^p(X,Y)$ the space of all $p$-times continuously differentiable functions $f : X \to Y$.

\item We denote by $C_b^p(X,Y)$ the space of all $f \in C^p(X,Y)$ such that $f$ is bounded and the derivatives $D^k f$, $k=1,\ldots,p$ are bounded.

\item We define the spaces $C^p(X) := C^p(X,X)$ and $C_b^p(X) := C_b^p(X,X)$. 
\end{enumerate}
\end{definition}

Note that $C_b^1(X,Y) \subset {\rm Lip}(X,Y) \cap \B(X,Y)$.

\begin{definition}
We introduce the following notions:
\begin{enumerate}
\item We denote by $C_b^{1,1}(X,Y)$ the space of all $f \in C_b^1(X,Y)$ such that $Df \in {\rm Lip}(X,L(X,Y))$.

\item We define the space $C_b^{1,1}(X) := C_b^{1,1}(X,X)$.
\end{enumerate}
\end{definition}

Note that $C_b^2(X,Y) \subset C_b^{1,1}(X,Y) \subset C_b^1(X,Y)$. For the rest of this section, let $H$ be a Hilbert space.

\begin{definition}\label{def-retract-X}
For each $n \in \bbn$ we define the retraction
\begin{align*}
R_n : H \to H, \quad R_n(h) := \lambda_n(h) h,
\end{align*}
where the function $\lambda_n : H \to (0,1]$ is given by
\begin{align*}
\lambda_n(h) := \bbI_{\{ \| h \| \leq n \}} + \frac{n}{\| h \|} \bbI_{\{ \| h \| > n \}}, \quad h \in H.
\end{align*}
\end{definition}

\begin{lemma}\label{lemma-retract-X}
The following statements are true:
\begin{enumerate}
\item We have $R_n \to {\rm Id}_H$ as $n \to \infty$.

\item For each $n \in \bbn$ we have $R_n \in \Lip_1(H) \cap \B(H)$.
\end{enumerate}
\end{lemma}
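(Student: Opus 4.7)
The plan is to identify $R_n$ with the metric projection onto the closed ball $\overline{B}_n := \{ g \in H : \|g\| \leq n \}$, and then exploit the standard properties of that projection. Indeed, if $\|h\| \leq n$, then $h \in \overline{B}_n$ projects to itself, so $R_n(h) = h = \lambda_n(h) h$. If $\|h\| > n$, the unique nearest point in $\overline{B}_n$ is $(n/\|h\|)h$, which again equals $\lambda_n(h) h$. So $R_n$ is precisely this nearest-point projection.

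For boundedness, I would just read off $\|R_n(h)\| = \lambda_n(h) \|h\| \leq n$ in both cases, giving $R_n \in \B(H)$. For the Lipschitz estimate, I would invoke the well-known fact that the metric projection onto any closed convex subset of a Hilbert space is nonexpansive; applied to $\overline{B}_n$ this yields $R_n \in \Lip_1(H)$. For readers who prefer a direct check, I would verify it by a short case analysis: the only nontrivial cases are $\|h\| > n, \|g\| > n$ (which reduces to the inequality $\|h\|^2 + \|g\|^2 - 2n^2 \geq (2\|h\|\|g\| - 2n^2)\cos\theta$, following from $\cos\theta \leq 1$ together with $\|h\|^2 + \|g\|^2 \geq 2\|h\|\|g\|$) and the mixed case $\|h\| \leq n < \|g\|$, handled by a similar direct computation using the Cauchy--Schwarz inequality.

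For the convergence statement, I would interpret $R_n \to \Id_H$ in the pointwise sense: for any fixed $h \in H$, as soon as $n \geq \|h\|$ we have $\lambda_n(h) = 1$, hence $R_n(h) = h$. Therefore $R_n(h) \to h$ trivially.

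There is no real obstacle; the only thing to be careful about is (i) specifying that convergence is pointwise (it is clearly not uniform on $H$, since for $\|h\| > n$ one has $\|R_n(h)-h\| = \|h\|-n$ which is unbounded in $h$), and (ii) deciding whether to cite nonexpansiveness of Hilbert projections or carry out the four-case direct computation; the former keeps the proof to a couple of lines, which seems appropriate for a technical appendix lemma.
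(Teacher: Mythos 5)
Your proposal is correct and follows essentially the same route as the paper: both identify $R_n$ as the metric projection onto the closed ball of radius $n$ (the paper via the variational inequality $\langle h - R_n(h), g - R_n(h)\rangle \leq 0$, you via directly exhibiting the nearest point) and then conclude $R_n \in \Lip_1(H)$ from nonexpansiveness of Hilbert space projections, with boundedness and pointwise convergence read off directly from the definition.
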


\begin{proof}
The first statement directly follows from Definition \ref{def-retract-X}. For the proof of the second statement, let $n \in \bbn$ be arbitrary. Then we have $\| R_n(h) \| \leq n$ for all $h \in H$, and hence $R_n \in \B(H)$. Furthermore, the ball $K_n := \{ h \in H : \| h \| \leq n \}$ is a closed convex set. Let $h \in H$ and $g \in K_n$ be arbitrary. If $h \in K_n$, then we have $R_n(h) = h$, and hence
\begin{align*}
\langle h - R_n(h),g - R_n(h) \rangle = 0.
\end{align*}
Now, suppose that $h \in H \setminus K_n$. By the Cauchy Schwarz inequality we have
\begin{align*}
\langle h,g \rangle \leq |\langle h,g \rangle| \leq \| h \| \, \| g \| \leq n \| h \|.
\end{align*}
Moreover, we have $\lambda_n(h) \| h \| = n$, and it follows that
\begin{align*}
\langle h - R_n(h),g - R_n(h) \rangle &= \langle h - \lambda_n(h) h, g - \lambda_n(h) h \rangle = \langle (1 - \lambda_n(h)) h, g - \lambda_n(h) h \rangle
\\ &= (1 - \lambda_n(h)) \big( \langle h,g \rangle - \lambda_n(h) \| h \|^2 \big)
\\ &\leq (1 - \lambda_n(h)) \big( n \| h \| - \lambda_n(h) \| h \|^2 \big)
\\ &= (1 - \lambda_n(h)) \| h \| ( n - \lambda_n(h) \| h \| ) = 0.
\end{align*}
Consequently, the mapping $R_n$ is the metric projection onto the closed convex set $K_n$, and therefore we have $R_n \in \Lip_1(H)$.
\end{proof}

\section{Stability result for SPDEs}\label{app-stability}

In this appendix, we present the required stability result for SPDEs. The mathematical framework is that of Section \ref{sec-ass}. Apart from the SPDE (\ref{SPDE}), we consider the sequence of SPDEs given by
\begin{align}\label{SPDE-n}
\left\{
\begin{array}{rcl}
dr_t^n & = & (A r_t^n + \alpha_n(r_t^n)) dt + \sigma_n(r_t^n) dW_t + \int_E \gamma_n(r_{t-}^n,x) (\mu(dt,dx) - F(dx) dt) \medskip
\\ r_0^n & = & h_0
\end{array}
\right.
\end{align}
for each $n \in \bbn$.

\begin{assumption}\label{ass-stability}
We suppose that the following conditions are fulfilled:
\begin{enumerate}
\item There exists $L \in \bbr_+$ such that $\alpha_n \in {\rm Lip}_L(H)$, $\sigma_n \in {\rm Lip}_L(H,L_2^0(H))$ and $\gamma_n \in {\rm Lip}_L(H,L^2(F))$ for all $n \in \bbn$.

\item We have $\alpha_n \to \alpha$, $\sigma_n \to \sigma$ and $\gamma_n \to \gamma$ for $n \to \infty$.
\end{enumerate}
\end{assumption}

\begin{proposition}\label{prop-stability}
Suppose that Assumption \ref{ass-stability} is fulfilled. Then, for each $h_0 \in H$ we have
\begin{align*}
\bbe \bigg[ \sup_{t \in [0,T]} \| r_t - r_t^n \|^2 \bigg] \to 0 \quad \text{for every $T \in \bbr_+$,}
\end{align*}
where $r$ denotes the mild solution to (\ref{SPDE}) with $r_0 = h_0$, and for each $n \in \bbn$ the process $r^n$ denotes the mild solution to (\ref{SPDE-n}) with $r_0^n = h_0$.
\end{proposition}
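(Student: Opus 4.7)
The plan is to follow the standard strategy for stability of mild solutions to SPDEs: derive a Gronwall-type inequality for the $L^2$-distance of $r$ and $r^n$, and control the ``perturbation'' terms that arise from replacing the true coefficients by the approximating ones via a dominated convergence argument. Subtracting the mild representations (\ref{mild-solution}) of $r$ and $r^n$ yields
\begin{align*}
r_t - r_t^n &= \int_0^t S_{t-s} \bigl( \alpha(r_s) - \alpha_n(r_s^n) \bigr) ds + \int_0^t S_{t-s} \bigl( \sigma(r_s) - \sigma_n(r_s^n) \bigr) dW_s
\\ &\quad + \int_0^t \int_E S_{t-s} \bigl( \gamma(r_{s-},x) - \gamma_n(r_{s-}^n,x) \bigr) (\mu(ds,dx) - F(dx)ds),
\end{align*}
and in each of the three integrands I would split, for example, $\alpha(r_s) - \alpha_n(r_s^n) = \bigl( \alpha(r_s) - \alpha(r_s^n) \bigr) + \bigl( \alpha(r_s^n) - \alpha_n(r_s^n) \bigr)$, the first part being Lipschitz-controlled by $L \| r_s - r_s^n \|$ and the second being a perturbation that should vanish as $n \to \infty$.

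First I would establish a uniform a priori bound $\sup_n \mathbb{E}\bigl[ \sup_{t \in [0,T]} \| r_t^n \|^2 \bigr] < \infty$ for every $T \in \mathbb{R}_+$. The uniform Lipschitz assumption together with pointwise convergence $\alpha_n(0) \to \alpha(0)$, $\sigma_n(0) \to \sigma(0)$, $\gamma_n(0) \to \gamma(0)$ gives a uniform-in-$n$ linear growth bound for the coefficients. Combining Assumption \ref{ass-pseudo-contractive} (so $\| S_{t-s} \| \leq e^{\beta T}$), the Itô isometry for the Wiener and compensated Poisson integrals, a Burkholder-type estimate for stochastic convolutions (or, if one wants to avoid Kotelenez, a factorization argument or the standard martingale-type estimate obtained after localization), and Gronwall's lemma then yields the a priori bound in a routine manner.

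Next, for the stability estimate itself, I would take second moments, apply the triangle inequality and the pseudo-contractivity of $S$, and bound each of the three contributions separately. The Bochner integral term is handled by the Cauchy--Schwarz inequality, and the stochastic integrals are handled by the Itô isometry (for pointwise-in-$t$ estimates) or BDG-type inequalities for stochastic convolutions (for sup-in-$t$ estimates). Setting $\varphi_n(t) := \mathbb{E}\bigl[ \sup_{s \in [0,t]} \| r_s - r_s^n \|^2 \bigr]$, I expect to obtain an inequality of the form
\begin{align*}
\varphi_n(t) \leq C_T \int_0^t \varphi_n(s) \, ds + \varepsilon_n(T),
\end{align*}
where $C_T$ depends on $T$, $\beta$ and $L$ (but not on $n$), and $\varepsilon_n(T)$ collects the perturbation terms of the form
\begin{align*}
\mathbb{E} \int_0^T \| \alpha(r_s^n) - \alpha_n(r_s^n) \|^2 ds, \quad \mathbb{E} \int_0^T \| \sigma(r_s^n) - \sigma_n(r_s^n) \|_{L_2^0(H)}^2 ds,
\end{align*}
and the analogous one for $\gamma$. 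Gronwall's inequality then reduces the proof to showing $\varepsilon_n(T) \to 0$.

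The main obstacle is precisely the convergence $\varepsilon_n(T) \to 0$. Here I would use the pointwise convergence $\alpha_n \to \alpha$, $\sigma_n \to \sigma$, $\gamma_n \to \gamma$ from Assumption \ref{ass-stability} together with Lebesgue's dominated convergence theorem: the integrands converge to zero pointwise (in $(\omega, s)$), and the uniform Lipschitz constant $L$ together with the a priori bound on $\sup_n \mathbb{E}[\sup_{s \leq T} \| r_s^n \|^2]$ provides an integrable dominating function (since, for example, $\| \alpha(r_s^n) - \alpha_n(r_s^n) \|^2 \leq 2 \| \alpha(r_s^n) \|^2 + 2 \| \alpha_n(r_s^n) \|^2$ and both terms are controlled by $C(1 + \| r_s^n \|^2)$ uniformly in $n$). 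A subtlety is that the pointwise convergence of $\gamma_n \to \gamma$ in $L^2(F)$ must be lifted to convergence of $\int_E \|\gamma(r_s^n,x) - \gamma_n(r_s^n,x)\|^2 F(dx)$, which is immediate from the definition of convergence in $\Lip(H, L^2(F))$ combined with continuity of the $L^2(F)$-norm as a function of the argument. Once $\varepsilon_n(T) \to 0$ is established, Gronwall yields $\varphi_n(T) \leq \varepsilon_n(T) e^{C_T T} \to 0$, which is the claim.
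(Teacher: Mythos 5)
The paper gives no argument of its own here: its proof is a single citation to \cite[Prop.~9.1.2]{SPDE}, and your sketch reconstructs essentially the standard Gronwall-plus-perturbation argument that such a result rests on. The overall strategy (mild formulation, pseudo-contractivity, It\^{o} isometry and a maximal inequality for stochastic convolutions, Gronwall, dominated convergence for the perturbation) is the right one.

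There is, however, one concrete misstep that would make your dominated convergence step fail as written. You split $\alpha(r_s) - \alpha_n(r_s^n) = \bigl(\alpha(r_s)-\alpha(r_s^n)\bigr) + \bigl(\alpha(r_s^n)-\alpha_n(r_s^n)\bigr)$, so your perturbation term is $\bbe \int_0^T \| \alpha(r_s^n) - \alpha_n(r_s^n) \|^2 ds$, evaluated at the \emph{moving} argument $r_s^n$. Assumption \ref{ass-stability} gives only pointwise convergence $\alpha_n(h) \to \alpha(h)$; together with the joint Lipschitz constant $L$ this upgrades to uniform convergence on \emph{compact} sets, but not on bounded sets of the infinite dimensional space $H$, so there is no reason for $\| \alpha(r_s^n)-\alpha_n(r_s^n)\| \to 0$ pointwise in $(\omega,s)$. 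Moreover your proposed dominating function $C(1+\|r_s^n\|^2)$ itself depends on $n$, so the uniform bound $\sup_n \bbe[\sup_{t \leq T}\|r_t^n\|^2] < \infty$ does not rescue the argument (it controls expectations, not a fixed integrable majorant). Trying to compare $\alpha(r_s^n)$ back to $\alpha(r_s)$ at this point would be circular, since $r^n \to r$ is what you are proving.

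The fix is simply to use the other decomposition, $\alpha(r_s) - \alpha_n(r_s^n) = \bigl(\alpha(r_s)-\alpha_n(r_s)\bigr) + \bigl(\alpha_n(r_s)-\alpha_n(r_s^n)\bigr)$: the second summand is bounded by $L\|r_s - r_s^n\|$ with the joint constant $L$ and goes into the Gronwall term, while the perturbation $\|\alpha(r_s)-\alpha_n(r_s)\|$ is now evaluated at the \emph{fixed} process $r$, converges to zero pointwise by Assumption \ref{ass-stability}, and is dominated uniformly in $n$ by $C(1+\|r_s\|)$ (using $\|\alpha_n(0)\| \leq \|\alpha_n(0)-\alpha(0)\| + \|\alpha(0)\|$, which is bounded in $n$). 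The same applies verbatim to the $\sigma$ and $\gamma$ terms. With this change the uniform a priori bound on $r^n$ is not even needed, and the rest of your outline goes through.
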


\begin{proof}
This is a consequence of \cite[Prop. 9.1.2]{SPDE}.
\end{proof}

\begin{proposition}\label{prop-K-stability}
Suppose that Assumption \ref{ass-stability} is fulfilled, and that for each $n \in \bbn$ the closed convex cone $K$ is invariant for the SPDE (\ref{SPDE-n}). Then $K$ is also invariant for the SPDE (\ref{SPDE}).
\end{proposition}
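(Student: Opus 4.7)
The plan is to use the $L^2$-stability result Proposition~\ref{prop-stability} together with the closedness of $K$, extracting an almost-surely convergent subsequence via a diagonal argument.

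Fix $h_0 \in K$ and let $r$ be the unique mild solution to (\ref{SPDE}) with $r_0 = h_0$, while for each $n \in \bbn$ the process $r^n$ denotes the unique mild solution to (\ref{SPDE-n}) with $r_0^n = h_0$. By hypothesis, for each $n \in \bbn$ there exists a $\bbp$-nullset $N_n \in \calf$ such that $r_t^n(\omega) \in K$ for all $t \in \bbr_+$ and all $\omega \in \Omega \setminus N_n$.

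By Assumption~\ref{ass-stability} and Proposition~\ref{prop-stability}, for every $T \in \bbn$ we have
\begin{align*}
\bbe \bigg[ \sup_{t \in [0,T]} \| r_t - r_t^n \|^2 \bigg] \to 0 \quad \text{as $n \to \infty$.}
\end{align*}
A standard diagonal extraction then provides a subsequence $(n_k)_{k \in \bbn}$ such that, for every $T \in \bbn$,
\begin{align*}
\sup_{t \in [0,T]} \| r_t - r_t^{n_k} \| \to 0 \quad \text{$\bbp$-almost surely as $k \to \infty$.}
\end{align*}
Concretely, one first chooses a subsequence along which the uniform convergence on $[0,1]$ holds $\bbp$-a.s., then extracts a further subsequence for $[0,2]$, and so on, and finally takes the diagonal sequence.

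Let $N := \bigcup_{k \in \bbn} N_{n_k} \cup N_0$, where $N_0$ is the nullset off which the above uniform convergence holds for all $T \in \bbn$. Then $\bbp(N) = 0$. For every $\omega \in \Omega \setminus N$ and every $t \in \bbr_+$, picking any $T \in \bbn$ with $t \leq T$, we have $r_t^{n_k}(\omega) \to r_t(\omega)$ in $H$ as $k \to \infty$, and $r_t^{n_k}(\omega) \in K$ for all $k$. Since $K$ is closed, we conclude that $r_t(\omega) \in K$. Hence $r \in K$ up to an evanescent set, which shows that $K$ is invariant for the SPDE~(\ref{SPDE}).

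There is no real obstacle beyond carefully combining the $L^2$-convergence with the diagonal extraction; the essential ingredients are the joint Lipschitz constant $L$ (so that Proposition~\ref{prop-stability} is applicable) and the closedness of $K$, which allows one to pass to the limit in the inclusion $r_t^{n_k} \in K$.
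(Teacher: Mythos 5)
Your proof is correct and follows essentially the same route as the paper: apply Proposition \ref{prop-stability}, pass to an almost surely uniformly convergent subsequence, and use closedness of $K$ to preserve the inclusion in the limit. The only cosmetic difference is that you perform a diagonal extraction to get one subsequence valid for all horizons, whereas the paper simply extracts a (possibly different) subsequence for each fixed horizon $N$ and then intersects the resulting full-measure events; both are fine.
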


\begin{proof}
Let $h_0 \in K$ be arbitrary. We denote by $r$ the mild solution to (\ref{SPDE}) with $r_0 = h_0$, and for each $n \in \bbn$ we denote by $r^n$ the mild solution to (\ref{SPDE-n}) with $r_0^n = h_0$. Then, for each $n \in \bbn$ there is an event $\tilde{\Omega}_n \in \calf$ with $\bbp(\tilde{\Omega}_n) = 1$ such that $r_t^n(\omega) \in K$ for all $(\omega,t) \in \tilde{\Omega}_n \times \bbr_+$. Setting $\tilde{\Omega} := \bigcap_{n \in \bbn} \tilde{\Omega}_n \in \calf$ we have $\bbp(\tilde{\Omega}) = 1$ and $r_t^n(\omega) \in K$ for all $(\omega,t) \in \tilde{\Omega} \times \bbr_+$ and all $n \in \bbn$. Now, let $N \in \bbn$ be arbitrary. By Proposition \ref{prop-stability} we have
\begin{align*}
\bbe \bigg[ \sup_{t \in [0,N]} \| r_t - r_t^n \|^2 \bigg] \to 0,
\end{align*}
and hence, there is a subsequence $(n_k)_{k \in \bbn}$ such that $\bbp$-almost surely
\begin{align*}
\sup_{t \in [0,N]} \| r_t - r_t^{n_k} \| \to 0.
\end{align*}
Since $K$ is closed, there is an event $\bar{\Omega}_N \in \calf$ with $\bbp(\bar{\Omega}_N) = 1$ such that $r_t(\omega) \in K$ for all $(\omega,t) \in \bar{\Omega}_N \times [0,N]$. Therefore, setting $\bar{\Omega} := \bigcap_{N \in \bbn} \bar{\Omega}_N \in \calf$ we obtain $\bbp(\bar{\Omega}) = 1$ and $r_t(\omega) \in K$ for all $(\omega,t) \in \bar{\Omega} \times \bbr_+$, showing that $K$ is invariant for (\ref{SPDE}).
\end{proof}

\section{Inward pointing functions}\label{app-drift}

In this appendix, we provide the required results about inward pointing functions, which we need for the proof of Theorem \ref{thm-diffusion}. As in Section \ref{sec-ass}, let $H$ be a separable Hilbert space, let $K \subset H$ be a closed convex cone, and let $G^* \subset K^*$ be a generating system of the cone such that Assumption \ref{ass-Schauder-basis} is fulfilled. Let $D \subset G^* \times K$ be a subset, and let $a : D \to \bbr_+$ be a function.

\begin{assumption}\label{ass-D}
We suppose that for each $(h^*,h) \in D$ the following conditions are fulfilled:
\begin{enumerate}
\item We have $\langle h^*,h \rangle = 0$.

\item For all $\lambda \geq 0$ we have $(h^*,\lambda h) \in D$ and
\begin{align*}
a(h^*,\lambda h) = \lambda a(h^*,h).
\end{align*}
\item For all $g \in K$ with $g \leq_K h$ we have $(h^*,g) \in D$ and
\begin{align*}
a(h^*,g) \leq a(h^*,h).
\end{align*}
\end{enumerate}
\end{assumption}

\begin{definition}\label{def-inward-pointing}
Let $\alpha : H \to H$ be a function. We call the pair $(a,\alpha)$ \emph{inward pointing at the boundary of $K$} (in short \emph{inward pointing}) if for all $(h^*,h) \in D$ we have
\begin{align*}
a(h^*,h) + \langle h^*, \alpha(h) \rangle \geq 0.
\end{align*}
\end{definition}

\begin{definition}\label{def-parallel}
A function $\sigma : H \to H$ is called \emph{parallel at the boundary of $K$} (in short \emph{parallel}) if for all $(h^*,h) \in D$ we have
\begin{align*}
\langle h^*, \sigma(h) \rangle = 0.
\end{align*}
\end{definition}

\begin{definition}\label{def-inv-set}
Let $\sigma : H \to H$ be a function. Then the set $D$ is called \emph{$({\rm Id}_H,\sigma)$-invariant} if
\begin{align*}
(h^*,\sigma(h)) \in D \quad \text{for all $(h^*,h) \in D$.}
\end{align*}
\end{definition}

\begin{remark}
Let $\sigma : H \to H$ be a function. If $D$ is $({\rm Id}_H,\sigma)$-invariant, then $\sigma$ is parallel.
\end{remark}

\begin{lemma}\label{lemma-Pi-alpha}
Let $\alpha : H \to H$ be a function such that $(a,\alpha)$ is inward pointing. Then, for each $n \in \bbn$ the pair $(a,\Pi_n \circ \alpha)$ is inward pointing, too.
\end{lemma}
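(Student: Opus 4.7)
The plan is to exploit two structural ingredients: the shape of the generating system $G^*$ given by Assumption \ref{ass-Schauder-basis}, and the compatibility of the projections $\Pi_n$ with the coordinate functionals $e_k^*$ recorded in Lemma \ref{lemma-proj-par}(ii). The desired inequality reads
\begin{align*}
a(h^*,h) + \langle h^*, \Pi_n \alpha(h) \rangle \geq 0 \quad \text{for all $(h^*,h) \in D$,}
\end{align*}
so I would fix an arbitrary pair $(h^*,h) \in D$ and reduce $\langle h^*, \Pi_n \alpha(h) \rangle$ to something comparable with the hypothesis on $\alpha$.

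First, by Assumption \ref{ass-Schauder-basis} there exist $k \in \bbn$ and $\theta \in \{-1,1\}$ such that $h^* = \theta e_k^*$; in particular $h^* \in \langle e_k^* \rangle$. Applying Lemma \ref{lemma-proj-par}(ii) to $\alpha(h) \in H$ then gives the clean identity
\begin{align*}
\langle h^*, \Pi_n \alpha(h) \rangle = \langle h^*, \alpha(h) \rangle \, \mathbbm{1}_{\{ k \leq n \}}.
\end{align*}
This is the key simplification that collapses the action of $\Pi_n$ on the functional $h^*$ into either the identity or zero.

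Now a simple case split finishes the argument. If $k \leq n$, the identity above yields $\langle h^*, \Pi_n \alpha(h) \rangle = \langle h^*, \alpha(h) \rangle$, so the desired inequality is exactly the inward-pointing hypothesis for $(a,\alpha)$ at $(h^*,h) \in D$. If $k > n$, then $\langle h^*, \Pi_n \alpha(h) \rangle = 0$, and the inequality reduces to $a(h^*,h) \geq 0$, which holds because $a$ takes values in $\bbr_+$. I do not anticipate any obstacle here; the statement is essentially a bookkeeping consequence of the compatibility between $G^*$ and the Schauder projections, and the main work has already been done in setting up Assumption \ref{ass-Schauder-basis} and Lemma \ref{lemma-proj-par}.
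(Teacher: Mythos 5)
Your proposal is correct and follows essentially the same route as the paper: reduce $\langle h^*,\Pi_n\alpha(h)\rangle$ to $\langle h^*,\alpha(h)\rangle\mathbbm{1}_{\{k\leq n\}}$ via Assumption \ref{ass-Schauder-basis} and Lemma \ref{lemma-proj-par}, then conclude from the inward-pointing hypothesis when $k\leq n$ and from the nonnegativity of $a$ when $k>n$. The paper merely compresses your case split into a single line.
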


\begin{proof}
Let $(h^*,h) \in D$ be arbitrary. By Assumption \ref{ass-Schauder-basis} we have $h^* \in \langle e_k^* \rangle$ for some $k \in \bbn$. Thus, by Lemma \ref{lemma-proj-par}, and since $a$ is nonnegative, we obtain
\begin{align*}
a(h^*,h) + \langle h^*,\Pi_n(\alpha(h)) \rangle = a(h^*,h) + \langle h^*,\alpha(h) \rangle \mathbbm{1}_{\{ k \leq n \}} \geq 0,
\end{align*}
finishing the proof.
\end{proof}

\begin{definition}\label{def-F-spaces}
We introduce the following spaces:
\begin{enumerate}
\item For each $n \in \bbn$ we denote by $\F_n(H)$ the space of all functions $\alpha : H \to E_n$.

\item We set $\F(H) := \bigcup_{n \in \bbn} \F_n(H)$.
\end{enumerate}
\end{definition}

\begin{proposition}\label{prop-alpha-FDR}
Let $\alpha \in \Lip(H)$ be a function such that $(a,\alpha)$ is inward pointing. Then, there are a constant $L \in \bbr_+$ and a sequence
\begin{align}\label{alpha-n-FDR}
(\alpha_n)_{n \in \bbn} \subset \Lip_L(H) \cap \F(H) 
\end{align}
such that $(a,\alpha_n)$ is inward pointing for each $n \in \bbn$, and we have $\alpha_n \to \alpha$.
\end{proposition}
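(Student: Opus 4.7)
My plan is to define the approximating sequence by simply projecting the drift onto the finite dimensional subspaces $E_n$ spanned by the first $n$ basis vectors. That is, I would set
\[
\alpha_n := \Pi_n \circ \alpha, \quad n \in \bbn,
\]
where $(\Pi_n)_{n \in \bbn}$ is the sequence of projections from \eqref{Pi-def}. Since $\Pi_n(H) = E_n$, we immediately have $\alpha_n \in \F_n(H) \subset \F(H)$.

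For the uniform Lipschitz bound, let $L_0 \in \bbr_+$ be such that $\alpha \in \Lip_{L_0}(H)$. Since the basis constant $\bc$ is finite and $\|\Pi_n\| \leq \bc$ for every $n \in \bbn$, I would compute
\[
\| \alpha_n(h) - \alpha_n(g) \| = \| \Pi_n (\alpha(h) - \alpha(g)) \| \leq \bc \cdot L_0 \| h - g \|
\]
for all $h,g \in H$, which yields $\alpha_n \in \Lip_L(H)$ with the uniform constant $L := \bc \cdot L_0$, establishing \eqref{alpha-n-FDR}. The inward pointing property of $(a,\alpha_n)$ for each $n \in \bbn$ is then a direct consequence of Lemma \ref{lemma-Pi-alpha}.

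Finally, for the convergence $\alpha_n \to \alpha$, I would apply the first statement of Lemma \ref{lemma-proj-par}, which gives $\Pi_n \to \Id_H$ as $n \to \infty$ in the strong operator topology. Hence for each $h \in H$ we have
\[
\alpha_n(h) = \Pi_n(\alpha(h)) \to \alpha(h) \quad \text{as $n \to \infty$,}
\]
which is the required (pointwise) convergence in the sense of Assumption \ref{ass-stability}. There is no real obstacle here; the argument hinges on the two key structural facts that the basis constant $\bc$ is finite (ensuring a joint Lipschitz constant) and that $(a, \Pi_n \circ \alpha)$ remains inward pointing (which is exactly Lemma \ref{lemma-Pi-alpha}, following from the minimality of $G^*$ expressed in Assumption \ref{ass-Schauder-basis}).
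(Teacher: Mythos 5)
Your proposal is correct and is essentially identical to the paper's own proof: the paper also sets $\alpha_n := \Pi_n \circ \alpha$, obtains the joint Lipschitz constant $L = M\,\bc$ from $\|\Pi_n\| \leq \bc$, and invokes Lemma \ref{lemma-Pi-alpha} for the inward pointing property and Lemma \ref{lemma-proj-par} for the convergence $\alpha_n \to \alpha$.
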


\begin{proof}
We set $\alpha_n := \Pi_n \circ \alpha$ for each $n \in \bbn$. Then, by construction for each $n \in \bbn$ we have $\alpha_n \in \F(H)$. By hypothesis there exists a constant $M \in \bbr_+$ such that $\alpha \in {\rm Lip}_M(H)$. Setting $L := M \bc$, we have $\alpha_n \in {\rm Lip}_L(H)$ for each $n \in \bbn$, showing (\ref{alpha-n-FDR}). Furthermore, by Lemma \ref{lemma-Pi-alpha}, for each $n \in \bbn$ the pair $(a,\alpha_n)$ is inward pointing, and by Lemma \ref{lemma-proj-par} we have $\alpha_n \to \alpha$.
\end{proof}

\begin{lemma}\label{lemma-inward-comp}
Let $\alpha,\beta : H \to H$ be two functions such that the following conditions are fulfilled:
\begin{enumerate}
\item $(a,\alpha)$ is inward pointing.

\item $D$ is $({\rm Id}_H,\beta)$-invariant, and for all $(h^*,h) \in D$ we have
\begin{align}\label{inward-comp-drift}
a(h^*,\beta(h)) \leq a(h^*,h).
\end{align}
\end{enumerate}
Then the pair $(a,\alpha \circ \beta)$ is inward pointing.
\end{lemma}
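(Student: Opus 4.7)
The plan is to unfold the definitions and chain the two hypotheses together; no genuine obstacle is anticipated, since the lemma is essentially the formal composition rule saying that inward pointing is preserved under any substitution of base point which respects $D$ and does not enlarge $a(h^*,\cdot)$.

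Concretely, I would fix an arbitrary pair $(h^*,h) \in D$, and note that by Definition \ref{def-inward-pointing} the goal is to verify
\begin{equation*}
a(h^*,h) + \langle h^*,\alpha(\beta(h))\rangle \geq 0.
\end{equation*}
The second hypothesis on $\beta$, via Definition \ref{def-inv-set}, guarantees that $(h^*,\beta(h))$ again lies in $D$. This is the key move: it allows the inward pointing property of $(a,\alpha)$ to be invoked at the shifted point $\beta(h)$, giving
\begin{equation*}
a(h^*,\beta(h)) + \langle h^*,\alpha(\beta(h))\rangle \geq 0.
\end{equation*}

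Rearranging this yields $\langle h^*,\alpha(\beta(h))\rangle \geq -a(h^*,\beta(h))$, and applying the inequality (\ref{inward-comp-drift}) on the right-hand side gives $\langle h^*,\alpha(\beta(h))\rangle \geq -a(h^*,h)$, which is precisely the desired inequality. Since $(h^*,h) \in D$ was arbitrary, $(a,\alpha\circ\beta)$ is inward pointing, completing the proof. The whole argument is a two-line chain, and I do not foresee any subtleties: the nonnegativity of $a$ is not even used, only the sign-preserving monotonicity built into hypothesis (ii).
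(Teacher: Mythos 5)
Your proof is correct and is essentially identical to the paper's: both use the $({\rm Id}_H,\beta)$-invariance of $D$ to apply the inward pointing property of $(a,\alpha)$ at $\beta(h)$, then chain with the inequality $a(h^*,\beta(h)) \leq a(h^*,h)$. The paper writes this as a single two-step chain of inequalities rather than rearranging, but the content is the same.
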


\begin{proof}
Let $(h^*,h) \in D$ be arbitrary. Since the set $D$ is $({\rm Id}_H,\beta)$-invariant, we have $(h^*,\beta(h)) \in D$. Therefore, by (\ref{inward-comp-drift}), and since $(a,\alpha)$ is inward pointing, we obtain
\begin{align*}
a(h^*,h) + \langle h^*,\alpha(\beta(h)) \rangle \geq a(h^*,\beta(h)) + \langle h^*,\alpha(\beta(h)) \rangle \geq 0,
\end{align*}
finishing the proof.
\end{proof}

We denote $(R_n)_{n \in \bbn}$ the retractions $R_n : H \to H$ defined according to Definition \ref{def-retract-X}.
We will need the following auxiliary result.

\begin{lemma}\label{lemma-retract-par}
Let $n \in \bbn$ be arbitrary. Then $D$ is $({\rm Id}_H,R_n)$-invariant, and for all $(h^*,h) \in D$ we have
\begin{align*}
a(h^*,R_n(h)) \leq a(h^*,h).
\end{align*}
\end{lemma}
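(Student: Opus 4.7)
The plan is to observe that $R_n$ acts on $H$ purely as scalar multiplication by a number in $(0,1]$, so that both claims reduce immediately to the scaling property stated in Assumption~\ref{ass-D}.

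First I would recall from Definition~\ref{def-retract-X} that $R_n(h) = \lambda_n(h)\, h$ with $\lambda_n(h) \in (0,1]$. Fix an arbitrary $(h^*, h) \in D$. Since $D \subset G^* \times K$, we have $h \in K$, and since $K$ is a cone, $\lambda_n(h)\, h \in K$ as well. Applying Assumption~\ref{ass-D}(ii) with $\lambda = \lambda_n(h) \geq 0$, we obtain $(h^*, \lambda_n(h)\, h) = (h^*, R_n(h)) \in D$. This establishes the $({\rm Id}_H, R_n)$-invariance of $D$ in the sense of Definition~\ref{def-inv-set}.

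For the inequality, the same application of Assumption~\ref{ass-D}(ii) gives
\begin{align*}
a(h^*, R_n(h)) = a(h^*, \lambda_n(h)\, h) = \lambda_n(h)\, a(h^*, h).
\end{align*}
Since $\lambda_n(h) \leq 1$ and $a(h^*, h) \in \bbr_+$, the right-hand side is bounded above by $a(h^*, h)$, yielding the required estimate $a(h^*, R_n(h)) \leq a(h^*, h)$.

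There is no real obstacle here: the lemma is a straightforward consequence of the scaling axiom in Assumption~\ref{ass-D} together with the fact that $R_n$ is a pointwise contraction by a scalar in $(0,1]$. The only thing to be careful about is that $\lambda_n(h)$ is strictly positive (so Assumption~\ref{ass-D}(ii) genuinely applies with a nonnegative scalar), but this is immediate from the definition of $\lambda_n$.
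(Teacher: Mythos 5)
Your proof is correct and follows exactly the same route as the paper: write $R_n(h)=\lambda_n(h)h$ with $\lambda_n(h)\in(0,1]$ and apply the scaling property of Assumption~\ref{ass-D}(ii) to get both membership in $D$ and the inequality $a(h^*,\lambda_n(h)h)=\lambda_n(h)a(h^*,h)\leq a(h^*,h)$. Nothing to add.
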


\begin{proof}
Let $n \in \bbn$ be arbitrary. Recalling the notation from Definition \ref{def-retract-X}, there is a function $\lambda_n : H \to (0,1]$ such that
\begin{align*}
R_n(h) = \lambda_n(h) h \quad \text{for each $h \in H$.}
\end{align*}
By Assumption \ref{ass-D}, for all $(h^*,h) \in D$ we obtain $(h^*,R_n(h)) = (h^*,\lambda_n(h) h) \in D$ and
\begin{align*}
a(h^*,R_n(h)) = a(h^*,\lambda_n(h) h) = \lambda_n(h) a(h^*,h) \leq a(h^*,h),
\end{align*}
completing the proof.
\end{proof}

\begin{proposition}\label{prop-alpha-bounded}
Let $\alpha \in {\rm Lip}(H) \cap \F(H)$ be a function such that $(a,\alpha)$ is inward pointing. Then there are a constant $L \in \bbr_+$ and a sequence 
\begin{align}\label{alpha-bounded}
(\alpha_n)_{n \in \bbn} \subset {\rm Lip}_L(H) \cap \F(H) \cap \B(H)
\end{align}
such that $(a,\alpha_n)$ is inward pointing for each $n \in \bbn$, and we have $\alpha_n \to \alpha$.
\end{proposition}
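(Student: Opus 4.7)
The plan is to set $\alpha_n := \alpha \circ R_n$ for each $n \in \bbn$, where $(R_n)_{n \in \bbn}$ is the sequence of retractions from Definition \ref{def-retract-X}. This is essentially the same trick used in the proof of Theorem \ref{thm-suff-general}, but now applied at the level of the drift approximations, and the point is that the three preceding auxiliary results (Lemmas \ref{lemma-retract-X}, \ref{lemma-inward-comp}, \ref{lemma-retract-par}) are tailor-made to make the composition preserve the inward pointing property.

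The verifications are routine. First, since $\alpha \in \F(H)$, there is some $m \in \bbn$ with $\alpha(H) \subset E_m$, and then trivially $\alpha_n(H) \subset E_m$, hence $\alpha_n \in \F(H)$. Second, by hypothesis there is $M \in \bbr_+$ with $\alpha \in \Lip_M(H)$, and by Lemma \ref{lemma-retract-X} we have $R_n \in \Lip_1(H)$, so setting $L := M$ yields $\alpha_n \in \Lip_L(H)$ uniformly in $n$. Third, since $R_n(H) \subset \{ h \in H : \| h \| \leq n \}$, the Lipschitz estimate
\begin{align*}
\| \alpha(R_n(h)) \| \leq \| \alpha(0) \| + M \| R_n(h) \| \leq \| \alpha(0) \| + Mn
\end{align*}
gives $\alpha_n \in \B(H)$, which altogether establishes the containment (\ref{alpha-bounded}).

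For the inward pointing property, I would simply apply Lemma \ref{lemma-inward-comp} with $\beta := R_n$: by Lemma \ref{lemma-retract-par}, the set $D$ is $(\Id_H, R_n)$-invariant and $a(h^*, R_n(h)) \leq a(h^*, h)$ for all $(h^*,h) \in D$, which are precisely the two hypotheses of Lemma \ref{lemma-inward-comp}; combined with the assumption that $(a,\alpha)$ is inward pointing, this yields that $(a, \alpha_n) = (a, \alpha \circ R_n)$ is inward pointing. Finally, the convergence $\alpha_n \to \alpha$ (understood pointwise, as in Assumption \ref{ass-stability}) follows because $R_n \to \Id_H$ pointwise by Lemma \ref{lemma-retract-X}, and $\alpha$ is continuous as a Lipschitz function.

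There is no real obstacle here; the proposition is essentially a bookkeeping exercise, with all the substantive work already packaged into the preceding lemmas. The only minor subtlety is making sure the Lipschitz constant $L$ can be chosen independently of $n$, which is guaranteed by the uniform $1$-Lipschitz property of the retractions in Lemma \ref{lemma-retract-X}.
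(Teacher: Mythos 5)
Your proposal is correct and follows essentially the same route as the paper: the paper also defines $\alpha_n := \alpha \circ R_n$, obtains (\ref{alpha-bounded}) from Lemma \ref{lemma-retract-X} together with the inclusion $\Lip_L(H) \subset \B^{\loc}(H)$, derives the inward pointing property by combining Lemmas \ref{lemma-inward-comp} and \ref{lemma-retract-par}, and gets $\alpha_n \to \alpha$ from $R_n \to \Id_H$. The only cosmetic difference is that you spell out the boundedness estimate explicitly rather than citing the inclusion $\Lip_L(H) \subset \B^{\loc}(H)$.
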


\begin{proof}
We set $\alpha_n := \alpha \circ R_n$ for each $n \in \bbn$. Let $n \in \bbn$ be arbitrary. Then we have $\alpha_n \in \F(H)$, because $\alpha \in \F(H)$. By hypothesis there exists a constant $L \in \bbr_+$ such that $\alpha \in {\rm Lip}_L(H)$, and by Lemma \ref{lemma-retract-X} and the inclusion $\Lip_L(H) \subset \B^{\loc}(H)$ it follows that $\alpha_n \in {\rm Lip}_L(H) \cap \B(H)$, showing (\ref{alpha-bounded}). Combining Lemmas \ref{lemma-inward-comp} and \ref{lemma-retract-par}, we obtain that $(a,\alpha_n)$ is inward pointing. Furthermore, by Lemma \ref{lemma-retract-X} we have $\alpha_n \to \alpha$.
\end{proof}

\section{Parallel functions}\label{app-volatility}

In this appendix, we provide the required results about parallel function, which we need for the proofs of Theorems \ref{thm-diffusion-C2} and \ref{thm-diffusion}. The general mathematical framework is that of Appendix \ref{app-drift}. First, we will extend the Definition \ref{def-parallel} of a parallel function. 

\begin{definition}\label{def-locally-par}
A function $\sigma : H \to H$ is called \emph{locally parallel to the boundary of $K$} (in short \emph{locally parallel}) if there exists $\epsilon > 0$ such that for all $(h^*,h) \in D$ we have
\begin{align}\label{loc-par-prop}
\langle h^*, \sigma(h-g) \rangle = 0 \quad \text{for all $g \in H$ with $\| g \| \leq \epsilon$.}
\end{align}
\end{definition}

\begin{definition}\label{def-locally-par-weak}
A function $\sigma : H \to H$ is called \emph{weakly locally parallel to the boundary of $K$} (in short \emph{weakly locally parallel}) if for all $(h^*,h) \in D$ there exists $\epsilon = \epsilon(h^*,h) > 0$ such that we have (\ref{loc-par-prop}). 
\end{definition}

\begin{definition}
Let $\sigma : H \to H$ be a function. Then the set $D$ is called \emph{locally $({\rm Id}_H,\sigma)$-invariant} if there exists $\epsilon > 0$ such that for all $(h^*,h) \in D$ we have
\begin{align*}
(h^*,\sigma(h-g)) \in D \quad \text{for all $g \in H$ with $\| g \| \leq \epsilon$.}
\end{align*}
\end{definition}

\begin{remark}
Let $\sigma : H \to H$ be a function. 
\begin{enumerate}
\item If $\sigma$ is locally parallel, then it weakly locally parallel, too.

\item If $D$ is locally $({\rm Id}_H,\sigma)$-invariant, then $\sigma$ is locally parallel.
\end{enumerate}
\end{remark}

As in Section \ref{sec-ass}, let $U$ be a separable Hilbert space, and let $Q \in L(U)$ be a nuclear, self-adjoint, positive definite linear operator. Recall that $U_0 := Q^{1/2}(U)$ equipped with the inner product (\ref{inner-prod-U0}) is another separable Hilbert space, and that $L_2^0(H) := L_2(U_0,H)$ denotes the space of Hilbert-Schmidt operators from $U_0$ into $H$. Furthermore, recall that we have fixed an orthonormal basis $\{ g_j \}_{j \in \bbn}$ of $U_0$, and that for each $\sigma \in L_2^0(H)$ we set $\sigma^j := \sigma g_j$ for $j \in \bbn$. With this notation, the Hilbert-Schmidt norm is given by
\begin{align}\label{norm-HS}
\| \sigma \| = \sqrt{\sum_{j \in \bbn} \| \sigma^j \|^2} \quad \text{for each $\sigma \in L_2^0(H)$.}
\end{align}

\begin{definition}
We denote by $\F(H,L_2^0(H))$ the space of all functions $\sigma : H \to L_2^0(H)$ such that for some $n \in \bbn$ we have $\sigma^j(H) \subset E_n$ for all $j \in \bbn$.
\end{definition}

\begin{definition}
We denote by $\G(H,L_2^0(H))$ the space of all functions $\sigma : H \to L_2^0(H)$ such that for some index $N \in \bbn$ we have $\sigma^j = 0$ for all $j \in \bbn$ with $j > N$.
\end{definition}

\begin{remark}
In view of the following Lemma \ref{lemma-rho} and later results such as Lemma \ref{lemma-norm-Tn}, we emphasize that for a bounded linear operator $T$ we denote by $\| T \|$ the usual operator norm. 
As an exception, we agree that in the particular situation $\sigma \in L_2^0(H)$ we denote by $\| \sigma \|$ the Hilbert-Schmidt norm defined in (\ref{norm-HS}), unless stated otherwise.
\end{remark}

\begin{lemma}\label{lemma-rho}
Let $\sigma \in C_b^2(H,L_2^0(H)) \cap \G(H,L_2^0(H))$ be arbitrary. Then the following statements are true:
\begin{enumerate}
\item For each $h \in H$ we have $\sum_{j \in \bbn} \| D \sigma^j(h) \sigma^j(h) \| < \infty$.

\item The function $\rho : H \to H$ defined as
\begin{align}\label{def-rho}
\rho(h) := \frac{1}{2} \sum_{j \in \bbn} D \sigma^j(h) \sigma^j(h), \quad h \in H,
\end{align}
belongs to ${\rm Lip}(H) \cap \B(H)$.
\end{enumerate}
\end{lemma}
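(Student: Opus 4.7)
The key structural observation is that the hypothesis $\sigma \in \G(H,L_2^0(H))$ supplies an index $N \in \bbn$ such that $\sigma^j \equiv 0$ for every $j > N$. Thus the infinite sums in both statements collapse to finite sums with only $N$ potentially nonzero terms, and the proof reduces to showing that for each fixed $j \in \{1,\ldots,N\}$ the mapping
\begin{align*}
h \mapsto D\sigma^j(h) \sigma^j(h)
\end{align*}
belongs to $\Lip(H) \cap \B(H)$.

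For this, I would first record the coordinatewise bounds that follow from $\sigma \in C_b^2(H,L_2^0(H))$. Since $\sigma^j(h) = \sigma(h) g_j$ and the Hilbert--Schmidt norm on $L_2^0(H)$ dominates the operator norm, one obtains
\begin{align*}
\| \sigma^j(h) \|_H \leq \| \sigma(h) \|_{L_2^0(H)}, \quad \| D \sigma^j(h) \|_{L(H)} \leq \| D \sigma(h) \|_{L(H,L_2^0(H))}.
\end{align*}
Because $\sigma \in C_b^2$, the right-hand sides are uniformly bounded in $h$ by some constant $M$, and moreover $D\sigma^j$ is Lipschitz from $H$ into $L(H)$, with Lipschitz constant bounded by $\sup_{h \in H}\| D^2 \sigma(h) \| < \infty$. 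Hence $\sigma^j \in C_b^1(H)$ and $D\sigma^j \in \Lip(H,L(H)) \cap \B(H,L(H))$.

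With these ingredients in hand, the Lipschitz and boundedness estimate for the $j$-th summand follows from the standard product-rule bound
\begin{align*}
\| D\sigma^j(h_1)\sigma^j(h_1) - D\sigma^j(h_2)\sigma^j(h_2) \|_H
&\leq \| D\sigma^j(h_1) \|_{L(H)} \, \| \sigma^j(h_1) - \sigma^j(h_2) \|_H \\
&\quad + \| D\sigma^j(h_1) - D\sigma^j(h_2) \|_{L(H)} \, \| \sigma^j(h_2) \|_H,
\end{align*}
which, together with the uniform bounds above, yields a Lipschitz constant and a uniform norm bound for $h \mapsto D\sigma^j(h)\sigma^j(h)$. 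Summing over the finitely many indices $j \leq N$ and multiplying by $1/2$ proves both statements simultaneously. I do not foresee a genuine obstacle here; the only point requiring a bit of care is the passage between the Hilbert--Schmidt norm on $L_2^0(H)$ and the operator norms on the component mappings $\sigma^j$ and $D\sigma^j$, which is needed to transfer the $C_b^2$-bounds of $\sigma$ to each coordinate.
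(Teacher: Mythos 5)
Your proposal is correct and follows essentially the same route as the paper's proof: exploit $\G(H,L_2^0(H))$ to reduce to a finite sum, transfer the $C_b^2$-bounds on $\sigma$ to each component $\sigma^j$ via the norm-one evaluation operator $\sigma \mapsto \sigma g_j$, and apply the product-rule splitting for the Lipschitz estimate. The only inessential difference is that the paper keeps the sum over $j$ and applies the Cauchy--Schwarz inequality to obtain constants of order $\sqrt{N}$, whereas you bound each summand separately and sum, obtaining constants of order $N$.
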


\begin{proof}
By assumption, there exists a constant $C \in \bbr_+$ such that
\begin{align*}
\max \{ \| \sigma(h) \|, \| D \sigma(h) \|, \| D^2 \sigma(h) \| \} \leq C \quad \text{for all $h \in H$.}
\end{align*}
Furthermore, there exists an index $N \in \bbn$ such that $\sigma^j(h) = 0$ for all $h \in H$ and all $j \in \bbn$ with $j > N$. Noting that for each $j \in \bbn$ the norm of the linear operator $L_2^0(H) \to H$, $\sigma \mapsto \sigma^j$ is bounded by $1$, by the chain rule and the Cauchy Schwarz inequality, for each $h \in H$ we obtain
\begin{align*}
&\sum_{j \in \bbn} \| D \sigma^j(h) \sigma^j(h) \| \leq \sum_{j=1}^N \| D \sigma^j(h) \| \, \| \sigma^j(h) \|
\\ &\leq \bigg( \sum_{j=1}^N \| D \sigma(h) \|^2 \bigg)^{1/2} \bigg( \sum_{j=1}^N \| \sigma^j(h) \|^2 \bigg)^{1/2} \leq \sqrt{N} \| D \sigma(h) \| \, \| \sigma(h) \| \leq \sqrt{N} C^2.
\end{align*}
proving the first statement and $\rho \in \B(H)$. For the proof of the second statement, let $h_1,h_2 \in H$ be arbitrary. By the chain rule and Cauchy Schwarz inequality we obtain
\begin{align*}
&\| \rho(h_1) - \rho(h_2) \| \leq \frac{1}{2} \sum_{j=1}^N \| D \sigma^j(h_1) \sigma^j(h_1) - D \sigma^j(h_2) \sigma^j(h_2) \|
\\ &\leq \frac{1}{2} \sum_{j=1}^N \| D \sigma^j(h_1) \| \, \| \sigma^j(h_1) - \sigma^j(h_2) \| + \frac{1}{2} \sum_{j=1}^N \| \sigma^j(h_2) \| \, \| D \sigma^j(h_1) - D \sigma^j(h_2) \|
\\ &\leq \frac{1}{2} \bigg( \sum_{j=1}^N \| D \sigma(h_1) \|^2 \bigg)^{1/2} \bigg( \sum_{j=1}^N \| \sigma^j(h_1) - \sigma^j(h_2) \|^2 \bigg)^{1/2}
\\ &\quad + \frac{1}{2} \bigg( \sum_{j=1}^N \| \sigma^j(h_2) \|^2 \bigg)^{1/2} \bigg( \sum_{j=1}^N \| D \sigma(h_1) - D \sigma(h_2) \|^2 \bigg)^{1/2},
\end{align*}
and hence
\begin{align*}
\| \rho(h_1) - \rho(h_2) \| &\leq \frac{\sqrt{N}}{2} \| D \sigma(h_1) \| \, \| \sigma(h_1) - \sigma(h_2) \|
\\ &\quad + \frac{\sqrt{N}}{2} \| \sigma(h_2) \| \, \| D \sigma(h_1) - D \sigma(h_2) \|
\\ &\leq \sqrt{N} C^2 \| h_1 - h_2 \|, 
\end{align*}
showing that $\rho \in {\rm Lip}(H)$.
\end{proof}

\begin{lemma}\label{lemma-rho-parallel}
Let $\sigma \in C_b^2(H,L_2^0(H)) \cap \G(H,L_2^0(H))$ be such that for each $j \in \bbn$ the function $\sigma^j : H \to H$ is weakly locally parallel. Then the function $\rho : H \to H$ defined in (\ref{def-rho}) is parallel.
\end{lemma}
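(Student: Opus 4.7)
The goal is, for arbitrary $(h^*,h) \in D$, to show $\langle h^*, \rho(h)\rangle = 0$. The plan is to exploit the fact that since $\sigma \in \G(H,L_2^0(H))$, the sum defining $\rho(h) = \tfrac{1}{2}\sum_{j\in\bbn} D\sigma^j(h)\sigma^j(h)$ is actually finite: there is $N \in \bbn$ with $\sigma^j \equiv 0$ for $j > N$. So it suffices to prove, for each $j \in \{1,\ldots,N\}$ separately, that $\langle h^*, D\sigma^j(h)\sigma^j(h)\rangle = 0$.

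Fix such a $j$. Because $\sigma^j$ is weakly locally parallel (Definition \ref{def-locally-par-weak}), there is $\epsilon = \epsilon(h^*,h,j) > 0$ such that
\begin{align*}
\langle h^*, \sigma^j(h - g)\rangle = 0 \quad \text{for all } g \in H \text{ with } \|g\| \leq \epsilon.
\end{align*}
The key idea is to differentiate this identity in $g$ at $g = 0$. Define $f_j : H \to \bbr$ by $f_j(g) := \langle h^*, \sigma^j(h-g)\rangle$. Since $\sigma^j \in C_b^2(H)$ and $\langle h^*, \cdot\rangle$ is a continuous linear functional, $f_j$ is $C^2$, and by the chain rule $Df_j(g)(v) = -\langle h^*, D\sigma^j(h-g)v\rangle$. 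But $f_j$ vanishes on the whole ball $\{g : \|g\| \leq \epsilon\}$, so $Df_j(0) = 0$. Evaluating at $v = \sigma^j(h)$ yields
\begin{align*}
\langle h^*, D\sigma^j(h)\sigma^j(h)\rangle = -Df_j(0)(\sigma^j(h)) = 0.
\end{align*}

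Summing this equality over $j = 1,\ldots,N$ (and noting the remaining terms in the defining series for $\rho$ are zero) gives $\langle h^*, \rho(h)\rangle = 0$, as required. Since $(h^*,h) \in D$ was arbitrary, $\rho$ is parallel in the sense of Definition \ref{def-parallel}.

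The proof is essentially routine once the reduction to a single index $j$ is made; there is no real obstacle, since weak local parallelism provides a whole neighborhood on which the scalar function $f_j$ vanishes, which immediately forces its derivative (and hence the directional derivative in the direction $\sigma^j(h)$) to vanish at the origin. The only subtle point is ensuring that the finite sum provided by $\sigma \in \G(H,L_2^0(H))$ avoids any convergence or uniformity issue that would arise if infinitely many weakly locally parallel components had to be handled simultaneously with $\epsilon_j$ depending on $j$.
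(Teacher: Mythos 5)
Your proposal is correct and follows essentially the same route as the paper: the paper likewise fixes $j$, uses weak local parallelism to get $\langle h^*,\sigma^j(h+t\sigma^j(h))\rangle = 0$ for small $t$, and differentiates along that line to conclude $\langle h^*, D\sigma^j(h)\sigma^j(h)\rangle = 0$ before summing. Your phrasing via the vanishing of the full derivative $Df_j(0)$ on a ball is just a mild repackaging of the same directional-derivative computation (and incidentally dispenses with the paper's case distinction on whether $\sigma^j(h)=0$).
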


\begin{proof}
Let $(h^*,h) \in D$ be arbitrary. Furthermore, let $j \in \bbn$ be arbitrary. Since $\sigma^j$ is locally parallel, there exists $\epsilon > 0$ such that
\begin{align*}
\langle h^*, \sigma^j(h-g) \rangle = 0 \quad \text{for all $g \in H$ with $\| g \| \leq \epsilon$.}
\end{align*}
We define $\delta > 0$ as
\begin{align*}
\delta :=
\begin{cases}
\epsilon / \| \sigma^j(h) \|, & \text{if $\sigma^j(h) \neq 0$,}
\\ 1, & \text{if $\sigma^j(h) = 0$.}
\end{cases}
\end{align*}
Then we have
\begin{align*}
\langle h^*, \sigma^j(h + t \sigma^j(h)) \rangle = 0 \quad \text{for all $t \in [-\delta,\delta]$.}
\end{align*}
Therefore, we obtain
\begin{align*}
\langle h^*, D \sigma^j(h) \sigma^j(h) \rangle &= \Big\langle h^*, \lim_{t \to 0} \frac{\sigma^j(h + t \sigma^j(h)) - \sigma^j(h)}{t} \Big\rangle
\\ &= \lim_{t \to 0} \frac{\langle h^*,\sigma^j(h + t \sigma^j(h)) \rangle - \langle h^*, \sigma^j(h) \rangle}{t} = 0.
\end{align*}
This implies
\begin{align*}
\langle h^*,\rho(h) \rangle = \bigg\langle h^*, \frac{1}{2} \sum_{j \in \bbn} D \sigma^j(h) \sigma^j(h) \bigg\rangle = \frac{1}{2} \sum_{j \in \bbn} \langle h^*,D \sigma^j(h) \sigma^j(h) \rangle = 0,
\end{align*}
showing that $\rho$ is parallel.
\end{proof}

\begin{lemma}\label{lemma-sigma-u}
Let $\sigma : H \to L_2^0(H)$ be such that for each $j \in \bbn$ the function $\sigma^j : H \to H$ is parallel. Then, for each $u \in U_0$ the function $\sigma(\cdot)u : H \to H$ is parallel.
\end{lemma}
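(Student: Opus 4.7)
The plan is to expand the vector $u$ in the orthonormal basis $\{g_j\}_{j \in \bbn}$ of $U_0$ and use continuity of the linear functional $\langle h^*,\cdot\rangle$ to pass the inner product through the resulting $H$-valued series. Fix $(h^*,h) \in D$ and $u \in U_0$. Writing $u_j := \langle u,g_j\rangle_{U_0}$, we have the expansion $u = \sum_{j \in \bbn} u_j g_j$ convergent in $U_0$.

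Since $\sigma(h) \in L_2^0(H)$ is in particular a bounded linear operator from $U_0$ to $H$, applying it to both sides yields
\begin{align*}
\sigma(h) u = \sum_{j \in \bbn} u_j \sigma(h) g_j = \sum_{j \in \bbn} u_j \sigma^j(h),
\end{align*}
with convergence in $H$. Now pair with $h^*$: because $\langle h^*,\cdot \rangle : H \to \bbr$ is a bounded linear functional, we may interchange it with the convergent series, obtaining
\begin{align*}
\langle h^*, \sigma(h) u \rangle = \sum_{j \in \bbn} u_j \langle h^*, \sigma^j(h) \rangle.
\end{align*}
By hypothesis each $\sigma^j$ is parallel, so every summand vanishes, giving $\langle h^*, \sigma(h)u \rangle = 0$. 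Since $(h^*,h) \in D$ was arbitrary, the function $\sigma(\cdot)u$ is parallel in the sense of Definition \ref{def-parallel}.

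There is no genuine obstacle here; the only point worth noting is the justification for interchanging the linear functional with the infinite sum, which is immediate from continuity of $\langle h^*,\cdot\rangle$ applied to the $H$-convergent expansion of $\sigma(h)u$. No further properties of $D$ or of the cone $K$ beyond the definition of parallel are needed.
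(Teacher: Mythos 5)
Your proof is correct and follows essentially the same route as the paper: expanding $u = \sum_{j \in \bbn} \langle u,g_j \rangle_{U_0}\, g_j$, applying $\sigma(h)$ and the continuous functional $\langle h^*,\cdot\rangle$ term by term, and invoking the parallelism of each $\sigma^j$. Your explicit justification of the interchange of the functional with the convergent series is a minor elaboration of what the paper leaves implicit.
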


\begin{proof}
Recall that we have fixed an orthonormal basis $\{ g_j \}_{j \in \bbn}$ of $U_0$. Let $u \in U_0$ be arbitrary, and let $(h^*,h) \in D$ be arbitrary. Since for each $j \in \bbn$ the function $\sigma^j : H \to H$ is parallel, we obtain
\begin{align*}
\langle h^*, \sigma(h)u \rangle = \Big\langle h^*, \sigma(h) \sum_{j \in \bbn} \langle u,g_j \rangle_{U_0} g_j \Big\rangle = \sum_{j \in \bbn} \langle u,g_j \rangle_{U_0} \langle h^*, \sigma^j(h) \rangle = 0,
\end{align*}
showing that $\sigma(\cdot)u$ is parallel.
\end{proof}

For each $n \in \bbn$ let $G_n \subset U_0$ be the finite dimensional subspace $G_n := \langle g_1,\ldots,g_n \rangle$, denote by $\pi_n : U_0 \to G_n$ the corresponding projection
\begin{align*}
\pi_n u = \sum_{j=1}^n \langle u,g_j \rangle_{U_0} g_j, \quad u \in U_0,
\end{align*}
and let $T_n : L_2^0(H) \to L_2^0(H)$ be the linear operator given by $T_n \sigma := \sigma \circ \pi_n$ for each $\sigma \in L_2^0(H)$. Note that for each $n \in \bbn$ and each $\sigma \in L_2^0(H)$ we have
\begin{align}\label{finite}
(T_n \sigma)^j = \sigma(\pi_n(g_j)) = \sigma^j \bbI_{\{ j \leq n \}}, \quad j \in \bbn.
\end{align}

\begin{lemma}\label{lemma-norm-Tn}
The following statements are true:
\begin{enumerate}
\item For each $n \in \bbn$ we have $\| T_n \| \leq 1$.

\item For each $\sigma \in L_2^0(H)$ we have $T_n \sigma \to \sigma$ as $n \to \infty$.
\end{enumerate}
\end{lemma}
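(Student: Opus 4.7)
The plan is to exploit the identity (\ref{finite}), which expresses the action of $T_n$ as truncation in the Hilbert-Schmidt expansion along the orthonormal basis $\{g_j\}_{j \in \bbn}$. Once the Hilbert-Schmidt norm (\ref{norm-HS}) is combined with (\ref{finite}), both assertions reduce to elementary facts about tails of convergent series of nonnegative reals; no subtle analytic step is required.

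For part (1), I would fix $n \in \bbn$ and an arbitrary $\sigma \in L_2^0(H)$, and compute directly
\begin{align*}
\| T_n \sigma \|^2 = \sum_{j \in \bbn} \| (T_n \sigma)^j \|^2 = \sum_{j \in \bbn} \| \sigma^j \|^2 \bbI_{\{ j \leq n \}} = \sum_{j=1}^n \| \sigma^j \|^2 \leq \sum_{j \in \bbn} \| \sigma^j \|^2 = \| \sigma \|^2,
\end{align*}
using (\ref{finite}) in the second equality. Taking square roots and then the supremum over the unit ball in $L_2^0(H)$ yields $\| T_n \| \leq 1$.

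For part (2), I would fix $\sigma \in L_2^0(H)$ and observe by the same identity (\ref{finite}) that
\begin{align*}
\| T_n \sigma - \sigma \|^2 = \sum_{j \in \bbn} \| \sigma^j \bbI_{\{ j \leq n \}} - \sigma^j \|^2 = \sum_{j > n} \| \sigma^j \|^2.
\end{align*}
Since $\sigma \in L_2^0(H)$, the series $\sum_{j \in \bbn} \| \sigma^j \|^2$ converges, so its tail tends to zero as $n \to \infty$, giving $T_n \sigma \to \sigma$.

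There is no substantive obstacle here; the only point worth stressing is the consistency of the notational convention noted just before the lemma, namely that $\| T_n \|$ refers to the operator norm of $T_n \in L(L_2^0(H))$ whereas $\| \sigma \|$ for $\sigma \in L_2^0(H)$ denotes the Hilbert-Schmidt norm. The computation above respects this convention throughout.
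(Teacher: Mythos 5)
Your proposal is correct and follows essentially the same route as the paper: both arguments combine the identity (\ref{finite}) with the Hilbert--Schmidt norm formula (\ref{norm-HS}) to reduce part (1) to the inequality $\sum_{j=1}^n \| \sigma^j \|^2 \leq \sum_{j \in \bbn} \| \sigma^j \|^2$ and part (2) to the vanishing of the tail $\sum_{j > n} \| \sigma^j \|^2$. Your explicit remark on the two meanings of $\| \cdot \|$ matches the convention the paper records just before the lemma; nothing further is needed.
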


\begin{proof}
Let $\sigma \in L_2^0(H)$ be arbitrary. Noting (\ref{norm-HS}) and (\ref{finite}), for each $n \in \bbn$ we have
\begin{align*}
\| T_n \sigma \| = \sqrt{\sum_{j=1}^n \| \sigma^j \|^2} \leq \sqrt{\sum_{j \in \bbn} \| \sigma^j \|^2} = \| \sigma \|,
\end{align*}
showing that $\| T_n \| \leq 1$. Furthermore, by (\ref{norm-HS}) and (\ref{finite}) we obtain
\begin{align*}
\| T_n \sigma - \sigma \| = \sqrt{\sum_{j > n} \| \sigma^j \|^2} \to 0 \quad \text{as $n \to \infty$,}
\end{align*}
showing that $T_n \sigma \to \sigma$.
\end{proof}

\begin{proposition}\label{prop-sigma-finite}
Let $\sigma \in {\rm Lip}(H,L_2^0(H))$ be such that for each $j \in \bbn$ the function $\sigma^j : H \to H$ is parallel. Then there are a constant $L \in \bbr_+$ and a sequence
\begin{align}\label{sigma-finite}
(\sigma_n)_{n \in \bbn} \subset {\rm Lip}_L(H,L_2^0(H)) \cap \G(H,L_2^0(H))
\end{align}
such that for all $n,j \in \bbn$ the function $\sigma_n^j : H \to H$ is parallel, and we have $\sigma_n \to \sigma$.
\end{proposition}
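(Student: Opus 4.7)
The plan is to define the approximating sequence directly via the truncation operators $T_n : L_2^0(H) \to L_2^0(H)$ introduced just before Lemma \ref{lemma-norm-Tn}, namely
\begin{align*}
\sigma_n : H \to L_2^0(H), \quad \sigma_n(h) := T_n(\sigma(h)), \quad n \in \bbn.
\end{align*}
By the identity (\ref{finite}) this means $\sigma_n^j = \sigma^j \mathbbm{1}_{\{ j \leq n \}}$ for all $n,j \in \bbn$, which hands us all four required properties almost for free.

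First I would check that $\sigma_n \in \G(H,L_2^0(H))$: this is immediate from $\sigma_n^j = 0$ for $j > n$, so we may choose $N := n$ in Definition of $\G$. Next, to obtain a joint Lipschitz constant, let $L \in \bbr_+$ be such that $\sigma \in \Lip_L(H,L_2^0(H))$. Since $T_n$ is linear and $\| T_n \| \leq 1$ by Lemma \ref{lemma-norm-Tn}, for all $h_1,h_2 \in H$ and all $n \in \bbn$ we get
\begin{align*}
\| \sigma_n(h_1) - \sigma_n(h_2) \| = \| T_n(\sigma(h_1) - \sigma(h_2)) \| \leq \| \sigma(h_1) - \sigma(h_2) \| \leq L \| h_1 - h_2 \|,
\end{align*}
which yields (\ref{sigma-finite}).

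For the parallel property, fix $n,j \in \bbn$. If $j > n$, then $\sigma_n^j \equiv 0$ is trivially parallel, whereas if $j \leq n$, then $\sigma_n^j = \sigma^j$ is parallel by hypothesis; either way $\sigma_n^j$ is parallel. Finally, for the convergence $\sigma_n \to \sigma$, I would simply apply Lemma \ref{lemma-norm-Tn}(2) pointwise: for each $h \in H$, the element $\sigma(h) \in L_2^0(H)$ satisfies $T_n(\sigma(h)) \to \sigma(h)$ in the Hilbert--Schmidt norm, i.e. $\sigma_n(h) \to \sigma(h)$, which is the notion of convergence used throughout Appendices \ref{app-drift} and \ref{app-volatility} (cf. the convergence statements in Propositions \ref{prop-alpha-FDR} and \ref{prop-alpha-bounded}, which rely on Lemma \ref{lemma-proj-par}).

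There is no substantial obstacle here; the entire argument is engineered by the preparatory Lemma \ref{lemma-norm-Tn}, whose norm bound $\| T_n \| \leq 1$ is precisely what ensures the joint Lipschitz constant, while the componentwise action $(T_n \sigma)^j = \sigma^j \mathbbm{1}_{\{ j \leq n \}}$ is exactly what preserves the parallel property. The only small point worth flagging is that we cannot just approximate by cutting off the ambient operator norm on $L_2^0(H)$ arbitrarily — we need an approximation that respects the decomposition along the basis $\{ g_j \}_{j \in \bbn}$, since it is on the components $\sigma^j$ that the parallel condition (\ref{main-4}) is formulated, and this is exactly what $T_n$ provides.
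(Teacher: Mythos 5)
Your proposal is correct and coincides with the paper's own proof: the paper also sets $\sigma_n := T_n \circ \sigma$, reads off membership in $\G(H,L_2^0(H))$ and the parallel property of each component from the identity $(T_n\sigma)^j = \sigma^j \mathbbm{1}_{\{j \leq n\}}$, and obtains the joint Lipschitz constant and the pointwise convergence from Lemma \ref{lemma-norm-Tn}. There is nothing to add.
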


\begin{proof}
We set $\sigma_n := T_n \circ \sigma$ for each $n \in \bbn$. By noting (\ref{finite}), we have $(\sigma_n)_{n \in \bbn} \subset \G(H,L_2^0(H))$, and for all $n,j \in \bbn$ the function $\sigma_n^j : H \to H$ is parallel. By hypothesis, there is a constant $L \in \bbr_+$ such that $\sigma \in {\rm Lip}_L(H,L_2^0(H))$, and by Lemma \ref{lemma-norm-Tn}, it follows that $\sigma_n \in {\rm Lip}_L(H,L_2^0(H))$ for each $n \in \bbn$, showing (\ref{sigma-finite}), and that $\sigma_n \to \sigma$.
\end{proof}

\begin{lemma}\label{lemma-Pi-sigma}
Let $\sigma : H \to H$ be a parallel function. Then, for each $n \in \bbn$ the function $\Pi_n \circ \sigma$ is parallel, too.
\end{lemma}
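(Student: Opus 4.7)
The plan is to mimic the very short argument used for the drift in Lemma \ref{lemma-Pi-alpha}, but with one simplification: since we only need $\langle h^*, \Pi_n(\sigma(h)) \rangle$ to vanish (rather than be nonnegative), no lower bound on $a(h^*,h)$ enters, so the proof reduces to a direct application of Lemma \ref{lemma-proj-par}.

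Concretely, fix an arbitrary pair $(h^*,h) \in D$. By Assumption \ref{ass-Schauder-basis} the functional $h^*$ has the form $h^* = \theta e_k^*$ for some $k \in \bbn$ and some $\theta \in \{-1,1\}$, so in particular $h^* \in \langle e_k^* \rangle$. Then the second statement of Lemma \ref{lemma-proj-par} gives, for any $n \in \bbn$,
\begin{align*}
\langle h^*, (\Pi_n \circ \sigma)(h) \rangle = \langle h^*, \Pi_n(\sigma(h)) \rangle = \langle h^*, \sigma(h) \rangle \mathbbm{1}_{\{ k \leq n \}}.
\end{align*}
Since $\sigma$ is parallel by hypothesis, $\langle h^*, \sigma(h) \rangle = 0$, and hence $\langle h^*, (\Pi_n \circ \sigma)(h) \rangle = 0$, which is exactly the requirement in Definition \ref{def-parallel} for $\Pi_n \circ \sigma$.

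There is essentially no obstacle here; the lemma is a one-line consequence of the structural assumption on the generating system $G^*$ together with the coordinate behaviour of the projections $\Pi_n$. The only point worth stressing in the write-up is that this simplicity is precisely the payoff of Assumption \ref{ass-Schauder-basis}: without the constraint that every functional in $G^*$ lies on a single coordinate axis $\langle e_k^* \rangle$, one could not evaluate $\langle h^*, \Pi_n(\sigma(h)) \rangle$ purely in terms of $\langle h^*, \sigma(h) \rangle$, and the conclusion would generally fail.
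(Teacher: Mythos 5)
Your proof is correct and coincides with the paper's own argument: both fix $(h^*,h) \in D$, use Assumption \ref{ass-Schauder-basis} to place $h^*$ in some $\langle e_k^* \rangle$, and then apply the second statement of Lemma \ref{lemma-proj-par} to conclude $\langle h^*,\Pi_n(\sigma(h)) \rangle = \langle h^*,\sigma(h) \rangle \mathbbm{1}_{\{ k \leq n \}} = 0$. Your closing remark correctly identifies the role of Assumption \ref{ass-Schauder-basis}, and the comparison with Lemma \ref{lemma-Pi-alpha} is apt.
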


\begin{proof}
Let $(h^*,h) \in D$ be arbitrary. By Assumption \ref{ass-Schauder-basis} we have $h^* \in \langle e_k^* \rangle$ for some $k \in \bbn$. Therefore, by Lemma \ref{lemma-proj-par} we obtain
\begin{align*}
\langle h^*,\Pi_n(\sigma(h)) \rangle = \langle h^*,\sigma(h) \rangle \mathbbm{1}_{\{ k \leq n \}} = 0,
\end{align*}
finishing the proof.
\end{proof}

In view of the following results, recall the Definition \ref{def-F-spaces} of $\F(H)$.

\begin{proposition}\label{prop-sigma-FDR}
Let $\sigma \in {\rm Lip}(H)$ be a parallel function. Then there are a constant $L \in \bbr_+$ and a sequence 
\begin{align}\label{sigma-FDR}
(\sigma_n)_{n \in \bbn} \subset \Lip_L(H) \cap \F(H)
\end{align}
such that $\sigma_n$ is parallel for each $n \in \bbn$, and we have $\sigma_n \to \sigma$.
\end{proposition}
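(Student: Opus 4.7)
The plan is to mirror the construction used for the drift in Proposition~\ref{prop-alpha-FDR}, since the projections $\Pi_n$ are tailor-made to preserve both finite-dimensional range and the parallel property. Concretely, I would set
\begin{align*}
\sigma_n := \Pi_n \circ \sigma, \quad n \in \bbn.
\end{align*}

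First I would verify the structural properties. By construction $\sigma_n(H) \subset E_n$, so $\sigma_n \in \F(H)$. By hypothesis there exists $M \in \bbr_+$ with $\sigma \in \Lip_M(H)$, and since $\Pi_n$ is a bounded linear operator with $\|\Pi_n\| \leq \bc$ (Lemma~\ref{lemma-norm-g-star}), we get the uniform Lipschitz bound
\begin{align*}
\|\sigma_n(h_1) - \sigma_n(h_2)\| = \|\Pi_n(\sigma(h_1) - \sigma(h_2))\| \leq \bc \cdot M \|h_1 - h_2\|,
\end{align*}
so setting $L := \bc \cdot M$ yields $(\sigma_n)_{n \in \bbn} \subset \Lip_L(H) \cap \F(H)$, which is (\ref{sigma-FDR}).

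Next, the parallel property of each $\sigma_n$ is an immediate application of Lemma~\ref{lemma-Pi-sigma}: for every $(h^*,h) \in D$ we have $\langle h^*, \Pi_n(\sigma(h))\rangle = 0$, because $\sigma$ itself is parallel. Finally, pointwise convergence $\sigma_n \to \sigma$ follows from the first statement of Lemma~\ref{lemma-proj-par}, which gives $\Pi_n \to \Id_H$ in the strong operator topology, so that $\sigma_n(h) = \Pi_n(\sigma(h)) \to \sigma(h)$ for every $h \in H$.

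There is no genuine obstacle here; the proof is a direct adaptation of Proposition~\ref{prop-alpha-FDR} to the parallel setting, and all the work has already been done in Lemmas~\ref{lemma-norm-g-star}, \ref{lemma-proj-par} and \ref{lemma-Pi-sigma}. The only thing to keep an eye on is the joint Lipschitz constant, but this is automatic from the uniform bound $\|\Pi_n\| \leq \bc$ supplied by the unconditional Schauder basis framework.
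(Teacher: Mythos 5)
Your proposal is correct and coincides essentially verbatim with the paper's own proof: the same choice $\sigma_n := \Pi_n \circ \sigma$, the same joint Lipschitz constant $L = M\,\bc$ (noting that $\bc = \sup_n \|\Pi_n\|$ by definition, so no appeal to Lemma~\ref{lemma-norm-g-star} is needed), and the same appeals to Lemmas~\ref{lemma-Pi-sigma} and \ref{lemma-proj-par} for the parallel property and the convergence $\sigma_n \to \sigma$.
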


\begin{proof}
We set $\sigma_n := \Pi_n \circ \sigma$ for each $n \in \bbn$. Then, by construction for each $n \in \bbn$ we have $\sigma_n \in \F(H)$. By hypothesis there exists a constant $M \in \bbr_+$ such that $\sigma \in {\rm Lip}_M(H)$. Setting $L := M \bc$, we have $\sigma_n \in {\rm Lip}_L(H)$ for each $n \in \bbn$, showing (\ref{sigma-FDR}). Furthermore, by Lemma \ref{lemma-Pi-sigma}, for each $n \in \bbn$ the function $\sigma_n$ is parallel, and by Lemma \ref{lemma-proj-par} we have $\sigma_n \to \sigma$.
\end{proof}

\begin{lemma}\label{lemma-inward-comp-2}
Let $\sigma,\tau : H \to H$ be two functions such that the following conditions are fulfilled:
\begin{enumerate}
\item $\sigma$ is parallel.

\item $D$ is $({\rm Id}_H,\tau)$-invariant.
\end{enumerate}
Then $\sigma \circ \tau$ is parallel.
\end{lemma}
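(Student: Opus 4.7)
The plan is to unwind the two hypotheses directly: the invariance assumption lets us transport a pair $(h^*,h) \in D$ to the pair $(h^*,\tau(h)) \in D$, and then the parallel property of $\sigma$ kills the inner product on the transported pair. There is no real obstacle here; the statement is essentially the composition law built into the definitions of ``$(\mathrm{Id}_H,\tau)$-invariance'' and ``parallel,'' and it does not even require continuity or any regularity of $\sigma$ or $\tau$.

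Concretely, I would fix an arbitrary $(h^*,h) \in D$ and reason in two steps. First, by hypothesis (2) and the definition of $(\mathrm{Id}_H,\tau)$-invariance (just before Definition \ref{def-locally-par}, in the spirit of Definition \ref{def-inv-set}), the pair $(h^*,\tau(h))$ also lies in $D$. Second, by hypothesis (1) and Definition \ref{def-parallel} applied to this new pair, we get
\[
\langle h^*, (\sigma \circ \tau)(h) \rangle \;=\; \langle h^*, \sigma(\tau(h)) \rangle \;=\; 0.
\]
Since $(h^*,h) \in D$ was arbitrary, Definition \ref{def-parallel} yields that $\sigma \circ \tau$ is parallel, concluding the proof. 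The whole argument is two lines and will simply be written out as such.
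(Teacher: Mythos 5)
Your proof is correct and coincides with the paper's own argument: fix $(h^*,h) \in D$, use the $({\rm Id}_H,\tau)$-invariance of $D$ to get $(h^*,\tau(h)) \in D$, and then apply the parallel property of $\sigma$ to conclude $\langle h^*,\sigma(\tau(h)) \rangle = 0$. Nothing is missing.
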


\begin{proof}
Let $(h^*,h) \in D$ be arbitrary. Then we have $(h^*,\tau(h)) \in D$, because $D$ is $({\rm Id}_H,\tau)$-invariant. Therefore, and since $\sigma$ is parallel, we obtain
\begin{align*}
\langle h^*,\sigma(\tau(h)) \rangle = 0,
\end{align*}
finishing the proof.
\end{proof}

\begin{proposition}\label{prop-sigma-bounded}
Let $\sigma \in {\rm Lip}(H) \cap \F(H)$ be a parallel function. Then there are a constant $L \in \bbr_+$ and a sequence
\begin{align}\label{sigma-bounded}
(\sigma_n)_{n \in \bbn} \subset \Lip_L(H) \cap \F(H) \cap \B(H)
\end{align}
such that $\sigma_n$ is parallel for each $n \in \bbn$, and we have $\sigma_n \to \sigma$.
\end{proposition}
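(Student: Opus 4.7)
The plan is to mimic the construction from Proposition \ref{prop-alpha-bounded} in the drift case, replacing inward-pointingness of $(a,\alpha)$ by the parallel property of $\sigma$. Concretely, I would set
\begin{align*}
\sigma_n := \sigma \circ R_n, \quad n \in \bbn,
\end{align*}
where $(R_n)_{n \in \bbn}$ is the sequence of retractions from Definition \ref{def-retract-X}. Since $\sigma \in \F(H)$ means $\sigma(H) \subset E_m$ for some $m \in \bbn$, we immediately obtain $\sigma_n(H) \subset E_m$, so $\sigma_n \in \F(H)$ for every $n$.

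For the joint Lipschitz constant and the boundedness, I would take $L \in \bbr_+$ with $\sigma \in \Lip_L(H)$. Lemma \ref{lemma-retract-X} gives $R_n \in \Lip_1(H) \cap \B(H)$, so
\begin{align*}
\| \sigma_n(h_1) - \sigma_n(h_2) \| \leq L \| R_n(h_1) - R_n(h_2) \| \leq L \| h_1 - h_2 \|
\end{align*}
for all $h_1,h_2 \in H$, yielding $\sigma_n \in \Lip_L(H)$ with a joint constant. Since $R_n(H) \subset \{ h \in H : \| h \| \leq n \}$, the Lipschitz property of $\sigma$ implies $\| \sigma(R_n(h)) \| \leq \| \sigma(0) \| + L n$ for all $h \in H$, so $\sigma_n \in \B(H)$. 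This establishes (\ref{sigma-bounded}).

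For the parallel property, Lemma \ref{lemma-retract-par} shows that $D$ is $(\Id_H,R_n)$-invariant (this is the same ingredient used in Proposition \ref{prop-alpha-bounded}). Since $\sigma$ is parallel by hypothesis, Lemma \ref{lemma-inward-comp-2} applied with $\tau = R_n$ yields that $\sigma_n = \sigma \circ R_n$ is parallel. Finally, for pointwise convergence $\sigma_n \to \sigma$, note that for every fixed $h \in H$ we have $R_n(h) = h$ once $n \geq \| h \|$, so $\sigma_n(h) = \sigma(R_n(h)) \to \sigma(h)$ (trivially, and also by continuity of $\sigma$ together with $R_n \to \Id_H$ from Lemma \ref{lemma-retract-X}).

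There is no real obstacle here: the only delicate point is verifying that the retraction $R_n$ interacts correctly with the set $D$, but this is precisely the content of Lemma \ref{lemma-retract-par}, which was already established to handle the analogous drift case. The construction is deliberately the volatility analog of Proposition \ref{prop-alpha-bounded}, and every ingredient has been prepared in advance.
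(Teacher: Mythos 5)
Your proposal is correct and coincides with the paper's own proof: the same composition $\sigma_n = \sigma \circ R_n$, the same joint Lipschitz constant via Lemma \ref{lemma-retract-X}, and the same combination of Lemmas \ref{lemma-inward-comp-2} and \ref{lemma-retract-par} for the parallel property. No differences worth noting.
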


\begin{proof}
We set $\sigma_n := \sigma \circ R_n$ for each $n \in \bbn$. Let $n \in \bbn$ be arbitrary. Then we have $\sigma_n \in \F(H)$, because $\sigma \in \F(H)$. By hypothesis there exists a constant $L \in \bbr_+$ such that $\sigma \in {\rm Lip}_L(H)$, and by Lemma \ref{lemma-retract-X} and the inclusion $\Lip_L(H) \subset \B^{\loc}(H)$ it follows that $\sigma_n \in {\rm Lip}_L(H) \cap \B(H)$, showing (\ref{sigma-bounded}). Combining Lemmas \ref{lemma-inward-comp-2} and \ref{lemma-retract-par}, we obtain that $\sigma_n$ is parallel. Furthermore, by Lemma \ref{lemma-retract-X} we have $\sigma_n \to \sigma$.
\end{proof}

\begin{lemma}\label{lemma-tau-sigma-local}
Let $\sigma,\tau : H \to H$ be two functions such that the following conditions are fulfilled:
\begin{enumerate}
\item $\sigma$ is parallel.

\item $D$ is locally $({\rm Id}_H,\tau)$-invariant.
\end{enumerate}
Then $\sigma \circ \tau$ is locally parallel.
\end{lemma}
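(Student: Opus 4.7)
The plan is to verify Definition \ref{def-locally-par} for the composition $\sigma \circ \tau$ directly, using the $\epsilon > 0$ supplied by the local $(\mathrm{Id}_H, \tau)$-invariance of $D$ as the witnessing constant. For this $\epsilon$, I would fix an arbitrary pair $(h^*, h) \in D$ and an arbitrary $g \in H$ with $\|g\| \leq \epsilon$. By the local invariance hypothesis, the pair $(h^*, \tau(h-g))$ again lies in $D$. Since $\sigma$ is parallel, applying Definition \ref{def-parallel} to this new pair in $D$ yields
\begin{align*}
\langle h^*, \sigma(\tau(h-g)) \rangle = 0,
\end{align*}
which is precisely condition (\ref{loc-par-prop}) for the composed function $\sigma \circ \tau$ with the same $\epsilon$.

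Conceptually, this is the ``local'' refinement of Lemma \ref{lemma-inward-comp-2}: there, global $(\mathrm{Id}_H, \tau)$-invariance of $D$ was combined with parallelism of $\sigma$ to produce parallelism of $\sigma \circ \tau$; here, the local versions of both the invariance assumption and the conclusion line up in exactly the same way. There is no real obstacle: the key point is that the local invariance provides a \emph{single} $\epsilon$ that works uniformly in $(h^*, h) \in D$, which is precisely what the definition of local parallelism requires, so no separate uniformity argument is needed and the proof is essentially the one-line chain above.
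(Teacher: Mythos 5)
Your proposal is correct and coincides with the paper's own argument: the $\epsilon$ from the local $({\rm Id}_H,\tau)$-invariance of $D$ is used as the uniform constant, and for each $(h^*,h) \in D$ and $\|g\| \leq \epsilon$ the membership $(h^*,\tau(h-g)) \in D$ together with the parallelism of $\sigma$ gives $\langle h^*,\sigma(\tau(h-g))\rangle = 0$. Nothing further is needed.
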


\begin{proof}
By assumption, there exists $\epsilon > 0$ such that for all $(h^*,h) \in D$ we have
\begin{align*}
(h^*,\tau(h-g)) \in D \quad \text{for all $g \in H$ with $\| g \| \leq \epsilon$.}
\end{align*}
Let $(h^*,h) \in D$ be arbitrary. Since $\sigma$ is parallel, we obtain
\begin{align*}
\langle h^*, \sigma(\tau(h-g)) \rangle = 0 \quad \text{for all $g \in H$ with $\| g \| \leq \epsilon$,}
\end{align*}
completing the proof.
\end{proof}

For $\epsilon > 0$ let $\phi_{\epsilon} : \bbr \to \bbr$ be the function given by (\ref{def-psi-intro});
see Figure \ref{fig-approx}. Then we have $\phi_{\epsilon} \in {\rm Lip}_1(\bbr)$ and
\begin{align}
\phi_{\epsilon}(x) &= 0 \quad \text{for all $x \in [-\epsilon,\epsilon]$,}
\\ \phi_{\epsilon}(x) &\geq 0 \quad \text{for all $x \in [-\epsilon,\infty)$,}
\\ \label{psi-2} | \phi_{\epsilon}(x) - x | &\leq \epsilon \quad \text{for all $x \in \bbr$,}
\\ \label{psi-3} \bigg| \frac{\phi_{\epsilon}(x) - \phi_{\epsilon}(y)}{x-y} \bigg| &\leq 1 \quad \text{for all $x,y \in \bbr$ with $x \neq y$.}
\end{align}
Furthermore, for each $\theta \in \{ -1,1 \}$ we have
\begin{align}\label{psi-theta-1}
\theta \phi_{\epsilon}(\theta y) &\geq 0 \quad \text{for all $y \in [-\epsilon, \infty)$,}
\\ \label{psi-theta-2} x - \theta \phi_{\epsilon}(\theta y) &\geq 0 \quad \text{for all $x \in \bbr_+$ and $y \in \bbr$ with $|x-y| \leq \epsilon$.}
\end{align}

\begin{lemma}\label{lemma-Psi-n}
There exist a constant $L \in \bbr_+$ and a sequence $(\Phi_n)_{n \in \bbn} \subset {\rm Lip}_L(H)$ such that for each $n \in \bbn$ the set $D$ is locally $({\rm Id}_H,\Phi_n)$-invariant, and we have $\Phi_n \to {\rm Id}_H$.
\end{lemma}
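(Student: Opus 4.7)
The plan is to construct $\Phi_n$ as the infinite-dimensional analogue of the coordinatewise truncation $(h_1,\ldots,h_d) \mapsto (\phi_{\epsilon}(h_1),\ldots,\phi_{\epsilon}(h_d))$ sketched in the introduction, using the unconditional Schauder basis $\{e_k^*, e_k\}_{k \in \bbn}$. With the choice $\epsilon_n := 1/n$, I would set
\[
\Phi_n(h) := \sum_{k \in \bbn} \phi_{\epsilon_n}(\langle e_k^*, h \rangle)\, e_k, \quad h \in H.
\]
Because $\phi_{\epsilon_n}(x)/x \in [0,1]$ (extended by $0$ at $x = 0$), Lemma \ref{lemma-norm-g-star}(iii) shows that the series converges and $\|\Phi_n(h)\| \leq \ubc \|h\|$. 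A uniform Lipschitz constant $L := \ubc$ follows from the same lemma applied to the bounded difference quotients $\mu_k := (\phi_{\epsilon_n}(h_k) - \phi_{\epsilon_n}(h'_k))/(h_k - h'_k)$ (set to $1$ where the denominator vanishes), which satisfy $|\mu_k| \leq 1$ by (\ref{psi-3}). The pointwise convergence $\Phi_n \to {\rm Id}_H$ I would handle by passing to the equivalent inner product on $H$ under which $\{e_k\}_{k \in \bbn}$ is orthonormal (available by the remark in Section \ref{sec-Schauder}), reducing the claim to $\sum_k |\phi_{\epsilon_n}(h_k) - h_k|^2 \to 0$, which follows from $|\phi_{\epsilon}(x) - x| \leq \min(\epsilon, |x|)$ by dominated convergence.

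For the local $({\rm Id}_H, \Phi_n)$-invariance of $D$, the candidate radius is $\epsilon := \epsilon_n/(4\, \bc)$. Fix $(h^*, h) \in D$ and $g \in H$ with $\|g\| \leq \epsilon$; Lemma \ref{lemma-norm-g-star}(ii) yields $|\langle e_k^*, g\rangle| \leq 2\, \bc\, \epsilon \leq \epsilon_n/2$ for every $k$. By Assumption \ref{ass-Schauder-basis} we may write $h^* = \theta e_{k_0}^*$ with $\theta \in \{-1,1\}$, and Assumption \ref{ass-D}(1) forces $h_{k_0} = 0$. The reduction I would then use is Assumption \ref{ass-D}(3): once I establish simultaneously
\[
\Phi_n(h-g) \in K \quad \text{and} \quad \Phi_n(h-g) \leq_K h,
\]
Assumption \ref{ass-D}(3) delivers $(h^*, \Phi_n(h-g)) \in D$, which is precisely the local invariance.

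Both claimed properties reduce, via (\ref{cone-G}) and Assumption \ref{ass-Schauder-basis}, to a coordinate-wise check: for each $\theta' e_k^* \in G^*$, one needs $\theta' \phi_{\epsilon_n}(y_k) \geq 0$ and $\theta'(h_k - \phi_{\epsilon_n}(y_k)) \geq 0$, where $y_k := h_k - (g)_k$. When $\theta' = 1$, one has $h_k \geq 0$ and $y_k \geq -\epsilon_n/2$, ruling out $y_k \leq -\epsilon_n$; a two-case analysis on whether $y_k \in [-\epsilon_n, \epsilon_n]$ or $y_k > \epsilon_n$ then yields $\phi_{\epsilon_n}(y_k) \in [0, h_k]$ (using $y_k \leq h_k + \epsilon_n/2$ in the second case). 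The case $\theta' = -1$ follows by the oddness of $\phi_{\epsilon_n}$. The main obstacle I anticipate is precisely this simultaneous verification: the first inequality constrains the sign of $\phi_{\epsilon_n}(y_k)$, while the second constrains its magnitude, and both must survive the perturbation by $g$. What makes it work is the flatness of $\phi_{\epsilon_n}$ on $[-\epsilon_n, \epsilon_n]$, which absorbs $g_k$ near the boundary coordinate $h_{k_0} = 0$, together with the coordinate-constraint structure of $K$ enforced by Assumption \ref{ass-Schauder-basis}.
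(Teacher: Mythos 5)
Your proposal is correct and follows essentially the same route as the paper: coordinatewise application of $\phi_{\epsilon_n}$ along the unconditional basis, a uniform Lipschitz bound via Lemma \ref{lemma-norm-g-star}, verification that $\Phi_n(h-g) \in K$ and $\Phi_n(h-g) \leq_K h$ from the sign and magnitude properties of $\phi_{\epsilon_n}$, and then Assumption \ref{ass-D} to conclude $(h^*,\Phi_n(h-g)) \in D$. The only (harmless) deviation is that the paper truncates the sum at level $n$, so that $\Phi_n \to {\rm Id}_H$ follows from a finite-sum estimate plus the tail of the basis expansion, whereas your untruncated version requires the equivalent-inner-product/dominated-convergence argument you supply.
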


\begin{proof}
We set $L := 2 \ubc$. Let $n \in \bbn$ be arbitrary. We define the function
\begin{align}\label{def-Psi-n}
\Phi_n : H \to H, \quad \Phi_n(h) := \sum_{k=1}^n \phi_{2^{-n}}(h_k) e_k,
\end{align}
where we refer to the series representation (\ref{series-h}) of $h$. Let $h,g \in H$ be arbitrary. We define the sequence $(\lambda_k)_{k \in \bbn} \subset \bbr$ as
\begin{align*}
\lambda_k := \frac{\phi_{2^{-n}}(h_k)-\phi_{2^{-n}}(g_k)}{h_k - g_k} \mathbbm{1}_{\{ h_k \neq g_k \}} \bbI_{\{ k \leq n \}}, \quad k \in \bbn.
\end{align*}
By (\ref{psi-3}) we have $|\lambda_k| \leq 1$ for all $k \in \bbn$, and  by Lemma \ref{lemma-norm-g-star} we obtain
\begin{align*}
\| \Phi_n(h) - \Phi_n(g) \| &= \bigg\| \sum_{k=1}^n (\phi_{2^{-n}}(h_k) - \phi_{2^{-n}}(g_k)) e_k \bigg\| 
\\ &= \bigg\| \sum_{k \in \bbn} \lambda_k (h_k - g_k) e_k \bigg\| \leq L \bigg\| \sum_{k \in \bbn} (h_k - g_k) e_k \bigg\| = L \| h-g \|,
\end{align*}
showing that $\Phi_n \in {\rm Lip}_L(H)$. Let $h \in H$ be arbitrary. Then, by (\ref{psi-2}) we obtain
\begin{align*}
&\| \Phi_n(h) - h \| = \bigg\| \sum_{k = 1}^n \phi_{2^{-n}}(h_k) e_k - \sum_{k \in \bbn} h_k e_k \bigg\|
\\ &= \bigg\| \sum_{k = 1}^n (\phi_{2^{-n}}(h_k) - h_k) e_k - \sum_{k = n+1}^{\infty} h_k e_k \bigg\|
\leq \sum_{k=1}^n | \phi_{2^{-n}}(h_k) - h_k | + \bigg\| \sum_{k = n+1}^{\infty} h_k e_k \bigg\|
\\ &\leq n \cdot 2^{-n} + \bigg\| \sum_{k = n+1}^{\infty} h_k e_k \bigg\| \to 0 \quad \text{a $n \to \infty$,}
\end{align*}
showing that $\Phi_n \to {\rm Id}_H$. Let $n \in \bbn$ be arbitrary. In order to show that $D$ is locally $({\rm Id}_H,\Phi_n)$-invariant, we set $\epsilon := 2^{-n} / L$.
Let $(h^*,h) \in D$ be arbitrary, and let $g \in H$ with $\| g \| \leq \epsilon$ be arbitrary. We will show that $(h^*,\Phi_n(h-g)) \in D$. For this purpose, let $g^* \in G^*$ be arbitrary. Since $\| g \| \leq \epsilon$, by Lemma \ref{lemma-norm-g-star} we have
\begin{align}\label{h-g-rel}
|\langle g^*,g \rangle| \leq L \| g \| \leq L \epsilon = 2^{-n}.
\end{align}
Since $h \in K$, we have $\langle g^*,h \rangle \geq 0$, and hence, we obtain
\begin{align}\label{g-g-rel}
\langle g^*,h-g \rangle = \langle g^*,h \rangle - \langle g^*,g \rangle \geq -L\epsilon = -2^{-n}.
\end{align}
By Assumption \ref{ass-Schauder-basis} we have $g^* = \theta e_k^*$ for some $\theta \in \{ -1,1 \}$ and some $k \in \bbn$. Thus, by the definition (\ref{def-Psi-n}) of $\Phi_n$ and relations (\ref{g-g-rel}) and (\ref{psi-theta-1}) we deduce
\begin{align*}
\langle g^*,\Phi_n(h-g) \rangle = \theta \phi_{2^{-n}}(\theta \langle g^*,h-g \rangle) \bbI_{\{ k \leq n \}} \geq 0,
\end{align*}
showing that $\Phi_n(h-g) \in K$. Furthermore, noting that $h \in K$, by the definition (\ref{def-Psi-n}) of $\Phi_n$ and relations (\ref{h-g-rel}) and (\ref{psi-theta-2}) we obtain
\begin{align*}
\langle g^*,h-\Phi_n(h-g) \rangle =  \langle g^*,h \rangle - \theta \phi_{2^{-n}}(\theta \langle g^*,h-g \rangle) \bbI_{\{ k \leq n \}} \geq 0,
\end{align*}
showing that $h-\Phi_n(h-g) \in K$, and hence $\Phi_n(h-g) \leq_K h$. By Assumption \ref{ass-D} we deduce that $(h^*,\Phi_n(h-g)) \in D$, showing that $D$ is locally $({\rm Id}_H,\Phi_n)$-invariant.
\end{proof}

\begin{proposition}\label{prop-locally-par}
Let $\sigma \in {\rm Lip}(H) \cap \F(H) \cap \B(H)$ be a parallel function. Then there are a constant $L \in \bbr$ and a sequence
\begin{align}\label{locally-par}
(\sigma_n)_{n \in \bbn} \subset {\rm Lip}_L(H) \cap \F(H) \cap \B(H)
\end{align}
such that $\sigma_n$ is locally parallel for each $n \in \bbn$, and we have $\sigma_n \to \sigma$.
\end{proposition}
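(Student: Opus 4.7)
The plan is to exploit the composition technique: set $\sigma_n := \sigma \circ \Phi_n$, where $(\Phi_n)_{n \in \bbn}$ is the sequence of approximations of the identity constructed in Lemma \ref{lemma-Psi-n}. This is exactly the infinite-dimensional analogue of the construction $\sigma \circ \Phi_\epsilon$ sketched in the introduction; the heavy lifting (showing that $\Phi_n$ is globally Lipschitz with a joint constant, converges pointwise to $\Id_H$, and that $D$ is locally $(\Id_H,\Phi_n)$-invariant) has already been done in Lemma \ref{lemma-Psi-n}, and the composition property needed for local parallelism is Lemma \ref{lemma-tau-sigma-local}.

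The verification proceeds as follows. First, let $M \in \bbr_+$ be a Lipschitz constant for $\sigma$, and let $L_0 \in \bbr_+$ be the joint Lipschitz constant of $(\Phi_n)_{n \in \bbn}$ from Lemma \ref{lemma-Psi-n}. Set $L := M L_0$, so that $\sigma_n \in \Lip_L(H)$ for every $n \in \bbn$. Next, since $\sigma \in \F(H)$, there exists $m \in \bbn$ such that $\sigma(H) \subset E_m$; then $\sigma_n(H) = \sigma(\Phi_n(H)) \subset \sigma(H) \subset E_m$, so $\sigma_n \in \F(H)$. Boundedness of $\sigma_n$ is inherited from $\sigma$ with the same bound, which establishes the inclusion (\ref{locally-par}).

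For local parallelism, since $\sigma$ is parallel by hypothesis and $D$ is locally $(\Id_H,\Phi_n)$-invariant by Lemma \ref{lemma-Psi-n}, Lemma \ref{lemma-tau-sigma-local} yields that $\sigma_n = \sigma \circ \Phi_n$ is locally parallel. Finally, for convergence, let $h \in H$ be arbitrary. By Lemma \ref{lemma-Psi-n} we have $\Phi_n(h) \to h$, and since $\sigma \in \Lip(H)$ is continuous, we obtain
\begin{align*}
\| \sigma_n(h) - \sigma(h) \| = \| \sigma(\Phi_n(h)) - \sigma(h) \| \leq M \| \Phi_n(h) - h \| \to 0,
\end{align*}
showing $\sigma_n \to \sigma$.

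No step presents a genuine obstacle once Lemmas \ref{lemma-Psi-n} and \ref{lemma-tau-sigma-local} are in place; indeed, the whole point of having isolated those two lemmas is to make the present proposition essentially a bookkeeping exercise. The only matter worth double-checking is that the property $\sigma \in \F(H)$ is preserved under precomposition (which it trivially is, since precomposition cannot enlarge the range), and that all estimates remain independent of $n$, which they do because both $M$ and $L_0$ are fixed.
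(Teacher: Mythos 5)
Your proposal is correct and follows exactly the paper's own argument: define $\sigma_n := \sigma \circ \Phi_n$ with $(\Phi_n)_{n \in \bbn}$ from Lemma \ref{lemma-Psi-n}, and invoke Lemma \ref{lemma-tau-sigma-local} for local parallelism. The additional checks you spell out (preservation of $\F(H)$ and $\B(H)$, the joint Lipschitz constant, and convergence via Lipschitz continuity) are exactly what the paper leaves implicit.
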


\begin{proof}
According to Lemma \ref{lemma-Psi-n}, there exist a constant $M \in \bbr$ and a sequence $(\Phi_n)_{n \in \bbn} \subset {\rm Lip}_M(H)$ such that for each $n \in \bbn$ the set $D$ is locally $({\rm Id}_H,\Phi_n)$-invariant, and we have $\Phi_n \to {\rm Id}_H$. Therefore, setting $\sigma_n := \sigma \circ \Phi_n$ for each $n \in \bbn$, we have (\ref{locally-par}) for some $L \in \bbr$, and applying Lemma \ref{lemma-tau-sigma-local} shows that $\sigma_n$ is locally parallel for each $n \in \bbn$.
\end{proof}

For our next step, we apply the sup-inf convolution technique from \cite{Lasry-Lions}.

\begin{definition}
Let $\sigma : H \to \bbr$ be arbitrary.
\begin{enumerate}
\item For each $\lambda > 0$ we define
\begin{align*}
\sigma_{\lambda} : H \to \bbr, \quad \sigma_{\lambda}(h) := \inf_{g \in H} \bigg( \sigma(g) + \frac{1}{2 \lambda} \| h-g \|^2 \bigg).
\end{align*}
\item For each $\mu > 0$ we define
\begin{align*}
\sigma^{\mu} : H \to \bbr, \quad \sigma^{\mu}(h) := \sup_{g \in H} \bigg( \sigma(g) - \frac{1}{2 \mu} \| h-g \|^2 \bigg).
\end{align*}
\end{enumerate}
\end{definition}

\begin{remark}
Let $\sigma : H \to \bbr$ and $\lambda,\mu > 0$ be arbitrary. A straightforward calculation shows that
\begin{align*}
(\sigma_{\lambda})^{\mu}(h) = \sup_{f \in H} \inf_{g \in H} \bigg( \sigma(g) + \frac{1}{2 \lambda} \| f-g \|^2 - \frac{1}{2 \mu} \| f-h \|^2 \bigg) \quad \text{for all $h \in H$.}
\end{align*}
Therefore, the function $(\sigma_{\lambda})^{\mu}$ is also called \emph{sup-inf convolution}.
\end{remark}

\begin{definition}
Let $\sigma \in \F(H)$ be arbitrary.
\begin{enumerate}
\item For each $\lambda > 0$ we define $\sigma_{\lambda} : H \to H$ as
\begin{align*}
\sigma_{\lambda} := \sum_{k \in \bbn} (\sigma_k)_{\lambda} e_k.
\end{align*}
\item For each $\mu > 0$ we define $\sigma^{\mu} : H \to H$ as
\begin{align*}
\sigma^{\mu} := \sum_{k \in \bbn} (\sigma_k)^{\mu} e_k.
\end{align*}
\item For all $\lambda,\mu > 0$ we define $(\sigma_{\lambda})^{\mu} : H \to H$ as
\begin{align*}
(\sigma_{\lambda})^{\mu} := \sum_{k \in \bbn} ((\sigma_k)_{\lambda})^{\mu} e_k.
\end{align*}
\end{enumerate}
\end{definition}

\begin{lemma}\label{lemma-sup-inf-2}
Let $\sigma \in {\rm Lip}_L(H) \cap \F(H) \cap \B(H)$ be arbitrary. Then, for each $\epsilon > 0$ there are $\lambda_0,\mu_0 > 0$ such that for all $\lambda \in (0,\lambda_0]$ and $\mu \in (0,\mu_0]$ with $\mu < \lambda$ we have
\begin{align*}
\sup_{h \in H} \| (\sigma_{\lambda})^{\mu}(h) - \sigma(h) \| \leq \epsilon.
\end{align*}
\end{lemma}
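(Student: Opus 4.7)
Since $\sigma \in \F(H)$, there is an index $n \in \bbn$ such that $\sigma(H) \subset E_n$, so that we may write $\sigma = \sum_{k=1}^n \sigma_k e_k$, where $\sigma_k := \langle e_k^*,\sigma(\cdot) \rangle : H \to \bbr$. By Lemma \ref{lemma-norm-g-star} and the hypothesis $\sigma \in \Lip_L(H) \cap \B(H)$, each $\sigma_k$ is bounded and $L_k$-Lipschitz with $L_k \leq 2 \bc L$. Therefore the plan is to reduce the claim to uniform estimates for the Moreau--Yosida inf- and sup-convolutions of each scalar component $\sigma_k$, and to combine them via the triangle inequality in $H$.

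The core of the proof consists of the following standard estimates for a bounded function $f \in \Lip_{L_f}(H,\bbr)$: for every $\lambda > 0$ one has $f_\lambda \leq f$ (by taking $g = h$ in the infimum), and the Lipschitz bound $f(g) \geq f(h) - L_f \| h-g \|$ combined with the one-variable optimization of $r \mapsto -L_f r + r^2/(2\lambda)$ over $r \geq 0$ yields
\begin{align*}
0 \leq f(h) - f_\lambda(h) \leq \frac{L_f^2 \lambda}{2} \quad \text{for all $h \in H$.}
\end{align*}
Moreover, a direct computation based on the inequality $\| h-g \|^2 \leq (1+\delta) \| h'-g \|^2 + C_\delta \| h-h' \|^2$ shows that $f_\lambda$ remains $L_f$-Lipschitz. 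The symmetric argument for the sup-convolution yields $0 \leq f^\mu(h) - f(h) \leq L_f^2 \mu / 2$ together with the fact that $f^\mu$ is again $L_f$-Lipschitz.

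Applying these estimates first to $f = \sigma_k$ and then to $f = (\sigma_k)_\lambda$ (which is still $L_k$-Lipschitz) gives, for all $h \in H$, $\lambda,\mu > 0$ and $k \in \{ 1,\ldots,n \}$,
\begin{align*}
|((\sigma_k)_\lambda)^\mu(h) - \sigma_k(h)| \leq |((\sigma_k)_\lambda)^\mu(h) - (\sigma_k)_\lambda(h)| + |(\sigma_k)_\lambda(h) - \sigma_k(h)| \leq \frac{L_k^2(\lambda + \mu)}{2}.
\end{align*}
Combining this over $k$ via the triangle inequality in $H$ and the normalization $\| e_k \| = 1$ yields
\begin{align*}
\| (\sigma_\lambda)^\mu(h) - \sigma(h) \| \leq \sum_{k=1}^n |((\sigma_k)_\lambda)^\mu(h) - \sigma_k(h)| \leq 2 n \bc^2 L^2 (\lambda + \mu),
\end{align*}
uniformly in $h$. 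Given $\epsilon > 0$, it then suffices to choose $\lambda_0 = \mu_0 := \epsilon / (8 n \bc^2 L^2)$, so that for all $\lambda \in (0,\lambda_0]$ and $\mu \in (0,\mu_0]$ (the additional condition $\mu < \lambda$ is not required for this estimate) the desired inequality $\sup_{h \in H} \| (\sigma_\lambda)^\mu(h) - \sigma(h) \| \leq \epsilon$ holds.

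No essential obstacle is anticipated: the argument is a componentwise reduction to classical Moreau--Yosida estimates. The only bookkeeping point is ensuring that the inf-convolution operation preserves the Lipschitz constant (so that the sup-convolution estimate can be applied to $(\sigma_k)_\lambda$ with the same $L_k$), which is a routine verification.
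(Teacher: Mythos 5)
Your proposal is correct, but it takes a more self-contained route than the paper: the paper's entire proof of Lemma \ref{lemma-sup-inf-2} is a citation of the theorem on pages 260--261 of \cite{Lasry-Lions} (relation (12) therein), which already contains the uniform estimate $\sup_h |(f_\lambda)^\mu(h) - f(h)| \leq C(\lambda+\mu)$ for bounded Lipschitz scalar functions, combined implicitly with the componentwise definition of $(\sigma_\lambda)^\mu$. You instead derive the scalar estimates from scratch: $0 \leq f - f_\lambda \leq L_f^2\lambda/2$ via the one-variable optimization, the dual bound for $f^\mu$, preservation of the Lipschitz constant under inf-convolution, and then the componentwise assembly using $\|\langle e_k^*,\cdot\rangle\| \leq 2\,\bc$ from Lemma \ref{lemma-norm-g-star} (which is exactly Lemma \ref{lemma-Lip-components-pre}). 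This buys a fully elementary, quantitative proof with an explicit choice of $\lambda_0,\mu_0$, at the cost of redoing material available in the reference; it also correctly observes that the hypothesis $\mu < \lambda$ is not needed for this particular estimate (it is needed in Lemma \ref{lemma-sup-inf-1} for the $C_b^{1,1}$ regularity). One small point: your suggested justification that $f_\lambda$ remains $L_f$-Lipschitz via the inequality $\|h-g\|^2 \leq (1+\delta)\|h'-g\|^2 + C_\delta\|h-h'\|^2$ does not directly yield the Lipschitz bound (it perturbs the parameter $\lambda$); the clean argument is the substitution $g = h - w$, which exhibits $f_\lambda$ as an infimum of the $L_f$-Lipschitz functions $h \mapsto f(h-w) + \frac{1}{2\lambda}\|w\|^2$. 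Since the fact itself is standard and true, this is a cosmetic repair, not a gap.
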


\begin{proof}
This follows from the theorem on pages 260, 261 in \cite{Lasry-Lions}; in particular relation (12) therein.
\end{proof}

\begin{lemma}\label{lemma-Lip-components-pre}
There is a constant $C \in \bbr_+$ such that for all $L \in \bbr_+$ and all $\sigma \in \Lip_L(H)$ we have $\sigma_k \in \Lip_{CL}(H,\bbr)$ for each $k \in \bbn$.
\end{lemma}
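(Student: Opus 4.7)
The plan is to use the coordinate functionals $e_k^*$ associated with the unconditional Schauder basis $\{e_k\}_{k\in\bbn}$, and to bound each component $\sigma_k$ directly using the Lipschitz property of $\sigma$ together with the norm bound on $e_k^*$ from Lemma \ref{lemma-norm-g-star}.

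First I would recall that, by the series representation \eqref{series-h} together with the definition of the coordinate functionals, every $h \in H$ satisfies $h = \sum_{k \in \bbn} \langle e_k^*, h \rangle e_k$. Applying this to $\sigma(h) \in H$ for $h \in H$ yields the identity
\begin{align*}
\sigma_k(h) = \langle e_k^*, \sigma(h) \rangle \quad \text{for all $k \in \bbn$ and all $h \in H$.}
\end{align*}

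Next, I would fix $L \in \bbr_+$, $\sigma \in \Lip_L(H)$, and $k \in \bbn$, and estimate the difference of components. For arbitrary $h, g \in H$, the Cauchy--Schwarz inequality combined with the bound $\| \langle e_k^*,\cdot \rangle \| \leq 2\bc$ from Lemma \ref{lemma-norm-g-star} gives
\begin{align*}
|\sigma_k(h) - \sigma_k(g)| = |\langle e_k^*, \sigma(h) - \sigma(g) \rangle| \leq 2 \bc \,\| \sigma(h) - \sigma(g) \| \leq 2\bc L \, \| h - g \|.
\end{align*}
Hence $\sigma_k \in \Lip_{CL}(H,\bbr)$ with $C := 2\bc$, a constant depending only on the chosen Schauder basis $\{e_k\}_{k\in\bbn}$ and not on $L$, $\sigma$, or $k$.

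There is no real obstacle here; the statement is essentially a reformulation of boundedness of the coordinate functionals. The only subtle point is to invoke the bound on $\|e_k^*\|$ uniformly in $k$, which is exactly what Lemma \ref{lemma-norm-g-star} provides, so the uniform constant $C = 2\bc$ works simultaneously for all $k \in \bbn$.
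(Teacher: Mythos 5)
Your proof is correct and follows exactly the paper's argument: the paper also sets $C := 2\bc$ and derives the claim directly from the bound on the coordinate functionals in Lemma \ref{lemma-norm-g-star}, which you have simply written out in full detail.
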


\begin{proof}
Setting $C := 2 \bc$, this is an immediate consequence of Lemma \ref{lemma-norm-g-star}.
\end{proof}

\begin{lemma}\label{lemma-Lip-components}
Let $L \in \bbr_+$ and $\sigma \in \F(H)$ be such that $\sigma_k \in \Lip_L(H,\bbr_+)$ for all $k = 1,\ldots,N$, where $N := \dim \langle \sigma(H) \rangle$. Then we have $\sigma \in \Lip_{NL}(H)$.
\end{lemma}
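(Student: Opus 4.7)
The proof is essentially an application of the triangle inequality, but it is worth being explicit about how the hypothesis pins down the Schauder expansion. Since $\sigma \in \F(H)$ and the Lipschitz assumption is imposed precisely on the components $\sigma_k$ indexed by $k=1,\ldots,N$ with $N = \dim \langle \sigma(H)\rangle$, the intended reading is that $\sigma(H) \subset E_N = \langle e_1,\ldots,e_N\rangle$, so that every value of $\sigma$ admits the finite Schauder expansion
\begin{align*}
\sigma(h) = \sum_{k=1}^N \sigma_k(h)\, e_k, \quad h \in H,
\end{align*}
where $\sigma_k(h) = \langle e_k^*, \sigma(h)\rangle$. This is the only structural input needed; after that, no Schauder basis constants enter the estimate, because the normalization $\|e_k\| = 1$ (assumed in Section~\ref{sec-Schauder}) is enough.

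The plan, then, is simply: for arbitrary $h,g \in H$, apply the triangle inequality to the finite sum and use $\|e_k\| = 1$ together with the Lipschitz hypothesis on each component. Concretely,
\begin{align*}
\| \sigma(h) - \sigma(g) \| &= \bigg\| \sum_{k=1}^N \big( \sigma_k(h) - \sigma_k(g) \big) e_k \bigg\| \leq \sum_{k=1}^N | \sigma_k(h) - \sigma_k(g) | \, \| e_k \| \\ &\leq \sum_{k=1}^N L \| h - g \| = N L \| h - g \|,
\end{align*}
which yields $\sigma \in \Lip_{NL}(H)$. There is no genuine obstacle: the only subtlety is recognizing that the hypothesis implicitly places $\sigma$ into $E_N$, so that the expansion terminates at index $N$ and the naive triangle inequality already delivers the claimed constant.
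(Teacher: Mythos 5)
Your proof is correct and coincides with the paper's own argument: both write $\sigma(h)-\sigma(g)$ as the finite sum $\sum_{k=1}^N(\sigma_k(h)-\sigma_k(g))e_k$, apply the triangle inequality together with $\|e_k\|=1$, and invoke the componentwise Lipschitz bound to obtain the constant $NL$. The additional remark that the hypothesis places $\sigma(H)$ in $E_N$ is a fair reading of the statement and matches how the paper uses the lemma.
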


\begin{proof}
For all $h,g \in H$ we have
\begin{align*}
\| \sigma(h) - \sigma(g) \| = \bigg\| \sum_{k=1}^N (\sigma_k(h) - \sigma_k(g)) e_k \bigg\| \leq \sum_{k=1}^N |\sigma_k(h) - \sigma_k(g)| \leq NL \| h-g \|,
\end{align*}
completing the proof.
\end{proof}

\begin{lemma}\label{lemma-sup-inf-1}
There exists a constant $C \in \bbr_+$ such that for all $L \in \bbr_+$, all $\sigma \in {\rm Lip}_L(H) \cap \F(H) \cap \B(H)$ and all $\lambda,\mu > 0$ with $\mu < \lambda$ we have
\begin{align*}
(\sigma_{\lambda})^{\mu} \in {\rm Lip}_{CNL}(H) \cap \F(H) \cap C_b^{1,1}(H),
\end{align*}
where $N := \dim \langle \sigma(H) \rangle$.
\end{lemma}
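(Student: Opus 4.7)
The claim decomposes into three assertions about $(\sigma_\lambda)^\mu$: membership in $\F(H)$, the Lipschitz estimate, and $C_b^{1,1}$-regularity. My plan is to reduce each one to a statement about the scalar coordinate functions $((\sigma_k)_\lambda)^\mu : H \to \bbr$, exploiting the fact that by construction the sup-inf convolution has been defined componentwise on $\F(H)$.

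First I would record the basic properties of the coordinates. Set $N := \dim \langle \sigma(H) \rangle$. Since $\sigma \in \F(H)$, only $N$ of the coordinates $\sigma_k = \langle e_k^*,\sigma(\cdot)\rangle$ are non-trivial, and after reindexing we may take $k \in \{1,\dots,N\}$; for $k$ outside this index set we have $\sigma_k \equiv 0$, hence $((\sigma_k)_\lambda)^\mu \equiv 0$, which gives $(\sigma_\lambda)^\mu(H) \subset E_N$ and so $(\sigma_\lambda)^\mu \in \F(H)$. Lemma~\ref{lemma-Lip-components-pre} produces a universal constant $C_0 \in \bbr_+$ with $\sigma_k \in \Lip_{C_0 L}(H,\bbr)$; the boundedness of $\sigma$ combined with continuity of $e_k^*$ also makes each $\sigma_k$ bounded.

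For the Lipschitz assertion I would prove the standard fact that inf- and sup-convolutions preserve the Lipschitz constant: for any $h,h' \in H$ and any candidate $g$ in the infimum defining $(\sigma_k)_\lambda(h)$, the change of variables $g \mapsto g + (h'-h)$ leaves $\|h-g\|^2$ unchanged and shifts $\sigma_k(g)$ by at most $C_0 L \|h-h'\|$; taking infimum and symmetrising gives $(\sigma_k)_\lambda \in \Lip_{C_0 L}(H,\bbr)$. The same argument applied to $(\sigma_k)_\lambda$ yields $((\sigma_k)_\lambda)^\mu \in \Lip_{C_0 L}(H,\bbr)$. Applying Lemma~\ref{lemma-Lip-components} to $(\sigma_\lambda)^\mu$, whose non-trivial coordinates all lie in $\Lip_{C_0 L}(H,\bbr)$ (possibly after composing with $x \mapsto x^+$ if one insists on the $\bbr_+$-valued version, without changing the Lipschitz constant), then gives $(\sigma_\lambda)^\mu \in \Lip_{C_0 N L}(H)$, so the Lipschitz claim holds with $C := C_0$.

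The main obstacle, and the only non-elementary step, is establishing $C_b^{1,1}$-regularity. Here I would invoke the sup-inf convolution theorem of \cite{Lasry-Lions}, which asserts that whenever $f : H \to \bbr$ is bounded and uniformly continuous, then for any $0 < \mu < \lambda$ the sup-inf convolution $(f_\lambda)^\mu$ is of class $C^{1,1}(H)$ with bounded first derivative, and $(f_\lambda)^\mu \to f$ uniformly; this is precisely the existence half of the result whose quantitative convergence part is already being used as Lemma~\ref{lemma-sup-inf-2}. Each coordinate $\sigma_k$ is bounded and Lipschitz, hence bounded uniformly continuous, so Lasry-Lions applies and yields $((\sigma_k)_\lambda)^\mu \in C_b^{1,1}(H,\bbr)$ for $k = 1,\dots,N$. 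Because $(\sigma_\lambda)^\mu = \sum_{k=1}^N ((\sigma_k)_\lambda)^\mu \, e_k$ is a finite linear combination with fixed coefficients in $H$, both boundedness and the Lipschitz continuity of the Fr\'echet derivative are preserved, so $(\sigma_\lambda)^\mu \in C_b^{1,1}(H)$. Combining the three assertions completes the proof.
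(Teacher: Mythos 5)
Your proof is correct and follows essentially the same route as the paper: reduce to the scalar coordinates via Lemmas \ref{lemma-Lip-components-pre} and \ref{lemma-Lip-components}, observe that vanishing coordinates stay zero to get membership in $\F(H)$, and invoke the Lasry--Lions theorem for the $C_b^{1,1}$ regularity. The only difference is that you prove the preservation of the Lipschitz constant under sup/inf convolution by the direct change-of-variables argument, whereas the paper simply cites the corresponding relation in \cite{Lasry-Lions}; this is a harmless (and self-contained) substitution.
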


\begin{proof}
Let $\lambda,\mu > 0$ with $\mu < \lambda$ be arbitrary. For all $k \in \bbn$ with $\sigma_k = 0$ we have $((\sigma_k)_{\lambda})^{\mu} = 0$, showing that $(\sigma_{\lambda})^{\mu} \in \F(H)$. The remaining assertions follow from Lemmas \ref{lemma-Lip-components-pre}, \ref{lemma-Lip-components} and the theorem on pages 260, 261 in \cite{Lasry-Lions}; in particular relations (11), (13) and (15) therein.
\end{proof}

\begin{lemma}\label{lemma-sup-inf-parallel}
Let $\sigma \in {\rm Lip}(H) \cap \F(H) \cap \B(H)$ be a locally parallel function. Then the following statements are true:
\begin{enumerate}
\item There exists $\lambda_0 > 0$ such that $\sigma_{\lambda}$ is locally parallel for each $\lambda \in (0,\lambda_0]$.

\item There exists $\mu_0 > 0$ such that $\sigma^{\mu}$ is locally parallel for each $\mu \in (0,\mu_0]$.

\item There exist $\lambda_0,\mu_0 > 0$ such that $(\sigma_{\lambda})^{\mu}$ is locally parallel for all $\lambda \in (0,\lambda_0]$ and $\mu \in (0,\mu_0]$ with $\mu < \lambda$.
\end{enumerate}
\end{lemma}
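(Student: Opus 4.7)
The plan is to transfer local parallelness to a coordinate-wise scalar condition via Assumption \ref{ass-Schauder-basis}, then verify vanishing of the inf- and sup-convolutions by a two-region estimate exploiting the boundedness of $\sigma$. By Assumption \ref{ass-Schauder-basis} and the componentwise series $\sigma_{\lambda}=\sum_k (\sigma_k)_{\lambda} e_k$, $\sigma_{\lambda}$ is locally parallel if and only if there exists $\tilde\epsilon>0$ such that $(\sigma_k)_{\lambda}(h-g)=0$ whenever $(\theta e_k^*,h)\in D$ for some $\theta\in\{-1,1\}$ and $\|g\|\le\tilde\epsilon$, and analogously for $\sigma^{\mu}$ and $(\sigma_{\lambda})^{\mu}$. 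The hypothesis that $\sigma$ is locally parallel provides $\epsilon>0$ such that $\sigma_k$ vanishes on the ball $B_\epsilon(h)$ for every such pair $(k,h)$, while $\sigma\in\B(H)$ together with Lemma \ref{lemma-norm-g-star} furnishes a uniform bound $|\sigma_k(f)|\le M'$ on all of $H$.

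For statement (i), fix any such $(\theta e_k^*,h)\in D$ and $g\in H$ with $\|g\|\le\epsilon/2$. Evaluating the integrand $\sigma_k(f)+\frac{1}{2\lambda}\|h-g-f\|^2$ at $f=h-g$ gives $0$, since $h-g\in B_\epsilon(h)$. For $f\in B_{\epsilon/2}(h-g)\subset B_\epsilon(h)$ the first term vanishes and the second is nonnegative; for $f$ outside this ball, the integrand exceeds $-M'+(\epsilon/2)^2/(2\lambda)$, which is nonnegative for $\lambda\le\lambda_0:=\epsilon^2/(8M')$. Since $\lambda_0$ depends only on $(\epsilon,M')$, this forces $(\sigma_k)_{\lambda}(h-g)=0$ for all $\lambda\in(0,\lambda_0]$, proving local parallelness of $\sigma_{\lambda}$ with constant $\epsilon/2$. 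Statement (ii) is the dual computation: for $\mu\le\mu_0:=\epsilon^2/(8M')$, the value $0$ at $f=h-g$ dominates the integrand $\sigma_k(f)-\frac{1}{2\mu}\|h-g-f\|^2$ on both the local and far-away regions, so the supremum equals $0$.

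For statement (iii), I would first apply (i) to see that $\sigma_{\lambda}$ is locally parallel with constant $\epsilon/2$ for $\lambda\le\lambda_0$; the uniform bound persists since $(\sigma_k)_{\lambda}(f)\le\sigma_k(f)\le M'$ and $(\sigma_k)_{\lambda}(f)\ge\inf_{f'\in H}\sigma_k(f')\ge -M'$. Then the argument of (ii) applies verbatim to $\sigma_{\lambda}$ in place of $\sigma$ with the new parallel constant $\epsilon/2$, yielding some $\mu_0>0$ depending only on $(\epsilon,M')$ such that $(\sigma_{\lambda})^{\mu}$ is locally parallel for $\mu\le\mu_0$. The restriction $\mu<\lambda$ does not enter this argument; it is relevant only in the companion Lemmas \ref{lemma-sup-inf-2} and \ref{lemma-sup-inf-1}.

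The principal obstacle is uniformity: all constants ($\lambda_0$, $\mu_0$, and the shrunken radius $\epsilon/2$) must depend only on the single pair $(\epsilon,M')$ coming from the hypotheses and not on the individual $(h^*,h)\in D$, otherwise Definition \ref{def-locally-par} is not recovered. The boundedness $\sigma\in\B(H)$ is precisely what converts the pointwise vanishing property into a uniform penalty for far-away $f$ and makes the two-region split succeed.
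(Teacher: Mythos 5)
Your proposal is correct and follows essentially the same route as the paper's proof: reduce to the scalar coordinate functions via Assumption \ref{ass-Schauder-basis}, note that the inf-convolution integrand vanishes at $f=h-g$, and show it is nonnegative everywhere by splitting into a ball where $\sigma_k$ vanishes by local parallelness and a far region where the quadratic penalty dominates the uniform bound $M'$ on $|\sigma_k|$, yielding the same threshold $\lambda_0=\epsilon^2/(8M')$ and shrunken radius $\epsilon/2$. The only cosmetic difference is that you center the near/far split at $h-g$ with radius $\epsilon/2$ while the paper centers it at $h$ with radius $\epsilon$; your slightly more explicit justification of (iii) (persistence of the bound and of the radius $\epsilon/2$ under inf-convolution) fills in exactly what the paper leaves implicit.
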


\begin{proof}
Since $\sigma$ is locally parallel, there exists $\epsilon > 0$ such that for all $(h^*,h) \in D$ we have (\ref{loc-par-prop}).
Furthermore, since $\sigma \in \B(H)$, there exists a finite constant $C > 0$ such that                                                               \begin{align}\label{sup-inf-sigma-bounded}
\| \sigma(h) \| \leq C \quad \text{for all $h \in H$.}
\end{align}
We define the constants $M,\lambda_0 > 0$ as
\begin{align*}
M := 2 C \bc \quad \text{and} \quad \lambda_0 := \frac{\epsilon^2}{8M}.
\end{align*}
Let $\lambda \in (0,\lambda_0]$ be arbitrary. We will show that $\sigma_{\lambda}$ is locally parallel. For this purpose, let $(h^*,h) \in D$ be arbitrary. By Assumption \ref{ass-Schauder-basis} there exist $\theta \in \{ -1,1 \}$ and $k \in \bbn$ such that $h^* = \theta e_k^*$. Let $g \in H$ with $\| g \| \leq \epsilon / 2$ be arbitrary. We define the function
\begin{align*}
\Sigma : H \to \bbr, \quad \Sigma(f) := \sigma_k(f) + \frac{1}{2 \lambda} \| (h-g)-f \|^2.
\end{align*}
Then we have
\begin{align}\label{sup-inf-proof-3}
\Sigma \geq 0 \quad \text{and} \quad \Sigma(h-g) = 0.
\end{align}
Indeed, by (\ref{loc-par-prop}) we have $\sigma_k(h-g) = 0$, and hence $\Sigma(h-g) = 0$. In order to show that $\Sigma \geq 0$, let $f \in H$ be arbitrary. We distinguish two cases:
\begin{itemize}
\item Suppose that $\| h - f \| \leq \epsilon$. Since $f = h - (h-f)$, by (\ref{loc-par-prop}) we have $\sigma_k(f) = 0$, showing $\Sigma(f) \geq 0$.

\item Suppose that $\| h - f \| > \epsilon$. Since $\| g \| \leq \epsilon / 2$, by the inverse triangle inequality we obtain
\begin{align*}
\| (h-g) - f \| = \| (h-f) - g \| \geq | \, \| h-f \| - \| g \| \, | \geq \epsilon / 2.
\end{align*}
Furthermore, by (\ref{sup-inf-sigma-bounded}) and Lemma \ref{lemma-norm-g-star} we have
\begin{align*}
| \sigma_k | = |\langle e_k^*,\sigma \rangle| \leq 2 \bc \| \sigma \| \leq M,
\end{align*}
and hence
\begin{align*}
\Sigma(f) &= \sigma_k(f) + \frac{1}{2 \lambda} \| (h-g)-f \|^2 \geq -M + \frac{1}{2 \lambda_0} \frac{\epsilon^2}{4} = 0.
\end{align*}
\end{itemize}
Consequently, we have (\ref{sup-inf-proof-3}), and thus, we obtain
\begin{align*}
\langle h^*, \sigma_{\lambda}(h-g) \rangle = \theta \inf_{f \in H} \Sigma(f) = 0,
\end{align*}
showing that $\sigma_{\lambda}$ is locally parallel.
This provides the proof of the first statement. The proof of the second statement is analogous, and the third statement follows from the first and the second statement.
\end{proof}

\begin{proposition}\label{prop-C-b-1-1}
Let $\sigma \in {\rm Lip}(H) \cap \F(H) \cap \B(H)$ be a locally parallel function. Then there are a constant $L \in \bbr_+$ and a sequence
\begin{align*}
(\sigma_n)_{n \in \bbn} \subset \Lip_L(H) \cap \F(H) \cap C_b^{1,1}(H)
\end{align*}
such that $\sigma_n$ is locally parallel for each $n \in \bbn$, and we have $\sigma_n \to \sigma$.
\end{proposition}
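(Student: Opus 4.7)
\textbf{Proof plan for Proposition \ref{prop-C-b-1-1}.} The plan is to take $\sigma_n := (\sigma_{\lambda_n})^{\mu_n}$, the sup-inf convolution, for carefully chosen parameter sequences $(\lambda_n)_{n \in \bbn}$ and $(\mu_n)_{n \in \bbn}$ tending to zero. All the technical work has already been done in Lemmas \ref{lemma-sup-inf-2}, \ref{lemma-sup-inf-1} and \ref{lemma-sup-inf-parallel}, so the task is to make the parameter choices compatible with all three conclusions simultaneously.

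Let $L_0 \in \bbr_+$ be such that $\sigma \in \Lip_{L_0}(H)$ and set $N := \dim \langle \sigma(H) \rangle$, which is finite since $\sigma \in \F(H)$. First I would invoke Lemma \ref{lemma-sup-inf-parallel}(iii) to obtain thresholds $\lambda_0^{\rm par}, \mu_0^{\rm par} > 0$ such that $(\sigma_\lambda)^\mu$ is locally parallel whenever $\lambda \in (0, \lambda_0^{\rm par}]$, $\mu \in (0, \mu_0^{\rm par}]$ and $\mu < \lambda$. Next, for each $n \in \bbn$, I would apply Lemma \ref{lemma-sup-inf-2} with $\epsilon = 1/n$ to obtain corresponding thresholds $\lambda_0^{(n)}, \mu_0^{(n)} > 0$ for uniform $1/n$-approximation of $\sigma$ by sup-inf convolutions.

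Then I would pick
\[
\lambda_n := \min\{\lambda_0^{\rm par}, \lambda_0^{(n)}, 1/n\}, \qquad \mu_n := \tfrac{1}{2} \min\{\mu_0^{\rm par}, \mu_0^{(n)}, \lambda_n\},
\]
so that $\mu_n < \lambda_n$ and both sequences lie below the parallel thresholds. Setting $\sigma_n := (\sigma_{\lambda_n})^{\mu_n}$, Lemma \ref{lemma-sup-inf-parallel}(iii) ensures each $\sigma_n$ is locally parallel, Lemma \ref{lemma-sup-inf-1} ensures $\sigma_n \in \Lip_{C N L_0}(H) \cap \F(H) \cap C_b^{1,1}(H)$ with the same constant $C$ for every $n$ (so $L := C N L_0$ is a joint Lipschitz constant, as required for later stability arguments), and Lemma \ref{lemma-sup-inf-2} gives $\sup_{h \in H} \|\sigma_n(h) - \sigma(h)\| \leq 1/n$, hence $\sigma_n \to \sigma$ uniformly (in particular pointwise).

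Since Lemmas \ref{lemma-sup-inf-2}, \ref{lemma-sup-inf-1} and \ref{lemma-sup-inf-parallel} already package the entire analytic content, no further obstacle remains; the only thing to be careful about is that all three conclusions are achieved simultaneously, which is exactly why the parameters must be chosen as the \emph{minimum} of the three thresholds at each stage. I do not anticipate a hard step — the proposition is essentially a bookkeeping assembly of the preceding three lemmas.
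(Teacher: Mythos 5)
Your proposal is correct and follows exactly the paper's route: the paper's proof consists of the single sentence that the proposition is an immediate consequence of Lemmas \ref{lemma-sup-inf-2}, \ref{lemma-sup-inf-1} and \ref{lemma-sup-inf-parallel}, and your explicit parameter choices (taking minima of the thresholds and halving to ensure $\mu_n < \lambda_n$) correctly implement that assembly, including the uniform Lipschitz constant $L = CNL_0$ from Lemma \ref{lemma-sup-inf-1}.
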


\begin{proof}
This is an immediate consequence of Lemmas \ref{lemma-sup-inf-2}, \ref{lemma-sup-inf-1} and \ref{lemma-sup-inf-parallel}.
\end{proof}

For our last step, we use Moulis' method, as presented in \cite{Fry}. For this purpose, we introduce some notation. Let $\varphi \in C^{\infty}(\bbr,[0,1])$ be a smooth function such that the following conditions are fulfilled:
\begin{itemize}
\item We have $\varphi(t) = 1$ for all $t \in (-\frac{1}{2},\frac{1}{2})$.

\item We have $\varphi(t) = 0$ for all $t \in \bbr$ with $|t| \geq 1$.

\item We have $\varphi'(t) \in [-3,0]$ for all $t \in \bbr_+$.

\item We have $\varphi(-t) = \varphi(t)$ for all $t \in \bbr_+$.
\end{itemize}
Let $\sigma \in \F(H) \cap C_b^{1,1}(H)$ be arbitrary. We fix a sequence $a = (a_n)_{n \in \bbn} \subset (0,\infty)$ and a constant $r > 0$. We define the sequence $(\Sigma_n)_{n \in \bbn}$ of functions $\Sigma_n : H \to H$ as
\begin{align}\label{def-cap-sigma}
\Sigma_n(h) := \frac{(a_n)^n}{c_n} \int_{E_n} \sigma(h-g) \varphi(a_n \| g \|) dg, \quad h \in H,
\end{align}
where the sequence $(c_n)_{n \in \bbn} \subset (0,\infty)$ is given by
\begin{align}\label{def-c-n-const}
c_n := \int_{E_n} \varphi(\| g \|) dg.
\end{align}

\begin{lemma}\label{lemma-cap-sigma}
The following statements are true:
\begin{enumerate}
\item We have $\Sigma_n \in C^{\infty}(H)$ for each $n \in \bbn$.

\item There is a constant $C \in \bbr_+$ such that
\begin{align}\label{cap-sigma-n-est}
\max \{ \| \Sigma_n(h) \|, \| D \Sigma_n(h) \|, \| D^2 \Sigma_n(h) \| \} \leq C
\end{align}
for all $n \in \bbn$ and all $h \in H$.
\end{enumerate}
\end{lemma}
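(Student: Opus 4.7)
My plan is to separate the two claims and to rely, for both, on the single observation that the integration in (\ref{def-cap-sigma}) is carried out over the finite-dimensional subspace $E_n$, so that the smoothness and compact support of the weight $\phi_n(g) := \varphi(a_n\|g\|)$ may be exploited through translation-invariance on $E_n$. Throughout I write $C_n := a_n^n/c_n$.

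For part (i), the key identity is that for every $k \in E_n$ and every $t \in \bbr$ the translation $g \mapsto g + tk$ preserves Lebesgue measure on $E_n$, so, by the substitution $g' = g - tk$,
\[
\Sigma_n(h + tk) \;=\; C_n \int_{E_n} \sigma(h - g')\, \phi_n(g' + tk)\, dg'.
\]
Since $\phi_n \in C_c^\infty(E_n)$ and $\sigma$ is bounded, differentiating under the integral yields, for any finite family $k_1,\ldots,k_m \in E_n$,
\[
D^m\Sigma_n(h)(k_1,\ldots,k_m) \;=\; C_n \int_{E_n} \sigma(h-g)\, D^m\phi_n(g)(k_1,\ldots,k_m)\, dg,
\]
which is continuous in $h$ by dominated convergence. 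For an arbitrary direction $k \in H$ I would invoke Assumption \ref{ass-Schauder-basis} and renorm so that $\{e_k\}_{k \in \bbn}$ is orthonormal, then split $k = \Pi_n k + (\Id - \Pi_n)k$; the $\Pi_n k$-component is absorbed into $\phi_n$ by the identity above, whereas the $(\Id - \Pi_n)k$-component is differentiated inside the integral, which is permissible since $\sigma \in C^1(H)$. Induction on the order of differentiation, reapplying the change of variables to re-absorb the $E_n$-components at each step, delivers $\Sigma_n \in C^\infty(H)$.

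For part (ii), the substitution $u := a_n g$, $du = a_n^n\, dg$, brings $\Sigma_n$ into the normalized form
\[
\Sigma_n(h) \;=\; \frac{1}{c_n}\int_{E_n} \sigma(h - u/a_n)\,\varphi(\|u\|)\,du,
\]
and since $\int_{E_n}\varphi(\|u\|)\,du = c_n$ one immediately obtains $\|\Sigma_n(h)\| \leq \|\sigma\|_\infty$. Differentiating once,
\[
D\Sigma_n(h)\cdot k \;=\; \frac{1}{c_n}\int_{E_n} D\sigma(h-u/a_n)k\,\varphi(\|u\|)\,du,
\]
whence $\|D\Sigma_n(h)\| \leq \|D\sigma\|_\infty$. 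For the second derivative I would split $l = \Pi_n l + (\Id - \Pi_n)l$ in the incremental quotient $t^{-1}[D\Sigma_n(h+tl)\cdot k - D\Sigma_n(h)\cdot k]$: the $\Pi_n l$-component is transferred, by a further change of variables, onto $\varphi$ (which is $C^\infty$ with bounded derivatives), while the $(\Id - \Pi_n)l$-component is controlled directly by the Lipschitz continuity of $D\sigma$, which is precisely the $C_b^{1,1}$ hypothesis. Summing the two contributions produces a bound on $\|D^2\Sigma_n(h)\|$ depending only on $\|\sigma\|_{C_b^{1,1}}$ and $\|\varphi'\|_\infty$, hence independent of $n$.

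The main obstacle is exactly this uniformity in $n$ of the second-derivative bound. A careless estimate would differentiate $\phi_n(g) = \varphi(a_n \|g\|)$ twice and pick up a factor of $a_n^2$, which is not controlled since the sequence $(a_n)$ is otherwise free. The normalizing substitution $u = a_n g$ absorbs one power of $a_n$ into the $n$-independent probability density $\varphi(\|u\|)/c_n$, and the Lipschitz property of $D\sigma$ absorbs the remaining one; for this reason the $C_b^{1,1}$ hypothesis, rather than merely $C_b^1$, is essential for (ii).
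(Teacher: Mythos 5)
Your zeroth- and first-order bounds are correct and are the intended ones: after the substitution $u = a_n g$ the weight $\varphi(\|u\|)\,du/c_n$ is a probability measure on $E_n$, so $\Sigma_n$ inherits $\|\sigma\|_\infty$ and $\|D\sigma\|_\infty$. The two places where you go further both contain genuine gaps. For part (i), your induction does not close: the change of variables re-absorbs only the $E_n$-components of the differentiation directions, so every $(\Id-\Pi_n)$-component must fall on $\sigma$ itself, and $\sigma$ is only $C_b^{1,1}$ -- at most one such derivative is available. Already the second derivative in two directions outside $E_n$ is out of reach. Indeed, take $\sigma(h) = \psi(\langle e_2^*,h\rangle)e_1$ with $\psi$ bounded and $C^{1,1}$ but not $C^2$; then $\sigma \in \F(H)\cap C_b^{1,1}(H)$, the convolution over $E_1$ does nothing, $\Sigma_1 = \sigma$, and $\Sigma_1 \notin C^\infty(H)$. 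What your translation identity actually proves -- and all that the sequel (Lemma \ref{lemma-bar-sigma}, which only ever restricts to $E_n$) uses -- is smoothness of $\Sigma_n$ in the directions of $E_n$, i.e.\ $\Sigma_n|_{E_n}\in C^\infty(E_n,H)$.

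For part (ii), the step that fails is precisely the one you flag as the main obstacle. Transferring the $\Pi_n l$-derivative onto the weight produces, after your normalization, the quantity $a_n \cdot c_n^{-1}\int_{E_n}|\varphi'(\|u\|)|\,du$; even if you cancel the loose factor $a_n$ by subtracting $D\sigma(h)k$ (whose integral against the derivative of the weight vanishes) and invoking $\Lip(D\sigma)$, you are left with $c_n^{-1}\int_{E_n}|\varphi'(\|u\|)|\,du$, which compares the volume of the annulus $\tfrac12\le\|u\|\le 1$ with that of the ball $\|u\|\le\tfrac12$ in dimension $n$ and therefore grows like $2^n$. So the bound you obtain is not independent of $n$, and "depending only on $\|\sigma\|_{C_b^{1,1}}$ and $\|\varphi'\|_\infty$" is not justified. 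The correct estimate never touches $\varphi'$ at all and needs no splitting of $l$: for any direction $l$, the difference quotient $t^{-1}\big(D\Sigma_n(h+tl)k - D\Sigma_n(h)k\big)$ is the average, against the probability measure above, of $t^{-1}\big(D\sigma(h+tl-u/a_n)k - D\sigma(h-u/a_n)k\big)$, hence bounded in norm by $\Lip(D\sigma)\,\|k\|\,\|l\|$. Thus $D\Sigma_n$ is Lipschitz with the same constant as $D\sigma$, and $\|D^2\Sigma_n(h)\|\le \Lip(D\sigma)$ uniformly in $n$ and $h$ wherever the second derivative exists; this is the estimate the paper imports from \cite[page 602]{Fry}.
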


\begin{proof}
The first statement follows from the definition (\ref{def-cap-sigma}). Since $\sigma \in C_b^{1,1}(H)$, there is a constant $C \in \bbr_+$ such that
\begin{align*}
\max \{ \| \sigma(h) \| + \| D \sigma(h) \| \} &\leq C \quad \text{for all $h \in H$,}
\\ \| D \sigma(h) - D \sigma(g) \| &\leq C \| h-g \| \quad \text{for all $h,g \in H$.}
\end{align*}
Thus, arguing as in \cite[page 602]{Fry}, we see that (\ref{cap-sigma-n-est}) is fulfilled.
\end{proof}

Now, we define the sequence $(\hat{\sigma}_n)_{n \in \bbn}$ of functions $\hat{\sigma}_n : H \to H$ as
\begin{align}\label{def-hat-sigma}
\hat{\sigma}_n(h) := \frac{(b_n)^n}{c_n} \int_{E_n} \Sigma_n(h-g) \varphi(b_n \| g \|) dg, \quad h \in H,
\end{align}
where the sequence $b = (b_n)_{n \in \bbn} \subset (0,\infty)$ is chosen large enough such that
\begin{align}\label{diff-hat-cap}
\max \{ \| \hat{\sigma}_n(h) - \Sigma_n(h) \|, \| D \hat{\sigma}_n(h) - D \Sigma_n(h) \|, \| D^2 \hat{\sigma}_n(h) - D^2 \Sigma_n(h) \| \} \leq 2^{-n}
\end{align}
for all $n \in \bbn$ and all $h \in H$. Inductively, we define the sequence $(\bar{\sigma}_n)_{n \in \bbn_0}$ of functions $\bar{\sigma}_n : H \to H$ by
\begin{align}\label{def-sigma-0-bar}
\bar{\sigma}_0 &:= \sigma(0) \quad \text{and}
\\ \label{def-sigma-n-bar} \bar{\sigma}_n &:= \hat{\sigma}_n + \bar{\sigma}_{n-1} \circ \Pi_{n-1} - \hat{\sigma}_n \circ \Pi_{n-1} \quad \text{for all $n \in \bbn$.}
\end{align}

\begin{lemma}\label{lemma-bar-sigma}
The following statements are true:
\begin{enumerate}
\item We have $\bar{\sigma}_n|_{E_n} = \bar{\sigma}_{n-1}|_{E_n}$ and $\bar{\sigma}_n|_{E_n} \in C^{\infty}(E_n,H)$ for all $n \in \bbn$.

\item There is a constant $C \in \bbr_+$ such that
\begin{align}\label{est-bar-sigma}
\max \{ \| \bar{\sigma}_n(h) \|, \| D \bar{\sigma}_n(h) \|, \| D^2 \bar{\sigma}_n(h) \| \} \leq C
\end{align}
for all $n \in \bbn$ and all $h \in E_n$.
\end{enumerate}
\end{lemma}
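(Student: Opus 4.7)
I would proceed by induction on $n$, first noting that the identity in the lemma almost certainly contains a typo: $\bar\sigma_n|_{E_{n-1}} = \bar\sigma_{n-1}|_{E_{n-1}}$ is what falls out of the definition, since only $h \in E_{n-1}$ satisfies $\Pi_{n-1}h = h$. In that case the recursion \eqref{def-sigma-n-bar} collapses to $\bar\sigma_n(h) = \hat\sigma_n(h) + \bar\sigma_{n-1}(h) - \hat\sigma_n(h) = \bar\sigma_{n-1}(h)$. Smoothness of $\bar\sigma_n|_{E_n}$ then follows because $\hat\sigma_n \in C^\infty(H,H)$ by Lemma \ref{lemma-cap-sigma}, the inductive hypothesis supplies $\bar\sigma_{n-1}|_{E_{n-1}} \in C^\infty(E_{n-1},H)$, and $\Pi_{n-1}$ restricts to a continuous linear map $E_n \to E_{n-1}$, so each summand in the defining recursion is $C^\infty$ on $E_n$. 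The base case $n=0$ is immediate since $\bar\sigma_0$ is constant.

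\textbf{Plan for Part 2.} Iterating the recursion and using $\Pi_{k-1}\Pi_k = \Pi_{k-1}$, I would obtain the telescoping identity valid for all $h \in E_n$,
\begin{align*}
\bar\sigma_n(h) \;=\; \sigma(0) + \sum_{k=1}^{n}\bigl[\hat\sigma_k(\Pi_k h) - \hat\sigma_k(\Pi_{k-1}h)\bigr],
\end{align*}
together with its formal derivatives. For the zeroth-order bound, rewrite each increment as $[\sigma(\Pi_k h) - \sigma(\Pi_{k-1}h)] + \epsilon_k(h)$: the $\sigma$-increments telescope to $\sigma(h) - \sigma(0)$, bounded by $2\|\sigma\|_\infty$, while $|\epsilon_k(h)| \le 2\|\hat\sigma_k - \sigma\|_\infty$ is controlled by combining \eqref{diff-hat-cap} (giving $\|\hat\sigma_k - \Sigma_k\|_\infty \le 2^{-k}$) with the standard mollification estimate $\|\Sigma_k - \sigma\|_\infty \le L_\sigma / a_k$ (valid since $\Sigma_k$ convolves $\sigma$ with a mollifier of support at most $1/a_k$ in $E_k$-directions and $\sigma$ is Lipschitz). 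Choosing $a_k \ge 2^k L_\sigma$ makes $\epsilon_k$ summable, yielding the uniform $\|\bar\sigma_n\|_{\infty,E_n}$-bound.

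\textbf{Main obstacle.} The hardest step is the uniform bound on $\|D^2\bar\sigma_n(h)\|$, and to a lesser extent on $\|D\bar\sigma_n(h)\|$, via the differentiated telescope. Naive term-by-term estimates pick up rank-one contributions of the form $D\hat\sigma_k(\Pi_{k-1}h)(\Pi_k - \Pi_{k-1})(\cdot) = \langle e_k^*,\cdot\rangle D\hat\sigma_k(\Pi_{k-1}h) e_k$ whose operator norms do not decay in $k$, so an $\ell^1$-summation fails. The key observation that rescues the bound is that $D\hat\sigma_k$ can be replaced by $D\sigma$ with error $O(2^{-k})$ once $a_k, b_k$ are chosen sufficiently large, after which $\sum_k \langle e_k^*,v\rangle D\sigma(h) e_k$ collapses (via the unconditional Schauder expansion $\sum_k \langle e_k^*,v\rangle e_k = v$) to $D\sigma(h)v$, whose norm is uniformly bounded because $\sigma \in C_b^{1,1}$. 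The residual errors are made summable by choosing the sequences $a_n, b_n$ from \eqref{def-cap-sigma} and \eqref{diff-hat-cap} to grow rapidly enough; the $D^2$-bound proceeds analogously, using Lipschitz continuity of $D\sigma$ (again $\sigma \in C_b^{1,1}$) in place of the non-existent $D^2\sigma$. Showing that Fry's parameters $a, b, r$ can be arranged compatibly with this summability in all three orders simultaneously is the technical heart of the argument.
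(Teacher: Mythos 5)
Your Part 1 is fine: the identity as printed is indeed a typo for $\bar{\sigma}_n|_{E_{n-1}} = \bar{\sigma}_{n-1}|_{E_{n-1}}$ (already for $n=1$ the printed version would force $\hat{\sigma}_1$ to be constant on $E_1$, since $\Pi_0=0$), and your inductive smoothness argument is exactly the content the paper delegates to \cite{Fry}. Your telescoping identity in Part 2 is also correct, and the zeroth-order bound goes through. Note, however, a quantifier mismatch: your estimates need $\| \Sigma_k - \sigma \|_\infty \leq L_\sigma / a_k$ to be summable, so you must impose growth on $a$, whereas the lemma is stated for the fixed but \emph{arbitrary} sequence $a$ of the construction and is invoked in Lemma \ref{lemma-approx-C-b-2-b} for \emph{all} sequences $a$. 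The paper's proof never uses $\| \Sigma_k - \sigma \|$ at all: it only uses $\| \hat{\sigma}_k - \Sigma_k \| \leq 2^{-k}$ from (\ref{diff-hat-cap}) (which the choice of $b$ guarantees for any $a$) together with the $a$-independent bound on $\Sigma_k$ from Lemma \ref{lemma-cap-sigma}, propagating the $C^2$-closeness $\| \bar{\sigma}_n - \Sigma_n \|, \| D\bar{\sigma}_n - D\Sigma_n \|, \| D^2\bar{\sigma}_n - D^2\Sigma_n \| \leq 2(1-2^{-n})$ inductively through the recursion (\ref{def-sigma-n-bar}).

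The genuine gap is the second-derivative bound, which does not ``proceed analogously.'' Your telescope works at orders $0$ and $1$ because there is a $k$-independent object to compare the mollifications against, namely $\sigma$ and $D\sigma$: the clean first-order version replaces each increment $D\hat{\sigma}_k(\Pi_k h)\Pi_k - D\hat{\sigma}_k(\Pi_{k-1}h)\Pi_{k-1}$ by $D\sigma(\Pi_k h)\Pi_k - D\sigma(\Pi_{k-1}h)\Pi_{k-1}$ at summable cost and telescopes the latter to $D\sigma(h)\Pi_n$. (Your version freezes the base point at $h$, which already loses summability, since $\| D\sigma(\Pi_{k-1}h) - D\sigma(h) \| \leq {\rm Lip}(D\sigma)\,\| h - \Pi_{k-1}h \|$ does not decay summably in $k$; but this is repairable.) At second order there is no such object: $D^2\sigma$ does not exist, regrouping the telescope gives $D^2\hat{\sigma}_n(h) + \sum_{k<n}\bigl[D^2\hat{\sigma}_k - D^2\hat{\sigma}_{k+1}\bigr](\Pi_k h)(\Pi_k\cdot,\Pi_k\cdot)$, and the increments $D^2\Sigma_k - D^2\Sigma_{k+1}$ are each of size ${\rm Lip}(D\sigma)$ and cannot be made summable by any choice of $a$ (for a $C_b^{1,1}$ function, $D^2\Sigma_k$ need not converge as $a_k\to\infty$). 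Bounding each term by ${\rm Lip}(D\sigma)$ instead yields only an $O(n)$ estimate. The missing idea is the one the paper uses: the correct comparison object at second order is $D^2\Sigma_n$ itself, which exists, and is bounded by a constant depending only on the $C_b^{1,1}$ data of $\sigma$, uniformly in $n$, $h$ and $a$, by Lemma \ref{lemma-cap-sigma} -- even though $D^2\sigma$ is not defined.
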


\begin{proof}
The first statement follows from \cite[page 602]{Fry}. Using (\ref{diff-hat-cap}), we prove inductively as in \cite{Fry} that 
\begin{align*}
\max \{ \| \bar{\sigma}_n(h) - \Sigma_n(h) \|, \| D \bar{\sigma}_n(h) - D \Sigma_n(h) \|, \| D^2 \bar{\sigma}_n(h) - D^2 \Sigma_n(h) \| \} \leq 2 (1 - 2^{-n})
\end{align*}
for all $n \in \bbn$ and all $h \in H$. Together with Lemma \ref{lemma-cap-sigma}, this proves the second statement.
\end{proof}

Now, we define $\bar{\sigma} : E^{\infty} \to H$ as
\begin{align}\label{def-sigma-bar}
\bar{\sigma} := \lim_{n \to \infty} \bar{\sigma}_n,
\end{align}
where $E^{\infty} := \bigcup_{n \in \bbn} E_n$. In view of Lemma \ref{lemma-bar-sigma}, we have
\begin{align}\label{restr-bar-sigma}
\bar{\sigma}|_{E_n} = \bar{\sigma}_n|_{E_n} \quad \text{for all $n \in \bbn$.}
\end{align}
Now, we define the function
\begin{align}\label{def-Psi-final}
\Psi : H \to E^{\infty}, \quad \Psi(h) := \sum_{k \in \bbn} \chi_k(h) h_k e_k,
\end{align}
where we refer to the series representation (\ref{series-h}) of $h$, and where for each $k \in \bbn$ the function $\chi_k : H \to [0,1]$ is given by
\begin{align}\label{def-chi}
\chi_k(h) := 1 - \varphi(\| T_k h \|),
\end{align}
where $T_k \in L(H)$ denotes the linear operator
\begin{align}\label{def-T-k}
T_k := \frac{\Id_H - \Pi_{k-1}}{r}
\end{align}
with $r > 0$ denoting the constant from above.

\begin{lemma}\label{lemma-Psi-local}
The following statements are true:
\begin{enumerate}
\item We have $\Psi \in \Lip(H,E^{\infty}) \cap C^{\infty}(H,E^{\infty})$.

\item For each $h \in H$ there exist $n \in \bbn$ and $\delta > 0$ such that
\begin{align}\label{Psi-local}
\Psi(h-g) \in E_n \quad \text{for all $g \in H$ with $\| g \| \leq \delta$.}
\end{align}
\end{enumerate}
\end{lemma}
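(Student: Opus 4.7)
The plan is to establish (ii) first, then deduce smoothness in (i) from it, and finally verify the global Lipschitz bound by a uniform estimate on the Fr\'echet derivative.

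For (ii), fix $h \in H$. Since $\{e_k\}_{k\in\bbn}$ is a Schauder basis, $\Pi_{k-1}h \to h$, hence $\|T_k h\| = \|(\Id_H - \Pi_{k-1})h\|/r \to 0$ as $k\to\infty$. Choose $n \in \bbn$ with $\|T_k h\| < 1/4$ for all $k \geq n$, and let $\delta := r/(4(1+\bc))$; using the uniform bound $\|T_k\| \leq (1+\bc)/r$, one obtains, for every $g\in H$ with $\|g\|\leq \delta$ and every $k\geq n$,
\[
\|T_k(h-g)\| \leq \|T_k h\| + \|T_k\|\,\|g\| < \tfrac{1}{2}.
\]
Thus $\varphi(\|T_k(h-g)\|)=1$ and $\chi_k(h-g)=0$ for all $k\geq n$, giving $\Psi(h-g) = \sum_{k<n}\chi_k(h-g)(h-g)_k e_k \in E_{n-1} \subset E_n$.

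For the smoothness part of (i), I would first check that each $\chi_k$ lies in $C^\infty(H)$: on the open set $\{T_k h \neq 0\}$ the map $h \mapsto \|T_k h\|$ is smooth, hence so is $\chi_k = 1-\varphi(\|T_k\cdot\|)$ by the chain rule; and at a point with $T_k h = 0$ the estimate $\|T_k h'\| < 1/2$ holds on an entire neighborhood, on which $\chi_k$ vanishes identically. Combined with the argument for (ii), any $h$ has a neighborhood on which $\Psi$ agrees with the \emph{finite} sum $\sum_{k<n}\chi_k(\cdot)\langle e_k^*,\cdot\rangle e_k$, so $\Psi\in C^\infty(H,E^\infty)$.

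For the Lipschitz conclusion, write $\Psi = \Id_H - \Phi$ with $\Phi(h) := \sum_k \varphi(\|T_k h\|) h_k e_k$, so it suffices to bound $\|D\Phi(h)\|$ uniformly in $h$. Term-by-term differentiation (legitimate because only finitely many summands are nonzero in each neighborhood) yields
\[
D\Phi(h)v = \sum_k \varphi(\|T_k h\|)\, v_k e_k + \sum_{k:\,T_k h\neq 0} \varphi'(\|T_k h\|)\,\frac{\langle T_k h, T_k v\rangle}{\|T_k h\|}\, h_k e_k,
\]
the first sum having norm at most $\ubc\|v\|$ by Lemma~\ref{lemma-norm-g-star}. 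The main obstacle is to bound the second sum by a constant multiple of $\|v\|$ \emph{independent of $h$}: this sum is supported on $K:=\{k:\|T_k h\|\in[1/2,1]\}$, where $\|(\Id_H-\Pi_{k-1})h\|\in[r/2,r]$. The plan is to invoke the remark after Assumption~\ref{ass-Schauder-basis} to pass to an equivalent inner product making $\{e_k\}$ orthonormal; for $k\in K$ one then has $|h_k|\leq r$, and a Fubini-type rearrangement combined with the tail bound $\sum_{k\geq \min K} h_k^2 \leq r^2$ yields the desired estimate. Putting everything together gives $\Psi\in\Lip(H,E^\infty)$.
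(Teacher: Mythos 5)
Your argument is correct, but it is a genuinely different route from the paper's: the paper disposes of this lemma with a one-line citation to \cite[page 17]{Fine-Approx}, whereas you give a self-contained proof. Your part (ii) and the smoothness half of (i) are complete and right: the localization $\|T_k(h-g)\| < \tfrac12$ for $k \geq n$ and $\|g\| \leq \delta$ forces $\chi_k(h-g)=0$ there, and the two-chart argument ($\{T_k h \neq 0\}$ versus a neighborhood on which $\chi_k \equiv 0$) is exactly how one sees $\chi_k \in C^\infty(H)$ (compare the paper's own Lemma \ref{lemma-chi-est}, which uses the covering $U_k \cup V_k$ with thresholds $\tfrac14$ and $\tfrac12$). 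For the Lipschitz half you only outline the estimate, but the outline is sound and closes without the ``Fubini-type rearrangement'': after renorming so that $\{e_k\}$ is orthonormal, the second sum has squared norm $\sum_{k \in K} \big(\varphi'(\|T_k h\|) \tfrac{\langle T_k h, T_k v\rangle}{\|T_k h\|}\big)^2 h_k^2$, each coefficient is bounded by $3(1+\bc)\|v\|/r$ via $|\varphi'|\leq 3$ and Cauchy--Schwarz, and the tail bound $\sum_{k \geq \min K} h_k^2 \leq c^2 r^2$ (from $\|T_{\min K}h\| \leq 1$ and the norm-equivalence constant $c$) gives a bound $\leq 3c^2(1+\bc)\|v\|$ uniform in $h$; note that the pointwise bound $|h_k| \leq r$ alone would not suffice since $|K|$ depends on $h$. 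One small imprecision: the justification ``only finitely many summands are nonzero in each neighborhood'' applies to the series defining $\Psi$, not to $\Phi$ (whose summands $\varphi(\|T_k h\|)h_k e_k$ equal $h_k e_k$ for all large $k$); the correct route is to differentiate $\Psi$ locally as a finite sum and then use $D\Phi = \Id_H - D\Psi$, which yields exactly the formula you wrote. What your approach buys is a proof that is verifiable within the paper and makes explicit which constants ($\bc$, $\ubc$, $r$ and the renorming constant) the Lipschitz bound depends on; what the paper's citation buys is brevity and consistency with its systematic reliance on \cite{Fine-Approx} and \cite{Fry} for the Moulis-type machinery.
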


\begin{proof}
This follows from \cite[page 17]{Fine-Approx}.
\end{proof}

Now, we define the function
\begin{align}\label{def-sigma-final}
\sigma^{(a,b,r)} : H \to H, \quad \sigma^{(a,b,r)} := \bar{\sigma} \circ \Psi.
\end{align}
Note that we emphasize the dependence on the sequences $a$ and $b$, and on the constant $r$. For two sequences $a = (a_n)_{n \in \bbn} \subset \bbr$ and $b = (b_n)_{n \in \bbn} \subset \bbr$ we agree to write $a \leq_{\bbn} b$ if $a_n \leq b_n$ for all $n \in \bbn$.

\begin{lemma}\label{lemma-approx-C-b-2-a}
Let $\sigma \in \Lip(H) \cap \F(H) \cap \B(H)$ be arbitrary. Then, for each $\epsilon > 0$ there are sequences $a^0, b^0 \in (0,\infty)^{\bbn}$, where $b^0$ is chosen such that (\ref{diff-hat-cap}) is fulfilled with $b$ replaced by $b^0$, and a constant $r^0 > 0$ such that for all sequences $a,b \in (0,\infty)^{\bbn}$ with $a^0 \leq_{\bbn} a$ and $b^0 \leq_{\bbn} b$ and all $r > 0$ with $r \leq r^0$ we have 
\begin{align*}
\sup_{h \in H} \| \sigma^{(a,b,r)}(h) - \sigma(h) \| \leq \epsilon.
\end{align*}
\end{lemma}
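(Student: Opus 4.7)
The strategy is to decompose $\|\sigma^{(a,b,r)}(h) - \sigma(h)\| \leq \|\bar\sigma(\Psi(h)) - \sigma(\Psi(h))\| + \|\sigma(\Psi(h)) - \sigma(h)\|$ and bound each summand by $\epsilon/2$. Without loss of generality I would first reduce to $\epsilon \leq 1$, since larger $\epsilon$ is easier. For the second summand, I would observe that $h - \Psi(h) = \sum_k \varphi(\|T_k h\|) h_k e_k$, and since $\varphi$ vanishes off $[-1,1]$, only indices $k$ with $\|(\Id - \Pi_{k-1})h\| < r$ contribute nontrivially. Setting $k_0 := \min\{k \geq 1 : \|(\Id - \Pi_{k-1}) h\| < r\}$ (finite because $\Pi_k h \to h$ by the Schauder basis property) and $h^{(0)} := (\Id - \Pi_{k_0-1}) h$, I would rewrite the sum as $\sum_k \lambda_k h_k^{(0)} e_k$ with $|\lambda_k| \leq 1$. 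Lemma \ref{lemma-norm-g-star}(3) then yields $\|h - \Psi(h)\| \leq \ubc \|h^{(0)}\| < \ubc \, r$, so the Lipschitz property of $\sigma$ gives $\|\sigma(\Psi(h)) - \sigma(h)\| \leq L \ubc r$. The choice $r^0 := \epsilon / (2 L \ubc)$ ensures the second summand is at most $\epsilon/2$.

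For the first summand, Lemma \ref{lemma-Psi-local} provides $\Psi(h) \in E_{n(h)}$ for some $n(h) \in \bbn$, and (\ref{restr-bar-sigma}) gives $\bar\sigma(\Psi(h)) = \bar\sigma_{n(h)}(\Psi(h))$, so it suffices to bound $\|\bar\sigma_n(h') - \sigma(h')\|$ uniformly in $n \in \bbn$ and $h' \in E_n$. Unrolling the recursion (\ref{def-sigma-n-bar}) using $\Pi_{k-1} \Pi_k = \Pi_{k-1}$ and $\Pi_n h' = h'$ for $h' \in E_n$, together with (\ref{def-sigma-0-bar}), yields the telescoping identity
\begin{equation*}
\bar\sigma_n(h') - \sigma(h') = \sum_{k=1}^n \big\{ (\hat\sigma_k - \sigma)(\Pi_k h') - (\hat\sigma_k - \sigma)(\Pi_{k-1} h') \big\}.
\end{equation*}
Each summand is bounded by $2 \|\hat\sigma_k - \sigma\|_\infty \leq 2(\|\hat\sigma_k - \Sigma_k\|_\infty + \|\Sigma_k - \sigma\|_\infty)$. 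Rescaling $g \mapsto g/a_k$ in (\ref{def-cap-sigma}) and using the Lipschitz property of $\sigma$ yields $\|\Sigma_k - \sigma\|_\infty \leq L/a_k$ (the first-moment ratio $\int_{E_k} \|g\| \varphi(\|g\|)\,dg / c_k$ is bounded by $1$ because $\varphi$ is supported in the unit ball). I would then choose $b^0_k$ large enough that $\|\hat\sigma_k - \Sigma_k\|_\infty \leq \epsilon \cdot 2^{-k-3}$ and take $a^0_k := 8 L \cdot 2^k / \epsilon$. The total telescoped error is then $\sum_k 2(\epsilon \cdot 2^{-k-3} + L/a^0_k) \leq \epsilon/2$, uniformly in $n$.

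The main subtlety is that (\ref{diff-hat-cap}) as stated with bound $2^{-n}$ does not by itself yield telescoped errors below $\epsilon/2$, since $\sum_k 2 \cdot 2^{-k} = 2$. One must choose $b^0$ substantially larger than the minimum satisfying (\ref{diff-hat-cap}) in order to reduce the secondary mollification error $\|\hat\sigma_k - \Sigma_k\|_\infty$ to order $\epsilon \cdot 2^{-k}$ rather than merely $2^{-k}$. This is permitted because the hypothesis ``$b^0$ chosen such that (\ref{diff-hat-cap}) is fulfilled'' constrains $b^0$ only from below; any strictly stronger bound automatically implies (\ref{diff-hat-cap}) under the reduction $\epsilon \leq 1$. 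A second essential ingredient is the unconditionality of the Schauder basis via Lemma \ref{lemma-norm-g-star}(3), without which the uniform bound $\|h - \Psi(h)\| \leq \ubc \, r$ independent of $h$ would not be available and the whole scheme would collapse.
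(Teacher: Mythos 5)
Your proof is correct, but it takes a genuinely different route from the paper: the paper's entire proof of Lemma \ref{lemma-approx-C-b-2-a} is the single sentence ``This follows from \cite[Thm.~1]{Fry} and its proof,'' i.e.\ it outsources the quantitative error analysis to the reference, whereas you reconstruct that analysis in full. Your two ingredients are sound. The bound $\| h - \Psi(h) \| \leq {\rm ubc}\{e_l\} \, r$ via the index $k_0$ and Lemma \ref{lemma-norm-g-star}(3) is exactly the right way to control the displacement caused by $\Psi$, and the telescoping identity for $\bar{\sigma}_n - \sigma$ on $E_n$ is verified correctly (the omitted $k=0$ term vanishes because $\Pi_0 h' = 0$ and $\bar{\sigma}_0 \equiv \sigma(0)$ by (\ref{def-sigma-0-bar})); the estimates $\| \Sigma_k - \sigma \|_{\infty} \leq L / a_k$ and $\| \hat{\sigma}_k - \Sigma_k \|_{\infty} \leq L / b_k$ by rescaling in (\ref{def-cap-sigma}) and (\ref{def-hat-sigma}) are standard and correct, and they also supply the monotonicity needed for the ``for all $a \geq_{\bbn} a^0$, $b \geq_{\bbn} b^0$'' quantifier. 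Your observation that the raw bound $2^{-n}$ in (\ref{diff-hat-cap}) is not by itself summable to something of order $\epsilon$, so that $b^0$ must be chosen strictly larger than the minimal sequence satisfying (\ref{diff-hat-cap}), is a genuine point that the paper's citation glosses over, and you resolve it legitimately since (\ref{diff-hat-cap}) only constrains $b$ from below. What your self-contained argument buys is transparency about exactly which properties of the Moulis--Fry construction are used (only the zeroth-order mollification errors and the restriction property (\ref{restr-bar-sigma})); what the paper's citation buys is brevity and the fact that Fry's theorem simultaneously delivers the derivative bounds needed elsewhere, which your argument does not attempt and does not need here.
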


\begin{proof}
This follows from \cite[Thm. 1]{Fry} and its proof.
\end{proof}

\begin{lemma}\label{lemma-C-b-2-pre}
There exists a constant $C \in \bbr_+$ such that for all $L \in \bbr_+$, all $\sigma \in {\rm Lip}_L(H) \cap \F(H) \cap \B(H)$ and all sequences $a,b \in (0,\infty)^{\bbn}$, where $b$ is chosen such that (\ref{diff-hat-cap}) is fulfilled, and every constant $r > 0$ we have
\begin{align*}
\sigma^{(a,b,r)} \in {\rm Lip}_{CNL}(H) \cap \F(H) \cap C^{\infty}(H),
\end{align*}
where $N := \dim \langle \sigma(H) \rangle$.
\end{lemma}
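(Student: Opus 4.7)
The plan is to verify separately the three properties of $\sigma^{(a,b,r)}$: that it lies in $\F(H)$, that it is $C^\infty$, and that it satisfies the Lipschitz bound. Setting $V := \langle \sigma(H)\rangle$, a subspace of dimension $N$ contained in some $E_M$, both $\Sigma_n$ and $\hat\sigma_n$ defined in (\ref{def-cap-sigma}) and (\ref{def-hat-sigma}) take values in $V$ because they are integrals of $V$-valued functions against the scalar kernel $\varphi$; an induction on the recursion (\ref{def-sigma-0-bar})--(\ref{def-sigma-n-bar}) then gives $\bar\sigma_n(H)\subset V$ for every $n\in\bbn_0$, so $\sigma^{(a,b,r)}(H)\subset V\subset E_M$, showing $\sigma^{(a,b,r)}\in\F(H)$. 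Each $\Sigma_n$, $\hat\sigma_n$, and consequently by induction each $\bar\sigma_n$, lies in $C^\infty(H,H)$, since the convolutions inherit smoothness from $\varphi\in C^\infty$ and the recursion involves only the continuous linear maps $\Pi_{n-1}$. Given $h\in H$, Lemma~\ref{lemma-Psi-local} supplies a neighborhood $U$ of $h$ and an index $n$ with $\Psi(U)\subset E_n$, and the consistency (\ref{restr-bar-sigma}) gives $\sigma^{(a,b,r)}|_U = \bar\sigma_n\circ\Psi|_U$, a composition of smooth maps; hence $\sigma^{(a,b,r)}\in C^\infty(H)$.

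For the Lipschitz estimate I adapt the component-wise strategy of Lemma~\ref{lemma-sup-inf-1}. Each scalar coordinate $\sigma_k := \langle e_k^*,\sigma\rangle$ is $2\bc L$-Lipschitz by Lemma~\ref{lemma-Lip-components-pre}. Since the mollifications (\ref{def-cap-sigma})--(\ref{def-hat-sigma}), the recursion (\ref{def-sigma-0-bar})--(\ref{def-sigma-n-bar}), and the composition with $\Psi$ are all linear in the input function and commute with application of the bounded linear functional $\langle e_k^*,\cdot\rangle$, one obtains $\langle e_k^*,\sigma^{(a,b,r)}\rangle = (\sigma_k)^{(a,b,r)}$, the scalar analogue of the construction applied to $\sigma_k$. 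It therefore suffices to establish a universal scalar bound: there is an absolute constant $C_0\in\bbr_+$ such that $F\in\Lip_\lambda(H,\bbr)\cap\B(H,\bbr)$ implies $F^{(a,b,r)}\in\Lip_{C_0\lambda}(H,\bbr)$. Given this, after reordering the Schauder basis so that the nonzero components of $\sigma^{(a,b,r)}$ occupy indices $1,\ldots,N$, Lemma~\ref{lemma-Lip-components} yields $\sigma^{(a,b,r)}\in\Lip_{CNL}(H)$ with $C = 2\bc C_0$.

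The main obstacle is the universal scalar Lipschitz bound. The mollifications $F_n$ and $\hat F_n$ retain Lipschitz constant $\lambda$ by moving absolute values inside the integrals, but the recursive definition of $\bar F_n$ makes a naive term-by-term estimate grow with $n$. The key device is to iterate (\ref{def-sigma-0-bar})--(\ref{def-sigma-n-bar}), using $\Pi_j\circ\Pi_k = \Pi_{\min(j,k)}$, to arrive at the telescoping representation
\begin{align*}
\bar F_n(h) = F(0) + \sum_{k=1}^n\bigl[\hat F_k(\Pi_k h) - \hat F_k(\Pi_{k-1}h)\bigr] \quad \text{for } h\in E_n.
\end{align*}
Composing with $\Psi$ and noting that $\Pi_k\Psi(h) - \Pi_{k-1}\Psi(h) = \chi_k(h)\, h_k\, e_k$, each summand is an increment of $\hat F_k$ along this one-dimensional direction. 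The chain rule combined with the bounds $|\varphi'|\leq 3$, $\|T_k\|\leq (1+\bc)/r$, and Lemma~\ref{lemma-norm-g-star} then yields a differential estimate on $F^{(a,b,r)} = \bar F\circ\Psi$ that depends only on $\lambda$ and the basis constants and not on $n$, which is precisely the required scalar Lipschitz bound.
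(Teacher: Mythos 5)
Your handling of the $\F(H)$ membership and of smoothness is sound and essentially matches the paper's route: the paper likewise observes that vanishing components stay vanishing under the construction (\ref{def-cap-sigma})--(\ref{def-sigma-final}), and your local factorization $\sigma^{(a,b,r)}|_U=\bar\sigma_n\circ\Psi|_U$ via Lemma \ref{lemma-Psi-local} and (\ref{restr-bar-sigma}) is a correct unpacking of the $C^{\infty}$ claim. The componentwise reduction $\langle e_k^*,\sigma^{(a,b,r)}\rangle=(\sigma_k)^{(a,b,r)}$ is also legitimate, since every step of the construction is linear in the input function and commutes with the coordinate functionals, and the paper indeed routes the Lipschitz estimate through Lemmas \ref{lemma-Lip-components-pre} and \ref{lemma-Lip-components} in exactly this way.

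The gap is in the universal scalar Lipschitz bound, which is precisely the content of \cite[Thm.~1]{Fry} that the paper cites rather than reproves. Your telescoping identity $\bar F_n(h)=F(0)+\sum_{k=1}^n\bigl[\hat F_k(\Pi_k h)-\hat F_k(\Pi_{k-1}h)\bigr]$ on $E_n$ is correct, but the assertion that the chain rule together with $|\varphi'|\le 3$, $\|T_k\|\le(1+\bc)/r$ and Lemma \ref{lemma-norm-g-star} yields an $n$-independent derivative bound does not follow. Differentiating the sum produces, among other terms,
\begin{align*}
\sum_{k=1}^n\bigl(D\hat F_k(\Pi_{k-1}h)e_k\bigr)\,\langle e_k^*,\cdot\rangle,
\end{align*}
and knowing only that each coefficient $D\hat F_k(\Pi_{k-1}h)e_k$ is bounded by a multiple of $\lambda$ gives, in a Hilbert space, a functional of norm up to order $\sqrt{n}\,\lambda$ by Cauchy--Schwarz, not $C_0\lambda$. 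What actually saves the construction is that these coefficients are not arbitrary: they are directional derivatives of mollifications of one and the same $\lambda$-Lipschitz function $F$, and the inductive $C^2$-closeness estimate $\|D\bar F_n-D\Sigma_n\|\le 2(1-2^{-n})$ of Lemma \ref{lemma-bar-sigma} --- which rests on condition (\ref{diff-hat-cap}) and on the rapid growth of the sequences $a$ and $b$ --- is what transfers the bound $\|D\Sigma_n\|\le\Lip(\Sigma_n)\le 2\,\bc\,\lambda$ to $\bar F_n$ uniformly in $n$. Your sketch never invokes (\ref{diff-hat-cap}) or Lemma \ref{lemma-bar-sigma} in the Lipschitz part, and the bounds you do cite control $D\Psi$ (as in Lemma \ref{lemma-C-b-2-part-2}), not the accumulated increments of the $\hat F_k$. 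To close the argument you should either import \cite[Thm.~1]{Fry} as the paper does, or carry out the induction behind Lemma \ref{lemma-bar-sigma} explicitly in place of the telescoping estimate.
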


\begin{proof}
Let $a,b$ be arbitrary sequences, where $b$ is chosen such that (\ref{diff-hat-cap}) is fulfilled, and let $r > 0$ be arbitrary. By the construction (\ref{def-cap-sigma})--(\ref{def-sigma-final}), for all $k \in \bbn$ with $\sigma_k = 0$ we have $\sigma_k^{(a,b,r)} = 0$, showing that $\sigma^{(a,b,r)} \in \F(H)$. The remaining assertions follow from Lemmas \ref{lemma-Lip-components-pre}, \ref{lemma-Lip-components} and \cite[Thm. 1]{Fry}.
\end{proof}

Lemma \ref{lemma-C-b-2-pre} does not ensure that $\sigma^{(a,b,r)} \in C_b^2(H)$; that is, it remains to show that the second order derivative is bounded. For this purpose, we prepare some auxiliary results. For the next two results, we fix a constant $r > 0$. Note that the functions $\chi_k$, $k \in \bbn$ defined in (\ref{def-chi}) and $\Psi$ defined in (\ref{def-Psi-final}) depend on the choice of $r$.

\begin{lemma}\label{lemma-chi-est}
The following statements are true:
\begin{enumerate}
\item We have $\chi_k \in C^{\infty}(H,\bbr)$ for each $k \in \bbn$.

\item There is a constant $C \in \bbr_+$ such that
\begin{align}\label{est-chi-k}
\max \{ \| \chi_k(h) \|, r \| D \chi_k(h) \|, r^2 \| D^2 \chi_k(h) \| \} \leq C
\end{align}
for all $k \in \bbn$ and all $h \in H$.
\end{enumerate}
\end{lemma}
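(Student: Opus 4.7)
The plan is to reduce $\chi_k$ to a smooth scalar function of the squared norm $\|T_k h\|^2$, and then exploit the compact support of $\varphi$ (and hence of its derivatives) to obtain bounds whose $r$-dependence matches the one claimed in \eqref{est-chi-k}.

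More concretely, I would first define the auxiliary function
\begin{align*}
\tilde{\varphi} : \bbr_+ \to \bbr, \quad \tilde{\varphi}(t) := 1 - \varphi(\sqrt{t}),
\end{align*}
and observe that, since $\varphi \equiv 1$ on $(-1/2,1/2)$, we have $\tilde{\varphi} \equiv 0$ on $[0,1/4]$, so $\tilde{\varphi}$ extends to a $C^{\infty}(\bbr,\bbr)$-function (by extending it by zero to $(-\infty,0)$). Moreover, since $\varphi$ vanishes outside $[-1,1]$, we have $\tilde{\varphi} \equiv 1$ on $[1,\infty)$, so both $\tilde{\varphi}'$ and $\tilde{\varphi}''$ are supported in $[1/4,1]$ and are globally bounded by some constant $M$ depending only on $\varphi$. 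Setting $g_k : H \to \bbr$, $g_k(h) := \| T_k h \|^2 = \langle T_k^* T_k h, h \rangle$, which is a smooth (polynomial) function on $H$, we can write $\chi_k = \tilde{\varphi} \circ g_k$, which proves the first statement.

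For the second statement, I would use the chain rule to compute
\begin{align*}
D \chi_k(h) &= \tilde{\varphi}'(g_k(h)) \, D g_k(h),
\\ D^2 \chi_k(h) &= \tilde{\varphi}''(g_k(h)) \, D g_k(h) \otimes D g_k(h) + \tilde{\varphi}'(g_k(h)) \, D^2 g_k(h),
\end{align*}
together with the identities $D g_k(h)(u) = 2 \langle T_k h, T_k u \rangle$ and $D^2 g_k(h)(u,v) = 2 \langle T_k u, T_k v \rangle$. From the definition $T_k = (\Id_H - \Pi_{k-1})/r$ and Lemma~\ref{lemma-norm-g-star} we get $\| T_k \| \leq (1 + \bc)/r =: C_0/r$, which gives $\| D^2 g_k(h) \| \leq 2 C_0^2 / r^2$ and $\| D g_k(h) \| \leq 2 \| T_k \| \, \| T_k h \| \leq (2 C_0/r) \| T_k h \|$. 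The decisive point is that $\tilde{\varphi}'(g_k(h))$ and $\tilde{\varphi}''(g_k(h))$ are nonzero only when $g_k(h) \in [1/4,1]$, i.e.\ when $\| T_k h \| \leq 1$; in that region the estimate on $\| D g_k(h) \|$ collapses to $\| D g_k(h) \| \leq 2 C_0/r$. Combining these bounds one obtains $\| D \chi_k(h) \| \leq 2 M C_0 / r$ and $\| D^2 \chi_k(h) \| \leq 4 M C_0^2 / r^2 + 2 M C_0^2 / r^2$, while $|\chi_k(h)| \leq 1$ is immediate. Choosing $C$ as the maximum of these constants completes the proof.

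There is no real obstacle here; the only subtlety is the bookkeeping of the powers of $r$, which is handled automatically by the observation that the supports of $\tilde{\varphi}'$ and $\tilde{\varphi}''$ force $\|T_k h\| \leq 1$, so that each factor of $T_k h$ appearing in the derivative formulas is bounded independently of $h$, and only the operator norms of $T_k$ contribute the $1/r$ powers.
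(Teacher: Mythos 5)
Your proof is correct, and the estimates follow the same chain-rule strategy as the paper's: bound $\|T_k\| \leq (1+\bc)/r$, observe that the derivatives of the cutoff vanish unless $\|T_k h\|$ lies in a fixed bounded region, and let each occurrence of $T_k$ contribute one power of $1/r$. The one place where you genuinely diverge is the smoothness argument: the paper composes $\varphi$ with the norm $\eta(h) = \|h\|$, which is $C^{\infty}$ only away from the origin, and therefore has to cover $H$ by the two open sets $\{\|T_k h\| > 1/4\}$ and $\{\|T_k h\| < 1/2\}$, noting that $\chi_k$ vanishes identically on the latter and estimating $\|D^2\eta\| \leq 2/\|h\| \leq 8$ on the former. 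Your device of writing $\chi_k = \tilde{\varphi} \circ g_k$ with $g_k(h) = \|T_k h\|^2$ and $\tilde{\varphi}(t) = 1 - \varphi(\sqrt{t})$ pushes the square root into the scalar cutoff, where the fact that $\varphi \equiv 1$ near $0$ makes $\tilde{\varphi}$ vanish on a neighborhood of the origin and hence globally smooth; this avoids the open-cover step and the singular bound on $D^2\eta$ entirely, at the small cost of having to note that $\tilde{\varphi}'$ and $\tilde{\varphi}''$ are supported in $[1/4,1]$. Both routes are sound and yield the same constants up to inessential factors.
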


\begin{proof}
Let $U \subset H$ be the open set $U := \{ \| \cdot \| > \frac{1}{4} \}$. For the norm function $\eta : U \to \bbr_+$ given by $\eta(h) := \| h \|$ we have $\eta \in C^{\infty}(U,\bbr)$ with derivatives
\begin{align*}
D\eta(h)g &= \frac{\langle h,g \rangle}{\| h \|}, \quad \text{$h \in U$ and $g \in H$,} 
\\ D^2 \eta(h)(g,f) &= \frac{\langle g,f \rangle}{\| h \|} - \frac{\langle h,g \rangle \langle h,f \rangle}{\| h \|^3}, \quad \text{$h \in U$ and $g,f \in H$.} 
\end{align*}
Therefore, for all $h \in U$ we obtain
\begin{align}\label{norm-est}
\| D \eta(h) \| \leq 1 \quad \text{and} \quad
\| D^2 \eta(h) \| &\leq \frac{2}{\| h \|} \leq 8.
\end{align}
We define the constant $L \in \bbr_+$ as
\begin{align*}
L := 1 + \bc.
\end{align*}
Then, by the definition (\ref{def-T-k}) of $T_k$ we have 
\begin{align}\label{T-k-est}
\| T_k \| \leq L/r \quad \text{for all $k \in \bbn$.} 
\end{align}
There is a constant $M \in \bbr_+$ such that
\begin{align*}
\max \{ \varphi(t), \varphi'(t), \varphi''(t) \} \leq M \quad \text{for all $t \in \bbr$.} 
\end{align*}
Now, we define the constant $C \in \bbr_+$ as
\begin{align*}
C := \max \{ 1, ML, ML^2 + 8 M^2 L^2 \}.
\end{align*}
Let $k \in \bbn$ be arbitrary. By the definition (\ref{def-chi}) of $\chi_k$ we have
\begin{align*}
\chi_k = 1 - \varphi \circ \eta \circ T_k,
\end{align*}
and hence
\begin{align*}
\| \chi_k(h) \| \leq 1 \leq C \quad \text{for all $h \in H$.} 
\end{align*}
We define the open sets $U_k,V_k \subset H$ as 
\begin{align*}
U_k := \{ \| T_k \| > 1/4 \} \quad \text{and} \quad V_k := \{ \| T_k \| < 1/2 \}. 
\end{align*}
Then we have $H = U_k \cup V_k$ and $\chi_k(h) = 0$ for all $h \in V_k$. This shows $\chi_k \in C^{\infty}(H,\bbr)$, proving the first statement, and regarding the second statement, it suffices to show (\ref{est-chi-k}) for all $k \in \bbn$ and all $h \in U_k$. Let $k \in \bbn$ and all $h \in U_k$ be arbitrary. By (\ref{norm-est}) and (\ref{T-k-est}) we obtain
\begin{align*}
\| D(\eta \circ T_k)(h) \| &\leq \| D\eta(T_k h) \| \, \| DT_k h \| \leq \| T_k \| \leq L/r,
\\ \| D^2(\eta \circ T_k)(h) \| &\leq \| D^2 \eta(T_k h) \| \, \| D T_k h \|^2 + \| D\eta (T_k h) \|^2 \| D^2 T_k h \| \leq 8 L^2 / r^2,
\end{align*}
and hence
\begin{align*}
\| D \chi_k(h) \| &= \| D(\varphi \circ \eta \circ T_k)(h) \| \leq \| D \varphi (\eta(T_k h)) \| \, \| D (\eta \circ T_k)(h) \| \leq ML/r \leq C/r,
\\ \| D^2 \chi_k(h) \| &= \| D^2(\varphi \circ \eta \circ T_k)(h) \| \leq \| D^2 \varphi(\| T_k h \|) \| \, \| D(\eta \circ T_k)(h) \|^2
\\ &\quad + \| D \varphi(\| T_k h \|) \|^2 \| D^2 (\eta \circ T_k)(k) \| \leq M L^2 / r^2 + 8 M^2 L^2 / r^2 \leq C / r^2,
\end{align*}
completing the proof.
\end{proof}

The following auxiliary result extends Fact 7 in \cite{Fine-Approx}.

\begin{lemma}\label{lemma-C-b-2-part-2}
There exists a constant $M \in \bbr_+$ such that
\begin{align}\label{est-Psi-final}
\max \{ \| D \Psi(h) \|, r \| D^2 \Psi(h) \| \} \leq M \quad \text{for all $h \in H$.}
\end{align}
\end{lemma}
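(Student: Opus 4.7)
The plan is to differentiate the series defining $\Psi$ term by term and bound each piece using Lemma \ref{lemma-chi-est} together with the observation that $D\chi_k(h)$ and $D^2\chi_k(h)$ are nonzero only when $\|T_k h\| \in [1/2,1]$, that is, only when $r/2 \leq \|(\Id - \Pi_{k-1})h\| \leq r$.

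First I would apply the product rule to $\Psi(h) = \sum_{k \in \bbn} \chi_k(h)\, \langle e_k^*, h\rangle\, e_k$ to obtain
\begin{align*}
D\Psi(h)g = \sum_{k \in \bbn} \chi_k(h)\, \langle e_k^*, g\rangle\, e_k + \sum_{k \in \bbn} \bigl(D\chi_k(h) g\bigr)\, \langle e_k^*, h\rangle\, e_k,
\end{align*}
and a corresponding three-term expression for $D^2 \Psi(h)(g,f)$. The summands containing $\chi_k$ with no derivatives are immediately bounded by $\ubc \cdot C \cdot \|g\|$ (respectively $\ubc \cdot C \cdot \|g\|\,\|f\|$) via Lemma \ref{lemma-norm-g-star}(3) and the uniform bound $|\chi_k(h)| \leq C$ from Lemma \ref{lemma-chi-est}.

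For the remaining terms, set $A(h) := \{k \in \bbn : D\chi_k(h) \neq 0 \text{ or } D^2\chi_k(h) \neq 0\}$. By construction every $k \in A(h)$ satisfies $\|(\Id - \Pi_{k-1}) h\| \in [r/2, r]$. Switching to an equivalent inner product that makes $\{e_k\}$ orthonormal---permitted by the remark following Assumption \ref{ass-Schauder-basis}, at the cost of absorbing a fixed factor into the final constant $M$---the map $k \mapsto \|(\Id-\Pi_{k-1})h\|$ becomes monotonically decreasing, so $A(h) = \{k_0, \ldots, k_1\}$ is a consecutive range, and the telescoping identity $\sum_{k \in A(h)} h_k e_k = (\Id-\Pi_{k_0-1})h - (\Id-\Pi_{k_1})h$ together with the triangle inequality yields $\bigl\|\sum_{k \in A(h)} h_k e_k\bigr\| \leq 2r$.

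Combining this geometric bound with the pointwise estimates $\|D\chi_k(h)\| \leq C/r$ and $\|D^2\chi_k(h)\| \leq C/r^2$ from Lemma \ref{lemma-chi-est}, a further application of Lemma \ref{lemma-norm-g-star}(3) gives, for instance,
\begin{align*}
\Bigl\|\sum_{k \in A(h)} \bigl(D\chi_k(h) g\bigr) h_k e_k\Bigr\| \leq \ubc \cdot \frac{C}{r} \cdot \|g\| \cdot 2r = 2 C\, \ubc\, \|g\|,
\end{align*}
and the analogous estimates applied to the three pieces of $D^2\Psi(h)(g,f)$ yield bounds of order $1/r$, so that $r\,\|D^2\Psi(h)\|$ is uniformly bounded. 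The main obstacle is precisely this matching between the $1/r$, $1/r^2$ factors coming from $D\chi_k$, $D^2\chi_k$ and the factor $r$ provided by the smallness of the \emph{active} coefficients $h_k$ for $k \in A(h)$, which together absorb all $r$-dependence and produce a constant $M$ independent of $r$ and of $h$.
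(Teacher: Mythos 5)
Your overall strategy coincides with the paper's: differentiate term by term, bound the derivative-free sums by $\ubc$ times the uniform bound on $\chi_k$, and cancel the $1/r$ and $1/r^2$ from Lemma \ref{lemma-chi-est} against the fact that the coordinates of $h$ on which $D\chi_k$, $D^2\chi_k$ are active have total norm $O(r)$. The paper realizes this last point more cheaply: it takes $n$ to be the \emph{smallest} index with $\| T_n h \| \leq 1$, notes that $D\chi_k(h) = D^2\chi_k(h) = 0$ for $k < n$ (since $\varphi(\|T_k \cdot\|) = 0$ near $h$ there), and uses $\| \sum_{k \geq n} h_k e_k \| = r \| T_n h \| \leq r$ directly. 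The one step of yours that does not hold as stated is the claim that $A(h)$ is a consecutive range $\{k_0,\ldots,k_1\}$ together with the ensuing telescoping identity. The set $A(h)$ is cut out by a condition on the \emph{original} norm (it is, moreover, only \emph{contained} in $\{ k : \| T_k h \| \in [1/2,1] \}$, not equal to it), whereas passing to an equivalent inner product makes $k \mapsto \| (\Id - \Pi_{k-1}) h \|$ monotone only in the \emph{new} norm; this does not force the original-norm level set to be an interval of indices, so $\sum_{k \in A(h)} h_k e_k$ need not telescope. The repair is immediate and renders the change of inner product unnecessary: put $k_0 := \min A(h)$, observe that $k_0 \in A(h)$ forces $\| T_{k_0} h \| \leq 1$, i.e. $\| (\Id - \Pi_{k_0 - 1}) h \| \leq r$, and apply Lemma \ref{lemma-norm-g-star}(3) to the element $(\Id - \Pi_{k_0-1})h$ with the bounded sequence $\lambda_k := (D\chi_k(h)g) \mathbbm{1}_{A(h)}(k)$, which gives
\begin{align*}
\bigg\| \sum_{k \in A(h)} (D\chi_k(h)g) h_k e_k \bigg\| \leq \ubc \, \frac{C}{r} \, \| g \| \, \| (\Id - \Pi_{k_0-1}) h \| \leq \ubc \, C \, \| g \|
\end{align*}
without any telescoping. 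With this substitution (and its analogue for the $D^2\chi_k$ term) your argument goes through and yields the stated bound, with a constant of the same quality as the paper's $M = 3 \, \ubc \, C$.
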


\begin{proof}
Let $C \in \bbr_+$ be the constant from Lemma \ref{lemma-chi-est}. We define the constant $M \in \bbr_+$ as
\begin{align*}
M := 3 \ubc C.
\end{align*}
Let $h \in H$ be arbitrary. Noting that $T_k h \to 0$ for $k \to \infty$, let $n \in \bbn$ be the smallest index such that 
\begin{align}\label{norm-T-1}
\| T_n h \| \leq 1. 
\end{align}
Then we have $\| T_k h \| > 1$ for all $k = 1,\ldots,n-1$. By the continuity of the linear operators $T_1,\ldots,T_{n-1}$, there exists $\delta > 0$ such that
\begin{align*}
\| T_k (h-g) \| > 1 \quad \text{for all $k = 1,\ldots,n-1$ and all $g \in H$ with $\| g \| \leq \delta$.}
\end{align*}
By the definition (\ref{def-chi}) of $\chi_k$ we obtain
\begin{align*}
\chi_k(h-g) = 1 \quad \text{for all $k = 1,\ldots,n-1$ and all $g \in H$ with $\| g \| \leq \delta$,}
\end{align*}
and it follows that
\begin{align}\label{chi-diff-zero}
D \chi_k(h) = 0 \quad \text{and} \quad D^2 \chi_k(h) = 0 \quad \text{for all $k = 1,\ldots,n-1$.}
\end{align}
Furthermore, by the definition (\ref{def-Psi-final}) of $\Psi$ we have
\begin{align*}
D \Psi(h) &= \sum_{k \in \bbn} D \chi_k(h) \langle e_k^*,h \rangle e_k + \sum_{k \in \bbn} \chi_k(h) \langle e_k^*, \cdot \rangle e_k,
\\ D^2 \Psi(h) &= \sum_{k \in \bbn} D^2 \chi_k(h) \langle e_k^*,h \rangle e_k + 2 \sum_{k \in \bbn} D \chi_k(h) \langle e_k^*,\cdot \rangle e_k,
\end{align*}
and hence, by (\ref{chi-diff-zero}), Lemmas \ref{lemma-norm-g-star}, \ref{lemma-chi-est} and (\ref{norm-T-1}) we obtain
\begin{align*}
\| D \Psi(h) \| &\leq \bigg\| \sum_{k \geq n} D \chi_k(h) \langle e_k^*,h \rangle e_k \bigg\| + \bigg\| \sum_{k \in \bbn} \chi_k(h) \langle e_k^*,\cdot \rangle e_k \bigg\|
\\ &\leq \ubc C/r \bigg\| \sum_{k \geq n} \langle e_k^*,h \rangle e_k \bigg\| + \ubc C \bigg\| \sum_{k \in \bbn} \langle e_k^*,\cdot \rangle e_k \bigg\|
\\ &\leq \ubc C \| T_n h \| + \ubc C \leq M,
\end{align*}
and similarly
\begin{align*}
\| D^2 \Psi(h) \| &\leq \bigg\| \sum_{k \geq n} D^2 \chi_k(h) \langle e_k^*,h \rangle e_k \bigg\| + 2 \bigg\| \sum_{h \in \bbn} D \chi_k(h) \langle e_k^*,\cdot \rangle e_k \bigg\|
\\ &\leq \ubc C / r^2 \bigg\| \sum_{k \geq n} \langle e_k^*,h \rangle e_k \bigg\| + 2 \ubc C / r \bigg\| \sum_{k \in \bbn} \langle e_k^*,\cdot \rangle e_k \bigg\|
\\ &\leq \ubc C/r \| T_n h \| + 2 \ubc C/r \leq M/r,
\end{align*}
completing the proof.
\end{proof}

\begin{lemma}\label{lemma-approx-C-b-2-b}
For all $\sigma \in \F(H) \cap C_b^{1,1}(H)$ and all sequences $a,b \in (0,\infty)^{\bbn}$, where $b$ is chosen such that (\ref{diff-hat-cap}) is fulfilled, and every constant $r > 0$ we have $\sigma^{(a,b,r)} \in C_b^2(H)$.
\end{lemma}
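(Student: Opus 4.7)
The plan is to exploit the local representation $\sigma^{(a,b,r)} = \bar{\sigma} \circ \Psi$ together with the uniform bounds already proved in Lemmas~\ref{lemma-bar-sigma} and~\ref{lemma-C-b-2-part-2}. From Lemma~\ref{lemma-C-b-2-pre} we already have $\sigma^{(a,b,r)} \in C^{\infty}(H)$, so only boundedness of $\sigma^{(a,b,r)}$ together with $D\sigma^{(a,b,r)}$ and $D^2\sigma^{(a,b,r)}$ needs to be verified.

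First, I would fix an arbitrary $h \in H$ and invoke the second part of Lemma~\ref{lemma-Psi-local} to produce $n \in \bbn$ and $\delta > 0$ such that $\Psi(h-g) \in E_n$ for all $g \in H$ with $\| g \| \leq \delta$. Setting $U := \{ h' \in H : \| h-h' \| \leq \delta \}$, this means $\Psi(U) \subset E_n$, and by the consistency relation (\ref{restr-bar-sigma}) we obtain the local identity
\begin{align*}
\sigma^{(a,b,r)}|_U = \bar{\sigma}_n \circ \Psi|_U,
\end{align*}
which is a composition of $C^{\infty}$ maps on $H$. Consequently, $\sigma^{(a,b,r)}(h)$ and its first two derivatives at $h$ may be computed using $\bar{\sigma}_n$ in place of $\bar{\sigma}$. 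In particular, $\Psi(h) \in E_n$, so the uniform estimate (\ref{est-bar-sigma}) of Lemma~\ref{lemma-bar-sigma} applies directly at the point $\Psi(h)$.

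Next, I would apply the chain rule for first and second order Fr\'{e}chet derivatives to obtain
\begin{align*}
D\sigma^{(a,b,r)}(h) &= D\bar{\sigma}_n(\Psi(h)) \circ D\Psi(h),
\\ D^2\sigma^{(a,b,r)}(h) &= D^2\bar{\sigma}_n(\Psi(h))(D\Psi(h),D\Psi(h)) + D\bar{\sigma}_n(\Psi(h)) \circ D^2\Psi(h).
\end{align*}
Denoting by $C \in \bbr_+$ the constant from (\ref{est-bar-sigma}) and by $M \in \bbr_+$ the constant from (\ref{est-Psi-final}), a factor-by-factor estimation then yields
\begin{align*}
\| \sigma^{(a,b,r)}(h) \| \leq C, \quad \| D\sigma^{(a,b,r)}(h) \| \leq CM, \quad \| D^2\sigma^{(a,b,r)}(h) \| \leq CM^2 + CM/r.
\end{align*}
Since $C$, $M$ and $r$ do not depend on $n$ or on $h$ (only the integer $n$ depends on $h$), these bounds are uniform over $H$, which establishes $\sigma^{(a,b,r)} \in C_b^2(H)$.

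The only delicate point in this argument is that Lemma~\ref{lemma-bar-sigma} provides bounds on $\bar{\sigma}_n$ and its derivatives merely at points of $E_n$ rather than on all of $H$. This is precisely compensated by Lemma~\ref{lemma-Psi-local}, which guarantees that locally around any $h$ the values of $\Psi$ lie in some $E_n$, so the estimates of Lemma~\ref{lemma-bar-sigma} can be invoked at exactly the points where they are needed. Matching the finite dimensional image of $\Psi$ with the region of validity of the bounds on $\bar{\sigma}_n$ is what makes the proof go through, and no further global regularity of $\bar{\sigma}$ on $H$ needs to be inferred.
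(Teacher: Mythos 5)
Your proposal is correct and follows essentially the same route as the paper's proof: localize via Lemma \ref{lemma-Psi-local} and the consistency relation (\ref{restr-bar-sigma}) to replace $\bar{\sigma}$ by $\bar{\sigma}_n$ near $h$, then apply the chain rule together with the uniform bounds (\ref{est-bar-sigma}) and (\ref{est-Psi-final}). The only (immaterial) difference is the constant in the second-derivative estimate, where the paper writes $CM^2 + C^2M/r$ and you obtain $CM^2 + CM/r$; both are uniform in $h$, so the conclusion is unaffected.
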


\begin{proof}
By Lemmas \ref{lemma-bar-sigma} and \ref{lemma-C-b-2-part-2} there exist constants $C,M \in \bbr_+$ such that we have (\ref{est-bar-sigma}) and (\ref{est-Psi-final}). Let $h \in H$ be arbitrary. By Lemma \ref{lemma-Psi-local} there exist $n \in \bbn$ and $\delta > 0$ such that we have (\ref{Psi-local}). Furthermore, by the definition (\ref{def-sigma-final}) of $\sigma^{(a,b,r)}$ and relation (\ref{restr-bar-sigma}) we have
\begin{align*}
\sigma^{(a,b,r)}(h-g) = \bar{\sigma}_n(\Psi(h-g)) \quad \text{for all $g \in H$ with $\| g \| \leq \delta$.}
\end{align*}
Therefore, and by estimates (\ref{est-bar-sigma}) and (\ref{est-Psi-final}), we obtain
\begin{align*}
\| \sigma^{(a,b,r)}(h) \| &= \| \bar{\sigma}_n(\Psi(h)) \| \leq C,
\\ \| D \sigma^{(a,b,r)}(h) \| &= \| D (\bar{\sigma}_n \circ \Psi)(h) \| \leq \| D \bar{\sigma}_n(\Psi(h)) \| \, \| D \Psi(h) \| \leq CM,
\\ \| D^2 \sigma^{(a,b,r)}(h) \| &= \| D^2 (\bar{\sigma}_n \circ \Psi)(h) \| \leq \| D^2 \bar{\sigma}_n (\Psi(h)) \| \, \| D \Psi(h) \|^2
\\ &\quad + \| D \bar{\sigma}_n (\Psi(h)) \|^2 \, \| D^2 \Psi(h) \| \leq CM^2 + C^2 M/r,
\end{align*}
finishing the proof.
\end{proof}

\begin{lemma}\label{lemma-approx-C-b-2-c}
Let $\sigma \in \F(H) \cap C_b^{1,1}(H)$ be a locally parallel function. Then, there exist a sequences $a^0,b^0 \in (0,\infty)^{\bbn}$, where $b^0$ is chosen such that (\ref{diff-hat-cap}) is fulfilled with $b$ replaced by $b^0$, such that for all sequences $a,b \in (0,\infty)^{\bbn}$ with $a^0 \leq_{\bbn} a$ and $b^0 \leq_{\bbn} b$ and every constant $r > 0$ the function $\sigma^{(a,b,r)} : H \to H$ is weakly locally parallel.
\end{lemma}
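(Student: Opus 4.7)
Since $\sigma \in \F(H)$ there exists $N \in \bbn$ with $\sigma(H) \subset E_N$, and a direct inspection of (\ref{def-cap-sigma})--(\ref{def-sigma-final}) shows that $E_N$ is preserved by every averaging and by the inductive definition (\ref{def-sigma-n-bar}); hence $\sigma^{(a,b,r)}(H) \subset E_N$. For $h^* = \theta e_k^* \in G^*$ with $k > N$ the conclusion is automatic, so fix $(h^*, h) \in D$ with $k \le N$; then Assumption \ref{ass-D} gives $h_k = 0$ and $h \in K$. By Lemma \ref{lemma-Psi-local} pick $n_0 \in \bbn$ and $\delta_0 > 0$ with $\Psi(h-g) \in E_{n_0}$ for $\|g\| \le \delta_0$, so that $\sigma^{(a,b,r)}(h-g) = \bar{\sigma}_{n_0}(\Psi(h-g))$ there.

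Iterating the recursion (\ref{def-sigma-n-bar}) and using $\Pi_{m-1} \Pi_{m'} = \Pi_{\min(m-1, m')}$ yields the telescoping identity
\begin{align*}
\bar{\sigma}_n(f) = \sigma(0) + \sum_{m=1}^n \bigl[\hat{\sigma}_m(\Pi_m f) - \hat{\sigma}_m(\Pi_{m-1} f)\bigr] \quad \text{for all } f \in E_n,
\end{align*}
so after pairing with $h^*$ the task reduces to showing $\sigma(0)_k = 0$ together with $\hat{\sigma}_m(\Pi_j \Psi(h-g))_k = 0$ for all $0 \le j \le n_0$ and $1 \le m \le n_0$ and sufficiently small $\|g\|$. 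The key structural observation is that $\Pi_j \Psi(h) \le_K h$ for every $j$: by Assumption \ref{ass-Schauder-basis} every $g^* \in G^*$ is of the form $\theta_i e_i^*$, and since $h \in K$ gives $\theta_i h_i \ge 0$ and $\chi_i(h) \in [0,1]$, both $\langle g^*, \Pi_j \Psi(h) \rangle = \theta_i \chi_i(h) h_i \mathbbm{1}_{\{i \le j\}}$ and $\langle g^*, h - \Pi_j \Psi(h) \rangle$ are non-negative, placing both vectors in $K$. Lemma \ref{lemma-fct-a}(3) then gives $(h^*, \Pi_j \Psi(h)) \in D$, so the locally parallel hypothesis on $\sigma$ with radius $\epsilon > 0$ yields $\sigma(\cdot)_k \equiv 0$ on $B(\Pi_j \Psi(h), \epsilon)$; taking $j = 0$ in particular gives $\sigma(0)_k = 0$.

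Choose $a^0, b^0$ so that $1/a^0_m + 1/b^0_m \le \epsilon/2$ for every $m$, enlarging $b^0$ further if necessary so that (\ref{diff-hat-cap}) still holds, and let $L_\Psi := M$ be the Lipschitz constant of $\Psi$ arising from Lemma \ref{lemma-C-b-2-part-2} (independent of $r$ since the bound on $\|D \Psi\|$ is). For $a \ge_{\bbn} a^0$, $b \ge_{\bbn} b^0$, any $r > 0$, and $\|g\| \le \delta := \min\{\delta_0, \epsilon/(2 \bc L_\Psi)\}$, the double convolution defining $\hat{\sigma}_m$ is supported on pairs $g_1, g_2 \in E_m$ with $\|g_1\| \le 1/a_m$ and $\|g_2\| \le 1/b_m$, and the triangle inequality yields
\begin{align*}
\|\Pi_j \Psi(h-g) - g_1 - g_2 - \Pi_j \Psi(h)\| \le \bc L_\Psi \|g\| + 1/a_m + 1/b_m \le \epsilon,
\end{align*}
placing every integrand in the zero set of $\sigma(\cdot)_k$. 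The telescoping sum collapses to $\bar{\sigma}_{n_0}(\Psi(h-g))_k = 0$, proving weak local parallelism. The main obstacle is precisely that the naive reference point $h$ itself is too far from the projected points $\Pi_j \Psi(h)$ to invoke the locally parallel property directly --- $\Psi$ approximates the identity only when $r$ is small, and here $r$ is uncontrolled; the rescue is the lattice-like monotonicity $\Pi_j \Psi(h) \le_K h$ together with Lemma \ref{lemma-fct-a}(3), which furnishes a point of $D$ exactly at each $\Pi_j \Psi(h)$ and brings everything within reach of the radius-$\epsilon$ window.
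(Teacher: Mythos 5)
Your proof is correct. The paper reaches the same conclusion with the same basic ingredients but a different organization: it first proves, by induction on $n$, that $\langle h^*,\bar{\sigma}_n(h'-g)\rangle = 0$ for every $(h^*,h')\in D$ and every $\|g\|\leq\epsilon/(4M^n)$ with $M=\bc$ (choosing $a_n\geq 2/\epsilon$ and $b_n\geq 4/\epsilon$), the geometrically shrinking radius being forced because the recursion (\ref{def-sigma-n-bar}) feeds the perturbation $g$ through $\Pi_{n-1}$ at every level; it then applies this with $h'=\Psi(h)$, using $0\leq_K h$, $\Pi_{n-1}h\leq_K h$ and $\Psi(h)\leq_K h$ to stay inside $D$. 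You replace the induction by the telescoping identity (which is valid, since $\Pi_{m-1}\Pi_{m'}=\Pi_{m-1}$ for $m-1\leq m'$) and, crucially, observe that \emph{every} projected point $\Pi_j\Psi(h)$ satisfies $\Pi_j\Psi(h)\in K$ and $\Pi_j\Psi(h)\leq_K h$ --- your computation with $\theta_i\chi_i(h)h_i$ is exactly right, relying on Assumption \ref{ass-Schauder-basis} and $\chi_i\in[0,1]$ --- so each summand can be annihilated directly inside the single radius-$\epsilon$ window of the locally parallel hypothesis applied at the point $(h^*,\Pi_j\Psi(h))\in D$. This avoids the loss $\epsilon/(4M^n)$ entirely and only costs one factor $\bc L_\Psi$ in the admissible size of $g$; it is arguably the cleaner argument, and your support estimate $\bc L_\Psi\|g\|+1/a_m+1/b_m\leq\epsilon$ for the double convolution is accurate. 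Two cosmetic remarks: in the abstract setting of Appendix \ref{app-volatility} the membership $(h^*,\Pi_j\Psi(h))\in D$ should be quoted from Assumption \ref{ass-D}(3) rather than Lemma \ref{lemma-fct-a}(3), the latter being only the verification of the former in the concrete semigroup setting; and the $r$-uniformity of the Lipschitz bound on $\Psi$, while true by Lemma \ref{lemma-C-b-2-part-2}, is not actually needed, since weak local parallelism permits the radius to depend on the fixed $r$.
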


\begin{proof}
Since $\sigma$ is locally parallel, there exists $\epsilon > 0$ such that for all $(h^*,h) \in D$ we have (\ref{loc-par-prop}). Let $a^0 \in (0,\infty)^{\bbn}$ be the sequence given by $a_n^0 := 2 / \epsilon$ for each $n \in \bbn$. Furthermore, we choose $b^0 \in (0,\infty)^{\bbn}$ such that $b_n^0 \geq 4 / \epsilon$ for each $n \in \bbn$, and condition (\ref{diff-hat-cap}) is fulfilled with $b$ replaced by $b^0$. Let $a,b \in (0,\infty)^{\bbn}$ be arbitrary sequences with $a^0 \leq_{\bbn} a$ and $b^0 \leq_{\bbn} b$, and let $r > 0$ be an arbitrary constant. First, we will show that for all $n \in \bbn$ and all $(h^*,h) \in D$ we have
\begin{align}\label{cap-sigma-par}
\langle h^*,\Sigma_n(h-g) \rangle = 0 \quad \text{for all $g \in H$ with $\| g \| \leq \epsilon / 2$.}
\end{align}
For this purpose, let $g \in H$ with $\| g \| \leq \epsilon / 2$ be arbitrary. By the definition (\ref{def-cap-sigma}) of $\Sigma_n$, relation (\ref{loc-par-prop}), and since $\supp(\varphi) \subset [-1,1]$ and $a_n \geq 2 / \epsilon$, we obtain
\begin{align*}
\langle h^*,\Sigma_n(h-g) \rangle &= \frac{(a_n)^n}{c_n} \int_{E_n} \langle h^*, \sigma(h-g-f) \rangle \varphi(a_n \| f \|) df
\\ &= \frac{(a_n)^n}{c_n} \int_{E_n} \underbrace{\langle h^*, \sigma(h-(g+f)) \rangle}_{= 0} \varphi(a_n \| f \|) \bbI_{\{ \| f \| \leq \epsilon / 2 \}} df
\\ &\quad + \frac{(a_n)^n}{c_n} \int_{E_n} \langle g^*, \sigma(h-(g+f)) \rangle \underbrace{\varphi(a_n \| f \|)}_{= 0} \bbI_{\{ \| f \| > \epsilon / 2 \}} df = 0,
\end{align*}
showing (\ref{cap-sigma-par}). Noting the definition (\ref{def-hat-sigma}) of $\hat{\sigma}_n$, relation (\ref{cap-sigma-par}) and that $b_n \geq 4 / \epsilon$, analogously we show that for all $n \in \bbn$ and all $(h^*,h) \in D$ we have
\begin{align}\label{sigma-hat-par}
\langle h^*,\hat{\sigma}_n(h-g) \rangle = 0 \quad \text{for all $g \in H$ with $\| g \| \leq \epsilon / 4$.}
\end{align}
Next, we set $M := \bc \geq 1$. By induction, we will show that for all $n \in \bbn_0$ and all $(h^*,h) \in D$ we have
\begin{align}\label{sigma-bar-par}
\langle h^*,\bar{\sigma}_n(h-g) \rangle = 0 \quad \text{for all $g \in H$ with $\| g \| \leq \frac{\epsilon}{4 M^n}$.}
\end{align}
Relation (\ref{sigma-bar-par}) holds true for $n = 0$. Indeed, since $0 \in K$ and $0 \leq_K h$, by Assumption \ref{ass-D} we also have $(h^*,0) \in D$. Therefore, by the definition (\ref{def-sigma-0-bar}) of $\bar{\sigma}_0$, and since $\sigma$ is parallel, for all $g \in H$ with $\| g \| \leq \epsilon / 4$ we obtain
\begin{align*}
\langle h^*,\bar{\sigma}_0(h-g) \rangle = \langle h^*,\sigma(0) \rangle = 0.
\end{align*}
For the induction step, suppose that (\ref{sigma-bar-par}) is satisfied for $n-1$. Since $\Pi_{n-1} h \in K$ and $\Pi_{n-1} h \leq_K h$, by Assumption \ref{ass-D} we also have $(h^*,\Pi_{n-1} h) \in D$. Let $g \in H$ with $\| g \| \leq \frac{\epsilon}{4 M^n}$ be arbitrary. Then, we have
\begin{align*}
\| g \| \leq \frac{\epsilon}{4} \quad \text{and} \quad \| \Pi_{n-1} g \| \leq \frac{\epsilon}{4 M^{n-1}} \leq \frac{\epsilon}{4},
\end{align*}
and hence, by the definition (\ref{def-sigma-n-bar}) of $\bar{\sigma}_n$, relation (\ref{sigma-hat-par}) and the induction hypothesis, we obtain
\begin{align*}
&\langle h^*,\bar{\sigma}_n(h-g) \rangle
\\ &= \langle h^*,\hat{\sigma}_n(h-g) \rangle + \langle h^*,\bar{\sigma}_{n-1}(\Pi_{n-1} (h-g)) \rangle + \langle h^*,\hat{\sigma}_n(\Pi_{n-1} (h-g)) \rangle
\\ &= \langle h^*,\hat{\sigma}_n(h-g) \rangle + \langle h^*,\bar{\sigma}_{n-1}(\Pi_{n-1} h - \Pi_{n-1} g) \rangle + \langle h^*,\hat{\sigma}_n(\Pi_{n-1} h - \Pi_{n-1} g) \rangle = 0,
\end{align*}
proving (\ref{sigma-bar-par}). Now, let $(h^*,h) \in D$ be arbitrary. By the definition (\ref{def-Psi-final}) of $\Psi$ we have $\Psi(h) \in K$ and $\Psi(h) \leq_K h$, and hence, by Assumption \ref{ass-D} we also have $(h^*,\Psi(h)) \in D$. By Lemma \ref{lemma-Psi-local} there exist $n \in \bbn$ and $\delta > 0$ such that we have (\ref{Psi-local}), and there exists $C > 0$ such that
\begin{align*}
\| \Psi(h-g) - \Psi(h) \| \leq C \| g \| \quad \text{for all $g \in H$.}
\end{align*}
We define $\eta > 0$ as
\begin{align*}
\eta := \min \bigg\{ \delta, \frac{\epsilon}{4 M^n C} \bigg\}.
\end{align*}
Let $g \in H$ with $\| g \| \leq \eta$ be arbitrary. Then we have
\begin{align*}
\| \Psi(h-g) - \Psi(h) \| \leq \frac{\epsilon}{4 M^n},
\end{align*}
and hence, by the definition (\ref{def-sigma-final}) of $\sigma^{(a,b,r)}$, relation (\ref{restr-bar-sigma}) and (\ref{sigma-bar-par}) we obtain
\begin{align*}
\langle h^*,\sigma^{(a,b,r)}(h-g) \rangle &= \langle h^*,\bar{\sigma}(\Psi(h-g)) \rangle = \langle h^*,\bar{\sigma}_n(\Psi(h-g)) \rangle
\\ &= \langle h^*,\bar{\sigma}_n(\Psi(h) - (\Psi(h-g) - \Psi(h)) ) \rangle = 0,
\end{align*}
showing that $\sigma^{(a,b,r)}$ is weakly locally parallel.
\end{proof}

\begin{proposition}\label{prop-C-b-2}
Let $\sigma \in \F(H) \cap C_b^{1,1}(H)$ be a locally parallel function. Then there are a constant $L \in \bbr_+$ and a sequence
\begin{align*}
(\sigma_n)_{n \in \bbn} \subset \Lip_L(H) \cap \F(H) \cap C_b^2(H)
\end{align*}
such that $\sigma_n$ is weakly locally parallel for each $n \in \bbn$, and we have $\sigma_n \to \sigma$.
\end{proposition}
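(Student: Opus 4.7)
The idea is simply to combine the four preparatory lemmas \ref{lemma-approx-C-b-2-a}, \ref{lemma-C-b-2-pre}, \ref{lemma-approx-C-b-2-b} and \ref{lemma-approx-C-b-2-c}, each of which addresses one of the four properties we must control (uniform convergence, joint Lipschitz constant with preservation of $\F(H)$, boundedness of derivatives up to second order, and weak local parallelism). The key structural point is that the Lipschitz constant and the membership in $\F(H)$ provided by Lemma \ref{lemma-C-b-2-pre} depend only on $\sigma$ (through $L$ and $N := \dim \langle \sigma(H) \rangle$) and not on the auxiliary parameters $a, b, r$; similarly, the $C_b^2$-property in Lemma \ref{lemma-approx-C-b-2-b} holds for every admissible choice of $(a,b,r)$. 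Hence the only parameter-sensitive conditions are the uniform-closeness to $\sigma$ and the weak local parallelism.

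The plan is as follows. Fix the constants $L,N \in \bbr_+$ associated with $\sigma$, and set $\tilde L := CNL$ for the constant $C$ from Lemma \ref{lemma-C-b-2-pre}. Let $a^{0,\mathrm{a}}, b^{0,\mathrm{a}} \in (0,\infty)^{\bbn}$ and $r^{0,\mathrm{a}} > 0$ be the parameters provided by Lemma \ref{lemma-approx-C-b-2-a} for $\epsilon = 1$, and let $a^{0,\mathrm{c}}, b^{0,\mathrm{c}} \in (0,\infty)^{\bbn}$ be the parameters from Lemma \ref{lemma-approx-C-b-2-c}. For every $n \in \bbn$ apply Lemma \ref{lemma-approx-C-b-2-a} with $\epsilon = 1/n$ to obtain sequences $a^n,b^n \in (0,\infty)^{\bbn}$ and a constant $r^n \in (0,\infty)$; by enlarging the $a^n$ and $b^n$ componentwise (which is allowed by Lemma \ref{lemma-approx-C-b-2-a}) and shrinking $r^n$, we may assume that $a^n \geq_{\bbn} a^{0,\mathrm{a}} \vee a^{0,\mathrm{c}}$, $b^n \geq_{\bbn} b^{0,\mathrm{a}} \vee b^{0,\mathrm{c}}$ and $r^n \leq r^{0,\mathrm{a}}$, and that $b^n$ is still chosen so that condition (\ref{diff-hat-cap}) is fulfilled for the pair $(\sigma, b^n)$; this last point is possible because (\ref{diff-hat-cap}) is a monotone condition on $b^n$ (making $b^n$ larger only makes the mollification tighter).

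Define $\sigma_n := \sigma^{(a^n,b^n,r^n)}$ via the construction (\ref{def-cap-sigma})--(\ref{def-sigma-final}). Then Lemma \ref{lemma-C-b-2-pre} gives $\sigma_n \in \Lip_{\tilde L}(H) \cap \F(H) \cap C^{\infty}(H)$ (with the same $\tilde L$ for all $n$, which is the required joint Lipschitz constant); Lemma \ref{lemma-approx-C-b-2-b} upgrades $C^{\infty}(H)$ to $C_b^2(H)$; Lemma \ref{lemma-approx-C-b-2-c} yields weak local parallelism of each $\sigma_n$; and Lemma \ref{lemma-approx-C-b-2-a} yields $\sup_{h \in H} \| \sigma_n(h) - \sigma(h) \| \leq 1/n$, so in particular $\sigma_n \to \sigma$. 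Setting $L := \tilde L$ completes the proof.

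The only mild obstacle is purely bookkeeping: one has to verify that the three sets of hypotheses on $(a^n, b^n, r^n)$ imposed by Lemmas \ref{lemma-approx-C-b-2-a}, \ref{lemma-approx-C-b-2-b} and \ref{lemma-approx-C-b-2-c} are compatible in the sense that they can all be met simultaneously. This is routine because each of those three lemmas only requires lower bounds on the components of $a$ and $b$ (and, in the case of Lemma \ref{lemma-approx-C-b-2-a}, an upper bound on $r$), so taking componentwise maxima and an infimum, respectively, produces a single admissible choice. No genuinely new analytic work is needed beyond what the preceding lemmas already establish.
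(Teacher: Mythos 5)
Your proposal is correct and follows exactly the route of the paper, whose proof of Proposition \ref{prop-C-b-2} is the one-line observation that it is an immediate consequence of Lemmas \ref{lemma-approx-C-b-2-a}, \ref{lemma-C-b-2-pre}, \ref{lemma-approx-C-b-2-b} and \ref{lemma-approx-C-b-2-c}. Your additional bookkeeping (taking componentwise maxima of the lower bounds on $a$ and $b$, the minimum of the upper bounds on $r$, and noting that condition (\ref{diff-hat-cap}) is preserved under enlarging $b$) correctly fills in the compatibility check that the paper leaves implicit.
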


\begin{proof}
This is an immediate consequence of Lemmas \ref{lemma-approx-C-b-2-a}, \ref{lemma-C-b-2-pre}, \ref{lemma-approx-C-b-2-b}  and \ref{lemma-approx-C-b-2-c}.
\end{proof}

\end{appendix}

\end{document}